\DeclareMathOperator*{\argmin}{argmin} 
\DeclareMathOperator{\barron}{\mathcal{B}}
\DeclareMathOperator{\Prob}{\mathcal{P}}
\DeclareMathOperator{\F}{\mathcal{F}} 
\DeclareMathOperator{\R}{\mathbb{R}} 
\DeclareMathOperator{\N}{\mathbb{N}} 
\DeclareMathOperator{\E}{\mathcal{E}} 
\DeclareMathOperator{\Fu}{\mathcal{F}}
\newcommand{\comment}[1]{\textcolor{red}{#1}}
\newtheorem{definition}{Definition}
\newtheorem{theorem}{Theorem}
\newtheorem{proposition}{Proposition}
\newtheorem{lemma}{Lemma}
\newtheorem{remark}{Remark}
\newtheorem{corollary}{Corollary}
\newtheorem{problem}{Problem}
\newtheorem{hypothesis}{Hypothesis}
\title{Numerical solution of Poisson partial differential equation in high dimension using two-layer neural networks}
\author{Dus Mathias, Ehrlacher Virginie}
\begin{document}
\maketitle

\begin{abstract}

The aim of this article is to analyze numerical schemes using two-layer neural networks with infinite width for the resolution of the high-dimensional Poisson partial differential equation (PDE) with Neumann boundary condition. Using Barron's representation of the solution~\cite{Barron1993} with a probability measure defined on the set of parameter values, the energy is minimized thanks to a gradient curve dynamic on the $2$-Wasserstein space of the set of parameter values defining the neural network. Inspired by the work from Bach and Chizat~\cite{BachChizat2018,BachChizat2021}, we prove that if the gradient curve converges, then the represented function is the solution of the elliptic equation considered. In contrast to the works~\cite{BachChizat2018,BachChizat2021}, the activation function we use here is not assumed to be homogeneous to obtain global convergence of the flow. Numerical experiments are given to show the potential of the method.

\end{abstract}

\section{Introduction}\label{sectionIntroduction}

\subsection{Literature review}

The motivation of our work is to bring some contributions on the mathematical understanding of neural-network based numerical schemes, typically Physically-Informed-Neural-Networks (PINNs)~\cite{raissi2019physics, han2018solving, karniadakis2021physics, weinan2021algorithms, cuomo2022scientific,despres2022neural} approaches, for the resolution of some high-dimensional Partial Differential Equations (PDEs). In this context, it is of tremendous importance to understand why neural networks work so well in some contexts in order to improve its efficiency and get an insight of why a particular neural network should be relevant to a specific task. 

The first step towards a mathematical analysis theory of neural network-based numerical methods is the identification of functional spaces suited for neural network approximation. The first important result in this direction is the celebrated theorem of approximation due to Cybenko \cite{Cybenko1989} proving that two-layer neural networks can approximate an arbitrary smooth function on a compact of $\R^d$. However, this work does not give an estimation of the number of neurons needed even if it is of utmost importance to hope for tractable numerical methods. To answer this question, Yarotsky \cite{Yarotsky2017} gave bounds on the number of neurons necessary to represent smooth functions. This theory mainly relies on classical techniques of Taylor expansions and does not give computable architectures in the high dimensional regime. Another original point of view was given by Barron \cite{Barron1993} who used Monte Carlo techniques from Maurey-Jones-Barron to prove that functions belonging to a certain metric space \textit{ie} the Barron space, can be approximated by a two-layer NN with precision $O\left(\frac{1}{\sqrt{m}}\right)$, $m$ being the width of the first layer. Initially, Barron's norm was characterized using Fourier analysis reducing the theory to domain where Fourier decomposition is available. Now other Barron type norms which does not suppose the existence of an harmonic decomposition \cite{WainanMaWu2020}, are also available.

In order to give a global idea of how this works, one can say that a Barron function $f_\mu: \R^d \rightarrow \R$ can be represented by a measure $\mu$ with second order moments :

$$
f_\mu(x) := \int a \sigma(wx + b) d\mu(a,b,c)
$$
where $\sigma$ is an activation function and the Barron norm $\|f_\mu \|_{\mathcal{B}}$ is roughly speaking, a mix of the second order moments of $\mu$. Intuitively, the law of large number says that the function $f_\mu$ can be represented by a sum of Dirac corresponding to a two-layer neural network whose width equals the number of Dirac masses. The architecture of a two-layer neural network is recalled in Figure~\ref{figureDescriptionTwoLayersNN}. Having said that, some important questions arise :

\begin{itemize}
\item What is the size of the Barron space and the influence of the activation function on such size ?

Some works have been done in this direction for the ReLU activation function. In~\cite{WeinanWojtowytsch2022}, it is proven that $H^s$ functions are Barron if $s \geq \frac{d}{2}+2$ and that $f_\mu$ can be decomposed by an infinite sum of $f_{\mu_i}$ whose singularities are located on a $k$ ($k < d$) affine subspace of $\R^d$. For the moment, no similar result seems to hold with more regular activation functions.

\item One can add more and more layers and observe the influence on the corresponding space. In~\cite{WeinanWojtowytsch2020}, tree-like spaces $\mathcal{W}_L$ (where $L$ is the number of hidden layers) are introduced using an iterative scheme starting from the Barron space. Of course, multi-layers neural networks naturally belong to these spaces. Nevertheless for a function belonging to $\mathcal{W}_L$, it is not clear that a multilayer neural network is more efficient than its two-layer counterpart for its approximation.

\item Does solutions of classical PDEs belong to a Barron space ? In this case, there is a potential to solve PDEs without suffering from the curse of dimension. Some important advances have been made in this direction in~\cite{Jianfeng2021} where authors considered the Poisson problem with Neumann boundary condition on the $d$ dimensional cube. If the source term is Barron, then it is proved that the solution is also Barron and there is hope for an approximation with a two-layer NN.  
\end{itemize}

Using conclusions from~\cite{Jianfeng2021}, the object of this paper is to propose and analyze a neural-network based numerical approach for the resolution of the Poisson equation in the high dimensional regime with Barron source. Inspired from~\cite{BachChizat2018}, we immerse the problem on the space of probability measures with finite second order moments defined on the parametric domain. This corresponds to finding a solution to the PDE thanks to an infinitely wide two-layer neural network. Then we interpret the learning phase of the network as a gradient curve in the space of probability measure. Finally under some hypothesis on the initial support, we prove that if the curve converges then it necessarily does towards a measure corresponding to the solution of the PDE considered. Note that our argumentation is different from~\cite{BachChizat2018,BachChizat2021} since the convergence proof is not based on topological degree nor the topological properties of the sphere. We rather use a homology argument taken from algebraic topology and a clever choice of activation function to prove that the dynamic of the support of the gradient curve of measure behaves nicely. Numerical experiments are conducted to confirm the potential of the method proposed.

In Section~\ref{sec:prelim}, the problem is presented in a more precise way and the link between probability and Barron functions is made clearly. In Section~\ref{sectionGradientFlow}, the gradient curve is introduced and our main theorems on its well-posedness and convergence are presented and proved. Finally, numerical experiments are exposed in Section~\ref{sectionNumericalExperiment}. 
\newline

\underline{\textbf{Notation}} : For $1\leq p \leq \infty$, the notation $|\cdot|_p$ designates the $\ell^p$ norm of a vector of arbitrary finite dimension with particular attention to $p=2$ (euclidean norm) for which the notation $|\cdot|$ is preferred.

\tikzmath{\r = 0.9; \D  = 1.5;}

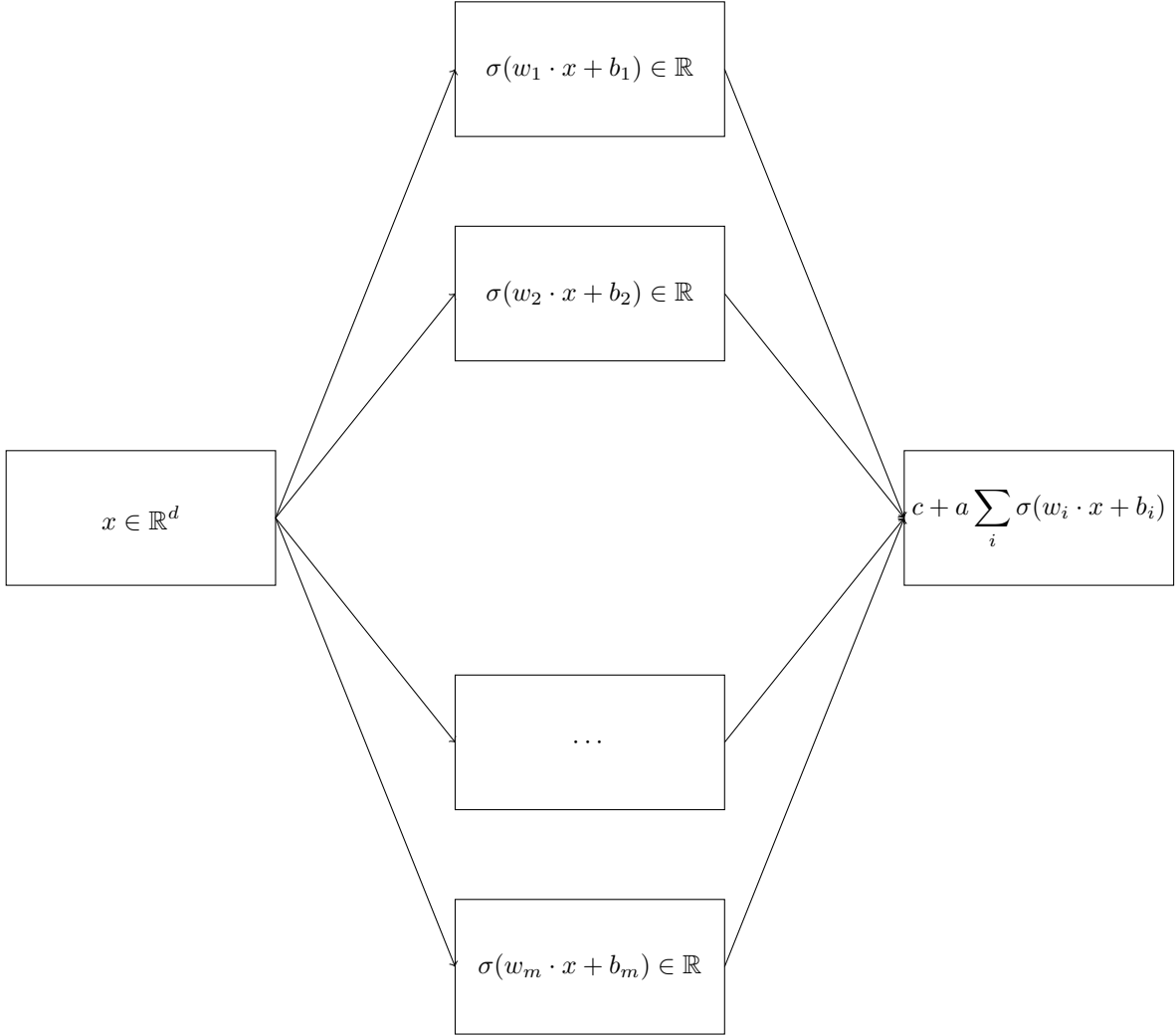
\begin{figure}[H]
\centering
\begin{tikzpicture}[scale=2]

\draw (-2*\D-\r,-\r/2) rectangle (-2*\D+\r,\r/2);
\draw (2*\D-\r,-\r/2) rectangle (2*\D+\r,\r/2);

\draw (-\r,\D-\r/2) rectangle (\r, \D+\r/2);
\draw (-\r,2*\D-\r/2) rectangle (\r, 2*\D+\r/2);
\draw (-\r,-\D-\r/2) rectangle (\r, -\D+\r/2);
\draw (-\r,-2*\D-\r/2) rectangle (\r, -2*\D+\r/2);

\draw[->] (-2*\D + \r ,0) -- (-\r,2*\D) ;
\draw[->] (-2*\D + \r ,0) -- (-\r,\D);
\draw[->] (-2*\D + \r ,0) -- (-\r,-\D);
\draw[->] (-2*\D + \r ,0) -- (-\r,-2*\D);

\draw[->] (\r,2*\D) -- (2*\D - \r ,0);
\draw[->] (\r,\D) -- (2*\D - \r ,0);
\draw[->] (\r,-\D) -- (2*\D - \r ,0);
\draw[->] (\r,-2*\D) -- (2*\D - \r ,0);

\draw (-2*\D,0) node {$x \in \R^d$};

\draw (0,2*\D) node {$\sigma(w_1 \cdot x + b_1) \in \R $};
\draw (0,\D) node {$\sigma(w_2 \cdot x + b_2)\in \R $};
\draw (0,-\D) node {$\cdots$};
\draw (0,-2*\D) node {$\sigma(w_m \cdot x + b_m) \in \R $};

\draw (2*\D,0) node {$c + a \sum_i \sigma(w_i \cdot x + b_i) $};

\end{tikzpicture}
\caption{A two-layer neural network of width $m$}
\label{figureDescriptionTwoLayersNN}
\end{figure}

\section{Preliminaries}\label{sec:prelim}

This section introduces the mathematical framework we consider in this paper to relate two-layer neural networks and high-dimensional Poisson equations. 

\subsection{Problem setting}

The following Poisson equation is considered on $\Omega := [0,1]^d$ ($d \in \mathbb{N}$) with Neumann boundary condition : find $u^* \in H^1(\Omega)$ with $\int_\Omega u^* = 0$ solution to :
\begin{equation}\label{PoissonEquation}
\left\{
\begin{aligned}
- \Delta u^\star &= f \text{ on } \Omega, \\
 \partial_n u^\star &= 0 \text{ on } \partial \Omega,
\end{aligned}
\right.
\end{equation}
where $f \in L^2(\Omega)$ with $\int_\Omega f=0$. Here~\eqref{PoissonEquation} has to be understood in the variational sense, in the sense that $u^*$ is equivalently the unique minimizer to :
\begin{equation}\label{eq:min}
u^\star = \argmin_{u \in H^1(\Omega)} \mathcal{E}(u), 
\end{equation}
where 
$$
\forall u \in H^1(\Omega), \quad \mathcal{E}(u):=  \int_\Omega \left(\frac{|\nabla u|^2}{2} - fu \right) dx + \frac{1}{2}\Big( \int_{\Omega} u dx \Big)^2.
$$
This can indeed be easily checked by classic Lax-Milgram arguments.
The functional $\E$ is strongly convex and differentiable with derivative given by Lemma~\ref{differentiabilityE}.

\begin{lemma}
\label{differentiabilityE}
The functional $\E : H^1(\Omega) \rightarrow \mathbb{R}$ is continuous, differentiable and for all $u \in H^1(\Omega)$, it holds that
$$
\forall v \in H^1(\Omega), \ d \E|_u(v) = \int_\Omega \left( \nabla u \cdot \nabla v- fv \right) dx + \int_{\Omega} u dx \int_{\Omega} v dx .
$$
\end{lemma}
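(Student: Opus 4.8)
The plan is to verify the three claims---continuity, differentiability, and the formula for the derivative---by splitting $\mathcal{E}$ into its three additive pieces and handling each separately. Write $\mathcal{E}(u) = \frac{1}{2}Q(u) - L(u) + \frac{1}{2}M(u)^2$, where $Q(u) := \int_\Omega |\nabla u|^2\,dx$, $L(u) := \int_\Omega fu\,dx$, and $M(u) := \int_\Omega u\,dx$. The point is that $Q$ is a bounded quadratic form, $L$ is a bounded linear form, and $M$ is a bounded linear form, so the whole functional is a quadratic polynomial on the Hilbert space $H^1(\Omega)$, and such objects are automatically smooth.

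First I would record the continuity and boundedness estimates: $|Q(u)| \le \|u\|_{H^1}^2$, $|L(u)| \le \|f\|_{L^2}\|u\|_{L^2} \le \|f\|_{L^2}\|u\|_{H^1}$ by Cauchy--Schwarz, and $|M(u)| \le |\Omega|^{1/2}\|u\|_{L^2} \le \|u\|_{H^1}$ (here $|\Omega| = 1$). These show $\mathcal{E}$ is finite and, being a polynomial in bounded forms, continuous on $H^1(\Omega)$. Next I would compute the Gateaux derivative directly from the definition: for $u, v \in H^1(\Omega)$ and $t \in \mathbb{R}$, expand
\begin{equation*}
\mathcal{E}(u+tv) = \mathcal{E}(u) + t\left( \int_\Omega (\nabla u \cdot \nabla v - fv)\,dx + M(u)M(v) \right) + \frac{t^2}{2}\left( \int_\Omega |\nabla v|^2\,dx + M(v)^2 \right),
\end{equation*}
so that the difference quotient $\frac{1}{t}(\mathcal{E}(u+tv) - \mathcal{E}(u))$ converges as $t \to 0$ to the claimed expression, which I denote $\ell_u(v) := \int_\Omega (\nabla u \cdot \nabla v - fv)\,dx + M(u)M(v)$.

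Then I would check that $v \mapsto \ell_u(v)$ is a bounded linear functional on $H^1(\Omega)$: linearity is clear, and boundedness follows from the same three estimates as above, giving $|\ell_u(v)| \le (\|u\|_{H^1} + \|f\|_{L^2} + \|u\|_{H^1})\|v\|_{H^1}$. Finally, to upgrade Gateaux to Fr\'echet differentiability I would exhibit the remainder: from the exact expansion above with $tv$ replaced by a general increment $h \in H^1(\Omega)$,
\begin{equation*}
\mathcal{E}(u+h) - \mathcal{E}(u) - \ell_u(h) = \frac{1}{2}\left( \int_\Omega |\nabla h|^2\,dx + M(h)^2 \right),
\end{equation*}
whose absolute value is bounded by $\|h\|_{H^1}^2 = o(\|h\|_{H^1})$ as $h \to 0$. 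This establishes that $d\mathcal{E}|_u = \ell_u$ in the Fr\'echet sense.

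There is essentially no hard part here---the result is a routine Hilbert-space computation---so the only thing to be careful about is bookkeeping: keeping track of the quadratic term $M(u)^2$ correctly when expanding (it contributes the cross term $M(u)M(v)$ to the derivative, not $2M(u)M(v)$, precisely because of the $\frac{1}{2}$ in front), and making sure every term is controlled by $\|v\|_{H^1}$ rather than just $\|v\|_{L^2}$ so that the functional is genuinely continuous on $H^1(\Omega)$. I would present the exact polynomial expansion once and read off all three conclusions from it.
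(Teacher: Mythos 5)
Your proof is correct and complete. The paper itself does not supply a proof of this lemma — it simply states it (after remarking that the variational characterization can be "easily checked by classic Lax-Milgram arguments"), treating it as a routine fact about quadratic functionals on a Hilbert space. Your exact polynomial expansion $\mathcal{E}(u+h) = \mathcal{E}(u) + \ell_u(h) + \tfrac12\bigl(\int_\Omega |\nabla h|^2\,dx + M(h)^2\bigr)$ is the cleanest way to get continuity, the formula for $d\mathcal{E}|_u$, and Fr\'echet (not merely Gateaux) differentiability all at once, with the remainder visibly $O(\|h\|_{H^1}^2)$. Your caution about the factor $\tfrac12$ in front of $M(u)^2$ producing $M(u)M(v)$ rather than $2M(u)M(v)$ is exactly the bookkeeping one needs, and all the bounds are in the correct norm ($H^1$, via Cauchy--Schwarz and $|\Omega|=1$). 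Nothing is missing.
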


It can be easily seen that points $u$ where the differential is identically zero  are solution to equation~\eqref{PoissonEquation}.

\begin{remark}\label{remarkEnergyBilinearForm}
The coercive symmetric bilinear form $\bar{a}$ involved in the definition of the energy writes :

$$
\bar{a}(u,v) := \int_\Omega \nabla u \cdot \nabla v  dx + \int_{\Omega} u dx \int_{\Omega} v dx.
$$
The energy $\E$ can then be equivalently rewritten thanks to the bilinear form $a$ : 

$$
\E(u) = \frac{1}{2} \bar{a}(u-u^\star, u-u^\star) - \frac{1}{2} \int_{\Omega} |\nabla u^\star|^2 dx.
$$
\end{remark}

The aim of the present work is to analyze a numerical method based on the use of infinite-width two-layer neural networks for the resolution of~\eqref{PoissonEquation} with a specific focus on the case when $d$ is large.

\subsection{Activation function}\label{sec:activation}

We introduce here the particular choice of activation function we consider in this work.

\medskip

Let $\sigma: \mathbb{R} \rightarrow \mathbb{R} $ be the classical Rectified Linear Unit (ReLU) function where :

\begin{equation}\label{definitionRelu}
\forall y \in \mathbb{R}, \ \sigma(y) := \max(y,0).
\end{equation}

Let $\rho: \mathbb{R} \to \mathbb{R}$ be defined by 
\begin{equation}
\left\{
\begin{array}{cl}
Z \exp\left(- \frac{\tan(\frac{\pi}{2} y)^2}{2}\right) & \text{if } |y| \leq 1 \\
0 & \text{otherwise}.
\end{array}
\right.
\end{equation}
where the constant $Z\in \mathbb{R}$ is defined such that the integral of $\rho$ is equal to one. For all $\tau >0$, we then define $\rho_\tau := \tau\rho(\tau \cdot)$ and $\sigma_\tau: \mathbb{R} \to \mathbb{R}$ the function defined by
\begin{equation}\label{definitionSoftplus}
\forall y \in \mathbb{R}, \ \sigma_\tau(y) := (\rho_\tau \star \sigma)(y). 
\end{equation}

We then have the following lemma. 
\begin{lemma}\label{lemmaApproxSigma}
For any $\tau >0$, it holds that
\begin{itemize}
\item[(i)] $\sigma_\tau\in {\mathcal C}^\infty(\mathbb{R})$ is uniformly bounded ans so is $\sigma^\prime_\tau$,
\item[(ii)] for all $y < -1/\tau$, $\sigma_\tau(y) = 0$,
\item[(iii)] for all $y > 1/\tau$, $\sigma_\tau(y) = y$,
\item[(iv)] there exists $C>0$ such that for all $\tau>0$, 
$$
\|\sigma - \sigma_\tau\|_{H^1(\mathbb{R})} \leq \frac{C}{\sqrt{\tau}}.
$$
\end{itemize}
\end{lemma}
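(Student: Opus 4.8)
The plan is to exploit that $\rho$ is a standard bump function and that $\sigma_\tau$ is nothing but a mollification of the ReLU $\sigma$ at scale $1/\tau$, so that all four items reduce to elementary estimates. For (i), I would first check that $\rho \in {\mathcal C}^\infty_c(\mathbb{R})$ with $\mathrm{supp}\,\rho \subset [-1,1]$ and $\int_{\mathbb{R}}\rho = 1$: away from $y = \pm 1$ this is clear, and at $y = \pm 1$ one argues as for the classical bump $e^{-1/x}$, namely that $\tan(\tfrac{\pi}{2} y)^2 \to +\infty$ forces $\rho$ and all of its derivatives to vanish there, so $\rho$ glues smoothly to $0$. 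Consequently $\rho_\tau = \tau\rho(\tau\,\cdot\,)$ is a ${\mathcal C}^\infty$ mollifier supported in $[-1/\tau, 1/\tau]$ with unit mass, and $\sigma_\tau = \rho_\tau \star \sigma$ inherits ${\mathcal C}^\infty$ regularity by differentiation under the integral sign. Since $\sigma$ is $1$-Lipschitz with $\sigma' = \mathbf{1}_{[0,\infty)}$ a.e., Young's inequality gives $\sigma_\tau' = \rho_\tau \star \sigma'$ with $0 \le \sigma_\tau' \le \|\rho_\tau\|_{L^1(\mathbb{R})} = 1$, which is the claimed uniform bound on $\sigma_\tau'$, while $\|\sigma_\tau - \sigma\|_{L^\infty(\mathbb{R})} \le 1/\tau$ follows from the Lipschitz property (this is the sense in which $\sigma_\tau$ is ``uniformly bounded'' relative to $\sigma$).

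Next, (ii) and (iii) are direct computations with the convolution $\sigma_\tau(y) = \int_{-1/\tau}^{1/\tau}\rho_\tau(z)\,\sigma(y-z)\,dz$. If $y < -1/\tau$ then $y - z < 0$ on the whole support of $\rho_\tau$, so $\sigma(y-z) = 0$ and $\sigma_\tau(y) = 0$, giving (ii). If $y > 1/\tau$ then $y - z > 0$ there, so $\sigma(y-z) = y - z$ and $\sigma_\tau(y) = y\int\rho_\tau - \int z\,\rho_\tau(z)\,dz = y$, using that $\rho$ is even so the first moment of $\rho_\tau$ vanishes; this gives (iii).

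Finally, for (iv), combining (ii)--(iii) with the corresponding statement for the derivative (for $y > 1/\tau$ one has $\sigma_\tau'(y) = \int_{-\infty}^{y}\rho_\tau = 1 = \sigma'(y)$, and for $y < -1/\tau$ one has $\sigma_\tau'(y) = 0 = \sigma'(y)$) shows that both $w := \sigma - \sigma_\tau$ and $w'$ are supported in the interval $[-1/\tau, 1/\tau]$ of length $2/\tau$. On that interval $|w| \le 1/\tau$ by the Lipschitz bound from (i), and $|w'| \le |\sigma'| + |\sigma_\tau'| \le 2$. Hence $\|w\|_{L^2(\mathbb{R})}^2 \le 2\tau^{-3}$ and $\|w'\|_{L^2(\mathbb{R})}^2 \le 8\tau^{-1}$, so $\|\sigma - \sigma_\tau\|_{H^1(\mathbb{R})} \le C/\sqrt{\tau}$; note that although neither $\sigma$ nor $\sigma_\tau$ belongs to $H^1(\mathbb{R})$, their difference does, being compactly supported with bounded weak derivative, so the statement is meaningful.

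I do not anticipate any serious difficulty: the only points requiring care are the ${\mathcal C}^\infty$ gluing of $\rho$ at $\pm 1$ (a flat-tails argument identical to the one for the classical mollifier) and the observation that (iv) must be read as an estimate on the difference $\sigma - \sigma_\tau$, which is precisely where the support localization established in (ii)--(iii) does all of the work.
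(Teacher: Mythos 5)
Your proof is correct and follows essentially the same route as the paper's: support localization of $\sigma-\sigma_\tau$ from (ii)--(iii), then elementary pointwise bounds on the interval $[-1/\tau,1/\tau]$ for (iv) (the paper in fact leaves (i) to the reader, which you fill in with the standard mollifier argument). Your constants for (iv) are the sharper ones (e.g.\ $\|w\|_{L^2}^2\leq 2\tau^{-3}$ rather than the paper's stated $8/\tau^2$, which appears to be a small typo), and your closing remark that $\sigma-\sigma_\tau\in H^1(\mathbb{R})$ even though neither term is, is a worthwhile clarification the paper omits.
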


\begin{proof}

The first item $(i)$ is classic and left to the reader. For $(ii)$, we have :

\begin{equation}\label{ecritureSigmaTau}
\sigma_\tau(y) = \int_{-1/\tau}^{1/\tau} \rho_\tau(y) \sigma(x-y) dy
\end{equation}
and if $x < -1/\tau$ then $x-y < 0$ for $-1/\tau<y<1/\tau$ and $\sigma(x-y) = 0$. This naturally gives $\sigma_\tau(y) = 0$.

For $(iii)$, using again~\eqref{ecritureSigmaTau} and if $x>1/\tau$, then $x-y > 0$ for $-1/\tau<y<1/\tau$ and $\sigma(x-y) = x-y$. As a consequence, 
$$
\sigma_\tau(y) =\int_{-1/\tau}^{1/\tau} \rho_\tau(y) (x-y) dy=x,
$$
where we have used the fact that $\int_{\R} \rho_\tau(y) dy = 1$ and $\int_{\R} y \rho_\tau(y) dy = 0$ by symmetry of $\rho$.

For $(iv)$, we have by $(ii)-(iii)$:
$$
\| \sigma - \sigma_\tau \|_{L^2(\R)}^2 = \int_{-1/\tau}^{1/\tau} (\sigma(x) - \sigma_\tau(x))^2 dx \leq \frac{8}{\tau^2},
$$
where we used the fact that $|\sigma(x)|, |\sigma_\tau(x)| \leq 1/\tau$ on $[-1/\tau,1/\tau]$. In a similar way, 
$$
\| \sigma^\prime - \sigma^\prime_\tau \|_{L^2(\R)}^2 = \int_{-1/\tau}^{1/\tau} (\sigma^\prime(x) - \sigma^\prime_\tau(x))^2 dx \leq \frac{4}{\tau}. 
$$
The two last inequalities gives $(iv)$.

\end{proof}

\medskip

In this work, we will rather use a hat version of the regularized ReLU activation function. More precisely, we define:
\begin{equation}\label{definitionRegularTrelu}
\forall y \in \mathbb{R}, \ \sigma_{H,\tau}(y) := \sigma_\tau(y+1) - \sigma_\tau(2y) + \sigma_\tau(y-1).
\end{equation}
We call hereafer this activation function the regularized HReLU (Hat ReLU) activation. When $\tau = +\infty$, the following notation is proposed :

\begin{equation}\label{definitionTrelu}
\forall y \in \mathbb{R}, \ \sigma_{H}(y) := \sigma(y+1) - \sigma(2y) + \sigma(y-1).
\end{equation}

The reasons why we use this activation is that it has a compact support and can be used to generate an arbitrary piecewise constant function on $[0,1]$. Note however that neither $\sigma_{H,\tau}$ nor $\sigma_H$ are homogeneous (in contrast to the activation functions considered in~\cite{BachChizat2018,BachChizat2021}). Notice also that a direct corollary of Lemma~\ref{lemmaApproxSigma} is that there exists a constant $C>0$ such that for all $\tau >0$, 
\begin{equation}\label{eq:sigmaerr}
\|\sigma_H - \sigma_{H,\tau}\|_{H^1(\mathbb{R})} \leq \frac{C}{\sqrt{\tau}}
\end{equation}
We will also use the fact that there exists a constant $C>0$ such that for all $\tau>0$, 
\begin{equation}\label{eq:bound}
\|\sigma_{H,\tau}\|_{L^\infty(\mathbb{R})}\leq C, \; \|\sigma'_{H,\tau}\|_{L^\infty(\mathbb{R})}\leq C,\; \|\sigma''_{H,\tau}\|_{L^\infty(\mathbb{R})}\leq C\tau \; \mbox{ and } \|\sigma'''_{H,\tau}\|_{L^\infty(\mathbb{R})}\leq C\tau^2. 
\end{equation}

\begin{figure}[H]
\centering
\includegraphics[width=0.5\textwidth]{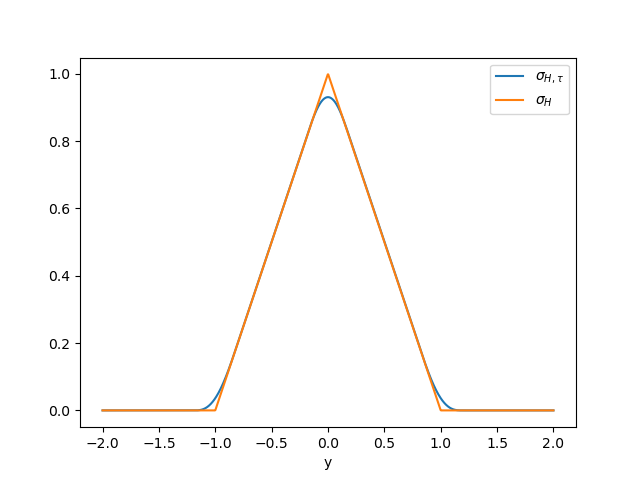}
\caption{The hat activation function and its regularization ($\tau = 4$)}
\end{figure}

\subsection{Spectral Barron space}\label{sectionSpectralBarron}

We introduce the orthonormal basis in $L^2(\Omega)$ composed of the eigenfunctions $\{\phi_k\}_{k\in \mathbb{N}^d}$ of the Laplacian operator with Neumann boundary conditions, where
\begin{equation}\label{baseL2}
\forall k = (k_1,\ldots, k_d)\in \N^d, \; \forall x:=(x_1,\cdots, x_d)\in \Omega, \quad  \phi_k(x_1, \ldots,x_d):= \prod_{i=1}^d \cos(\pi k_i x_i).
\end{equation}

Notice that $\{\phi_k\}_{k\in \N^d}$ is also an orthogonal basis of $H^1(\Omega)$. Using this basis, we have the Fourier representation formula for any function $u \in L^2(\Omega)$ :
$$
u = \sum_{k \in \N^d} \hat{u}(k) \phi_k,
$$
where for all $k\in \mathbb{N}^d$, $ \hat{u}(k):= \langle \phi_k, u \rangle_{L^2(\Omega)}$. This allows to define the (spectral) Barron space~\cite{Jianfeng2021} as follows :

\begin{definition}

For all $s>0$, the Barron space $\barron^s(\Omega)$ is defined as :

\begin{equation}
\label{DefinitionBarron}
\barron^s(\Omega) := \Big\{ u \in L^1(\Omega) : \sum_{k \in \N^d}(1 + \pi^s |k|^s_1)|\hat{u}(k)| < +\infty \Big\}
\end{equation}
and the space $\barron^2(\Omega)$ is denoted $\barron(\Omega)$. Moreover, the space $\barron^s(\Omega)$ is embedded with the norm :
\begin{equation}
\label{normBarron}
\| u\|_{\barron^s(\Omega)} := \sum_{k \in \N^d}(1 + \pi^s |k|^s_1)|\hat{u}(k)|.
\end{equation}

\end{definition}

By~\cite[Lemma 4.3]{Jianfeng2021}, it is possible to  relate the Barron space to traditional Sobolev spaces :

\begin{lemma}
The following continuous injections hold :
\begin{itemize}
\item $\barron(\Omega) \xhookrightarrow{} H^1(\Omega)$,
\item $\barron^0(\Omega) \xhookrightarrow{} L^\infty(\Omega)$.
\end{itemize}
\end{lemma}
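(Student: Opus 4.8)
The plan is to prove the two continuous injections by direct estimation on Fourier coefficients, exploiting the explicit description of the eigenbasis $\{\phi_k\}$ and comparing the Barron norm to the natural Hilbert/sup norms term by term.

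\medskip

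\textbf{First injection $\barron(\Omega)\hookrightarrow H^1(\Omega)$.} For $u=\sum_{k\in\N^d}\hat u(k)\phi_k$, I would first record the $H^1$ orthogonality of the basis: since $\nabla\phi_k$ has components $-\pi k_i\sin(\pi k_i x_i)\prod_{j\neq i}\cos(\pi k_j x_j)$, one computes $\|\phi_k\|_{L^2(\Omega)}^2 = 2^{-\#\{i:k_i\neq 0\}}$ and $\|\nabla\phi_k\|_{L^2(\Omega)}^2 = \pi^2|k|_2^2\cdot 2^{-\#\{i:k_i\neq 0\}}$, so in particular $\|\phi_k\|_{H^1(\Omega)}^2 \le 1+\pi^2|k|_2^2 \le (1+\pi^2|k|_1^2)$ because $|k|_2\le|k|_1$. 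Hence by the triangle inequality in $H^1(\Omega)$,
\[
\|u\|_{H^1(\Omega)} \;\le\; \sum_{k\in\N^d}|\hat u(k)|\,\|\phi_k\|_{H^1(\Omega)} \;\le\; \sum_{k\in\N^d}(1+\pi|k|_1)|\hat u(k)| \;\le\; \sum_{k\in\N^d}(1+\pi^2|k|_1^2)|\hat u(k)| \;=\; \|u\|_{\barron(\Omega)},
\]
where $1+\pi|k|_1\le 1+\pi^2|k|_1^2$ holds once $|k|_1\ge 1$ and trivially for $k=0$. (If one prefers to match the exponent $s=2$ exactly, bound $\|\phi_k\|_{H^1}^2\le 1+\pi^2|k|_1^2\le(1+\pi^2|k|_1^2)^2$ and take square roots; either way the constant is $1$.) The series defining $u$ then converges in $H^1(\Omega)$, so $u\in H^1(\Omega)$ with $\|u\|_{H^1}\le\|u\|_{\barron}$.

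\medskip

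\textbf{Second injection $\barron^0(\Omega)\hookrightarrow L^\infty(\Omega)$.} This is even simpler: $\|\phi_k\|_{L^\infty(\Omega)}=1$ for every $k$ since it is a product of cosines, so for $u=\sum_k\hat u(k)\phi_k$ with $\sum_k|\hat u(k)|<\infty$ (which is exactly $\|u\|_{\barron^0(\Omega)}<\infty$ since the weight is $1+\pi^0|k|_1^0$, i.e. a bounded multiple of $1$) the series converges uniformly and
\[
\|u\|_{L^\infty(\Omega)} \;\le\; \sum_{k\in\N^d}|\hat u(k)|\,\|\phi_k\|_{L^\infty(\Omega)} \;=\; \sum_{k\in\N^d}|\hat u(k)| \;\le\; \|u\|_{\barron^0(\Omega)}.
\]
One should note that a priori $u$ is only an $L^1$ function given by an $L^2$-convergent Fourier series, so a small remark is needed that the uniformly convergent series has the same limit a.e.; this follows because uniform convergence implies $L^2$ convergence on the bounded domain $\Omega$, and $L^2$ limits are unique.

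\medskip

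\textbf{Expected main obstacle.} There is no deep obstacle here — the result is essentially bookkeeping. The only points requiring a little care are: (a) getting the normalization of $\|\phi_k\|_{L^2}$ and $\|\nabla\phi_k\|_{L^2}$ right (the factor $2^{-\#\{i:k_i\neq 0\}}\le 1$ is what makes the clean bound work), and (b) justifying that convergence of the Barron series in the stronger norm is compatible with the original $L^1$/$L^2$ Fourier representation, i.e. identifying limits. Since the statement is quoted from \cite[Lemma 4.3]{Jianfeng2021}, I would in practice just cite it, but the above gives the self-contained argument.
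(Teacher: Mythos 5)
Your proposal is correct and gives the direct term-by-term estimate one would expect. The paper itself does not prove this lemma at all --- it simply cites \cite[Lemma 4.3]{Jianfeng2021} --- so there is no proof in the paper to compare against; you have supplied the self-contained argument the paper defers to its reference. The computation of $\|\phi_k\|_{L^2}^2$ and $\|\nabla\phi_k\|_{L^2}^2$, the chain $\|\phi_k\|_{H^1}^2 \le 1+\pi^2|k|_2^2 \le 1+\pi^2|k|_1^2$ followed by $\sqrt{1+\pi^2|k|_1^2}\le 1+\pi|k|_1\le 1+\pi^2|k|_1^2$, and the triangle inequality in $H^1$ are all sound, as is the $L^\infty$ estimate via $\|\phi_k\|_{L^\infty}=1$. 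Your remark about identifying the uniform (resp.\ $H^1$) limit of the series with $u$ is the right point of caution. One small thing worth noting: the paper calls $\{\phi_k\}$ an \emph{orthonormal} basis, but as your own computation $\|\phi_k\|_{L^2}^2=2^{-\#\{i:k_i\neq0\}}$ shows, the $\phi_k$ in \eqref{baseL2} are orthogonal, not normalized; this has no effect on the existence of the continuous injections (at worst it changes the constant by a power of $2$), but it does mean the paper's synthesis formula $u=\sum_k\hat u(k)\phi_k$ with $\hat u(k)=\langle\phi_k,u\rangle_{L^2}$ is, strictly speaking, missing the normalizing factors $2^{\#\{i:k_i\neq0\}}$. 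Since you adopt the paper's conventions consistently, your argument is internally coherent and the lemma holds in any case.
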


The space $\barron(\Omega)$ has interesting approximation properties related to neural networks schemes. We introduce the following approximation space:
\begin{definition}
Let $\chi : \R \rightarrow \R$ be measurable, $m\in \N^*$ and $B>0$. The space $\F_{\chi,m}(B)$ is defined as:
\begin{equation}
\label{defFrelu}
\F_{\chi,m}(B) := \left\{ c + \sum_{i=1}^m a_i \chi(w_i \cdot x + b_i) : c, a_i, b_i\in \R, \, w_i \in \R^d, \;  |c| \leq 2B, |w_i| = 1, |b_i| \leq 1, \sum_{i = 1}^m |a_i| \leq 4B \right\}
\end{equation}

\end{definition}
Now, we are able to state the main approximation theorem.

\begin{theorem}
\label{approximationTheorem}

For any $u \in \barron(\Omega)$, $m \in \N^*$ :

\begin{itemize}
\item[(i)] there exists $u_m \in \F_{\sigma_H,m}(\| u \|_{\barron(\Omega)})$ such that :
$$
\| u - u_m \|_{H^1(\Omega)} \leq  \frac{C \| u \|_{\barron(\Omega)}}{\sqrt{m}},
$$

\item[(ii)] there exists $\tilde{u}_m \in \F_{\sigma_{H,m},m}(\| u \|_{\barron(\Omega)})$ such that :
\begin{equation}\label{eq:ineq}
\| u - \tilde{u}_m \|_{H^1(\Omega)} \leq  \frac{C \| u \|_{\barron(\Omega)}}{\sqrt{m}}.
\end{equation}

\end{itemize}
where for both items, $C$ is a universal constant which does not depend on $d$ neither on $u$.
\end{theorem}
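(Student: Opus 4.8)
The plan is to run the classical Maurey--Jones--Barron argument in the Hilbert space $H^1(\Omega)$: first write $u$, up to an additive constant, as an average of single-neuron ridge functions with parameters constrained exactly as in the definition of $\F_{\sigma_H,m}$ against a probability measure on the parameter set, and then extract an $m$-term approximant via the empirical-mean sampling lemma. Item~(ii) will then be deduced from item~(i) using the stability estimate~\eqref{eq:sigmaerr}. For the representation, I would start from the Fourier expansion $u=\hat u(0)+\sum_{k\in\N^d\setminus\{0\}}\hat u(k)\phi_k$, use the elementary product-to-sum identity $\prod_{i=1}^d\cos\theta_i=2^{-(d-1)}\sum_{\varepsilon\in\{\pm1\}^d,\,\varepsilon_1=1}\cos(\varepsilon\cdot\theta)$ to rewrite $u(x)=\hat u(0)+\sum_\ell\gamma_\ell\cos(\pi\kappa_\ell\cdot x)$ with $\kappa_\ell\in\mathbb Z^d\setminus\{0\}$ satisfying $|\kappa_\ell|_1=|k|_1$ for the originating $k$ and with the combinatorial factor exactly cancelling the number of sign patterns so that $\sum_\ell|\gamma_\ell|(1+\pi^2|\kappa_\ell|_1^2)\le\|u\|_{\barron(\Omega)}$. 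For a fixed frequency I would write $\kappa=r\omega$ with $r=|\kappa|_2$, $\omega=\kappa/r$, so that $t:=\omega\cdot x$ runs over an interval $I_\omega\ni 0$ of length $|\omega|_1=|\kappa|_1/r$, hence $r^2|\omega|_1\le|\kappa|_1^2$ — the crucial dimension-free bound. Since $g(t):=\cos(\pi rt)$ has $g(0)=1$, $g'(0)=0$ and $\int_{I_\omega}|g''|\le\pi^2 r^2|\omega|_1\le\pi^2|\kappa|_1^2$, Taylor's formula with integral remainder gives $\cos(\pi\kappa\cdot x)=1+\int_{I_\omega}K(\omega\cdot x,s)g''(s)\,ds$ with $K(\cdot,s)$ a translated/reflected ramp, which (since $|\omega|=1$) is a unit-weight ridge function; re-expressing these ramps restricted to $\Omega$, together with the additive constants, through the compactly supported hat $\sigma_H$ so as to make the biases lie in $[-1,1]$ costs only a universal factor in total coefficient mass. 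Summing over all frequencies yields $u-c=\int_\Theta a(\theta)\sigma_H(w(\theta)\cdot x+b(\theta))\,d\pi(\theta)$ with $\pi$ a probability measure on $\Theta=\{(w,b):|w|=1,\ |b|\le1\}$, $\sup_\theta|a(\theta)|\le C\|u\|_{\barron(\Omega)}$ and $|c|\le C\|u\|_{\barron(\Omega)}$, $C$ universal because $|\Omega|=1$.

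Next, since $|w(\theta)|=1$, $|\Omega|=1$ and $\sigma_H\in W^{1,\infty}(\R)$ has compact support, a change of variables (the hyperplane sections of the unit cube having $(d-1)$-dimensional measure bounded uniformly in $d$) shows $\|\sigma_H(w(\theta)\cdot x+b(\theta))\|_{H^1(\Omega)}$ is bounded by a universal constant, hence $\|\psi_\theta\|_{H^1(\Omega)}\le C\|u\|_{\barron(\Omega)}$ for $\psi_\theta:=a(\theta)\sigma_H(w(\theta)\cdot x+b(\theta))$. Applying the Maurey--Jones--Barron sampling lemma~\cite{Barron1993} in $H^1(\Omega)$ to $u-c=\int\psi_\theta\,d\pi(\theta)$ gives $\theta_1,\dots,\theta_m$ with $\big\|(u-c)-\tfrac1m\sum_{i=1}^m\psi_{\theta_i}\big\|_{H^1(\Omega)}\le C\|u\|_{\barron(\Omega)}/\sqrt m$. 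Writing $u_m:=c+\sum_i a_i\sigma_H(w_i\cdot x+b_i)$ with $a_i=\tfrac1m a(\theta_i)$ one has $\sum_i|a_i|\le\sup_\theta|a(\theta)|$; tracking the normalizations so that $\sup_\theta|a(\theta)|\le4\|u\|_{\barron(\Omega)}$ and $|c|\le2\|u\|_{\barron(\Omega)}$ (possible by bookkeeping the constants above) gives $u_m\in\F_{\sigma_H,m}(\|u\|_{\barron(\Omega)})$, which is item~(i), with a constant independent of $d$ and of $u$.

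For item~(ii) I would keep the very same parameters $(c,a_i,w_i,b_i)$ and set $\tilde u_m:=c+\sum_i a_i\sigma_{H,m}(w_i\cdot x+b_i)$, which still belongs to $\F_{\sigma_{H,m},m}(\|u\|_{\barron(\Omega)})$ since the structural constraints are unchanged. Using $|w_i|=1$, the same change of variables and $|\Omega|=1$, each term satisfies $\|a_i(\sigma_H-\sigma_{H,m})(w_i\cdot x+b_i)\|_{H^1(\Omega)}\le C|a_i|\,\|\sigma_H-\sigma_{H,m}\|_{H^1(\R)}\le C|a_i|/\sqrt m$ by~\eqref{eq:sigmaerr}; summing over $i$ and using $\sum_i|a_i|\le4\|u\|_{\barron(\Omega)}$ gives $\|u_m-\tilde u_m\|_{H^1(\Omega)}\le C\|u\|_{\barron(\Omega)}/\sqrt m$, and~\eqref{eq:ineq} follows from~(i) by the triangle inequality.

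The main obstacle is the integral representation of the first paragraph: realizing $u$ minus a constant as a probability average of neurons that \emph{simultaneously} meet all three structural constraints of $\F_{\sigma_H,m}$ — unit weights, biases bounded by $1$, and total coefficient mass $\lesssim\|u\|_{\barron(\Omega)}$ with a dimension-free constant. Unit weights come for free from using the frequency directions $\omega=\kappa/|\kappa|_2$; the dimension-freeness rests on the identity $|\kappa|_2\cdot\mathrm{diam}(\omega\cdot\Omega)=|\kappa|_1$ together with $|\Omega|=1$, which lets the Barron weight $(1+\pi^2|k|_1^2)$ absorb exactly the cost $\|g''\|_{L^1(I_\omega)}\lesssim|\kappa|_1^2$ of representing $\cos(\pi r\,\cdot\,)$; the delicate point is the passage from the unbounded ReLU ramps produced by the Taylor remainder to the compactly supported, non-homogeneous $\sigma_H$ while respecting $|b_i|\le1$ — this is where the specific choice of $\sigma_H$ (rather than a homogeneous ReLU) is genuinely used, and where I would, if needed, lean on the analogous representation lemma of~\cite{Jianfeng2021} and adapt it to the hat activation.
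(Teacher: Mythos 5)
Your proposal takes essentially the same route as the paper. The paper's proof of item (i) is a one-line citation to the analogous representation result of~\cite[Theorem 2.1]{Jianfeng2021}, which is precisely the content you reconstruct in your first two paragraphs (Fourier expansion, product-to-sum identity, unit-direction normalization $\omega=\kappa/|\kappa|_2$ with the dimension-free identity $|\kappa|_2\,|\omega|_1=|\kappa|_1$, Taylor representation with a ramp kernel, then the Maurey--Jones--Barron sampling lemma in $H^1(\Omega)$), and you rightly flag the conversion of the ReLU ramps to the compactly supported $\sigma_H$ with $|b_i|\le 1$ as the delicate step where one leans on~\cite{Jianfeng2021}. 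For item (ii), your argument is identical to the paper's: keep the same parameters $(c,a_i,w_i,b_i)$, replace $\sigma_H$ by $\sigma_{H,m}$, observe that the structural constraints defining $\F_{\sigma_{H,m},m}(\|u\|_{\barron(\Omega)})$ are unchanged, bound $\|u_m-\tilde u_m\|_{H^1(\Omega)}$ by $\sum_i|a_i|\cdot\|\sigma_H-\sigma_{H,m}\|_{H^1(\R)}\lesssim\|u\|_{\barron(\Omega)}/\sqrt m$ using~\eqref{eq:sigmaerr} (Lemma~\ref{lemmaApproxSigma}(iv)) together with $|w_i|=1$ and the bounded-hyperplane-section change of variables, and conclude by the triangle inequality.
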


\begin{proof}
Let $B:= \| u \|_{\barron(\Omega)}$.
We just give a sketch of the proof of (ii), (i) being derived from similar arguments as in~\cite[Theorem 2.1]{Jianfeng2021}

By (i), there exists $u_m \in \F_{\sigma_H,m}(B)$ such that
$$
\| u - u_m \|_{H^1(\Omega)} \leq \frac{C B }{\sqrt{m}}.
$$
The function $u_m$ can be written as :
$$
u_m(x) = c + \sum_{i=1}^m a_i \sigma_H(w_i \cdot x + b_i)
$$
for some $c, a_i, b_i\in \R$ , $w_i \in \R^d$ for $i=1,\ldots,m$ with $|c| \leq 2B, |w_i| = 1, |b_i| \leq 1, \sum_{i = 1}^m |a_i| \leq 4B$.

By Lemma~\ref{lemmaApproxSigma} $(iv)$, there exists $C>0$ such that for all $\tau>0$, $\| \sigma_H - \sigma_{H,\tau}\|_{H^1(\R)} \leq \frac{C}{\sqrt{\tau}}$, it is easy to see that 

$$
\| \tilde{u}_m - u_m \|_{H^1(\Omega)} \leq  \frac{CB}{\sqrt{m}}
$$
where :
$$
\tilde{u}_m(x) = c + \sum_{i=1}^m a_i \sigma_{H,m}(w_i \cdot x + b_i).
$$
Consequently,

$$
\| u - \tilde{u}_m \|_{H^1(\Omega)} \leq  \frac{C B}{\sqrt{m}}
$$
which yields the desired result.
\end{proof}

\begin{remark}
With other words, a Barron function can be approximated in $H^1(\Omega)$ by a two-layer neural network of width $m$
with precision $O\left(\frac{1}{\sqrt{m}}\right)$  when the activation function is the HReLU one. 
\end{remark}

In the sequel, we assume that any parameter vector $\theta = (c,a,w,b)$ takes values in the neural network parameter set
\begin{equation}\label{eq:defTheta}
\Theta := \R \times \R  \times S_{\R^d}(1) \times [-\sqrt{d} -2,\sqrt{d} + 2], 
\end{equation}
with $S_{\R^d}(1)$ the unit sphere of $\R^d$. In addition, for all $r>0$, we denote by 
\begin{equation}\label{eq:defKr}
K_r:= [-2r, 2r] \times [-4r, 4r] \times S_{\R^d}(1) \times  [-\sqrt{d} -2,\sqrt{d} + 2].
\end{equation}
The particular choice of the domain value of the parameter $b$, namely $[-\sqrt{d} -2,\sqrt{d} + 2]$ will be made clear in the following. Moreover, let $\Prob_2(\Theta)$ (respectively $\Prob_2(K_r)$) denote the set of probability measures on $\Theta$ (respectively on $K_r$) with finite second-order moments.

Let us make the following remark.
\begin{remark}\label{remarkUnifBoundFm}
Let $m\in \mathbb{N}^*$, $u_m \in \F_{\chi, m}(B)$ with $B>0$ and $\chi: \R \rightarrow \R$. Then, there exists $c, a_i, b_i\in \R$ , $w_i \in \R^d$ for $i=1,\ldots,m$ with $|c| \leq 2B, |w_i| = 1, |b_i| \leq 1, \sum_{i = 1}^m |a_i| \leq 4B$ such that for all $x\in \Omega$, 
\begin{align*}
u_m(x) &= c + \sum_{i=1}^m a_i \chi(w_i \cdot x + b_i)\\
&=  \sum_{i=1}^m \left( c + \sum_{j=1}^m |a_j| sign(a_i) \chi(w_i \cdot x + b_i) \right) \frac{|a_i|}{\sum_{j=1}^m |a_j|} \\
&=  \int_{\Theta}  [c + a \chi(w \cdot x +b)] d\mu_m(c,a,w,b),\\
\end{align*}
where the measure $\mu_m$ is a probability measure on $\Theta$ given by :
$$
\mu_m := \sum_{i=1}^m \frac{|a_i|}{\sum_{j=1}^m |a_j|} \delta_{(c,\sum_{j=1}^m |a_j| sign(a_i), w_i, b_i)}.
$$

Remark that $\mu_m$ has support in $K_B$. In addition, the sequence $(\mu_m)_{m\in \mathbb{N}^*}$ is  uniformly (with respect to $m$) bounded in $\Prob_2(\Theta)$.

\end{remark}

For a general domain $\Omega$ which is not of the form $\Omega = [0,1]^d$, the solution to equation~\eqref{PoissonEquation} does not necessarily belong to the Barron space even if the source term has finite Barron norm. Nevertheless for our case ($\Omega = [0,1]^d$), there is an explicit bound of the Barron norm of the solution compared with the source one. This gives hope for a neural network approximation of the solution.

\begin{theorem}\cite{Jianfeng2021}
\label{theoremBarronPoisson}
Let $u^*$ be the solution of the equation~\eqref{PoissonEquation} with $f \in \barron^0(\Omega)$,
then $u^* \in \barron(\Omega)$. Moreover, the following estimate holds :
$$
\| u^* \|_{\barron(\Omega)} \leq d \| f \|_{\barron^0(\Omega)}.
$$
\end{theorem}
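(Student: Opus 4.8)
\textbf{Proof plan for Theorem~\ref{theoremBarronPoisson}.}

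The plan is to work entirely in Fourier space using the eigenbasis $\{\phi_k\}_{k\in\N^d}$ introduced in~\eqref{baseL2}, since both the PDE and the Barron norms are diagonal there. First I would expand $f = \sum_{k\in\N^d}\hat f(k)\phi_k$ and, because $\int_\Omega f = 0$, note that $\hat f(0) = 0$. Testing the variational formulation~\eqref{eq:min}--\eqref{differentiabilityE} against $\phi_k$ and using $-\Delta\phi_k = \pi^2|k|_2^2\,\phi_k$ with Neumann conditions, one gets the explicit coefficients of the solution: $\hat u^*(0) = 0$ (the zero-mean constraint) and, for $k\neq 0$,
\begin{equation*}
\hat u^*(k) = \frac{\hat f(k)}{\pi^2 |k|_2^2}.
\end{equation*}
This is the step where the special structure of $\Omega = [0,1]^d$ is used — on a general domain the eigenfunctions are not tensor products of cosines and no such clean diagonalization is available.

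Next I would estimate the Barron norm of $u^*$ directly from its definition~\eqref{normBarron}:
\begin{equation*}
\|u^*\|_{\barron(\Omega)} = \sum_{k\in\N^d}(1 + \pi^2|k|_1^2)|\hat u^*(k)| = \sum_{k\neq 0}\frac{1 + \pi^2|k|_1^2}{\pi^2|k|_2^2}\,|\hat f(k)|.
\end{equation*}
The key elementary inequality is that for every nonzero $k\in\N^d$ one has $|k|_2^2 \geq \max_i k_i^2 \geq 1$, hence $\frac{1}{\pi^2|k|_2^2}\le 1$; and $|k|_1^2 \le d\,|k|_2^2$ by Cauchy--Schwarz, so $\frac{\pi^2|k|_1^2}{\pi^2|k|_2^2} \le d$. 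Adding these, $\frac{1+\pi^2|k|_1^2}{\pi^2|k|_2^2} \le 1 + d \le$ (a bound we want to massage into exactly $d$ times $(1 + 0\cdot\text{something})$). To land on the stated constant $d$ rather than $d+1$, I would instead compare against $\|f\|_{\barron^0(\Omega)} = \sum_{k}(1+1)|\hat f(k)|$...

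Actually the cleanest route: observe $1 + \pi^2|k|_1^2 \le 1 + \pi^2 d|k|_2^2$, so the ratio is $\le \frac{1}{\pi^2|k|_2^2} + d \le \frac{1}{\pi^2} + d$; since $\frac1{\pi^2}<1$ this is $< d+1$, and with the normalization in $\barron^0$ using the factor $(1+\pi^0|k|_1^0) = 2$ one absorbs the slack — summing gives $\|u^*\|_{\barron(\Omega)} \le d\sum_{k\neq 0}|\hat f(k)| \le d\|f\|_{\barron^0(\Omega)}$ once the bookkeeping of the $+1$ against the factor $2$ in $\barron^0$ is done carefully. In any case the essential content is the coefficientwise bound $\frac{1+\pi^2|k|_1^2}{\pi^2|k|_2^2}|\hat f(k)| \lesssim d\,|\hat f(k)|$, and the conclusion $u^*\in\barron(\Omega)$ follows since the right-hand side is summable. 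I expect the only real subtlety to be matching the constant exactly to $d$ (reconciling the weights $1+\pi^s|k|_1^s$ for $s=2$ versus $s=0$), which is a routine but slightly fiddly accounting exercise; the substantive mathematics is the diagonalization and the two norm inequalities $|k|_2\ge 1$ and $|k|_1^2 \le d|k|_2^2$. Since this is exactly~\cite[Theorem 3.2 / Lemma 4.x]{Jianfeng2021}, I would simply cite that reference for the detailed constant-chasing and present the Fourier computation as the proof sketch.
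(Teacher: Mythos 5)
The paper does not give a proof of this theorem; it is stated with a citation to \cite{Jianfeng2021} and used as a black box. Your reconstruction is correct and is, in essence, the argument of that reference: diagonalize $-\Delta$ in the Neumann cosine basis on $[0,1]^d$ to get $\hat u^*(k) = \hat f(k)/(\pi^2|k|_2^2)$ for $k\neq 0$, then bound the ratio of weights coefficientwise via $|k|_1^2 \le d\,|k|_2^2$ and $|k|_2^2 \ge 1$ for nonzero lattice points. The only place you hand-wave, the ``bookkeeping of the $+1$,'' does close cleanly: one gets $\frac{1+\pi^2|k|_1^2}{\pi^2|k|_2^2} \le \frac{1}{\pi^2}+d$, and since $\|f\|_{\barron^0(\Omega)} = 2\sum_{k\ne 0}|\hat f(k)|$ the final comparison reduces to $\tfrac12\bigl(\tfrac{1}{\pi^2}+d\bigr)\le d$, i.e.\ $\tfrac{1}{\pi^2}\le d$, which holds for all $d\ge 1$. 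You correctly identify that the special product structure of $\Omega=[0,1]^d$ is what makes this explicit diagonalization possible, which is the real content here.
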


\subsection{Infinite width two-layer neural networks}
In order to ease the notation for future computations, for all $\tau >0$, we introduce the function $\Phi_{\tau} : \Theta \times \Omega \to \mathbb{R}$ defined by

\begin{equation}
\label{definitionPhi}
\forall \theta:=(c,a,w,b)\in \Theta, \; \forall x\in \Omega, \quad \Phi_\tau(\theta; x) := c + a \sigma_{H,\tau}(w \cdot x + b)
\end{equation}
and $\Phi_{\infty}: \Theta \times \Omega \to \mathbb{R}$ defined by such that:

\begin{equation}
\label{definitionPhi1}
\forall \theta:=(c,a,w,b)\in \Theta, \; \forall x\in \Omega, \quad\Phi_\infty(\theta; x) := c + a \sigma_H(w \cdot x + b).
\end{equation}
The space $\Prob_2(\Theta)$ is embedded with the $2$-Wasserstein distance :
$$
\forall \mu, \nu \in \Prob_2(\Theta), \quad W^2_2(\mu,\nu) := \inf_{\gamma \in \Gamma(\mu,\nu)} \int_{\Theta^2} d(\theta,\tilde{\theta})^2 d \gamma(\theta,\tilde{\theta}),
$$
where $\Gamma(\mu,\nu)$ is the set of probability measures on $\Theta^2$ with marginals given respectively by $\mu$ and $\nu$ and where $d$ is the geodesic distance in $\Theta$. For the interested reader, the geodesic distance between $\theta, \tilde{\theta} \in \Theta$ can be computed as :

$$
d(\theta, \tilde{\theta}) = \sqrt{(c - \tilde{c})^2 + (a - \tilde{a})^2 + d_{S_{\R^d}(1)}(w,\tilde{w}) + (b - \tilde{b})^2}.
$$

For all $\tau, r>0$, we introduce the operator $P_{\tau}$ and the functional $\mathcal{E}_{\tau,r}$ defined as follows :
\begin{definition}
The operator $P_{\tau}: \Prob_2(\Theta) \rightarrow H^1(\Omega)$ is defined for all
$\mu \in \Prob_2(\Theta)$ as :

$$
P_{\tau}(\mu) := \int_{\Theta} \Phi_{\tau}(\theta; x) d\mu(\theta).
$$
Additionally, we define the functional $\mathcal{E}_{\tau,r}(\mu) : \Prob_2(\Theta) \rightarrow \mathbb{R}$ as :
$$
\mathcal{E}_{\tau,r}(\mu) := 
\left\{
\begin{aligned}
\E(P_{\tau}(\mu)) & \text{ if } \mu(K_{r}) = 1\\
+\infty & \text{ otherwise}.
\end{aligned}
\right.
$$. 
\end{definition} 

\begin{proposition}\label{propositionLowerSemicontinuity}
For all $0<\tau,r<\infty$, the functional $\mathcal{E}_{\tau,r}$ is weakly lower semicontinuous.
\end{proposition}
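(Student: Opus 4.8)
The plan is to show weak lower semicontinuity of $\mathcal{E}_{\tau,r}$ by combining three ingredients: the constraint set $\{\mu : \mu(K_r) = 1\}$ is weakly closed, the map $\mu \mapsto P_\tau(\mu)$ is continuous from the weak topology on this set to the strong (or at least weak) topology on $H^1(\Omega)$, and the energy $\mathcal{E}$ is (strongly) convex and continuous on $H^1(\Omega)$, hence weakly lower semicontinuous. Take a sequence $\mu_n \rightharpoonup \mu$ in $\Prob_2(\Theta)$. If $\liminf_n \mathcal{E}_{\tau,r}(\mu_n) = +\infty$ there is nothing to prove, so assume (after passing to a subsequence realizing the liminf) that $\mu_n(K_r) = 1$ for all $n$. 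Since $K_r$ is closed (it is a product of closed bounded sets with the compact sphere) and the $\mu_n$ are all supported in this fixed compact set, $\mu \rightharpoonup \mu$ in fact also narrowly, and $\mu(K_r) = 1$ by the Portmanteau theorem; so $\mathcal{E}_{\tau,r}(\mu) = \mathcal{E}(P_\tau(\mu))$.

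Next I would establish that $P_\tau(\mu_n) \to P_\tau(\mu)$ strongly in $H^1(\Omega)$. Because all measures are supported in the compact set $K_r$, and because on $K_r$ the parameter $\theta = (c,a,w,b)$ ranges over a compact set, the family of functions $x \mapsto \Phi_\tau(\theta;x)$ and their spatial gradients $\nabla_x \Phi_\tau(\theta;x) = a\,\sigma'_{H,\tau}(w\cdot x + b)\,w$ are uniformly bounded on $\Omega \times K_r$ (using the $L^\infty$ bounds on $\sigma_{H,\tau}$ and $\sigma'_{H,\tau}$ from~\eqref{eq:bound}, together with $|w|=1$ and $|a|\le 4r$, $|c|\le 2r$). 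Moreover $\theta \mapsto \Phi_\tau(\theta;\cdot)$ is continuous from $K_r$ into $H^1(\Omega)$ (since $\sigma_{H,\tau}\in \mathcal{C}^\infty$ and the dependence on $(c,a,w,b)$ is smooth, with dominated convergence controlling the $x$-integrals). Hence $\theta \mapsto \Phi_\tau(\theta;\cdot)$ is a bounded continuous map from the compact metric space $K_r$ into the Hilbert space $H^1(\Omega)$, and $P_\tau(\mu) = \int_{K_r} \Phi_\tau(\theta;\cdot)\,d\mu(\theta)$ is a Bochner integral. Narrow convergence $\mu_n \rightharpoonup \mu$ then gives, testing against any fixed $v \in H^1(\Omega)$, that $\langle P_\tau(\mu_n), v\rangle_{H^1} = \int_{K_r} \langle \Phi_\tau(\theta;\cdot), v\rangle_{H^1}\,d\mu_n(\theta) \to \int_{K_r}\langle \Phi_\tau(\theta;\cdot),v\rangle_{H^1}\,d\mu(\theta) = \langle P_\tau(\mu),v\rangle_{H^1}$, because $\theta \mapsto \langle\Phi_\tau(\theta;\cdot),v\rangle_{H^1}$ is bounded and continuous on $K_r$. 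This shows $P_\tau(\mu_n)\rightharpoonup P_\tau(\mu)$ weakly in $H^1(\Omega)$; one can upgrade to strong convergence, if desired, by the uniform bound plus the fact that the integrand varies continuously, but weak convergence already suffices.

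Finally, since $\mathcal{E}$ is convex and continuous (indeed strongly convex, by Lemma~\ref{differentiabilityE} and Remark~\ref{remarkEnergyBilinearForm}) on $H^1(\Omega)$, it is weakly lower semicontinuous, so
$$
\mathcal{E}(P_\tau(\mu)) \leq \liminf_{n\to\infty} \mathcal{E}(P_\tau(\mu_n)) = \liminf_{n\to\infty}\mathcal{E}_{\tau,r}(\mu_n),
$$
which is exactly $\mathcal{E}_{\tau,r}(\mu) \le \liminf_n \mathcal{E}_{\tau,r}(\mu_n)$ and completes the proof. The main obstacle is the middle step: one must be careful that weak convergence in $\Prob_2(\Theta)$ together with the uniform support condition really does yield narrow convergence and hence convergence of integrals of the (merely bounded continuous, not compactly supported in $\theta$ a priori) vector-valued integrand $\Phi_\tau(\cdot;\cdot)$; the compactness of $K_r$ — in particular compactness of the sphere $S_{\R^d}(1)$ and boundedness of the $b$-interval — is what makes this work, and this is precisely why the functional is set to $+\infty$ off $\{\mu(K_r)=1\}$.
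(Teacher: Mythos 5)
Your proof is correct and follows the same overall strategy as the paper's: pass to the subsequence with $\mu_n(K_r)=1$, invoke the Portmanteau theorem to get that the limit $\mu$ is also supported in $K_r$, use the uniform bound on $\Phi_\tau$ (and $\nabla_x\Phi_\tau$) over the compact set $K_r$ to control $P_\tau(\mu_n)$, and conclude with a semicontinuity property of $\E$ on $H^1(\Omega)$. The one place where you genuinely diverge is the last step: you only establish weak convergence $P_\tau(\mu_n) \rightharpoonup P_\tau(\mu)$ in $H^1(\Omega)$ (by testing against a fixed $v$ and using that $\theta\mapsto\langle\Phi_\tau(\theta;\cdot),v\rangle_{H^1}$ is bounded and continuous on $K_r$) and then invoke weak lower semicontinuity of the convex, continuous functional $\E$, whereas the paper proves \emph{strong} $H^1$ convergence $P_\tau(\mu_n)\to P_\tau(\mu)$ (pointwise convergence of $u_n$ and $\nabla u_n$ plus a uniform bound plus dominated convergence) and then uses plain continuity of $\E$. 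Both are valid; your version is marginally more economical since it never needs dominated convergence in $x$, while the paper's version establishes the slightly stronger fact that $\E_{\tau,r}$ is sequentially \emph{continuous} (not merely l.s.c.) on $\Prob_2(K_r)$, which is consistent with the way it is used later. One typographical slip: you wrote ``$\mu\rightharpoonup\mu$ in fact also narrowly'' where you mean $\mu_n\rightharpoonup\mu$.
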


\begin{proof}
Let $(\mu_n)_{n\in \mathbb{N}^*}$ be a sequence of elements of $\Prob_2(\Theta)$ which narrowly converges towards some $\mu \in \Prob_2(\Theta)$. Without loss of generality, we can assume that $\mu_n$ is supported in $K_{r}$ for all $n\in \mathbb{N}^*$. Then, it holds that : 

\begin{itemize}

\item the limit $\mu$ has support in $K_{r}$ (by Portmanteau theorem);
\item moreover, let $u_n:\Omega \to \mathbb{R}$ be defined such that for all $x\in \Omega$, 
$$
u_n(x) := \int_{\Theta} \Phi_{\tau}(\theta; x) d\mu_n(\theta) = \int_{K_r} \Phi_{\tau}(\theta; x) \,d\mu_n(\theta).
$$
Since for all $x\in \Omega$, the function  $K_{r} \ni \theta \mapsto \Phi_{\tau}(\theta; x)$ is continuous and bounded, it then holds that, for all $x\in \Omega$,
$$
u_n(x) \mathop{\longrightarrow}_{n \to \infty} u(x) :=  \int_{K_r} \Phi_{\tau}(\theta; x) d\mu(\theta) = \int_{\Theta} \Phi_{\tau}(\theta; x) d\mu(\theta), 
$$
where the last equality comes from the fact that $\mu$ is supported in $K_{r}$.

\item It actually holds that the sequence $(u_n)_{n\in \mathbb{N}^*}$ is uniformly bounded in ${\mathcal C}(\Omega)$. Indeed, there exists $C_\tau>0$ such that for all $x\in \Omega$ and $n\in \mathbb{N}^*$, we have 
\begin{align*}
u_n(x)^2  &=  \left( \int_{K_{r}} \Phi_{\tau}(\theta; x) d\mu_n(\theta) \right)^2 \\
& \leq   \int_{K_{r}} \Phi_{\tau}^2(\theta; x) d\mu_n(\theta) \\
& \leq  C r^2,\\
\end{align*}
where the last inequality comes from~\eqref{eq:bound}. 
\end{itemize}

As a consequence of the Lebesgue dominated convergence theorem, the sequence $(u_n)_{n\in \mathbb{N}^*}$ strongly converges towards $u$ in $L^2(\Omega)$. Reproducing the same argument as above for the sequence $(\nabla u_n)_{n\in \mathbb{N}^*}$, one easily proves that this strong convergence holds in fact in $H^1(\Omega)$. The fact that the functional $\E : H^1(\Omega) \rightarrow \R$ is continuous allows us to conclude.
\end{proof}

\begin{remark}\label{remarkLowerSemicontinuity}
In $\Prob_2(K_r)$, the weak convergence is metricized by the Wasserstein distance. Hence, $\E_{\tau}$ is lower semicontinuous 
as a functional from $(\Prob_2(\Theta), W_2)$ to $(\R, |\cdot|)$.
\end{remark}

Finally, the lower semicontinuity of $\E_{\tau,r}$ and compactness of $\Prob_2(K_{r})$ (as $K_r$ is compact) allows to prove the existence of at least one solution to the following minimization problem :

\begin{problem}\label{poissonProba}
For $0<\tau<\infty$ and $0<r <+\infty$, let $\mu_{\tau,r}^\star \in \Prob_2(\Theta)$ be solution to
\begin{equation}\label{eq:tauprob}
\mu_{\tau,r}^\star \in \argmin_{\mu \in \Prob_2(\Theta)} \E_{\tau,r}(\mu).
\end{equation}
\end{problem}

For large values of $\tau$ and $r = d\|f\|_{\barron^0(\Omega)}$, solutions of~\eqref{eq:tauprob} enable to obtain accurate approximations of the solution of~\eqref{PoissonEquation}. This result is stated in
Theorem~\ref{theoremEquivalencePoissonP2}.

\begin{theorem}
\label{theoremEquivalencePoissonP2}
There exists $C>0$ such that for all $m\in \mathbb{N}^*$ and any solution $\mu^\star_{m, d\|f\|_{\barron^0(\Omega)}}$ to~\eqref{eq:tauprob} with $\tau = m$ and $r = d\|f\|_{\barron^0(\Omega)}$, it holds that:  
$$
\Big\| u^\star - \int_{\Theta} \Phi_{m}(\theta; \cdot) d\mu_{m,d\|f\|_{\barron^0(\Omega)} }^\star(\theta) \Big\|_{H^1(\Omega)} \leq Cd\frac{\|f\|_{\barron^0(\Omega)} }{\sqrt{m}}
$$
where $u^\star$ is the solution of the equation~\eqref{PoissonEquation}.
\end{theorem}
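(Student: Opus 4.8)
The plan is to combine the approximation theorem (Theorem~\ref{approximationTheorem}(ii)) with the Barron regularity of the solution (Theorem~\ref{theoremBarronPoisson}) and the strong convexity of the energy $\E$ (Remark~\ref{remarkEnergyBilinearForm}). First I would set $B := \|f\|_{\barron^0(\Omega)}$, so that Theorem~\ref{theoremBarronPoisson} gives $u^\star \in \barron(\Omega)$ with $\|u^\star\|_{\barron(\Omega)} \leq dB$. Applying Theorem~\ref{approximationTheorem}(ii) to $u^\star$ with width $m$ and activation $\sigma_{H,m}$ (i.e.\ $\tau = m$), there exists $\tilde u_m \in \F_{\sigma_{H,m},m}(\|u^\star\|_{\barron(\Omega)})$ with
$$
\|u^\star - \tilde u_m\|_{H^1(\Omega)} \leq \frac{C\|u^\star\|_{\barron(\Omega)}}{\sqrt{m}} \leq \frac{Cd B}{\sqrt{m}}.
$$

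The next step is to turn $\tilde u_m$ into a competitor for Problem~\ref{poissonProba}. By Remark~\ref{remarkUnifBoundFm} (applied with $\chi = \sigma_{H,m}$ and $B$ replaced by $\|u^\star\|_{\barron(\Omega)} \leq dB$), there is a probability measure $\mu_m \in \Prob_2(\Theta)$ supported in $K_{\|u^\star\|_{\barron(\Omega)}} \subseteq K_{dB}$ such that $P_m(\mu_m) = \tilde u_m$; hence $\E_{m,dB}(\mu_m) = \E(\tilde u_m) < +\infty$. Since $\mu^\star_{m,dB}$ minimizes $\E_{m,dB}$, we get $\E(P_m(\mu^\star_{m,dB})) \leq \E(\tilde u_m)$. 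Now I would invoke the identity in Remark~\ref{remarkEnergyBilinearForm}: for any $u \in H^1(\Omega)$, $\E(u) - \E(u^\star) = \tfrac12 \bar a(u - u^\star, u - u^\star)$. Writing $u^\star_m := P_m(\mu^\star_{m,dB})$, coercivity of $\bar a$ (there is $\alpha>0$ with $\bar a(v,v) \geq \alpha\|v\|_{H^1(\Omega)}^2$) and continuity ($\bar a(v,v) \leq M\|v\|_{H^1(\Omega)}^2$) give
$$
\frac{\alpha}{2}\|u^\star_m - u^\star\|_{H^1(\Omega)}^2 \leq \E(u^\star_m) - \E(u^\star) \leq \E(\tilde u_m) - \E(u^\star) = \frac12 \bar a(\tilde u_m - u^\star, \tilde u_m - u^\star) \leq \frac{M}{2}\|\tilde u_m - u^\star\|_{H^1(\Omega)}^2.
$$

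Combining the two displays yields $\|u^\star_m - u^\star\|_{H^1(\Omega)} \leq \sqrt{M/\alpha}\,\|\tilde u_m - u^\star\|_{H^1(\Omega)} \leq \sqrt{M/\alpha}\,\tfrac{CdB}{\sqrt m}$, which is the claimed bound after renaming the constant. The only genuine point requiring care — the rest being a direct chaining of already-established results — is verifying that the competitor $\mu_m$ is admissible for $\E_{m,dB}$, i.e.\ that the normalization in Remark~\ref{remarkUnifBoundFm} indeed places its support inside $K_r$ with $r = d\|f\|_{\barron^0(\Omega)}$; this is exactly why the radius $r$ in Theorem~\ref{theoremEquivalencePoissonP2} is chosen to be $d\|f\|_{\barron^0(\Omega)}$, matching the Barron-norm bound $\|u^\star\|_{\barron(\Omega)} \leq dB$ from Theorem~\ref{theoremBarronPoisson}. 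One should also note that the universal constant $C$ in Theorem~\ref{approximationTheorem} does not depend on $d$, so the final dependence on dimension is only the explicit factor $d$ coming from the Barron-norm estimate, giving the stated rate $Cd\|f\|_{\barron^0(\Omega)}/\sqrt m$.
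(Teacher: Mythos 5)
Your proposal follows essentially the same route as the paper's proof: apply Theorem~\ref{approximationTheorem}(ii) to $u^\star$, use Theorem~\ref{theoremBarronPoisson} and Remark~\ref{remarkUnifBoundFm} to produce an admissible competitor measure $\mu_m$ in $\Prob_2(K_{d\|f\|_{\barron^0(\Omega)}})$, invoke optimality of $\mu^\star_{m,dB}$, and then use the quadratic identity of Remark~\ref{remarkEnergyBilinearForm} together with coercivity/continuity of $\bar a$ to transfer the $H^1$ error. Your constant $\sqrt{M/\alpha}$ is in fact the correct one (the paper writes $L/\alpha$, which is a harmless but slightly off bookkeeping of the same bilinear-form estimate); otherwise the two arguments are identical.
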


\begin{proof}
For all $m\in \mathbb{N}^*$, let $\tilde{u}_m\in \F_{\sigma_{H,m},m}(\| u^* \|_{\barron})$ satisfying (\ref{eq:ineq}) for $u=u^*$ (using Theorem~\ref{approximationTheorem}). Since $\|u^*\|_{\barron(\Omega)}\leq d \|f\|_{\barron^0(\Omega)}$ thanks to Theorem~\ref{theoremBarronPoisson} and by Remark~\ref{remarkUnifBoundFm}, $\tilde{u}_m$ can be rewritten using a probability measure $\mu_m$ with support in $K_{d\|f\|_{\barron^0(\Omega)}}$ as : 
$$
\forall x\in \Omega, \quad \tilde{u}_m(x) = \int_{\Theta} \Phi_{m}(\theta; x) \,d\mu_m(\theta).
$$
Let $\mu^\star_{m, d\|f\|_{\barron^0(\Omega)}}$ be a minimizer of (\ref{eq:tauprob}) with $\tau = m$ and $r = d\|f\|_{\barron^0(\Omega)}$. Then, it holds that:
$$
\E_{m, d\|f\|_{\barron^0(\Omega)}}\left(\mu^\star_{m, d\|f\|_{\barron^0(\Omega)}}\right) \leq \E_{m, d\|f\|_{\barron^0(\Omega)}}(\mu_m),
$$
which by Remark~\ref{remarkEnergyBilinearForm}, is equivalent to :
$$
\bar{a}(u^\star_m-u^\star,u^\star_m-u^\star) \leq \bar{a}(\tilde{u}_m-u^\star,\tilde{u}_m-u^\star).
$$
where for all $x\in \Omega$, 
$$
u^\star_m(x):= \int_\Theta \Phi_m(\theta;x)\,d\mu^\star_{m, d\|f\|_{\barron^0(\Omega)}}(\theta).
$$
Denoting by $\alpha$ and $L$ respectively the coercivity and continuity constants of $\bar{a}$, we obtain that
$$
\| u^\star_m-u^\star \|_{H^1(\Omega)} \leq \frac{L}{\alpha} \| \tilde{u}_m-u^\star \|_{H^1(\Omega)} \leq Cd\frac{\|f\|_{\barron^0(\Omega)} }{\sqrt{m}}.
$$
\end{proof}

\subsection{Main results}\label{sectionMainResults}

In this section, we find a solution to Problem~\ref{poissonProba} using gradient curve techniques. More particularly, we will define and prove the existence of a gradient descent curve such that if the convergence is asserted, then the convergence necessarily holds towards a global minimizer. In all the sequel, we fix an a prioori chosen value of $\tau>0$.

\subsubsection{Well-posedness}
First, we introduce the concept of gradient curve which formally writes for $r >0$:
\begin{equation}\label{eq:formal}
\forall t\geq 0, \ \frac{d}{dt} \mu^{r}(t) = - \nabla \E_{\tau,r}(\mu^{r}(t)).
\end{equation}
Equation~\eqref{eq:formal} has no mathematical sense since the space $\Prob_2(\Theta)$ is not a Hilbert space and consequently, the gradient of $\E_{\tau,r}$ is not available in a classical sense. Nevertheless $\Prob_2(\Theta)$ being an Alexandrov space, it has a differential structure which allows to define gradients properly. The careful reader wishing to understand this structure can find a complete recap of all useful definitions and properties of Alexandrov spaces in Appendix~\ref{appendix}. 

Before exposing our main results of well-posedness, we recall the basic definition of local slope~\cite{AmbrosioSavareBook}. In the sequel, we denote by  $\Prob_2(K_r)$ the set of probability measures on $\Theta$ with support included in $K_r$.

\begin{definition}\label{definitionLocalSlope}
At every $\mu \in \Prob_2(K_r)$, the local slope writes :

$$
|\nabla^- \E_{\tau,r}|(\mu) := \limsup_{\nu \rightarrow \mu} \frac{(\E_{\tau,r}(\mu) - \E_{\tau,r}(\nu))_+}{W_2(\mu,\nu)}
$$
which may be infinite.
\end{definition}

In Section~\ref{sectionWellPosedness}, we prove two theorems; the first one states the existence and the uniqueness of the gradient curve with respect to $\E_{\tau,r}$ when $r<\infty$.

\begin{theorem}\label{theoremWellPosednessCondSupport}
For all $\mu_0 \in \Prob_2(K_r)$, there exists a unique locally Lipschitz gradient curve $\mu^{r}: \mathbb{R}_+ \to \Prob_2(K_r)$ which is also a curve of maximal slope with respect to the upper gradient $|\nabla^- \E_{\tau,r}|$. Moreover, for almost all $t\geq 0$, there exists a vector field $v^{r}_t\in L^2(\Theta;\,d\mu^r(t))^{d+3}$ such that
\begin{equation}
\int_{\Theta} \|v^{r}_t\|^2 \,d\mu^r(t) = \| v^{r}_t \|^2_{L^2(\Theta;\,d\mu^r(t))} < +\infty
\end{equation}
and :
\begin{equation}
\label{wellPosednessEr}
\left\{
\begin{array}{rl}
\partial_t \mu^{r}(t) + \text{\rm div}(v^{r}_{t} \mu^{r}(t)) =& 0 \\
\mu^{r}(0) =& \mu_0\\
\mu^{r}(t) \in& \Prob_2(K_r).
\end{array}
\right.
\end{equation}
\end{theorem}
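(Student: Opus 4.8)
The plan is to apply the general theory of gradient flows (curves of maximal slope) on metric spaces developed in Ambrosio--Gigli--Savar\'e, specialized to the Alexandrov space $(\Prob_2(K_r),W_2)$. The key structural facts needed are: (1) $\Prob_2(K_r)$ is a complete metric space which is geodesically convex and has curvature bounded below (it is a closed geodesically convex subset of the Alexandrov space $\Prob_2(\Theta)$, hence itself Alexandrov); (2) the functional $\E_{\tau,r}$ restricted to $\Prob_2(K_r)$ is $\lambda$-geodesically convex for some $\lambda\in\R$ (here along generalized geodesics the map $\mu\mapsto P_\tau(\mu)$ is affine, so $\E_{\tau,r}=\E\circ P_\tau$ inherits $\lambda$-convexity from the strong convexity of $\E$ on $H^1$, with the caveat that convexity along $W_2$-geodesics requires a short computation since $P_\tau$ need not be affine along $W_2$-geodesics — one uses instead the displacement-convexity-along-generalized-geodesics framework, or, more simply, one verifies semiconvexity directly from the smoothness bounds \eqref{eq:bound} on $\sigma_{H,\tau}$); (3) $\E_{\tau,r}$ is proper and lower semicontinuous, which is exactly Proposition~\ref{propositionLowerSemicontinuity} together with Remark~\ref{remarkLowerSemicontinuity}; and (4) $\E_{\tau,r}$ is bounded below on $\Prob_2(K_r)$, which follows from Remark~\ref{remarkEnergyBilinearForm} since $\E(u)\geq -\frac12\int_\Omega|\nabla u^\star|^2$.

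First I would establish the Alexandrov/geodesic-convexity structure of $\Prob_2(K_r)$: since $K_r$ is compact and geodesically convex in $\Theta$ (the first three factors $[-2r,2r]$, $[-4r,4r]$ are convex intervals, $S_{\R^d}(1)$ is a Riemannian manifold of bounded geometry, and the last factor is again an interval), $\Prob_2(K_r)$ is a compact, complete, geodesic Alexandrov space with curvature bounded below. Then I would show $\E_{\tau,r}$ is $\lambda$-convex along (generalized) geodesics: writing $\E_{\tau,r}(\mu)=\frac12\bar a(P_\tau\mu - u^\star,P_\tau\mu-u^\star)-\frac12\int|\nabla u^\star|^2$, and noting that for a generalized geodesic interpolation the push-forward structure makes $t\mapsto P_\tau(\mu_t)$ a smooth curve in $H^1$ with second derivative controlled by $\|\sigma''_{H,\tau}\|_\infty \le C\tau$ and $\|\sigma'''_{H,\tau}\|_\infty\le C\tau^2$ from \eqref{eq:bound}, one gets $\frac{d^2}{dt^2}\E_{\tau,r}(\mu_t)\geq \lambda W_2^2(\mu_0,\mu_1)$ for $\lambda=\lambda(\tau,r)$ possibly very negative but finite. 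With these three ingredients — $\lambda$-convexity, lower semicontinuity, lower boundedness, on a complete Alexandrov space — the abstract existence-and-uniqueness theorem for curves of maximal slope (e.g. Ambrosio--Gigli--Savar\'e Theorem 4.0.4 / 2.4.15, or the Alexandrov-space version in Ohta or in Appendix~\ref{appendix}) yields: for every $\mu_0\in\overline{D(\E_{\tau,r})}=\Prob_2(K_r)$ there is a unique locally Lipschitz curve $\mu^r:\R_+\to\Prob_2(K_r)$ of maximal slope for the upper gradient $|\nabla^-\E_{\tau,r}|$, starting at $\mu_0$, with the energy-dissipation identity holding; moreover the metric derivative and the slope are in $L^2_{loc}$.

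Next I would pass from the abstract curve of maximal slope to the continuity equation \eqref{wellPosednessEr}. This is the standard identification, in the Wasserstein setting, of a curve of maximal slope with a solution of $\partial_t\mu + \dv(v_t\mu)=0$ where $v_t$ is the minimal-norm velocity field (the Wasserstein gradient), using the absolutely-continuous-curves-have-velocity-fields theorem (AGS Theorem 8.3.1): any locally absolutely continuous curve in $\Prob_2$ admits a Borel velocity field $v_t\in L^2(\mu^r(t))^{d+3}$ with $\|v_t\|_{L^2(\mu^r(t))} = |\dot\mu^r|(t)$ for a.e.\ $t$, solving the continuity equation in the distributional sense on $\Theta$. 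Since $\mu^r(t)$ stays supported in the compact set $K_r$, no boundary-flux issue arises — this is precisely why the problem was set up on $K_r$ rather than on all of $\Theta$ — and the curve being of maximal slope forces $v_t=-\nabla_{W_2}\E_{\tau,r}(\mu^r(t))$, giving the $L^2$-in-space bound $\int_\Theta\|v_t^r\|^2\,d\mu^r(t)=|\nabla^-\E_{\tau,r}|^2(\mu^r(t))<+\infty$ for a.e.\ $t$.

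The main obstacle I anticipate is verifying the $\lambda$-geodesic-convexity of $\E_{\tau,r}$ on $(\Prob_2(K_r),W_2)$ with genuine care. The energy $\E$ is strongly convex on $H^1$, but $\mu\mapsto P_\tau(\mu)$ is affine only for the \emph{linear} structure of measures, not along $W_2$-geodesics; along a $W_2$-geodesic $\mu_t=((1-t)\mathrm{id}+t T)_\#\mu_0$ the curve $P_\tau(\mu_t)=\int \Phi_\tau((1-t)\theta+tT(\theta);\cdot)\,d\mu_0$ is curved, and one must bound $\partial_t^2 P_\tau(\mu_t)$ using the Hessian bounds on $\Phi_\tau$, which scale like $\tau$ and $\tau^2$ — hence the admissibility of a finite but $\tau$-dependent $\lambda$. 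One must also handle the sphere factor $S_{\R^d}(1)$ in the parameter space, whose positive curvature interacts with the second variation; working along generalized geodesics (as in AGS Chapter 9) rather than constant-speed $W_2$-geodesics is the cleanest route, and the compactness of $K_r$ keeps all the relevant derivatives of $\Phi_\tau$ uniformly bounded so the constants are finite. Everything else — lower semicontinuity, coercivity/lower-boundedness, completeness — is either already proved in the excerpt or immediate.
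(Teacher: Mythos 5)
Your proposal follows essentially the same route as the paper: establish that $\Prob_2(K_r)$ is a compact Alexandrov space, prove $\lambda$-semiconvexity of $\E_{\tau,r}$ along $W_2$-geodesics directly from the Hessian bounds on $\Phi_\tau$ (this is exactly what the paper does in Proposition~\ref{propositionConvexityE}, using Lemma~\ref{lem:aux} and Lemma~\ref{lemmaSmoothnessGeodesic} to handle the sphere factor), invoke an abstract existence/uniqueness theorem for gradient curves on Alexandrov spaces (the paper cites Ohta~\cite{Ohta2009} Theorems 5.9 and 5.11; you cite the equivalent AGS formulation), and then recover the $L^2$ velocity field from the absolutely-continuous-curve representation theorem (the paper cites~\cite{Erbar2010}, you cite AGS Theorem 8.3.1 --- same result). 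The main obstacle you anticipate (the choice between ordinary and generalized geodesics) is resolved in the paper by the more elementary route you yourself suggest, namely a direct second-derivative computation along McCann interpolants, so the concern is moot.
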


In the second theorem, we focus on the case when $r= + \infty$ for which we formally take the limit of gradient curves $(\mu^{r})_{r>0}$ as $r$ goes to infinity. Introducing the following quantities, the definition of which will be made precise below :
$$
\left\{
\begin{array}{rl}
\phi_\mu(\theta) :=& d\E|_{P_{\tau}(\mu)}(\Phi_{\tau}(\theta;\cdot)),\\
v_\mu(\theta) :=& \nabla_\theta \phi_\mu(\theta), \\
\end{array}
\right.
$$
and $\bold{P}$ the projection on the tangent bundle of $\Theta$ the precise definition of which is given in Definition~\ref{def:P}, 
the following theorem is proved.
\begin{theorem}\label{theoremGlobalWellPosedness}
For all $\mu_0$ compactly supported, there exists a curve $\mu: \mathbb{R}_+ \to \Prob_2(\Theta)$ such that :

\begin{equation}
\label{wellPosednessE}
\left\{
\begin{aligned}
\partial_t \mu(t) + {\rm div}((-\bold{P} v_{\mu(t)}) \mu(t)) &= 0 \\
\mu(0) &= \mu_0
\end{aligned}
\right.
\end{equation}
and for almost all $t\geq 0$ :
$$
\int_{\Theta} |\bold{P} v_{\mu(t)}|^2 \; d \mu(t) = \| \bold{P} v_{\mu(t)} \|^2_{L^2(\Theta;d\mu(t))} < +\infty.
$$
Moreover, the solution satisfies :

$$
\forall t \geq 0, \mu(t) = {\chi(t)}\# \mu_0
$$
with $\chi: \mathbb{R}_+ \times \Theta \to \Theta$ solution to
$$
\left\{
\begin{aligned}
\partial_t \chi(t;\theta) &= -\bold{P} v_{\mu(t)}(\theta)\\
\chi(0;\theta) &= \theta.
\end{aligned}
\right.
$$
\end{theorem}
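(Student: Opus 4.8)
The plan is to obtain $\mu$ as the common value, on each bounded time interval, of the confined gradient curves $\mu^{r}$ furnished by Theorem~\ref{theoremWellPosednessCondSupport}, once $r$ is taken large enough; the heart of the matter is an a priori bound on $\operatorname{supp}\mu^{r}(t)$ that is \emph{uniform in $r$}. Fix $T>0$ and pick $r_{0}>0$ with $\operatorname{supp}\mu_{0}\subseteq K_{r_{0}}$. First I would note that, $\mu^{r}$ being a curve of maximal slope, $t\mapsto\E(P_{\tau}(\mu^{r}(t)))$ is nonincreasing, so the strong convexity of $\E$ (Remark~\ref{remarkEnergyBilinearForm}) yields $M=M(\mu_{0},\tau)>0$, independent of $r$, with $\sup_{t\ge0}\|P_{\tau}(\mu^{r}(t))\|_{H^{1}(\Omega)}\le M$. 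Writing the first variation $\phi_{\mu}(\theta)=\int_{\Omega}\big(\nabla P_{\tau}(\mu)\cdot\nabla_{x}\Phi_{\tau}(\theta;\cdot)-f\,\Phi_{\tau}(\theta;\cdot)\big)\,dx+\int_{\Omega}P_{\tau}(\mu)\int_{\Omega}\Phi_{\tau}(\theta;\cdot)$ via Lemma~\ref{differentiabilityE} and differentiating in $\theta$, I would use $\partial_{c}\Phi_{\tau}\equiv1$, $\partial_{a}\Phi_{\tau}=\sigma_{H,\tau}(w\cdot x+b)$, $\int_{\Omega}f=0$ and the bounds \eqref{eq:bound} to get that the $c$- and $a$-components of $v_{\mu^{r}(t)}(\theta)$ are bounded, \emph{independently of $\theta$ and of $r$}, by some $C_{1}=C_{1}(M,\|f\|_{L^{2}})$; moreover, since $\sigma'_{H,\tau}$ and $\sigma''_{H,\tau}$ are supported in $[-1-1/\tau,1+1/\tau]$ and $|w\cdot x|\le\sqrt d$ on $\Omega$, the $b$-component of $v_{\mu^{r}(t)}(\theta)$ vanishes whenever $b\notin[-\sqrt d-1-1/\tau,\sqrt d+1+1/\tau]$, an interval contained in $[-\sqrt d-2,\sqrt d+2]$ for $\tau\ge1$ --- this is exactly the reason for the choice of the $b$-domain in \eqref{eq:defTheta}. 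Applying the superposition principle (see~\cite{AmbrosioSavareBook}) to \eqref{wellPosednessEr}, $\mu^{r}(t)=(e_{t})\#\eta^{r}$ with $\eta^{r}$ concentrated on integral curves $\gamma$ of $v^{r}$; each such $\gamma$ starts in $K_{r_{0}}$, so $|\gamma_{c}(t)|,|\gamma_{a}(t)|\le r_{0}+C_{1}t$, $\gamma_{w}(t)\in S_{\R^{d}}(1)$, and $\gamma_{b}(t)\in[-\sqrt d-2,\sqrt d+2]$ for all $t$. Hence $\operatorname{supp}\mu^{r}(t)\subseteq K_{R(t)}$ with $R(t):=r_{0}+C_{1}t$, uniformly in $r\ge r_{0}$.

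Next I would pass to the limit $r\to\infty$. Where the support is interior to $K_{r}$ the constraint $\mu(K_{r})=1$ is inactive, so by the characterization of curves of maximal slope the velocity $v^{r}_{t}$ equals the Wasserstein gradient $-\bold{P}v_{\mu^{r}(t)}$, $\mu^{r}(t)$-a.e.; by the preceding estimate this holds on all of $\operatorname{supp}\mu^{r}(t)$ for $t\in[0,T]$ and $r\ge R(T)$, so each such $\mu^{r}$ solves on $[0,T]$ the continuity equation $\partial_{t}\mu+\operatorname{div}\big((-\bold{P}v_{\mu(t)})\mu(t)\big)=0$ with $\mu(0)=\mu_{0}$, valued in $\Prob_{2}(K_{R(T)})$. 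On the compact set $K_{R(T)}$ the map $\theta\mapsto\bold{P}v_{\mu}(\theta)$ is Lipschitz with a constant depending only on $R(T),\tau,M$ --- here the bounds \eqref{eq:bound} on $\sigma''_{H,\tau}$ and $\sigma'''_{H,\tau}$ enter --- and $\mu\mapsto\bold{P}v_{\mu}$ is Lipschitz from $(\Prob_{2}(K_{R(T)}),W_{2})$ to $L^{2}$; a Gronwall estimate in $W_{2}$, the standard uniqueness argument for continuity equations whose velocity field depends Lipschitz-continuously on the current measure, then forces all the $\mu^{r}|_{[0,T]}$ with $r\ge R(T)$ to coincide. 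Letting $T\to\infty$ produces a single curve $\mu:\R_{+}\to\Prob_{2}(\Theta)$ with $\operatorname{supp}\mu(t)\subseteq K_{R(t)}$, solving \eqref{wellPosednessE}; and $\int_{\Theta}|\bold{P}v_{\mu(t)}|^{2}\,d\mu(t)<+\infty$ follows from the same bounds since $\mu(t)$ is compactly supported.

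For the representation $\mu(t)=\chi(t)\#\mu_{0}$, I would observe that on $[0,T]\times K_{R(T)}$ the field $(t,\theta)\mapsto-\bold{P}v_{\mu(t)}(\theta)$ is Lipschitz in $\theta$ uniformly in $t$, and Lipschitz in $t$ as well, since $t\mapsto\mu(t)$ is $W_{2}$-locally Lipschitz (being a curve of maximal slope) and $\mu\mapsto\bold{P}v_{\mu}$ is $W_{2}$-Lipschitz. Cauchy--Lipschitz on the manifold $\Theta$ then gives a unique flow $\chi$ with $\partial_{t}\chi(t;\theta)=-\bold{P}v_{\mu(t)}(\chi(t;\theta))$ and $\chi(0;\theta)=\theta$; since $t\mapsto\chi(t)\#\mu_{0}$ then solves the \emph{same}, now linear, continuity equation with the same initial datum $\mu_{0}$, uniqueness for continuity equations driven by a given Lipschitz velocity field yields $\mu(t)=\chi(t)\#\mu_{0}$ for every $t\ge0$.

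The main obstacle is the $r$-uniform confinement of the support carried out in the first step, and it is where the particular activation function is indispensable: its compact support makes the $b$-drift vanish outside a fixed interval, so that the parameter domain \eqref{eq:defTheta} is invariant under the flow, while the boundedness of $\partial_{c}\phi_{\mu}$ and $\partial_{a}\phi_{\mu}$ \emph{independently of $\theta$} makes the support spread only linearly in $t$ rather than exponentially; a homogeneous activation, as used in~\cite{BachChizat2018,BachChizat2021}, would provide neither feature. The remaining ingredients --- identifying the reduced subdifferential of the constrained functional $\E_{\tau,r}$ with $-\bold{P}v_{\mu}$ on the interior region, and the compact Lipschitz estimates (via \eqref{eq:bound}) needed for the gluing and the Cauchy--Lipschitz step --- are routine by comparison.
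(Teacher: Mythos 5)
Your proof is correct, and it follows a genuinely different route from the paper's. The paper's proof relies on the coarse bound $|v_{\mu}(\theta)|\le C_\tau r|\theta|$ for $\mu\in\Prob_2(K_r)$ (Corollary~\ref{corollaryBoundVelocity}), whose $r$-dependence comes from $\|P_\tau(\mu)\|_{H^1}\le C_\tau r$; the resulting Gronwall estimate gives exponential-in-$t$ growth of the support, hence a short existence time $T_r$ for each confinement level $r$, and the global curve is then obtained by a bootstrap over $r_n=(n+1)r_0$. Your argument replaces this with two sharper observations: first, since $\mu^r$ is a curve of maximal slope, $\E(P_\tau(\mu^r(t)))$ decreases, and strong convexity of $\E$ then bounds $\|P_\tau(\mu^r(t))\|_{H^1}$ by a constant $M$ \emph{independent of $r$}; second, the structure of $\phi_\mu$ (affine in $(a,c)$, with $(w,b)$ constrained to compact directions and the $b$-drift vanishing for $|b|>\sqrt d+1+1/\tau$) shows that the only unbounded components of the drift, namely those in the $(c,a)$ directions, are in fact bounded uniformly in $\theta$. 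This yields an $r$-uniform \emph{linear} confinement ${\rm Supp}\,\mu^r(t)\subseteq K_{r_0+C_1 t}$, so that all $\mu^r$ with $r\ge R(T)$ coincide on $[0,T]$ directly, with no bootstrap. Besides being cleaner, your route delivers a stronger quantitative conclusion (linear rather than exponential spreading of the support) and makes explicit why the compactly-supported activation and the particular $b$-domain in~\eqref{eq:defTheta} matter for well-posedness; the paper's bootstrap, which pegs $T_r\sim\tfrac1{C r}\log\tfrac{r+r_0}{2r_0}$ and iterates with $r_n=(n+1)r_0$, is more delicate to make globally summable (as written, the $T_{r_n}$ appear to decay like $n^{-2}$), so your uniform bound also sidesteps that concern.

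Two small points of precision are worth flagging. First, the chain of estimates that gives ${\rm Supp}\,\mu^r(t)\subseteq K_{R(t)}$ requires the identification $v^r_t=-\bold{P}v_{\mu^r(t)}$ (Proposition~\ref{velocityProp}), which is only available while the support stays strictly inside $K_r$; this should be run as a continuation argument (valid on a maximal interval, then shown not to saturate), which is implicit in your write-up but should be stated. Second, the Gronwall-type uniqueness for the nonlinear continuity equation you invoke to glue the $\mu^r$ is valid under the stated Lipschitz hypotheses, but the paper achieves the same gluing more economically from the uniqueness of gradient curves for $\E_{\tau,r_1}$ (since $\E_{\tau,r_1}$ and $\E_{\tau,r_2}$ coincide on $\Prob_2(K_{r_1})$); either route works, but it is worth being aware that the Alexandrov-space machinery already contains the needed uniqueness.
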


In Remark~\ref{remarkNonexistenceUniqueness}, we argue why proving the existence and uniqueness of a gradient curve for $\E_{\tau,\infty}$ is not reachable. This is why $\mu: \mathbb{R}_+ \to \Prob_2(\Theta)$ is described as a limiting gradient curve and not a gradient curve itself in Theorem~\ref{theoremGlobalWellPosedness}.  

\subsubsection{Link with neural network}
Our motivation for considering the analysis presented in the previous section is that we can link the learning phase of a neural network with the optimization procedure given by gradient curves defined above. Indeed, let $m>0$ be an integer. A two-layer neural network $u$ with $\sigma_{H,\tau}$ as activation function can always be written as :

\begin{equation}\label{network}
u = \frac{1}{m} \sum_{i=1}^m \Phi_\tau(\theta_i,\cdot)
\end{equation}
with $\theta_i \in \Theta$. Then, we differentiate the functional $\Fu :(\theta_1, \cdots, \theta_m) \rightarrow \E \left(\frac{1}{m} \sum_{i=1}^m \Phi_\tau(\theta_i,\cdot)\right)$ :

$$
\begin{aligned}
d \Fu|_{\theta_1,\cdots, \theta_m}(d\theta_1, \cdots, d\theta_m) &= d \E|_u \left( \frac{1}{m} \sum_{i=1}^m \nabla_\theta \Phi_\tau(\theta_i,\cdot) \cdot d \theta_i \right).
\end{aligned}
$$
Thus, the gradient of $\Fu$ is given by :

$$
\begin{aligned}
\nabla_{\theta_i} \Fu(\theta_1,\cdots, \theta_m)
& = \frac{1}{m} \nabla_{\theta} \phi_{\mu}(\theta_i)
\end{aligned}
$$
where :

\begin{equation}\label{equivalentMu}
\mu := \frac{1}{m} \sum_{i=1}^m \delta_{\theta_i} \in \Prob_2(\Theta).
\end{equation}
As a consequence, a gradient descent of $\F$ in the sense that, for all $1\leq i \leq m$, 
$$
\left\{
\begin{aligned}
\frac{d}{dt}\theta_i(t) & =  - m \bold{P} \nabla_{\theta_i} \F(\theta_1(t), \cdots, \theta_m(t)) \\
\theta_i(0) & =  \theta_{i,0}, \\
\end{aligned}
\right.
$$
which is equivalent to the gradient curve of $\E_{ \tau, +\infty}$ with initial condition given by
\begin{equation}\label{eq:Dirac}
\mu_{0,m}:=\frac{1}{m} \sum_{i=1}^m \delta_{\theta_{i,0}}.
\end{equation}

\begin{theorem}\label{theoremLinkNN}
Let $\mu_0 \in \Prob_2(\Omega)$ compactly supported, $(\mu_{0,m})_{m\in \mathbb{N}^*}$ be such that for all $m\in \mathbb{N}^*$, $\mu_{0,m}$ is of the form~\eqref{eq:Dirac} for some $(\theta_{i,0})_{1\leq i \leq m} \subset {\rm Supp}(\mu_0)$ and $\lim_{m \rightarrow + \infty} W_2(\mu_{0,m}, \mu_0) = 0$. 

Let $\mu: \mathbb{R}_+ \to \Prob_2(\Theta)$ and $\mu_m: \mathbb{R}_+ \to \Prob_2(\Theta)$ be the  gradient curves constructed in Theorem~\ref{theoremGlobalWellPosedness} associated respectively to the initial conditions $\mu(0)=\mu_0$ and $\mu_m(0) = \mu_{0,m}$. Then for all $T>0$, there exists a constant $C_{T}>0$ such that 
$$
\mathop{\sup}_{0\leq t \leq T} W_2(\mu(t), \mu_m(t)) \leq C_T W_2(\mu_0, \mu_{0,m}).
$$
\end{theorem}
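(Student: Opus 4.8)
The plan is to exploit the flow representation provided by Theorem~\ref{theoremGlobalWellPosedness}: both curves are push-forwards of their initial data along characteristic flows, $\mu(t) = \chi(t;\cdot)\#\mu_0$ and $\mu_m(t) = \chi_m(t;\cdot)\#\mu_{0,m}$, where $\chi$ and $\chi_m$ solve the ODEs $\partial_t\chi(t;\theta) = -\bold{P}v_{\mu(t)}(\chi(t;\theta))$ and similarly for $\chi_m$. The strategy is a Gronwall-type stability estimate for the driving velocity field $\theta \mapsto -\bold{P}v_{\mu(t)}(\theta)$, viewed as a map that depends both on the point $\theta$ and, through $\mu(t)$, on the whole measure. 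First I would fix $T>0$ and invoke the (uniform in $m$) compact support property: since all $\theta_{i,0}\in\mathrm{Supp}(\mu_0)$ which is compact, and since the flows $\chi,\chi_m$ keep the parameter $w$ on the sphere and $b$ in the fixed compact interval, there is a compact set $K\subset\Theta$ (depending only on $\mathrm{Supp}(\mu_0)$ and $T$) containing the supports of $\mu(t)$ and $\mu_m(t)$ for all $t\in[0,T]$ — this follows from the a priori bounds used in the proof of Theorem~\ref{theoremGlobalWellPosedness}, in particular the $L^2$ bound on $\bold{P}v_{\mu(t)}$ and the fact that the $c,a$ components grow at a controlled rate. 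On this compact set, using the regularity of $\sigma_{H,\tau}$ from~\eqref{eq:bound}, the map $(\theta,\mu)\mapsto \bold{P}v_\mu(\theta) = \bold{P}\nabla_\theta\, d\E|_{P_\tau(\mu)}(\Phi_\tau(\theta;\cdot))$ is Lipschitz in $\theta$ (uniformly in $\mu$ supported in $K$) and Lipschitz in $\mu$ with respect to $W_2$ (uniformly in $\theta\in K$), since $\E$ is smooth, $P_\tau$ is linear and $W_2$-continuous, and $\Phi_\tau$ together with its $\theta$-derivatives are bounded and Lipschitz on $K\times\Omega$.

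Next I would set $D(t):=W_2(\mu(t),\mu_m(t))$ and estimate its growth. Using an optimal coupling $\gamma_0$ between $\mu_0$ and $\mu_{0,m}$ and transporting it by the two flows, $(\chi(t;\cdot),\chi_m(t;\cdot))\#\gamma_0$ is an admissible coupling between $\mu(t)$ and $\mu_m(t)$, so
\begin{equation}
D(t)^2 \leq \int_{\Theta^2} d\big(\chi(t;\theta),\chi_m(t;\tilde\theta)\big)^2\, d\gamma_0(\theta,\tilde\theta).
\end{equation}
Differentiating in $t$ and using the two ODEs, the integrand's derivative is controlled by $d(\chi(t;\theta),\chi_m(t;\tilde\theta))\cdot |\bold{P}v_{\mu(t)}(\chi(t;\theta)) - \bold{P}v_{\mu_m(t)}(\chi_m(t;\tilde\theta))|$; splitting this difference through the intermediate term $\bold{P}v_{\mu(t)}(\chi_m(t;\tilde\theta))$ and applying the two Lipschitz bounds above yields a pointwise estimate of the form $\leq L\,d(\chi(t;\theta),\chi_m(t;\tilde\theta))^2 + L\,d(\chi(t;\theta),\chi_m(t;\tilde\theta))\,W_2(\mu(t),\mu_m(t))$ for a constant $L=L_T$ depending only on $K$, $\tau$, $\E$. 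Integrating against $\gamma_0$, using Cauchy–Schwarz and $\int d(\chi,\chi_m)^2\,d\gamma_0 \geq D(t)^2$ together with $\int d(\chi,\chi_m)\,d\gamma_0 \leq (\int d(\chi,\chi_m)^2\,d\gamma_0)^{1/2}$, one obtains a differential inequality $\frac{d}{dt}\big(\int d(\chi,\chi_m)^2 d\gamma_0\big) \leq C_T \int d(\chi,\chi_m)^2\, d\gamma_0$. Gronwall's lemma then gives $\int d(\chi(t;\theta),\chi_m(t;\tilde\theta))^2\,d\gamma_0 \leq e^{C_T t}\int d(\theta,\tilde\theta)^2\,d\gamma_0 = e^{C_T t}W_2(\mu_0,\mu_{0,m})^2$, hence $D(t)\leq e^{C_T T/2}W_2(\mu_0,\mu_{0,m})$ for all $t\in[0,T]$, which is the claim with $C_T := e^{C_T T/2}$.

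The main obstacle I anticipate is establishing the uniform-in-$m$ confinement of the supports to a fixed compact set $K$ and, on that set, the $W_2$-Lipschitz dependence of $\theta\mapsto\bold{P}v_\mu(\theta)$ on $\mu$. The confinement is genuinely needed because the $c$ and $a$ coordinates of $\Theta$ are unbounded and the velocity field is only locally Lipschitz there; one must re-use the energy-dissipation and a priori growth estimates from the proof of Theorem~\ref{theoremGlobalWellPosedness} to see that, along the flow, these coordinates cannot escape a $T$-dependent ball, and crucially that the bound is independent of $m$ because $\mathcal E_{\tau,\infty}(\mu_{0,m})$ is bounded uniformly in $m$ (the $\mu_{0,m}$ being supported in $\mathrm{Supp}(\mu_0)$). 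Once $K$ is fixed, the $\mu$-Lipschitz bound is a routine computation: $d\E|_{P_\tau(\mu)}(\Phi_\tau(\theta;\cdot))$ and its $\theta$-gradient are affine in $P_\tau(\mu)$, $\mu\mapsto P_\tau(\mu)$ is linear and $1$-Lipschitz from $(\Prob_2(K),W_2)$ to $H^1(\Omega)$ up to a constant depending on $\|\Phi_\tau\|_{\mathcal C^1(K\times\Omega)}$, and the projection $\bold{P}$ onto the tangent bundle is $1$-Lipschitz on $K$; combining these gives the desired constant. The delicate point is simply to carry the dependence on $\tau$ explicitly (it is fixed throughout, as the paper notes) and to verify that the projection $\bold P$ does not spoil Lipschitz continuity, which holds because $\bold P$ acts as the identity on the unbounded coordinates and as the smooth orthogonal projection onto $T_wS_{\R^d}(1)$ on the $w$-coordinate, the latter being Lipschitz away from the (empty, since $|w|=1$) origin.
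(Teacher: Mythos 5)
Your proposal is correct in substance but takes a genuinely different route from the paper. The paper's argument is much shorter: it goes back to the construction in the proof of Theorem~\ref{theoremGlobalWellPosedness}, observes that for a fixed horizon $T$ one can choose $r$ large enough (depending on $T$ and on ${\rm Supp}(\mu_0)$, hence uniformly in $m$ since ${\rm Supp}(\mu_{0,m})\subset {\rm Supp}(\mu_0)$) so that both $\mu$ and $\mu_m$ coincide on $[0,T]$ with the unique gradient curves of the \emph{same} functional $\E_{\tau,r}$, and then invokes directly the abstract semigroup expansion bound for gradient flows of $(-\lambda_{\tau,r})$-convex functionals on $\Prob_2$ over a compact Alexandrov space, namely $W_2(\mu(t),\mu_m(t)) \leq e^{\lambda_{\tau,r} t} W_2(\mu_0,\mu_{0,m})$ from~\cite[Theorem 5.11]{Ohta2009}, with $\lambda_{\tau,r}$ the constant of Proposition~\ref{propositionConvexityE}. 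In other words, the paper reuses the semi-convexity machinery already built for well-posedness and lets the abstract theory do all the work. Your proof instead performs a hands-on Gronwall estimate on the characteristic flows, using the $W_2$-Lipschitz dependence of the velocity field on $\mu$ (essentially Lemma~\ref{lemmaContinuityPhi} restricted to a common compact set) and the Lipschitz-in-$\theta$ regularity of $v_\mu$. Both give the same qualitative $e^{Ct}$ contraction constant, and both hinge on the same crucial uniform-in-$m$ confinement to a $K_r$ on $[0,T]$, which in the paper comes from the a priori pointwise bound $|v_{\mu^r(t)}(\theta)|\leq C r|\theta|$ of Corollary~\ref{corollaryBoundVelocity} (your invocation of energy dissipation there is unnecessary; the exponential bound on $|\chi(t;\theta)|$ from Corollary~\ref{corollaryBoundVelocity} alone suffices and is what the paper actually uses).

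Two small technical points you would need to attend to if you fleshed out your version. First, when you differentiate $d(\chi(t;\theta),\chi_m(t;\tilde\theta))^2$ in $t$, recall that $d$ is the geodesic distance on $\Theta$, which contains a spherical factor; the first-variation formula for the squared geodesic distance on a manifold of bounded curvature is what you need, and it works here because the curvature is bounded and the supports stay in a compact set, but this step deserves a sentence. Second, there is a notational slip at the very end: you write ``$C_T := e^{C_T T/2}$'', defining $C_T$ in terms of itself; you mean two different constants, say $C_T := e^{L_T T/2}$ where $L_T$ is the Gronwall constant. Neither issue affects the validity of the argument.
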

This theorem is proved in Section~\ref{sectionLinkNN}.

\subsubsection{Convergence}

Our convergence result towards a global optimum is based on the following hypothesis on the initial measure $\mu_0$ : 

\begin{hypothesis}\label{hypothesisSupport}
The support of the measure $\mu_0$ verifies :
$$
\{0\} \times \{0\} \times S_{\R^d}(1) \times [-\sqrt{d} -2,\sqrt{d} + 2] \subset {\rm Supp}(\mu_0)
$$
\end{hypothesis}

Under such hypothesis, one gets a result of convergence in the spirit of a previous work from Bach and Chizat~\cite{BachChizat2018} :
\begin{theorem}\label{theoremConvergence}
If $\mu_0$ satisfies Hypothesis~\ref{hypothesisSupport} and $\mu(t)$ converges towards $\mu^\star \in \Prob_2(\Theta)$ as $t$ goes to infinity, then $\mu^\star$ is optimal for Problem~\ref{poissonProba}.
\end{theorem}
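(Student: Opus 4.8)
The plan is to reduce the statement, via the convexity of the energy, to two ``flat'' conditions on $\mu^\star$ — that the represented function has zero mean and that a certain Barron potential vanishes for every admissible neuron — and then to establish these by combining the fact that $\mu^\star$ is a critical point of the flow with a topological (homology) argument controlling how the support of the curve moves along the dormant directions supplied by Hypothesis~\ref{hypothesisSupport}.

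\emph{Step 1 (reduction by convexity).} Since $P_\tau$ is affine and $\E$ is convex on $H^1(\Omega)$ — its quadratic part $\frac12\bar a(\cdot,\cdot)$ being nonnegative by Remark~\ref{remarkEnergyBilinearForm} — the functional $\mu\mapsto \E(P_\tau(\mu))$ is convex along linear interpolations of probability measures. Hence $\mu^\star$ solves Problem~\ref{poissonProba} with $r=+\infty$ if and only if $\int_\Theta \phi_{\mu^\star}\,d\nu\ge \int_\Theta \phi_{\mu^\star}\,d\mu^\star$ for every $\nu\in\Prob_2(\Theta)$, where $\phi_{\mu^\star}(\theta):=d\E|_{P_\tau(\mu^\star)}(\Phi_\tau(\theta;\cdot))$; testing against Dirac masses this is equivalent to $\inf_{\theta\in\Theta}\phi_{\mu^\star}(\theta)\ge \int_\Theta\phi_{\mu^\star}\,d\mu^\star$. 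Because $\Phi_\tau(\theta;\cdot)=c+a\,\sigma_{H,\tau}(w\cdot(\cdot)+b)$ is affine in $(c,a)$ and $d\E|_u$ is linear, $\phi_{\mu^\star}(c,a,w,b)=c\,h+a\,g(w,b)$ with $h:=\int_\Omega P_\tau(\mu^\star)$ (using $\int_\Omega f=0$) and $g(w,b):=d\E|_{P_\tau(\mu^\star)}\!\big(\sigma_{H,\tau}(w\cdot(\cdot)+b)\big)$. As $c$ and $a$ range over all of $\R$, the infimum is finite — hence equal to $\int_\Theta\phi_{\mu^\star}\,d\mu^\star$, so that the first-order inequality holds — if and only if $h=0$ and $g\equiv 0$ on $M':=S_{\R^d}(1)\times[-\sqrt d-2,\sqrt d+2]$; in that case $\phi_{\mu^\star}\equiv 0$. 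So it suffices to prove $h=0$ and $g\equiv 0$ on $M'$.

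\emph{Step 2 ($\mu^\star$ is a critical point).} The limiting curve of Theorem~\ref{theoremGlobalWellPosedness} dissipates the energy ($t\mapsto\E_{\tau,\infty}(\mu(t))$ is non-increasing and bounded below by Remark~\ref{remarkEnergyBilinearForm}), so $\int_0^\infty \|\bold P v_{\mu(t)}\|^2_{L^2(\Theta;d\mu(t))}\,dt<\infty$ and thus $\|\bold P v_{\mu(t_n)}\|_{L^2(d\mu(t_n))}\to 0$ along some $t_n\to\infty$. By lower semicontinuity of the local slope $|\nabla^-\E_{\tau,\infty}|$ (Definition~\ref{definitionLocalSlope}), which coincides with $\|\bold P v_\mu\|_{L^2(d\mu)}$, and since $\mu(t_n)\to\mu^\star$, we get $\bold P v_{\mu^\star}=0$ $\mu^\star$-a.e.; continuity of $\theta\mapsto \bold P v_{\mu^\star}(\theta)$ then gives $\bold P v_{\mu^\star}\equiv 0$ on ${\rm Supp}(\mu^\star)$. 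Reading off the $c$-component (left untouched by $\bold P$) at any point of ${\rm Supp}(\mu^\star)$ yields $h=\partial_c\phi_{\mu^\star}=0$; reading off the $a$-component yields $g(w,b)=0$ for every $(w,b)$ in the $(w,b)$-projection of ${\rm Supp}(\mu^\star)$. It thus remains to prove $g\equiv 0$ on all of $M'$, which requires controlling where ${\rm Supp}(\mu^\star)$ lives in the $(w,b)$-variables; the mere criticality of $\mu^\star$ does not suffice here (a flow can converge to a spurious critical point), and Hypothesis~\ref{hypothesisSupport} must enter.

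\emph{Step 3 (the dormant sheet flows onto all of $M'$; the topological step).} Write $\mu(t)=\chi(t)\#\mu_0$ with $\chi(t):\Theta\to\Theta$ the homeomorphism solving the characteristic ODE of Theorem~\ref{theoremGlobalWellPosedness}, and let $\pi_{w,b}$ denote projection onto the $(w,b)$-coordinates. By Hypothesis~\ref{hypothesisSupport}, $M:=\{0\}\times\{0\}\times M'\subset{\rm Supp}(\mu_0)$, hence $\chi(t)(M)\subset{\rm Supp}(\mu(t))$. Two properties of $\sigma_{H,\tau}$ are used. First, $\sigma_{H,\tau}$ is supported in $[-1-1/\tau,1+1/\tau]$, so for $w\in S_{\R^d}(1)$, $x\in\Omega$ and $|b|=\sqrt d+2$ one has $|w\cdot x+b|\ge |b|-|w\cdot x|\ge \sqrt d+2-\sqrt d=2\ge 1+1/\tau$ (for $\tau\ge1$, which is precisely why the bias interval is $[-\sqrt d-2,\sqrt d+2]$), so $x\mapsto\sigma_{H,\tau}(w\cdot x+b)$ and $x\mapsto\sigma'_{H,\tau}(w\cdot x+b)$ vanish identically on $\Omega$; consequently $g_\mu$, $\partial_b g_\mu$ and $\nabla_w g_\mu$ vanish on $S_{\R^d}(1)\times\{\pm(\sqrt d+2)\}$ for every $\mu$, the set $\{|b|=\sqrt d+2\}$ is flow-invariant (so $\Theta$ is flow-invariant without reflection), and the neurons $(0,0,w_0,\pm(\sqrt d+2))$ are stationary under $\chi(t)$ up to the common $c$-drift. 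Second, $\chi(t)$ is a homeomorphism of $\Theta$ isotopic to $\mathrm{id}_\Theta$ via $\chi(s\,t)$, $s\in[0,1]$, and this isotopy fixes $\partial M$ at every time. Therefore $F_t:=\pi_{w,b}\circ\chi(t)|_M:M'\to M'$ is continuous and homotopic to $\mathrm{id}_{M'}$ relative to $\partial M'$, so it induces the identity on $H_d(M',\partial M')\cong\mathbb Z$ ($M'$ being a compact connected orientable $d$-manifold with boundary). If $F_t$ missed an interior point $p$, it would factor through $M'\setminus\{p\}$, a non-compact $d$-manifold with the same boundary, for which $H_d(\cdot,\partial M')=0$ — a contradiction. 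Hence $F_t$ is onto $M'$ for every $t$: the $(w,b)$-projection of $\chi(t)(M)$, a fortiori of ${\rm Supp}(\mu(t))$, is all of $M'$.

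\emph{Step 4 (passing to the limit; conclusion).} It remains to transfer Step 3 to the limit, i.e.\ to show the $(w,b)$-projection of ${\rm Supp}(\mu^\star)$ is all of $M'$. Suppose not: some $(w_1,b_1)\in M'$ is missed, so ${\rm Supp}(\mu^\star)$ avoids $\pi_{w,b}^{-1}(U)$ for a neighbourhood $U$ of $(w_1,b_1)$ in $M'$; by Step 2 and continuity, $g$ (and $\partial_bg$, $\nabla_w g$) and the corresponding time-$t$ quantities $g_{\mu(t)}$ have, for $t$ large, a definite sign of magnitude $\ge\delta>0$ on $U$ — note $(w_1,b_1)$ is interior since $g$ vanishes on $\partial M'$. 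Now a dormant neuron whose $(w,b)$-coordinates enter $U$ has $\dot a=-g_{\mu(t)}(w,b)$ of fixed sign, and the ensuing sign of $a$ makes the $(w,b)$-motion a (scaled) gradient flow of $g_{\mu(t)}$, so such neurons climb towards the interior extremum and remain in $U$; using surjectivity of $F_t$ and mass conservation of the flow one extracts a positive-$\mu_0$-mass ball $B$ whose image $\chi(t)(B)$ stays in $\pi_{w,b}^{-1}(U)$, along which $|a|\to\infty$. This forces the mass-$\mu_0(B)$ set $\chi(t)(B)$ to escape every compact of $\Theta$, contradicting tightness of $(\mu(t))_t$, hence the convergence $\mu(t)\to\mu^\star$. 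Therefore ${\rm Supp}(\mu^\star)$ projects onto $M'$, so $g\equiv 0$ on $M'$; with $h=0$ this gives $\phi_{\mu^\star}\equiv 0$, and by Step 1, $\mu^\star$ is optimal for Problem~\ref{poissonProba}. The main obstacle is Steps 3–4: justifying rigorously that $\chi(t)$ is a genuine homeomorphism isotopic to the identity rel $\partial M$ (this rests on the well-posedness of Theorem~\ref{theoremGlobalWellPosedness} and on the boundary behaviour of $\sigma_{H,\tau}$) and, above all, converting the finite-time surjectivity of $F_t$ into a statement about ${\rm Supp}(\mu^\star)$, i.e.\ producing the trapped positive-mass set $B$. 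The homology argument itself is short, but it is exactly the ingredient that replaces the homogeneity-based, degree-on-the-sphere reasoning of~\cite{BachChizat2018,BachChizat2021}.
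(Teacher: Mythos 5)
Your overall strategy shares the paper's two key ingredients --- the homology argument on the cylinder (your Step~3 is essentially the paper's Lemma~\ref{lemmaSurjectivity}) and an escape-of-mass mechanism that blows up the second moment when $\phi_{\mu^\star}\neq 0$ --- but the logic is organized differently, and two of your steps have genuine gaps.

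\textbf{Step~2 is both unnecessary and technically unsupported.} The paper never tries to show that $\mu^\star$ is a critical point. You invoke ``lower semicontinuity of the local slope $|\nabla^-\E_{\tau,\infty}|$, which coincides with $\|\bold P v_\mu\|_{L^2(d\mu)}$,'' but the paper establishes that identification only for $\E_{\tau,r}$ with $r<\infty$ (Lemma~\ref{lemmaIdentificationGradient}), under a strict support condition, and only for measures in $\Prob_2(K_r)$; there is no such statement for $\E_{\tau,\infty}$. Moreover, lower semicontinuity of $\mu\mapsto\|\bold P v_\mu\|_{L^2(d\mu)}$ along a $W_2$-convergent sequence is not automatic here: $v_\mu$ itself depends on $\mu$, and the quantitative control in Lemma~\ref{lemmaContinuityPhi} has quadratic growth in $\theta$, so the estimate you'd need to pass to the limit requires higher-moment control that the convergence $\mu(t_n)\to\mu^\star$ in $W_2$ does not give. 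Nothing downstream actually requires criticality, so this step is best dropped; in the paper the role of ``what constraint does optimality failure impose'' is entirely absorbed into the contradiction set-up via Lemma~\ref{lemmaOptimality}.

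\textbf{Step~4 does not establish the crucial escape step rigorously, and in fact targets the wrong object.} The paper does not try to prove that ${\rm Supp}(\mu^\star)$ projects onto all of $M'$ --- that assertion is false in general, since supports can drop under $W_2$-limits. What the paper proves (Lemma~\ref{lemmaPositiveMeasure}) is that $\mu(t)(\R^2\times O)>0$ for every $t$, and this is then fed into Proposition~\ref{propositionEscape}: if $\phi_{\mu^\star}\neq 0$ somewhere, there is an explicit open set $A\subset\R^2\times S_{\R^d}(1)\times[-\sqrt d-2,\sqrt d+2]$ (a sublevel set $A^-_{-\eta}$ of $\psi_{\mu^\star}$ chosen at a regular value, with the $a$-sign fixed) and a radius $\varepsilon>0$ such that positive $\mu(t_0)$-mass on $A$ together with $W_2(\mu(t_0),\mu^\star)\le\varepsilon$ forces the flow to exit the $\varepsilon$-ball at some later time; the mechanism is that the level set $A^-_{-\eta}$ is invariant under the perturbed flow, $\dot a$ has a definite sign there, and a truncated second moment grows linearly, contradicting the moment bound implied by $W_2$-closeness. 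Your ``extract a positive-$\mu_0$-mass ball $B$ whose image $\chi(t)(B)$ stays in $\pi_{w,b}^{-1}(U)$'' skips precisely the delicate part: the surjectivity of $F_t$ gives one preimage point at each time, not a fixed ball trapped for all $t$, and establishing invariance of a $(w,b)$-sublevel set under the time-dependent vector field $-\bold P v_{\mu(t)}$ requires exactly the quantitative comparison $|v_{\mu(t)}-v_{\mu^\star}|\le C\varepsilon(1+|\theta|^2)$ on a controlled sublevel of $|\theta|$, which is the content of Proposition~\ref{propositionEscape}. If you rewrite Step~4 purely by contradiction --- ``$\mu(t)(A)>0$ for all $t$ (by Step~3) and $W_2(\mu(t),\mu^\star)\le\varepsilon$ eventually (by convergence), hence by Proposition~\ref{propositionEscape} the flow leaves the $\varepsilon$-ball, contradiction'' --- you recover exactly the paper's argument and do not need Step~2 or any statement about ${\rm Supp}(\mu^\star)$.
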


This theorem is proved in Section~\ref{sectionConvergence}.

\section{Gradient curve}\label{sectionGradientFlow}

This section is dedicated to the proof of the two theorems stated in Section~\ref{sectionMainResults}.

\subsection{Well-posedness}\label{sectionWellPosedness}

\subsubsection{Proof of Theorem~\ref{theoremWellPosednessCondSupport}}

Let us fix some value of $r>0$ in this section. In the following, $C>0$ will denote an arbitrary constant which does not depend on $\tau$ and $r$. Let $\mathfrak{P}$ be the set of geodesics of $\Theta$ \textit{ie} the set of absolutely continuous curves $\pi : [0,1] \rightarrow \Theta$ such that for all $t_1,t_2\in[0,1]$, $d(\pi(t_1), \pi(t_2)) = d(\pi(0), \pi(1))|t_1 - t_2|$. Besides, it holds that for all $0\leq t \leq 1$, we have $|\dot{\pi}(t)|= d(\pi(0), \pi(1))$. 

For all $s\in [0,1]$, we define the application map $e_s : \mathfrak{P} \rightarrow \Theta$ such that $e_s(\pi) := \pi(s)$. Owing this, McCann interpolation gives the fundamental characterization of constant speed geodesics in $\Prob_2(\Theta)$ :

\begin{proposition}\cite[Proposition 2.10]{LottVillani2009}\label{propositionMccannInterpolant}
For all $\mu, \nu \in \Prob_2(\Theta)$ and any geodesic $\kappa: [0,1] \to \Prob_2(\Theta)$ between them (i.e. such that $\kappa(0) = \mu$ and $\kappa(1) = \nu$) in the $W_2$ sense, there exists $\Pi \in \Prob_2(\mathfrak{P})$ such that :

$$
\forall t \in [0,1], \ \kappa(t) = {e_t}\# \Pi.
$$ 
\end{proposition}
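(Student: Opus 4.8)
This proposition is a classical fact from optimal transport on geodesic metric spaces, and it is precisely~\cite[Proposition 2.10]{LottVillani2009}; the plan is therefore just to recall the standard proof, which ``lifts'' the given Wasserstein geodesic $\kappa$ to a probability measure on the space $\mathfrak{P}$ of geodesics of $\Theta$ by a dyadic approximation followed by a compactness argument on the path space ${\mathcal C}([0,1];\Theta)$. First I would record the preliminary structural facts: each factor of $\Theta$ (the two copies of $\R$, the round sphere $S_{\R^d}(1)$ and the compact interval) is a complete separable locally compact geodesic space, hence so is $\Theta$, and so is $(\Prob_2(\Theta),W_2)$; moreover, by a measurable selection theorem one can fix once and for all a Borel map $G:\Theta\times\Theta\to{\mathcal C}([0,1];\Theta)$ sending $(\theta,\tilde\theta)$ to a constant-speed minimizing geodesic from $\theta$ to $\tilde\theta$ (a selection is genuinely needed, since geodesics between antipodal points of $S_{\R^d}(1)$ are not unique).

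Next I would carry out the dyadic construction. For $n\in\N$ and $1\le k\le 2^n$ choose an optimal coupling $\gamma_n^k$ between $\kappa((k-1)2^{-n})$ and $\kappa(k2^{-n})$, and iterate the gluing lemma to produce $\Sigma_n\in\Prob_2(\Theta^{2^n+1})$ whose $k$-th one-dimensional marginal is $\kappa(k2^{-n})$ and whose marginal on the coordinates $(k-1,k)$ is $\gamma_n^k$. Interpolating geodesically on each dyadic subinterval through $G$ defines a Borel map $\Theta^{2^n+1}\to{\mathcal C}([0,1];\Theta)$; let $\Pi_n\in\Prob({\mathcal C}([0,1];\Theta))$ be the image of $\Sigma_n$ under this map. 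By construction $\Pi_n$ is concentrated on piecewise-geodesic curves and $(e_t)_\#\Pi_n=\kappa(t)$ for every dyadic $t=k2^{-n}$.

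The core is then the compactness and identification step. Since $\kappa$ is a constant-speed $W_2$-geodesic, $\int d(\theta_{k-1},\theta_k)^2\,d\Sigma_n=W_2(\kappa((k-1)2^{-n}),\kappa(k2^{-n}))^2=2^{-2n}W_2(\mu,\nu)^2$; summing over $k$ and applying Cauchy--Schwarz bounds $\int ({\rm length}\,\omega)^2\,d\Pi_n(\omega)$ by $W_2(\mu,\nu)^2$ uniformly in $n$, while analogous estimates provide a uniform-in-$n$ modulus of continuity for the $\Pi_n$. Together with the uniform bound on the second moments of the $\kappa(t)$ (obtained from the triangle inequality for $W_2$ along $\kappa$), this yields tightness of $(\Pi_n)_n$ on ${\mathcal C}([0,1];\Theta)$, so I extract a narrowly convergent subsequence $\Pi_n\rightharpoonup\Pi$. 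Passing to the limit at $t=0$ and $t=1$ gives $(e_0)_\#\Pi=\mu$ and $(e_1)_\#\Pi=\nu$, so the law of $(\omega(0),\omega(1))$ under $\Pi$ is a coupling of $\mu$ and $\nu$ and hence $\int d(\omega(0),\omega(1))^2\,d\Pi\ge W_2(\mu,\nu)^2$; on the other hand, lower semicontinuity of the length functional gives $\int ({\rm length}\,\omega)^2\,d\Pi\le W_2(\mu,\nu)^2$, and since ${\rm length}\,\omega\ge d(\omega(0),\omega(1))$ pointwise, all of these inequalities are forced to be equalities. Hence $\Pi$-almost every curve is minimizing, and a short additional argument using that $\kappa$ itself has constant $W_2$-speed upgrades this to ``constant-speed geodesic'', so $\Pi\in\Prob_2(\mathfrak{P})$. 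Finally $(e_t)_\#\Pi=\lim_n(e_t)_\#\Pi_n=\kappa(t)$ for dyadic $t$, and both sides are $W_2$-continuous in $t$ while the dyadics are dense in $[0,1]$, so $(e_t)_\#\Pi=\kappa(t)$ for all $t\in[0,1]$, which is the claim.

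The step I expect to be the main obstacle is the compactness: since $\Theta$ is non-compact, Arzel\`a--Ascoli cannot be invoked directly, and tightness of $(\Pi_n)_n$ on ${\mathcal C}([0,1];\Theta)$ has to be assembled from the uniform second-moment control of the time-marginals together with the uniform ``energy'' estimate on $\Pi_n$ coming from the geodesic property of $\kappa$. The measurable selection of geodesics on $\Theta$ and the passage from ``minimizing'' to ``constant-speed'' for the limiting curves are the remaining points that require a little care.
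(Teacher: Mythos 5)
The paper does not prove this proposition; it is stated as a quoted black-box result, with the citation to Lott--Villani, Proposition 2.10 doing the work, and the only genuinely new content is the observation recorded afterwards in Remark~\ref{remarkSupportGeodesic}. Your argument is the standard lifting proof from the optimal-transport literature (it is essentially what the cited reference, and Lisini's and Villani's treatments, do): measurable selection of constant-speed geodesics on the base, dyadic interpolation via the gluing lemma, a uniform action estimate coming from the constant-speed property of $\kappa$ together with Cauchy--Schwarz, tightness on ${\mathcal C}([0,1];\Theta)$, and identification of the limit by comparing the endpoint coupling with the lower-semicontinuous action. So the approach is correct and consistent with what the paper invokes. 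One minor technical improvement worth making explicit: to conclude that $\Pi$-a.e.\ curve is not merely minimizing but also \emph{constant-speed}, it is cleaner to run the lower-semicontinuity argument with the kinetic action $\int_0^1|\dot\omega|^2\,dt$ rather than with $(\mathrm{length}\,\omega)^2$. Equality in $\int_0^1|\dot\omega|^2\,dt\ge(\mathrm{length}\,\omega)^2\ge d(\omega(0),\omega(1))^2$ forces constant speed directly, whereas equality of length with endpoint distance only forces the curve to be minimizing; you do flag this as needing an extra step, and the fact that $(e_s,e_t)_\#\Pi$ is an optimal coupling of $\kappa(s)$ and $\kappa(t)$ for all $s<t$ (provable by the same lsc comparison on subintervals) does close that gap, but the action-based route is the shorter one.
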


\begin{remark}\label{remarkSupportGeodesic}
As ${e_0}\# \Pi = \mu$ and ${e_1}\# \Pi = \nu$, the support of $\Pi$ is included in the set of geodesics $\pi: [0,1] \to \Theta$ such that $\pi(0)$ belongs to the support of $\mu$ and $\pi(1)$ belongs to the support of $\nu$. In addition, it holds that $\gamma:=(e_0,e_1)\#\Pi$ is then an optimal transport plan between $\mu$ and $\nu$ for the quadratic cost, i.e. $W_2(\mu^,\nu)^2 = \int_{\Theta \times \Theta} |\theta - \widetilde{\theta}|^2\,d\gamma(\theta, \widetilde{\theta})$. 
\end{remark}

The next result states smoothness properties of geodesics on $\Theta$ which are direct consequences of the smoothness properties of geodesics on the unit sphere of $\R^d$. It is a classical result and its proof is left to the reader. 
\begin{lemma}\label{lemmaSmoothnessGeodesic}
There exists $C>0$ such that for all $(\theta,\tilde{\theta})$ in $\Theta^2$, all geodesic $\pi: [0,1] \to \Theta$ such that $\pi(0) = \theta$ and $\pi(1) = \widetilde{\theta}$ and all $0 \leq s \leq t \leq 1$, 

$$
|\pi(t) - \pi(s)| \leq d(\pi (t), \pi (s)) = (t-s) d(\theta,\tilde{\theta}) \leq C (t-s) |\tilde{\theta} - \theta|
$$
and
$$
\left|\frac{d}{dt} \pi(t) \right| \leq d(\theta,\tilde{\theta}) \leq C |\tilde{\theta} - \theta|.
$$
\end{lemma}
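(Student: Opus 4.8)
The plan is to reduce everything to the product structure of $\Theta$. As a metric space, $\Theta = \R \times \R \times S_{\R^d}(1) \times I$ with $I := [-\sqrt{d}-2, \sqrt{d}+2]$ is the Cartesian product of four complete geodesic metric spaces (two Euclidean lines carrying $c$ and $a$, the round sphere $S_{\R^d}(1)$, and the convex interval $I$ carrying $b$), and its squared geodesic distance is the sum of the squared geodesic distances of the factors, as recorded in the explicit formula for $d(\theta,\tilde\theta)$ given just before the statement. Consequently every constant-speed geodesic $\pi = (\pi_c,\pi_a,\pi_w,\pi_b):[0,1]\to\Theta$ splits coordinatewise: $\pi_c,\pi_a,\pi_b$ are affine and $\pi_w$ is a constant-speed great-circle arc on $S_{\R^d}(1)$ of length $d_{S_{\R^d}(1)}(w,\tilde w)$. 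I regard $\Theta$ as a subset of the ambient Euclidean space $\R^{d+3}$, so that $|\theta-\tilde\theta|^2 = (c-\tilde c)^2+(a-\tilde a)^2+|w-\tilde w|^2+(b-\tilde b)^2$ with $|w-\tilde w|$ the chord distance on the sphere.

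The one genuinely computational step is the elementary comparison of arc length and chord length on the unit sphere. For $w,\tilde w\in S_{\R^d}(1)$, write $\alpha := d_{S_{\R^d}(1)}(w,\tilde w)=\arccos(\langle w,\tilde w\rangle)\in[0,\pi]$, so that $|w-\tilde w| = 2\sin(\alpha/2)$. From $2\sin(\alpha/2)\leq\alpha$ one gets $|w-\tilde w|\leq\alpha$, and since $t\mapsto t/(2\sin(t/2))$ is nondecreasing on $(0,\pi]$ with value $\pi/2$ at $t=\pi$, one also gets $\alpha\leq\frac{\pi}{2}|w-\tilde w|$. Because the three other coordinates contribute identically to $d(\theta,\tilde\theta)^2$ and to $|\theta-\tilde\theta|^2$, these two inequalities upgrade to $|\theta-\tilde\theta|\leq d(\theta,\tilde\theta)\leq\frac{\pi}{2}|\theta-\tilde\theta|$ for all $\theta,\tilde\theta\in\Theta$; this fixes the universal constant $C:=\pi/2$, which crucially does not depend on $d$.

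It then remains only to assemble the two displays. For $0\leq s\leq t\leq 1$, the defining property of $\mathfrak{P}$ gives directly $d(\pi(t),\pi(s)) = (t-s)\,d(\pi(0),\pi(1)) = (t-s)\,d(\theta,\tilde\theta)$; combining with the inequality $|\pi(t)-\pi(s)|\leq d(\pi(t),\pi(s))$ (the chord/geodesic comparison of the previous paragraph applied to the pair $\pi(s),\pi(t)$) and with $d(\theta,\tilde\theta)\leq C|\tilde\theta-\theta|$ yields the first display. For the second display, the Riemannian metric induced on each factor coincides with the restriction of the ambient Euclidean one, hence the ambient speed $\big|\tfrac{d}{dt}\pi(t)\big|$ equals the metric speed of $\pi$, which for a constant-speed geodesic is $d(\pi(0),\pi(1)) = d(\theta,\tilde\theta)\leq C|\tilde\theta-\theta|$.

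There is no real obstacle here: the content is the arc-versus-chord estimate on $S_{\R^d}(1)$ together with the splitting of geodesics of a metric product, both standard. The only point requiring a moment's attention is that the constant must be uniform in the dimension $d$, which is automatic since the sphere comparison constant $\pi/2$ is dimension-free and the remaining factors contribute with constant $1$.
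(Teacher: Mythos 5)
Your proof is correct and supplies exactly the argument the paper omits: the authors state only that the lemma is a direct consequence of the smoothness of geodesics on the unit sphere and leave the proof to the reader. The coordinatewise splitting of geodesics in the metric product, the arc-versus-chord comparison $2\sin(\alpha/2)\leq\alpha\leq\pi\sin(\alpha/2)$ on $S_{\R^d}(1)$ giving the dimension-free constant $C=\pi/2$, and the identification of metric speed with ambient Euclidean speed for the induced Riemannian metric are precisely the classical steps the paper has in mind (and you have correctly read the displayed distance formula as carrying $d_{S_{\R^d}(1)}(w,\tilde{w})^2$, not $d_{S_{\R^d}(1)}(w,\tilde{w})$, inside the square root).
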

In order to prove the well-posedness, it is necessary to get information about the smoothness of $\E_{\tau,r}$. 

\begin{proposition}
\label{lemmaDiffEp}

The functional $\mathcal{E}_{\tau,r}$ is proper, coercive, differentiable on $\Prob_2(K_r)$. Moreover, there exists a constant $C_{r,\tau}>0$ such that for all $\mu, \nu \in \Prob_2(K_r)$, $ \gamma \in \Gamma(\mu,\nu)$ with support included in $K_r \times K_r$:

\begin{equation}
\label{definitionV3}
\left| \mathcal{E}_{\tau,r}(\nu) - \mathcal{E}_{\tau,r}(\mu) + \int_{\Theta^2} v_\mu(\theta) \cdot(\tilde{\theta} - \theta) d\gamma(\theta, \tilde{\theta})\right| \leq  C_{r,\tau} c_2(\gamma)
\end{equation}
with
$$
c_2(\gamma) := \int_{\Theta ^2} (\theta-\tilde{\theta})^2\,d\gamma(\theta,\tilde{\theta}), 
$$
and
\begin{equation}
\label{definitionV1}
v_\mu(\theta) := \nabla_\theta \phi_\mu(\theta) 
\end{equation}
where for all $\theta \in K_r$,
\begin{equation}
\label{definitionV2}
\begin{aligned}
\phi_\mu(\theta) & :=  \langle \nabla_x P_{\tau}(\mu), \nabla_x \Phi_{\tau}(\theta;\cdot) \rangle_{L^2(\Omega)} - \langle f,  \Phi_{\tau}(\theta;\cdot)   \rangle_{L^2(\Omega)} + \int_{\Omega}  P_{\tau}(\mu)(x) dx \times \int_{\Omega} \Phi_{\tau}(\theta;x) dx\\
& = d\E|_{P_{\tau}(\mu)}(\Phi_{\tau}(\theta;\cdot)).
\end{aligned}
\end{equation}
\end{proposition}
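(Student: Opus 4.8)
The plan is to reduce everything to the fact that $\E$ is a quadratic (strongly convex, continuous) functional on $H^1(\Omega)$ with the explicit derivative computed in Lemma~\ref{differentiabilityE}, and that $P_\tau$ is an affine map $\Prob_2(\Theta)\to H^1(\Omega)$ which is Lipschitz-controlled on $\Prob_2(K_r)$ by the $W_2$-metric. First I would record that $P_\tau$ is affine: $P_\tau\big((1-t)\mu+t\nu\big)=(1-t)P_\tau(\mu)+tP_\tau(\nu)$, so that along a geodesic $\kappa(t)={e_t}\#\Pi$ given by Proposition~\ref{propositionMccannInterpolant} one has an exact second-order expansion of $\E\circ P_\tau$. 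Properness is immediate ($\E_{\tau,r}$ is finite at any $\delta_\theta$, $\theta\in K_r$), and coercivity follows from the rewriting in Remark~\ref{remarkEnergyBilinearForm} together with the fact that $\Prob_2(K_r)$ is $W_2$-bounded, so $\E_{\tau,r}$ is actually bounded below on its domain; lower semicontinuity is Proposition~\ref{propositionLowerSemicontinuity} and Remark~\ref{remarkLowerSemicontinuity}.

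For the quantitative estimate~\eqref{definitionV3}, fix $\mu,\nu\in\Prob_2(K_r)$ and $\gamma\in\Gamma(\mu,\nu)$ supported in $K_r\times K_r$. Using Remark~\ref{remarkEnergyBilinearForm}, $\E(w)=\frac12\bar a(w-u^\star,w-u^\star)-\frac12\|\nabla u^\star\|_{L^2}^2$, so with $u:=P_\tau(\mu)$, $\tilde u:=P_\tau(\nu)$ one gets exactly
$$
\E_{\tau,r}(\nu)-\E_{\tau,r}(\mu)=\bar a(u-u^\star,\tilde u-u)+\tfrac12\bar a(\tilde u-u,\tilde u-u)=d\E|_{P_\tau(\mu)}(\tilde u-u)+\tfrac12\bar a(\tilde u-u,\tilde u-u).
$$
Now $\tilde u-u=P_\tau(\nu)-P_\tau(\mu)=\int_{\Theta^2}\big(\Phi_\tau(\tilde\theta;\cdot)-\Phi_\tau(\theta;\cdot)\big)\,d\gamma(\theta,\tilde\theta)$. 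A first-order Taylor expansion of $\theta\mapsto\Phi_\tau(\theta;\cdot)$ (valid since $\sigma_{H,\tau}\in\mathcal C^\infty$, with the second-derivative bounds of~\eqref{eq:bound}, and geodesics on $\Theta$ are smooth with speed controlled by $|\tilde\theta-\theta|$ via Lemma~\ref{lemmaSmoothnessGeodesic}) gives, in $H^1(\Omega)$,
$$
\Phi_\tau(\tilde\theta;\cdot)-\Phi_\tau(\theta;\cdot)=\nabla_\theta\Phi_\tau(\theta;\cdot)\cdot(\tilde\theta-\theta)+R(\theta,\tilde\theta),\qquad \|R(\theta,\tilde\theta)\|_{H^1(\Omega)}\le C_{r,\tau}\,|\tilde\theta-\theta|^2.
$$
Plugging this into $d\E|_{P_\tau(\mu)}(\tilde u-u)$ and using linearity of $d\E|_{P_\tau(\mu)}$ produces the term $-\int_{\Theta^2}v_\mu(\theta)\cdot(\tilde\theta-\theta)\,d\gamma$ up to sign conventions, where $v_\mu(\theta)=\nabla_\theta\phi_\mu(\theta)$ and $\phi_\mu(\theta)=d\E|_{P_\tau(\mu)}(\Phi_\tau(\theta;\cdot))$ exactly as in~\eqref{definitionV2}; here one commutes $\nabla_\theta$ with the bounded linear functional $d\E|_{P_\tau(\mu)}$. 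The two remainder contributions — the Taylor remainder $d\E|_{P_\tau(\mu)}(\int R\,d\gamma)$ and the quadratic term $\frac12\bar a(\tilde u-u,\tilde u-u)$, which is $\le\frac L2\|\tilde u-u\|_{H^1}^2\le\frac{L}{2}\big(\int\|\Phi_\tau(\tilde\theta;\cdot)-\Phi_\tau(\theta;\cdot)\|_{H^1}\,d\gamma\big)^2\le C_{r,\tau}\,c_2(\gamma)$ by Cauchy–Schwarz and the Lipschitz bound on $\theta\mapsto\Phi_\tau(\theta;\cdot)$ — are both bounded by $C_{r,\tau}\,c_2(\gamma)$, where $C_{r,\tau}$ absorbs $\|d\E|_{P_\tau(\mu)}\|$ (itself controlled by $\|P_\tau(\mu)\|_{H^1}+\|f\|_{L^2}\le C r$ on $K_r$) and the derivative bounds~\eqref{eq:bound}. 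Differentiability of $\E_{\tau,r}$ on $\Prob_2(K_r)$ then follows from~\eqref{definitionV3} by choosing $\gamma$ the optimal plan, so that $c_2(\gamma)=W_2(\mu,\nu)^2$, and reading off $v_\mu$ as the Wasserstein gradient (an element of the tangent cone).

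The main obstacle is the bookkeeping of the constant $C_{r,\tau}$: one must verify that all the $H^1(\Omega)$-norms of $\Phi_\tau(\theta;\cdot)$ and its $\theta$-derivatives of order $\le 2$ are uniformly bounded for $\theta\in K_r$ — this is where~\eqref{eq:bound} and the compactness of $S_{\R^d}(1)\times[-\sqrt d-2,\sqrt d+2]$ enter, together with $|w\cdot x+b|\le\sqrt d+\sqrt d+2$ on $\Omega$ so the argument of $\sigma_{H,\tau}$ stays in a fixed compact — and that differentiating $\Phi_\tau$ along geodesics of $\Theta$ (rather than straight lines) only costs the constant $C$ of Lemma~\ref{lemmaSmoothnessGeodesic}. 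Once these uniform bounds are in hand, the estimate is a routine second-order Taylor argument and the remaining assertions (proper, coercive, differentiable) are immediate consequences.
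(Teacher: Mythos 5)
Your proposal follows essentially the same route as the paper's proof: you exploit the exact quadratic structure of $\E$ (your use of Remark~\ref{remarkEnergyBilinearForm} to write $\E(\tilde u)-\E(u)=d\E|_u(\tilde u-u)+\tfrac12\bar a(\tilde u-u,\tilde u-u)$ is identical to the paper's decomposition via $N(w):=\tfrac12\bar a(w,w)$), then Taylor-expand $\theta\mapsto\Phi_\tau(\theta;\cdot)$ to first order with an $H^1$-bounded quadratic remainder (the paper's $M_\tau$, your $R$), bound both remainder contributions by $C_{r,\tau}c_2(\gamma)$ via the derivative estimates that appear in Lemma~\ref{lem:aux}/~\eqref{eq:bound} and Jensen, and identify the first-order term as $\int v_\mu(\theta)\cdot(\tilde\theta-\theta)\,d\gamma$ by Fubini and commuting $\nabla_\theta$ with $d\E|_{P_\tau(\mu)}$. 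Two small points: the claim in your opening plan that $P_\tau$ being affine yields an \emph{exact} second-order expansion of $\E\circ P_\tau$ along a $W_2$-geodesic $\kappa(t)=e_t\#\Pi$ is not correct (the geodesic is not the linear interpolation $(1-t)\mu+t\nu$, so $t\mapsto P_\tau(\kappa(t))$ is not affine), but your actual argument never uses this and works with a general coupling $\gamma$, so nothing breaks; and the first-order term you obtain is $+\int v_\mu\cdot(\tilde\theta-\theta)\,d\gamma$ rather than $-\int v_\mu\cdot(\tilde\theta-\theta)\,d\gamma$ — you are right to flag the sign, which is a typo in the statement and in~\eqref{formulaEpdiff} of the paper, as the correct sign is the one used later in Proposition~\ref{velocityProp}.
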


The properness and coercivity are easy to prove and left to the reader. Before proving the differentiability property of $\E_{\tau, r}$, we will need the following auxiliary lemma. 

\begin{lemma}\label{lem:aux}
There exists a constant $C>0$ such that for all $\tau>0$ and all $\theta \in \Theta$, we have
\begin{align*}
\left\|\Phi_\tau(\theta; \cdot)\right\|_{L^\infty(\Omega)} & \leq C |\theta|,\\
\left\| \nabla_x \Phi_\tau(\theta; \cdot)\right\|_{L^\infty(\Omega)} & \leq C |\theta|,\\
\left\| \nabla_\theta \Phi_\tau(\theta; \cdot)\right\|_{L^\infty(\Omega)} & \leq C |\theta|,\\
\left\| H_\theta \Phi_\tau(\theta; \cdot)\right\|_{L^\infty(\Omega)} & \leq C |\theta| \tau,\\
\left\|\nabla_x \nabla_\theta \Phi_\tau(\theta; \cdot)\right\|_{L^\infty(\Omega)} & \leq C|\theta|\tau,\\
\left\| \nabla_x H_\theta  \Phi_\tau(\theta; \cdot)\right\|_{L^\infty(\Omega)} & \leq C |\theta| \tau^2,\\
\end{align*}
where for all $\theta \in \Theta$ and $x\in \Omega$, $H_\theta \Phi_\tau(\theta; x)$ denotes the Hessian of $\Phi_\tau$ with respect to the variable $\theta$ at the point $(\theta,x)\in \Theta \times \Omega$.  
\end{lemma}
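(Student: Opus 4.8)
The plan is to establish each bound by explicitly differentiating $\Phi_\tau(\theta;x) = c + a\,\sigma_{H,\tau}(w\cdot x + b)$ with respect to the relevant variables and then invoking the uniform derivative bounds on $\sigma_{H,\tau}$ collected in~\eqref{eq:bound}, together with the compactness of $\Omega = [0,1]^d$ which gives $|x| \leq \sqrt{d}$ and of the $w$-component which lives on $S_{\R^d}(1)$. The key preliminary observation is that for $\theta = (c,a,w,b) \in \Theta$ one has $|a| \leq |\theta|$, $|c| \leq |\theta|$, and $|w| = 1 \leq |\theta|$, and moreover $|w\cdot x + b| \leq \sqrt{d} + (\sqrt d + 2)$ is bounded, so that the argument of $\sigma_{H,\tau}$ always ranges in a fixed compact set and the $L^\infty(\mathbb{R})$ bounds~\eqref{eq:bound} apply directly at every point $x\in\Omega$. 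Since every derivative of $\Phi_\tau$ in $\theta$ brings down a factor that is either a polynomial in the bounded quantities $x$, $w$, $1$ or the coefficient $a$, and every derivative of $\sigma_{H,\tau}$ costs a factor of $\tau$, the stated powers of $\tau$ will simply count how many $\sigma_{H,\tau}$-derivatives appear.

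Concretely, I would proceed bound by bound. For $\|\Phi_\tau(\theta;\cdot)\|_{L^\infty}$: $|c + a\sigma_{H,\tau}(w\cdot x+b)| \leq |c| + |a|\,\|\sigma_{H,\tau}\|_{L^\infty} \leq |\theta| + C|\theta| \leq C|\theta|$. For $\nabla_x \Phi_\tau = a\,\sigma_{H,\tau}'(w\cdot x+b)\,w$, so $|\nabla_x\Phi_\tau| \leq |a|\,\|\sigma_{H,\tau}'\|_{L^\infty}|w| \leq C|\theta|$. For $\nabla_\theta \Phi_\tau$, the components are $\partial_c \Phi_\tau = 1$, $\partial_a \Phi_\tau = \sigma_{H,\tau}(w\cdot x+b)$, $\nabla_w \Phi_\tau = a\,\sigma_{H,\tau}'(w\cdot x+b)\,x$ (projected onto the tangent space of the sphere, which only shrinks the norm), $\partial_b \Phi_\tau = a\,\sigma_{H,\tau}'(w\cdot x+b)$; all are bounded by $C(1 + |a| + |a|\sqrt d) \leq C|\theta|$ using $|\theta|\geq |w| = 1$ to absorb the constant term. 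The Hessian $H_\theta\Phi_\tau$ has nonzero blocks involving at most one further $\theta$-derivative: e.g. $\partial_a\partial_b\Phi_\tau = \sigma_{H,\tau}'$, $\partial_b^2\Phi_\tau = a\,\sigma_{H,\tau}''$, $\partial_a\nabla_w\Phi_\tau = \sigma_{H,\tau}'x$, $\nabla_w^2\Phi_\tau = a\,\sigma_{H,\tau}''\,x\otimes x + (\text{curvature terms of }S_{\R^d}(1))$; the largest contributions carry one factor $\sigma_{H,\tau}''$, hence are bounded by $C|\theta|\tau$ (the sphere-curvature terms only involve $\sigma_{H,\tau}'$ and are lower order). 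Differentiating $\nabla_\theta\Phi_\tau$ once more in $x$ gives $\nabla_x\nabla_\theta\Phi_\tau$, whose entries carry at most one $\sigma_{H,\tau}''$ (e.g. $\nabla_x\nabla_w\Phi_\tau = a\,\sigma_{H,\tau}''\,w\otimes x + a\,\sigma_{H,\tau}'\,I$), bounded by $C|\theta|\tau$. Finally $\nabla_x H_\theta\Phi_\tau$ picks up one more derivative, so at most one $\sigma_{H,\tau}'''$ appears (e.g. $\nabla_x\nabla_w^2\Phi_\tau$ contains $a\,\sigma_{H,\tau}'''\,x\otimes x\otimes w$), giving $C|\theta|\tau^2$.

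I do not expect any genuine obstacle here; the only mild bookkeeping subtlety is handling the $w$-variable correctly, since it is constrained to the unit sphere and its intrinsic gradient/Hessian are the ambient ones composed with the orthogonal projection onto $T_w S_{\R^d}(1)$ (and the sphere's second fundamental form contributes to $\nabla_w^2$). Because projection is $1$-Lipschitz and the curvature of the unit sphere is bounded by a universal constant, these geometric corrections never worsen the $\tau$-powers and only affect the universal constant $C$. One should also note that $\sigma_{H,\tau}$ is $C^\infty$ by Lemma~\ref{lemmaApproxSigma}(i), so all the derivatives written above exist classically, and $\Omega$ being compact means the $L^\infty(\Omega)$ norms are attained and finite. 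Collecting the six estimates with a common universal constant $C$ (independent of $\tau$ and $\theta$, as~\eqref{eq:bound} provides such constants) completes the proof.
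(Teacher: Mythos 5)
Your proof is correct and follows essentially the same route as the paper's: explicitly differentiate $\Phi_\tau(\theta;x) = c + a\,\sigma_{H,\tau}(w\cdot x+b)$, observe that each $\sigma_{H,\tau}$-derivative contributes one factor of $\tau$ by~\eqref{eq:bound}, and absorb the bounded factors $x$, $w$, and constants into $C|\theta|$ using $|\theta|\geq|w|=1$. The paper computes the same derivative tables (its~\eqref{gradtheta},~\eqref{grad^2theta},~\eqref{gradx},~\eqref{gradthetax}) and applies~\eqref{eq:bound}; your additional remarks about the sphere projection and second fundamental form are careful but not a different argument — the paper implicitly works with ambient derivatives on $\R^{d+3}$ and defers the tangential projection to the operator $\bold{P}$ introduced later, which amounts to the same thing since, as you note, projection is $1$-Lipschitz.
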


\begin{proof}
Let $\theta = (c,a,w,b)\in \Theta$. It then holds that, for all $x\in \Omega$,
\begin{equation}
\label{gradtheta}
\left\{
\begin{aligned}
\frac{\partial \Phi_\tau(\theta; x)}{\partial c} &= 1\\ 
\frac{\partial \Phi_\tau(\theta; x)}{\partial a} &= \sigma_{H,\tau}(w\cdot x + b)\\
\frac{\partial \Phi_\tau(\theta; x)}{\partial w} &= ax\sigma^\prime_{H,\tau}(w\cdot x + b)\\
\frac{\partial \Phi_\tau(\theta; x)}{\partial b} &= a\sigma^\prime_{H,\tau}(w\cdot x + b).\\ 
\end{aligned}
\right.
\end{equation}
This expression yields the first desired inequality. In addition, the nonzero terms of the Hessian matrix read as:
\begin{equation}
\label{grad^2theta}
\left\{
\begin{aligned} 
\frac{\partial^2 \Phi_\tau(\theta; x)}{\partial a \partial w} &= \sigma_{H,\tau}^\prime(w\cdot x + b) x\\
\frac{\partial^2 \Phi_\tau(\theta; x)}{\partial a \partial b} &= \sigma_{H,\tau}^\prime(w\cdot x + b)\\
\frac{\partial^2 \Phi_\tau(\theta; x)}{\partial^2 w} &= a\sigma^{\prime \prime}_{H,\tau}(w\cdot x + b)x x^T\\
\frac{\partial^2 \Phi_\tau(\theta; x)}{\partial w \partial b} &= a\sigma^{\prime \prime}_{H,\tau}(w\cdot x + b)x\\
\frac{\partial^2 \Phi_\tau(\theta; x)}{\partial^2 b} &= a\sigma^{\prime \prime}_{H,\tau}(w\cdot x + b).
\end{aligned}
\right.
\end{equation}

From these expressions, together with (\ref{eq:bound}), we easily get that, for all $\theta \in K_r$, 
$$
\left\| H_\theta \Phi_\tau(\theta; \cdot)\right\|_{L^\infty(\Omega)} \leq C r \tau,
$$
for some constant $C>0$ independent of $\theta$, $r$ and $\tau$.
Moreover, for all $x\in \Omega$,
\begin{equation}
\label{gradx}
\nabla_x \Phi_\tau(\theta; x) = a w \sigma^\prime_{H,\tau} (w \cdot x + b),
\end{equation}
which implies that
\begin{equation}
\label{gradthetax}
\left\{
\begin{aligned}
 \frac{\partial \nabla_x\Phi_\tau(\theta; x)}{\partial c} &= 0\\ 
  \frac{\partial \nabla_x\Phi_\tau(\theta; x)}{\partial a} &= w \sigma^\prime_{H,\tau}(w\cdot x + b)\\
 \frac{\partial \nabla_x \Phi_\tau(\theta; x)}{\partial w} &= a\sigma^\prime_{H,\tau}(w\cdot x + b) I_d + ax w^T\sigma^{\prime \prime}_{H,\tau}(w\cdot x + b) \\
\frac{\partial \nabla_x  \Phi_\tau(\theta; x)}{\partial b} &= a w \sigma^{\prime \prime}_{H,\tau}(w\cdot x + b).
\end{aligned}
\right.
\end{equation}
This implies then that
$$
\left\|\nabla_\theta \nabla_x\Phi_\tau(\theta; \cdot)\right\|_{L^\infty(\Omega)} \leq Cr\tau.
$$

Moreover, it then holds, using again (\ref{eq:bound}), that for all $\theta \in K_r$,
$$
\left\| H_\theta \nabla_x\Phi_\tau(\theta; \cdot)\right\|_{L^\infty(\Omega)} \leq C r \tau^2,
$$ 
for some constant $C>0$ independent of $\theta$, $r$ and $\tau$.

\end{proof}
The following corollary is also a prerequisite for the proof of Proposition~\ref{lemmaDiffEp}.

\begin{corollary}\label{cor:boundPtau}
There exists a constant $C_\tau>0$ and a constant $C_{r,\tau}>0$ such that for all $\mu, \nu \in \Prob_2(K_r)$ :
\begin{equation}\label{BoundPbarronmu}
\|  P_\tau(\mu) \|^2_{H^1(\Omega)} \leq C_\tau \int_{\Theta} |\theta|^2 d\mu(\theta),
\end{equation}
and
$$
\|  P_\tau(\mu) - P_\tau(\nu)\|^2_{H^1(\Omega)} \leq C_{r,\tau} W^2_2(\mu,\nu).
$$
\end{corollary}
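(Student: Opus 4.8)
The plan is to prove both estimates by bringing the $H^1(\Omega)$-norm of $P_\tau(\mu)$ (resp.\ the difference $P_\tau(\mu)-P_\tau(\nu)$) inside the integral defining it and then invoking the pointwise bounds on $\Phi_\tau$ and its $x$-gradient furnished by Lemma~\ref{lem:aux}. For the first inequality, one writes for all $x\in\Omega$
$$
P_\tau(\mu)(x)^2 = \Big(\int_\Theta \Phi_\tau(\theta;x)\,d\mu(\theta)\Big)^2 \leq \int_\Theta \Phi_\tau(\theta;x)^2\,d\mu(\theta)
$$
by Jensen's (or Cauchy--Schwarz's) inequality, and similarly $|\nabla_x P_\tau(\mu)(x)|^2 \leq \int_\Theta |\nabla_x\Phi_\tau(\theta;x)|^2\,d\mu(\theta)$ (differentiation under the integral sign is justified since $\theta\mapsto\nabla_x\Phi_\tau(\theta;x)$ is bounded uniformly on the compact $K_r$, by Lemma~\ref{lem:aux}). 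Integrating in $x$ over $\Omega$, using $|\Omega|=1$, and applying the bounds $\|\Phi_\tau(\theta;\cdot)\|_{L^\infty(\Omega)}\leq C|\theta|$ and $\|\nabla_x\Phi_\tau(\theta;\cdot)\|_{L^\infty(\Omega)}\leq C|\theta|$ from Lemma~\ref{lem:aux}, one gets $\|P_\tau(\mu)\|_{H^1(\Omega)}^2 \leq C_\tau\int_\Theta|\theta|^2\,d\mu(\theta)$, which is~\eqref{BoundPbarronmu}. (On $K_r$ one could even replace $|\theta|^2$ by $Cr^2$, but the stated form is what is needed later.)

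For the second inequality, fix an optimal transport plan $\gamma\in\Gamma(\mu,\nu)$ with support in $K_r\times K_r$ realizing $W_2(\mu,\nu)$. Then for all $x\in\Omega$,
$$
P_\tau(\mu)(x) - P_\tau(\nu)(x) = \int_{\Theta^2}\big(\Phi_\tau(\theta;x) - \Phi_\tau(\tilde\theta;x)\big)\,d\gamma(\theta,\tilde\theta),
$$
so by Jensen's inequality $|P_\tau(\mu)(x)-P_\tau(\nu)(x)|^2 \leq \int_{\Theta^2}|\Phi_\tau(\theta;x)-\Phi_\tau(\tilde\theta;x)|^2\,d\gamma$. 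On the convex set $K_r$ the map $\theta\mapsto\Phi_\tau(\theta;x)$ is Lipschitz with constant $\|\nabla_\theta\Phi_\tau(\cdot;x)\|_{L^\infty(K_r)}\leq Cr$ (Lemma~\ref{lem:aux}), hence $|\Phi_\tau(\theta;x)-\Phi_\tau(\tilde\theta;x)|\leq Cr\,|\theta-\tilde\theta|$; one must check that the Euclidean Lipschitz bound transfers to the geodesic distance $d$ on $\Theta$, which follows because $|\theta-\tilde\theta|\leq d(\theta,\tilde\theta)$ on $S_{\R^d}(1)$ and the other coordinates are flat (alternatively invoke Lemma~\ref{lemmaSmoothnessGeodesic} to bound the length of the geodesic connecting $\theta$ to $\tilde\theta$ inside $K_r$). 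The same argument applied to $\nabla_x\Phi_\tau$, using $\|\nabla_\theta\nabla_x\Phi_\tau(\cdot;x)\|_{L^\infty(K_r)}\leq Cr\tau$, gives $|\nabla_x\Phi_\tau(\theta;x)-\nabla_x\Phi_\tau(\tilde\theta;x)|\leq Cr\tau|\theta-\tilde\theta|$. Integrating in $x$ and then recognizing $\int_{\Theta^2}|\theta-\tilde\theta|^2\,d\gamma = W_2(\mu,\nu)^2$ yields $\|P_\tau(\mu)-P_\tau(\nu)\|_{H^1(\Omega)}^2\leq C_{r,\tau}W_2(\mu,\nu)^2$ with $C_{r,\tau}\sim Cr^2(1+\tau^2)$.

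I do not expect any serious obstacle here: the statement is essentially a smoothness/Lipschitz bookkeeping exercise once Lemma~\ref{lem:aux} is in hand. The only mildly delicate point is making sure that the Lipschitz estimates for $\theta\mapsto\Phi_\tau(\theta;x)$ and $\theta\mapsto\nabla_x\Phi_\tau(\theta;x)$, which Lemma~\ref{lem:aux} states in terms of sup-norms of Euclidean derivatives, are correctly converted into estimates with respect to the geodesic distance $d$ on $\Theta$ used in the definition of $W_2$; this is handled by the comparison $|\theta-\tilde\theta|\le d(\theta,\tilde\theta)$ together with the convexity of the $(c,a,b)$-block and Lemma~\ref{lemmaSmoothnessGeodesic} for the spherical factor. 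One should also record the (harmless) dependence of the constants on $\tau$ and $r$, since later results will track how $C_{r,\tau}$ blows up as $\tau\to\infty$ or $r\to\infty$.
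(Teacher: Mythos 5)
Your proof is correct and follows the same route as the paper: push the square inside the integral by Jensen, then invoke the pointwise bounds from Lemma~\ref{lem:aux} on $\Phi_\tau$ and $\nabla_x\Phi_\tau$, and (for the second estimate) the Lipschitz bounds on $\nabla_\theta\Phi_\tau$ and $\nabla_\theta\nabla_x\Phi_\tau$ over $K_r$ together with an optimal plan $\gamma$. You fill in the details that the paper compresses into one line, and you are also right to flag the geodesic-vs-Euclidean distance comparison; note in passing that the paper's written proof quotes the bounds on $\nabla_\theta\Phi_\tau(\theta;\cdot)$ where it clearly means the corresponding bounds on $\Phi_\tau(\theta;\cdot)$ itself, which is what your argument correctly uses.
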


\begin{proof}
From Lemma~\ref{lem:aux} we immediately obtain that, for all $\tau,r>0$, there exists a constant $C_{\tau,r}>0$ such that for all $\theta_1, \theta_2 \in K_r$, 
$$
\left\{
\begin{array}{rl}
\| \nabla_\theta \Phi_{\tau}(\theta_1 ;\cdot)\|_{ H^1(\Omega)} \leq & C_{\tau,r} |\theta_1|, \\
\|\nabla_\theta \Phi_{\tau} (\theta_1 ;\cdot) - \nabla_\theta \Phi_{\tau}(\theta_2 ;\cdot)\|_{ H^1(\Omega)} \leq &C_{\tau,r} |\theta_1 - \theta_2|.
\end{array}
\right.
$$
The corollary immediately follows from that fact. 

\end{proof}

Now we are able to prove Proposition~\ref{lemmaDiffEp}.

\begin{proof}

First, we focus on the proof of~\eqref{definitionV3}-\eqref{definitionV2}.
As $\Phi_{\tau}$ and $\E$ are smooth, it holds that for all $x\in \Omega$, $\theta, \widetilde{\theta}\in \Theta$, $u, \tilde{u}\in H^1(\Omega)$,

$$
\left\{
\begin{aligned}
\Phi_\tau(\tilde{\theta};x) &= \Phi_{\tau}(\theta;x) + \nabla_\theta \Phi_{\tau}(\theta;x) \cdot (\tilde{\theta} - \theta) + M_\tau(\theta, \tilde{\theta};x)\\
\E(\tilde{u}) &= \E(u) + d\E|_{u} (\tilde{u} - u) + N(\tilde{u} - u),
\end{aligned}
\right.
$$
where $N(u) := \frac{1}{2}\bar{a}(u,u)$ for all $u\in H^{1}(\Omega)$ and 
$
M_\tau(\theta, \tilde{\theta};x) :=  \int_{0}^1 (\tilde{\theta} - \theta)^T H_\theta \Phi_\tau(\theta + t (\tilde{\theta} - \theta);x) (\tilde{\theta} - \theta) (1-t) dt.
$ 
Using Lemma~\ref{lem:aux}, there exists a constant $C>0$ independent on $r$ and $\tau$ such that:
\begin{itemize}

\item $\forall x \in \Omega, \; \forall\theta, \tilde{\theta} \in K_r,  \ |M_\tau(\theta, \tilde{\theta};x)| \leq C r \tau |\theta - \tilde{\theta}|^2$,

\item $\forall x \in \Omega, \; \forall \theta, \tilde{\theta} \in K_r, \ |\nabla_x M_\tau(\theta, \tilde{\theta};x)| \leq C r \tau^2 |\theta - \tilde{\theta}|^2$.

\end{itemize}
Moreover, there exists a constant $C>0$ such that for all $u\in H^1(\Omega)$,
\begin{equation}
\label{boundN}
0 \leq N(u)  \leq C \| u \|^2_{H^1(\Omega)}.
\end{equation}

Thus, for $\mu,\nu \in \Prob_2(K_r)$ and $\gamma \in \Gamma(\mu,\nu)$ supported in $K_r^2$, it holds that:
$$
\begin{aligned}
\mathcal{E}_{\tau,r}(\nu) & =  \E\Big(\int_{K_r} \Phi_\tau(\tilde{\theta};\cdot) d\nu(\tilde{\theta}) \Big)\\
& = \E\Big(\int_{K_r^2}  \Phi_\tau(\tilde{\theta};\cdot) d\gamma(\theta, \tilde{\theta}) \Big) \\
& = \E\Big(\int_{K_r^2} \left[  \Phi_{\tau}(\theta;\cdot) + \nabla_\theta \Phi_{\tau}(\theta;\cdot)\cdot  (\tilde{\theta} - \theta) + M_\tau( \theta, \tilde{\theta}; \cdot)\right] d\gamma(\theta, \tilde{\theta}) \Big)\\
& = \mathcal{E}_{\tau,r}(\mu) + d\E|_{P_{\tau}(\mu)} \Big(\int_{K_r^2}  \left[ \nabla_\theta \Phi_{\tau}(\theta;\cdot) \cdot (\tilde{\theta} - \theta) + M_\tau( \theta, \tilde{\theta}; \cdot)\right] d\gamma(\theta, \tilde{\theta})\Big) \\
&+ N\Big(\int_{K_r^2} M_\tau( \theta, \tilde{\theta}; \cdot) d\gamma(\theta, \tilde{\theta}) + \int_{K_r^2}\nabla_\theta \Phi_{\tau}(\theta;\cdot) \cdot (\tilde{\theta} - \theta)\,d\gamma(\theta, \tilde{\theta})\Big),\\
\end{aligned}
$$
Using standard derivation integral theorems, a bound on $M_\tau$ is available :
$$
\begin{aligned}
\Big\| \int_{K_r^2} M_\tau( \theta, \tilde{\theta}; \cdot) d\gamma(\theta, \tilde{\theta}) \Big\|^2_{H^1(\Omega)} &= \Big\| \int_{K_r^2} M_\tau( \theta, \tilde{\theta}; \cdot) d\gamma(\theta, \tilde{\theta}) \Big\|^2_{L^2(\Omega)} + \Big\| \int_{K_r^2} \nabla_x M_\tau( \theta, \tilde{\theta}; \cdot) d\gamma(\theta, \tilde{\theta}) \Big\|^2_{L^2(\Omega)}\\
& \leq   \int_{K_r^2} \|M_\tau( \theta, \tilde{\theta}; \cdot) \|^2_{L^2(\Omega)} d\gamma(\theta, \tilde{\theta})  + \int_{K_r^2} \| \nabla_x M_\tau( \theta, \tilde{\theta}; \cdot)\|^2_{L^2(\Omega)} d\gamma(\theta, \tilde{\theta})  \\
& \leq  C(r^2\tau^2 + r^2\tau^4) \int_{\Theta^2} |\tilde{\theta} - \theta|^{4} d\gamma(\theta, \tilde{\theta})\\
& \leq  C(r^4\tau^2 + r^4\tau^4) \int_{\Theta^2} |\tilde{\theta} - \theta|^{2} d\gamma(\theta, \tilde{\theta})\\
& = C(r^4\tau^2 + r^4\tau^4) c_2(\gamma),\\
\end{aligned}
$$
where we used Jensen inequality to get the first inequality and Lemma~\ref{lemmaSmoothnessGeodesic} to get the last inequality. Using Corollary~\ref{cor:boundPtau} and the uniform continuity of $d \E$, it holds :

$$
\left|d \E |_{P_\tau \mu} \left( \int_{K_r^2} M_\tau( \theta, \tilde{\theta}; \cdot) d\gamma(\theta, \tilde{\theta})  \right) \right| \leq C_{\tau,r} c_2(\gamma).
$$

Moreover, using similar calculations, it holds that
\begin{align*}
\left\|\int_{K_r^2}\nabla_\theta \Phi_{\tau}(\theta;\cdot) \cdot (\tilde{\theta} - \theta)\,d\gamma(\theta, \tilde{\theta})\right\|_{H^1(\Omega)}^2 &=
\left\| \int_{K_r^2}\nabla_\theta \Phi_{\tau}(\theta;\cdot) \cdot (\tilde{\theta} - \theta)\,d\gamma(\theta, \tilde{\theta})\right\|_{L^2(\Omega)}^2 
\\
& + \left\|\int_{K_r^2}\nabla_x\nabla_\theta \Phi_{\tau}(\theta;\cdot) \cdot (\tilde{\theta} - \theta)\,d\gamma(\theta, \tilde{\theta})\right\|_{L^2(\Omega)}^2,\\
&\leq
\int_{K_r^2}\left\|  \nabla_\theta \Phi_{\tau}(\theta;\cdot) \cdot (\tilde{\theta} - \theta)\right\|_{L^2(\Omega)}^2 \,d\gamma(\theta, \tilde{\theta})
\\
& + \int_{K_r^2}\left\|\nabla_x\nabla_\theta \Phi_{\tau}(\theta;\cdot) \cdot (\tilde{\theta} - \theta)\right\|_{L^2(\Omega)}^2\,d\gamma(\theta, \tilde{\theta}),\\
& \leq C (r^2 + r^2\tau^2) \int_{\Theta^2} |\tilde{\theta} - \theta|^2 d\gamma(\theta, \tilde{\theta})\\
& \leq C(r^2 + r^2\tau^2) c_2(\gamma).
\end{align*}

Hence, together with the previous bounds and~\eqref{boundN}, we easily obtain that there exists a constant $C_{r,\tau}>0$ such that for all $\mu, \nu \in \Prob_2(K_r)$, it holds that
\begin{equation}
\label{formulaEpdiff}
\left|\mathcal{E}_{\tau,r}(\nu) - \mathcal{E}_{\tau,r}(\mu) + d\E|_{P_\tau(\mu)} \left(\int_{K_r^2} \nabla_\theta \Phi_{\tau}(\theta;\cdot) \cdot (\tilde{\theta} - \theta) d\gamma(\theta, \tilde{\theta})\right) \right| \leq C_{r,\tau} c_2(\gamma).
\end{equation}

Now we focus on the first order term and by Fubini and standard integral derivation theorem, we obtain that:
$$
\begin{aligned}
d\E|_{P_\tau(\mu)} \left(\int_{K_r^2}  \nabla_\theta \Phi_{\tau}(\theta;\cdot) \cdot (\tilde{\theta} - \theta) d\gamma\right) &= \left\langle \nabla_x P_\tau(\mu), \nabla_x \int_{K_r^2}   \nabla_{\theta}\Phi_{\tau}(\theta;\cdot) \cdot (\tilde{\theta} - \theta) d\gamma(\theta, \tilde{\theta}) \right\rangle_{L^2(\Omega)}\\
&- \left\langle f, \int_{K_r^2}   \nabla_{\theta}\Phi_{\tau}(\theta;\cdot) \cdot (\tilde{\theta} - \theta) d\gamma(\theta, \tilde{\theta}) \right\rangle_{L^2(\Omega)}\\
&+ \int_{\Omega} P_\tau(\mu)(x)\,dx \times \int_{\Omega} \int_{K_r^2}  \nabla_{\theta}\Phi_{\tau}(\theta;x) \cdot (\tilde{\theta} - \theta) d\gamma(\theta, \tilde{\theta}) dx \\
&= \int_{K_r^2} \langle \nabla_x P_\tau(\mu), \nabla_x \nabla_{\theta}  \Phi_{\tau}(\theta;\cdot) \cdot (\tilde{\theta} - \theta) \rangle_{L^2(\Omega)}  d\gamma(\theta, \tilde{\theta}) \\
&- \int_{K_r^2} \nabla_{\theta} \langle f,  \Phi_{\tau}(\theta;\cdot)   \rangle_{L^2(\Omega)} \cdot (\tilde{\theta} - \theta) d\gamma(\theta, \tilde{\theta})\\
&+ \int_{K_r^2} \int_{\Omega} P_\tau(\mu)(x) dx \times \int_{\Omega}    \nabla_{\theta}\Phi_{\tau}(\theta;x) \cdot (\tilde{\theta} - \theta) dx d\gamma(\theta, \tilde{\theta})\\
& =  \int_{K_r^2} v_\mu(\theta) \cdot(\tilde{\theta} - \theta) d\gamma(\theta, \tilde{\theta}),
\end{aligned}
$$
where 
\begin{equation}
\label{defv}
v_\mu(\theta) := \nabla_\theta \phi_\mu(\theta) \quad \gamma-\text{almost everywhere},
\end{equation}
with
$$
\phi_\mu(\theta) :=  \langle \nabla_x P_\tau(\mu), \nabla _x \Phi_{\tau}(\theta;\cdot) \rangle_{L^2(\Omega)} - \langle f,  \Phi_{\tau}(\theta;\cdot)   \rangle_{L^2(\Omega)} + \int_{\Omega}  P_\tau(\mu)(x) dx \times \int_{\Omega} \Phi_{\tau}(\theta;x) dx.
$$
Note that~\eqref{defv} is equivalent to 
$$
v_\mu(\theta) := \nabla_\theta \phi_\mu(\theta) \quad  \mu-\text{almost everywhere},
$$
as $v_\mu$ only depends on $\theta$. 
\newline
\end{proof}

To prove a well-posedness result, some convexity is needed. More precisely, one should check that $\E_{\tau,r}$ is convex along geodesics.
\begin{proposition}
\label{propositionConvexityE}
For all $\tau, r>0$, there exists $\lambda_{\tau,r}>0$ such that for all $\mu,\nu \in \Prob_2(K_r)$ with associated geodesic $\kappa(t) := {e_t}\# \Pi$ given by Proposition~\ref{propositionMccannInterpolant}, the functional $[0,1] \ni t \mapsto \frac{d}{dt}\left(\E_{\tau,r}(\kappa(t)\right)$ is $-\lambda_{\tau,r}$-Lipschitz.

\end{proposition}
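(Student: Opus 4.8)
The plan is to exploit the quadratic structure of the energy recalled in Remark~\ref{remarkEnergyBilinearForm} and to show that the scalar function $g:=\E_{\tau,r}\circ\kappa:[0,1]\to\R$ is of class $C^{2}$ with a two‑sided bound on $g''$; the lower bound $g''\geq-\lambda_{\tau,r}$ is exactly the assertion, since it is equivalent to $t\mapsto g'(t)+\lambda_{\tau,r}t$ being nondecreasing, i.e. to $\frac{d}{dt}\E_{\tau,r}(\kappa(t))$ being $-\lambda_{\tau,r}$‑Lipschitz. First I would check that the geodesic stays in $\Prob_2(K_r)$: by Remark~\ref{remarkSupportGeodesic}, $\Pi$ is concentrated on geodesics $\pi$ joining a point of ${\rm Supp}\,\mu\subset K_r$ to a point of ${\rm Supp}\,\nu\subset K_r$, and since $K_r$ is geodesically convex in $\Theta$ we get $\pi(t)\in K_r$ for all $t$, whence $\kappa(t)=e_t\#\Pi\in\Prob_2(K_r)$ and $g(t)=\E(P_\tau(\kappa(t)))$ is finite for every $t$. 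Writing $u(t):=P_\tau(\kappa(t))=\int_{\mathfrak P}\Phi_\tau(\pi(t);\cdot)\,d\Pi(\pi)\in H^1(\Omega)$, Remark~\ref{remarkEnergyBilinearForm} gives $g(t)=\tfrac12\bar a(u(t)-u^\star,u(t)-u^\star)-\tfrac12\int_\Omega|\nabla u^\star|^2$.

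The heart of the proof is to show that $t\mapsto u(t)$ is a $C^2$ curve in $H^1(\Omega)$ with first and second derivatives bounded uniformly in $t\in[0,1]$. Since $\sigma_{H,\tau}\in C^\infty(\R)$ (Lemma~\ref{lemmaApproxSigma}), $t\mapsto\Phi_\tau(\pi(t);\cdot)$ is smooth, and differentiating under the Bochner integral one gets $\dot u(t)=\int_{\mathfrak P}\nabla_\theta\Phi_\tau(\pi(t);\cdot)\cdot\dot\pi(t)\,d\Pi$ and $\ddot u(t)=\int_{\mathfrak P}\big(\dot\pi(t)^{T}H_\theta\Phi_\tau(\pi(t);\cdot)\dot\pi(t)+\nabla_\theta\Phi_\tau(\pi(t);\cdot)\cdot\ddot\pi(t)\big)\,d\Pi$. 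For $\pi\in{\rm Supp}\,\Pi$ one has $|\dot\pi(t)|=d(\pi(0),\pi(1))\leq\mathrm{diam}(K_r)$ by Lemma~\ref{lemmaSmoothnessGeodesic}, and because the only non‑flat factor of $\Theta$ is $S_{\R^d}(1)$ the geodesic equation on the round sphere gives $\ddot\pi(t)=(0,0,-|\dot w(t)|^2w(t),0)$, hence $|\ddot\pi(t)|\leq|\dot\pi(t)|^2\leq\mathrm{diam}(K_r)^2$ (a mild companion to Lemma~\ref{lemmaSmoothnessGeodesic}). Feeding these into the pointwise bounds of Lemma~\ref{lem:aux} on $\nabla_\theta\Phi_\tau$, $H_\theta\Phi_\tau$ and their $\nabla_x$‑derivatives (to control the full $H^1(\Omega)$ norm, with the $\tau$‑powers from~\eqref{eq:bound}) yields a constant $C_{r,\tau}>0$ with $\sup_{t\in[0,1]}\|\dot u(t)\|_{H^1(\Omega)}\leq C_{r,\tau}$ and $\sup_{t\in[0,1]}\|\ddot u(t)\|_{H^1(\Omega)}\leq C_{r,\tau}$; the same uniform bounds and dominated convergence give continuity of $t\mapsto\dot u(t),\ddot u(t)$, so $u\in C^2([0,1];H^1(\Omega))$.

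It then follows from the quadratic expression of $g$ that $g\in C^2([0,1])$ with $g''(t)=\bar a(\dot u(t),\dot u(t))+\bar a(u(t)-u^\star,\ddot u(t))$. The first term is $\geq0$ by coercivity of $\bar a$; for the second, continuity of $\bar a$ gives $|\bar a(v,w)|\leq L\|v\|_{H^1}\|w\|_{H^1}$, while $\|u(t)\|_{H^1}\leq C_{r,\tau}$ by Corollary~\ref{cor:boundPtau} (since $\kappa(t)$ is supported in $K_r$) and $\|\ddot u(t)\|_{H^1}\leq C_{r,\tau}$ by the previous step, so $|\bar a(u(t)-u^\star,\ddot u(t))|\leq L(C_{r,\tau}+\|u^\star\|_{H^1})C_{r,\tau}=:\lambda_{\tau,r}$. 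Hence $g''(t)\geq-\lambda_{\tau,r}$ on $[0,1]$, which is the claim; the symmetric bound $g''(t)\leq\bar a(\dot u(t),\dot u(t))+\lambda_{\tau,r}\leq LC_{r,\tau}^2+\lambda_{\tau,r}$ moreover gives genuine Lipschitzness of $g'$, should it be needed later. The only real obstacle is the middle step: justifying differentiation under the integral and obtaining the \emph{uniform}, $\tau$‑explicit $H^1(\Omega)$‑bounds on $\dot u$ and $\ddot u$, which rests on the geodesic second‑derivative estimate $|\ddot\pi|\leq|\dot\pi|^2$ on the sphere together with careful bookkeeping of the $\tau$‑powers in Lemma~\ref{lem:aux} and~\eqref{eq:bound}. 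An essentially equivalent alternative avoids mentioning $\ddot\pi$ by applying the second‑order Taylor estimate~\eqref{definitionV3} at $\mu=\kappa(\tfrac{s+t}{2})$ against $\nu=\kappa(s),\kappa(t)$ with the transport plans $(e_{\frac{s+t}{2}},e_s)\#\Pi$ and $(e_{\frac{s+t}{2}},e_t)\#\Pi$ and summing, which yields $|g(t)+g(s)-2g(\tfrac{s+t}{2})|\leq C_{r,\tau}(t-s)^2$ and hence the semiconvexity by the standard midpoint argument, but it still requires the quadratic geodesic second‑difference bound $|\pi(t)+\pi(s)-2\pi(\tfrac{s+t}{2})|\leq C(t-s)^2 d(\pi(0),\pi(1))^2$.
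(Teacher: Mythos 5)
Your proof is correct and reaches the same conclusion, but it is organized differently from the paper's argument. The paper never introduces the curve $t\mapsto u(t)=P_\tau(\kappa(t))$ as a $C^2$ object in $H^1(\Omega)$; instead it works entirely at the level of $\E_{\tau,r}$, first using the second-order Taylor estimate~\eqref{formulaEpdiff} with the coupling $\alpha(t,s)=(e_t,e_s)\#\Pi$ to show that $h(t):=\frac{d}{dt}\E_{\tau,r}(\kappa(t))$ exists and equals $\int_{\mathfrak P} d\E|_{P_\tau(\kappa(t))}\bigl(\nabla_\theta\Phi_\tau(\pi(t);\cdot)\cdot\dot\pi(t)\bigr)\,d\Pi(\pi)$, and then bounding $|h(t)-h(s)|$ by telescoping into three differences (one for $\nabla_\theta\Phi_\tau(\pi(t))-\nabla_\theta\Phi_\tau(\pi(s))$, one for $d\E|_{P_\tau(\kappa(t))}-d\E|_{P_\tau(\kappa(s))}$, one for $\dot\pi(t)-\dot\pi(s)$), each estimated via Lemma~\ref{lem:aux}, Corollary~\ref{cor:boundPtau} and the uniform bound on $\ddot\pi$. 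You instead exploit the quadratic structure of $\E$ via Remark~\ref{remarkEnergyBilinearForm}, upgrade $u(\cdot)$ to a $C^2$ curve in $H^1$, and read off $g''(t)=\bar a(\dot u,\dot u)+\bar a(u-u^\star,\ddot u)$. The underlying estimates are essentially the same in both routes — both hinge on the $L^\infty$ bounds of Lemma~\ref{lem:aux}, on $|\dot\pi|\leq d(\pi(0),\pi(1))$ from Lemma~\ref{lemmaSmoothnessGeodesic}, and on $|\ddot\pi|\leq|\dot\pi|^2$ from the bounded curvature of $\Theta$ — but yours gives a cleaner structural explanation: the nonnegative term $\bar a(\dot u,\dot u)\geq0$ (which in the paper's treatment is buried in the quadratic remainder $N$) is seen explicitly to help the one-sided bound, and the whole Lipschitz constant comes from a single bilinear estimate rather than three. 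The cost is that you must justify Bochner differentiability of $u$ twice under the integral, which the paper avoids by never differentiating $u$ more than once. Your midpoint alternative using~\eqref{definitionV3} at $\kappa(\frac{s+t}{2})$ is also a legitimate shortcut that the paper does not take, and would avoid computing $h$ altogether, at the price of a second-difference bound on $\pi$ that is morally identical to the $\ddot\pi$ estimate. One small terminological point: the paper's phrase ``$-\lambda_{\tau,r}$-Lipschitz'' is informal, and the paper's proof in fact establishes the two-sided bound $|h(t)-h(s)|\leq\lambda_{\tau,r}|t-s|$; you correctly identify the semiconvexity $g''\geq-\lambda_{\tau,r}$ as the property actually used downstream and note that your estimates also yield the symmetric upper bound, so no content is lost.
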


\begin{proof}

First of all, one has to check that for all $t\in [0,1]$, $\kappa(t) \in \Prob_2(K_r)$. This is a direct consequence of the fact that $\mu,\nu$ are supported in $K_r$, Remark~\ref{remarkSupportGeodesic} and that $K_r$ is convex (in the geodesic sense). 
\newline

Let $t,s \in [0,1]$ and define $\alpha(t,s) := (e_t, e_s) \# \Pi \in \Gamma(\kappa(t),\kappa(s))$. 
By~\eqref{formulaEpdiff}, it holds that
$$
\left|\mathcal{E}_{\tau,r}(\kappa(s)) - \mathcal{E}_{\tau,r}(\kappa(t)) + \int_{\Theta^2} d\E|_{P_\tau(\kappa(t))} \Big( \nabla_\theta \Phi_{\tau}(\theta;\cdot) \cdot (\tilde{\theta} - \theta) \Big) d\alpha(t,s)(\theta, \tilde{\theta}) \right| \leq C_{r,\tau} c_2(\alpha(t,s)),
$$
which reads equivalently as
\begin{align*}
&\left|\frac{\mathcal{E}_{\tau,r}(\kappa(s)) - \mathcal{E}_{\tau,r}(\kappa(t))}{s-t} - \int_{\mathfrak{P}} d\E|_{P_\tau(\kappa(t))} \Big( \nabla_\theta \Phi_{\tau}(\pi(t) ;\cdot) \cdot \Big( \frac{\pi(s) - \pi(t)}{s-t} \Big) \Big) d\Pi(\pi)\right| \\
\leq &C_{r,\tau} \frac{1}{|s-t|}\int_{\Theta^2} |\theta - \tilde{\theta}|^2 \,d\alpha(t,s)(\theta, \tilde{\theta})\\
 = &C_{r,\tau} \frac{1}{|s-t|}\int_{\mathfrak P} |\pi(t) - \pi(s)|^2 \,d\Pi(\pi)\\
 = &C_{r,\tau}|s-t| \int_{\mathfrak P} |\pi(1) - \pi(0)|^2 \,d\Pi(\pi)\\
\leq& C_{r,\tau}|s-t|, \\
\end{align*}
where the value of the constant $C_{r,\tau}$ only depends on $r$ and $\tau$.
Letting $s$ go to $t$ and using the dominated convergence theorem, one concludes that $[0,1] \ni t \mapsto \mathcal{E}_{\tau,r}(\kappa(t))$ is differentiable with derivative equal to :

$$
\begin{array}{rl}
h(t) := \frac{d}{dt}\left( \mathcal{E}_{\tau,r}(\kappa(t)) \right) = & \int_{\mathfrak P} d\E|_{P_\tau(\kappa(t))} \Big( \nabla_\theta \Phi_{\tau}(\pi(t) ;\cdot) \cdot \Big( \frac{d}{dt} \pi(t) \Big) \Big) d\Pi(\pi).
\end{array}
$$
To conclude, one has the decomposition :
\begin{equation}\label{decompositionh}
\begin{aligned}
|h(t) - h(s)| & \leq \Big|\int_{\mathfrak P} d\E|_{P_\tau(\kappa(t))} \Big( (\nabla_\theta \Phi_{\tau}(\pi(t) ;\cdot)- \nabla_\theta \Phi_{\tau}(\pi(s) ;\cdot)) \cdot  \Big( \frac{d}{dt} \pi(t) \Big) \Big) d\Pi(\pi)  \Big| \\
&+ \Big| \int_{\mathfrak P} (d\E|_{P_\tau(\kappa(t))} - d\E|_{P_\tau(\kappa(s))} ) \Big( \nabla_\theta \Phi_{\tau}(\pi(s) ;\cdot) \cdot \Big( \frac{d}{dt} \pi(t) \Big) \Big) d\Pi(\pi)  \Big|\\
&+ \Big| \int_{\mathfrak P} d\E|_{P_\tau(\kappa(s))}  \Big( \nabla_\theta \Phi_{\tau}(\pi(s) ;\cdot) \cdot \Big( \frac{d}{dt} \pi(t) - \frac{d}{dt} \pi(s) \Big) \Big) d\Pi(\pi)  \Big|.
\end{aligned}
\end{equation}

Recalling~\eqref{decompositionh}, denoting $\alpha := (e_0, e_1)\# \Pi$ and using the previous estimates, we obtain that, for all $t,s\in [0,1]$,
$$
\begin{aligned}
|h(t) - h(s)| &\leq C_{r,\tau} \Big( \|  P_\tau(\kappa(t)) \|_{H^1(\Omega)} \int_{\mathfrak P}  |\pi(t) - \pi(s)| \Big| \frac{d}{dt} \pi(t) \Big| d\Pi(\pi) \\
&+  \|  P_\tau(\kappa(t)) - P_\tau(\kappa(s)) \|_{H^1(\Omega)} \int_{\mathfrak P}  |\pi(s)| \Big| \frac{d}{dt} \pi(t) \Big| d\Pi(\pi) \\
&+\|  P_\tau(\kappa(s)) \|_{H^1(\Omega)}\int_{\mathfrak P}  |\pi(s)| \Big| \frac{d}{dt} \pi(t) - \frac{d}{dt} \pi(s) \Big| d\Pi(\pi) \Big) \\
&\leq C_{r,\tau} \Big( |t-s| \|  P_\tau(\kappa(t)) \|_{H^1(\Omega)} \int_{\mathfrak P}  |\pi(1) - \pi(0)|^2  d\Pi(\pi) \\
&+  \|  P_\tau(\kappa(t)) - P_\tau(\kappa(s)) \|_{H^1(\Omega)} \int_{\mathfrak P}  \sup_{u\in[0,1]} |\pi(u)| |\pi(1) - \pi(0)| d\Pi(\pi) \\
&+ |t-s|\|  P_\tau(\kappa(s)) \|_{H^1(\Omega)}\int_{\mathfrak P}  \sup_{u\in[0,1]} |\pi(u)| \sup_{u\in[0,1]} \left|\frac{d^2 \pi(u)}{dt^2}\right|  d\Pi(\pi) \Big)\\
& \leq C_{r,\tau} \left( |t-s| \left(\sqrt{\int_{\Theta^2} |\theta|^2 d\kappa(t)(\theta)}
+ \sqrt{\int_{\Theta^2} |\theta|^2 d\kappa(s)(\theta)}\right)  (1+c_2(\alpha)) + W_2(\kappa(t),\kappa(s)) c_2(\alpha) \right)
\end{aligned}
$$
where we have used Lemma~\ref{lemmaSmoothnessGeodesic} to get the second inequality and the fact that $\sup_{u\in[0,1]} \left|\frac{d^2 \pi(u)}{dt^2}\right|$ is uniformly bounded (since the curvature of $\Theta$ is bounded) to get the last one. 
We also have the following estimates: 
\begin{itemize}
\item By Remark~\ref{remarkSupportGeodesic} and the convexity of $K_r$ (in the geodesic sense), for all $0 \leq t \leq 1$ :
$$
\int_{\Theta} |\theta|^2 d\kappa(t)(\theta)\leq C(1+r^2).
$$

\item Moreover,
$$
\begin{aligned}
W_2^2(\kappa(t),\kappa(s)) & \leq  \int_{\Theta^2} d(\theta, \tilde{\theta})^2 d \alpha(t,s)(\theta, \tilde{\theta})\\
& \leq  \int_\Gamma d(\pi(t), \pi(s))^2 d \Pi(\pi) \\
& =  |t-s| \int_\Gamma d(\pi(1), \pi(0))^2 d \Pi(\pi) \\
& =  |t-s| \int_{\Theta^2} d(\theta, \tilde{\theta})^2 d \alpha(\theta, \tilde{\theta}).
\end{aligned}
$$

\end{itemize}
This allows us to conclude that :

$$
|h(t) - h(s)| \leq C_{r,\tau} (1+c_2(\alpha)) |t-s|.
$$
As the measure $\alpha$ is supported in $K_r^2$, we get :
$$
|h(t) - h(s)| \leq \lambda_{\tau,r} |t-s|.
$$
for some $\lambda_{\tau, r} >0$, which yields the desired result.
\end{proof}

The characterization of the velocity field allows to get a bound on its amplitude. This is given by the next corollary which will be useful later in the paper.
\begin{corollary}\label{corollaryBoundVelocity}
There exists a constant $C_\tau >0$ such that for all $r>0$, all $\mu \in \Prob_2(K_r)$ and $\theta \in \Theta$: 
$$
|v_\mu(\theta)| \leq C_\tau r |\theta|.
$$
\end{corollary}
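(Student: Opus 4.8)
The plan is to read the velocity field directly off the differentiability computation carried out in the proof of Proposition~\ref{lemmaDiffEp}. Recall from there that $v_\mu = \nabla_\theta \phi_\mu$ with $\phi_\mu(\theta) = d\E|_{P_\tau(\mu)}(\Phi_\tau(\theta;\cdot))$, where $d\E|_{P_\tau(\mu)}$ is a continuous linear form on $H^1(\Omega)$ which does \emph{not} depend on $\theta$. Since $\theta \mapsto \Phi_\tau(\theta;\cdot)$ is $\mathcal{C}^1$ into $H^1(\Omega)$ (the quantitative version being Lemma~\ref{lem:aux}), one may differentiate under the linear form and obtain, exactly as in the final computation of that proof,
\[
v_\mu(\theta) = \langle \nabla_x P_\tau(\mu), \nabla_x \nabla_\theta \Phi_\tau(\theta;\cdot) \rangle_{L^2(\Omega)} - \langle f, \nabla_\theta \Phi_\tau(\theta;\cdot) \rangle_{L^2(\Omega)} + \int_\Omega P_\tau(\mu)\,dx \; \int_\Omega \nabla_\theta \Phi_\tau(\theta;x)\,dx .
\]

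First I would estimate the three terms by Cauchy--Schwarz, using $|\Omega| = 1$ to bound the $L^1(\Omega)$ norms appearing in the last term by the corresponding $L^2(\Omega)$ norms; this yields
\[
|v_\mu(\theta)| \leq C\bigl( \| P_\tau(\mu)\|_{H^1(\Omega)} + \|f\|_{L^2(\Omega)} \bigr)\, \| \nabla_\theta \Phi_\tau(\theta;\cdot)\|_{H^1(\Omega)^{d+3}} .
\]
Next I would control $\| \nabla_\theta \Phi_\tau(\theta;\cdot)\|_{H^1(\Omega)^{d+3}}$ via Lemma~\ref{lem:aux}: the bounds $\| \nabla_\theta \Phi_\tau(\theta;\cdot)\|_{L^\infty(\Omega)} \leq C|\theta|$ and $\| \nabla_x\nabla_\theta \Phi_\tau(\theta;\cdot)\|_{L^\infty(\Omega)} \leq C|\theta|\tau$ give $\| \nabla_\theta \Phi_\tau(\theta;\cdot)\|_{H^1(\Omega)^{d+3}} \leq C_\tau |\theta|$, the constant being linear in $\tau$. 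Finally I would control $\| P_\tau(\mu)\|_{H^1(\Omega)}$ using the support constraint $\mathrm{Supp}(\mu) \subset K_r$: by Minkowski's integral inequality together with the elementary estimate $\|\Phi_\tau(\theta;\cdot)\|_{H^1(\Omega)} \leq C(|c|+|a|)$ (a direct consequence of the uniform bounds~\eqref{eq:bound} on $\sigma_{H,\tau}$ and $\sigma'_{H,\tau}$ and of $|w|=1$, $|\Omega|=1$), and with $|c| \leq 2r$, $|a| \leq 4r$ for $\theta = (c,a,w,b) \in K_r$, one gets $\| P_\tau(\mu)\|_{H^1(\Omega)} \leq \int_{K_r}\|\Phi_\tau(\theta;\cdot)\|_{H^1(\Omega)}\,d\mu(\theta) \leq C r$.

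Combining the three estimates gives $|v_\mu(\theta)| \leq C_\tau\bigl(r + \|f\|_{L^2(\Omega)}\bigr)|\theta|$; absorbing the fixed quantity $\|f\|_{L^2(\Omega)}$ into the constant — legitimate in the regime $r \gtrsim \|f\|_{L^2(\Omega)}$ used throughout (and in particular whenever $r = d\|f\|_{\barron^0(\Omega)}$, since $\|f\|_{L^2(\Omega)} \leq \|f\|_{\barron^0(\Omega)}$) — yields the stated inequality $|v_\mu(\theta)| \leq C_\tau r|\theta|$, where $C_\tau$ may also depend on $d$. I do not expect any genuine obstacle: the result is essentially a corollary of the computations already performed for Proposition~\ref{lemmaDiffEp}, the only points requiring a little care being the linear degradation of the constant in $\tau$ (traceable to $\| \nabla_x\nabla_\theta \Phi_\tau(\theta;\cdot)\|_{L^\infty(\Omega)} \leq C|\theta|\tau$) and the fact that one must exploit $\mathrm{Supp}(\mu) \subset K_r$, rather than merely $\mu \in \Prob_2(\Theta)$, to produce the factor $r$ in front of $|\theta|$.
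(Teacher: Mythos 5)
Your argument is correct and is essentially the same as the paper's (very brief) proof, which simply instructs the reader to combine the bound \eqref{BoundPbarronmu} on $\|P_\tau(\mu)\|_{H^1(\Omega)}$ with the explicit formulas \eqref{gradtheta} and \eqref{gradthetax} for $\nabla_\theta\Phi_\tau$ and $\nabla_\theta\nabla_x\Phi_\tau$; you have filled in the Cauchy--Schwarz bookkeeping that the paper leaves ``to the reader.'' One small improvement in your version: you bound $\|P_\tau(\mu)\|_{H^1(\Omega)}\leq Cr$ directly from $\|\Phi_\tau(\theta;\cdot)\|_{H^1(\Omega)}\leq C(|c|+|a|)$, which is sharper than what \eqref{BoundPbarronmu} yields (the latter produces $C_{\tau,d}(1+r)$, since $\int_\Theta|\theta|^2\,d\mu$ retains the $O(1)$ contribution of the $w,b$ components on $K_r$). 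Your caveat about the $f$-term is also genuine and worth flagging: the contribution $-\nabla_\theta\langle f,\Phi_\tau(\theta;\cdot)\rangle_{L^2(\Omega)}$ to $v_\mu$ is independent of $\mu$ and hence of $r$, so the stated bound $|v_\mu(\theta)|\leq C_\tau r|\theta|$ cannot hold uniformly in $r$ as $r\to 0$ unless $f=0$; the honest form is $|v_\mu(\theta)|\leq C_\tau(1+r)|\theta|$. This does not affect the paper because the corollary is only ever invoked with $r$ bounded below (by $r_0$ in the proof of Theorem~\ref{theoremGlobalWellPosedness}), but the ``for all $r>0$'' phrasing is slightly loose.
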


\begin{proof}
This can be proved combining~\eqref{BoundPbarronmu},~\eqref{gradtheta} and~\eqref{gradthetax}. The rest is just basic computations and left to the reader.
\end{proof}

An important consequence of Proposition~\ref{propositionConvexityE} is that $\E_{\tau,r}$ is $(-\lambda_{\tau,r})$-convex along geodesics. Now we are able to prove Theorem~\ref{theoremWellPosednessCondSupport}.

%
%

%

\begin{proof}[Proof of Theorem~\ref{theoremWellPosednessCondSupport}]


The functional $\E_{\tau,r}$ is lower semicontinuous by Remark~\ref{remarkLowerSemicontinuity} and it is $(-\lambda_{\tau,r})$-convex along generalized geodesics. Moreover, the space $\Theta$ has a curvature bounded from below which ensures that it is an Alexandrov space of curvature bounded from below. We apply~\cite[Theorem 5.9, 5.11]{Ohta2009} to get the existence and the uniqueness of a gradient curve $\mu^r: \mathbb{R}_+ \to \Prob_2(K_r)$ in the sense of~\cite[Definition 5.8]{Ohta2009}. Being a gradient curve, it is also a curve of maximal slope in the sense of~\cite[Definition 1.3.2]{AmbrosioSavareBook}. Note that in~\cite{Ohta2009}, the space on which the probability measures are defined (here this is $\Theta$) is supposed to be compact. This is not a problem here since the domain of the functional $\E_{\tau,r}$ is reduced to probability measures whose support is included in $K_r$ which is compact and geodesically convex.

The existence of the vector field $v^r_t$ for almost all $t\geq 0$ is given by the absolute continuity of the curve $[0,1]\ni t \mapsto \mu^r(t)$ (because it is a gradient curve) and by~\cite[Proposition 2.5]{Erbar2010}.

\end{proof}

The work is not finished here since we do not have any knowledge about the velocity field $v^r_t$ and the well-posedness result is proved only for $\E_{\tau,r}$ with $r < \infty $. In the following sections, we prove that this velocity field can be related to $v_{\mu^r(t)}$ and use a bootstrap argument to prove an existence result for the gradient curve of $\E_{\tau,+\infty}$.

\subsubsection{Identification of the vector field $v^r_t$}

In the following, we denote by $T\Theta$ the tangent bundle of $\Theta$, i.e. 
$$
T\Theta := \bigcup_{\theta\in \Theta} \{\theta\} \times T_\theta \Theta, 
$$
where $T_\theta \Theta$ is the tangent space to $\Theta$ at $\theta$. It is easy to check that for all $\theta = (c,a,w,b) \in \Theta$, it holds that $T_\theta \Theta = \mathbb{R} \times \mathbb{R} \times {\rm Span}\{w\}^\perp \times \mathbb{R}$, where ${\rm Span}\{w\}^\perp$ is the subspace of $\mathbb{R}^d$ containing all $d$-dimensional vectors orthogonal to $w$.

We also introduce the operators $G$ and $S_h$ for $0 < h \leq 1$ as follows :

$$
G := 
\left\{
\begin{array}{rcl}
\mathfrak P & \rightarrow & T \Theta\\
\pi & \mapsto & (\pi(0), \dot{\pi}(0))
\end{array}
\right.
$$
and 

$$
S_h := 
\left\{
\begin{array}{rcl}
T \Theta & \rightarrow & T \Theta\\
(\theta, v) & \mapsto & \left(\theta, \frac{v}{h}\right).
\end{array}
\right.
$$

The next lemma concerns the local behaviour of couplings along a curve of maximal slope $\mu^r: \mathbb{R}_+ \to \Prob_2(K_r)$. In the following, for any $\mu, \nu\in \Prob_2(\Theta)$, we denote by $\Gamma_o(\mu,\nu)$ the set of optimal transport plans between $\mu$ and $\nu$ in the sense of the quadratic cost. In other words, for all $\gamma \in \Gamma_o(\mu,\nu)$, it holds that $W_2^2(\mu,\nu) = \int_{\Theta \times \Theta} |\theta - \widetilde{\theta}|^2\,d\gamma(\theta, \widetilde{\theta})$.
\begin{lemma}\label{lemmaDiff}
Let $\mu^r: \mathbb{R}_+ \to \Prob_2(K_r)$ be a solution to~\eqref{wellPosednessEr} and for all $0< h\leq 1$, let $\Pi_h\in \Prob_2(\mathfrak P)$ such that $\gamma_h  := (e_0,e_1)\#\Pi_h \in \Gamma_o(\mu^r(t),\mu^r(t+h))$ (i.e. satisfying the condition of Proposition~\ref{propositionMccannInterpolant} with $\mu = \mu^r(t)$ and $\nu = \mu^r(t+h)$). Then, for almost all $t\geq 0$, it holds that
$$
\lim_{h \rightarrow 0} (S_h \circ G)\# \Pi_h = (i \times v^r_t) \# \mu^r(t) \text{ in } \Prob_2(T\Theta),
$$
where $(v^r_t)_{t\geq 0}$ is given by Theorem~\ref{theoremWellPosednessCondSupport},  and $i:\Theta \to \Theta$ is the identity map.

Moreover, 
$$
\lim_{h \rightarrow 0} \frac{W^2_2(\mu^r(t+h), \exp(h v^r_t){\#} \mu^r(t))}{h^2} = 0,
$$
where $\exp(h v^r_t): \Theta \ni \theta \mapsto \exp_\theta(h v^r_t(\theta))$.
\end{lemma}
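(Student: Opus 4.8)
The plan is to identify the limit of the rescaled couplings $(S_h \circ G)\#\Pi_h$ by combining the metric differentiability of the curve of maximal slope with the first-order expansion of $\E_{\tau,r}$ from Proposition~\ref{lemmaDiffEp}. First I would recall that, since $\mu^r$ solves~\eqref{wellPosednessEr}, it is a curve of maximal slope, so for almost every $t\geq 0$ the metric derivative $|\dot\mu^r|(t)$ exists, equals $\|v^r_t\|_{L^2(\Theta;d\mu^r(t))}$, and
$$
W_2(\mu^r(t),\mu^r(t+h)) = h\,\|v^r_t\|_{L^2(\Theta;d\mu^r(t))} + o(h).
$$
Consequently the family of measures $\nu_h := (S_h\circ G)\#\Pi_h \in \Prob_2(T\Theta)$ has second moments $\int_{T\Theta}|v|^2\,d\nu_h = \tfrac{1}{h^2}\int_{\mathfrak P} d(\pi(0),\pi(1))^2\,d\Pi_h(\pi) = \tfrac{1}{h^2}W_2^2(\mu^r(t),\mu^r(t+h))$, which stays bounded (and in fact converges to $\|v^r_t\|^2_{L^2}$). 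This uniform bound on second moments, together with the fact that the base marginal of $\nu_h$ is always $\mu^r(t)$ (since $e_0\circ G = \mathrm{id}$ on the base point), gives tightness in $\Prob_2(T\Theta)$, so along subsequences $\nu_h \to \nu$ narrowly, with $\nu$ having base marginal $\mu^r(t)$.

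Next I would show every such limit $\nu$ is of the form $(i\times v^r_t)\#\mu^r(t)$, i.e. the ``velocity part'' is $\mu^r(t)$-a.e. a function of $\theta$ equal to $v^r_t$. The mechanism is the following: for a curve of maximal slope the velocity field $v^r_t$ is characterized (see~\cite[Proposition 2.5]{Erbar2010} and the theory in~\cite{AmbrosioSavareBook}) as the element of minimal norm in the tangent cone realizing the descent $\frac{d}{dt}\E_{\tau,r}(\mu^r(t)) = -\|v^r_t\|^2_{L^2(\Theta;d\mu^r(t))}$, and also $-\|v^r_t\|^2_{L^2} = $ the slope condition. On the other hand, using the second-order expansion~\eqref{definitionV3} of $\E_{\tau,r}$ with $\gamma = \gamma_h$ one gets
$$
\E_{\tau,r}(\mu^r(t+h)) - \E_{\tau,r}(\mu^r(t)) = - \int_{\Theta^2} v_{\mu^r(t)}(\theta)\cdot(\tilde\theta-\theta)\,d\gamma_h(\theta,\tilde\theta) + O\big(c_2(\gamma_h)\big),
$$
and $c_2(\gamma_h) = W_2^2(\mu^r(t),\mu^r(t+h)) = O(h^2)$. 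Dividing by $h$ and rewriting the first-order term via $\Pi_h$ as $\int_{\mathfrak P} v_{\mu^r(t)}(\pi(0))\cdot \frac{\pi(1)-\pi(0)}{h}\,d\Pi_h(\pi) = \int_{T\Theta} \mathbf{P}v_{\mu^r(t)}(\theta)\cdot v\,d\nu_h(\theta,v) + o(1)$ (using Lemma~\ref{lemmaSmoothnessGeodesic} to replace $\pi(1)-\pi(0)$ by $h\dot\pi(0)$ up to $O(h^2)$, and that only the tangential component survives the pairing), and passing to the limit along a convergent subsequence, we obtain
$$
\frac{d}{dt}\E_{\tau,r}(\mu^r(t)) = -\int_{T\Theta} \mathbf{P}v_{\mu^r(t)}(\theta)\cdot v\,d\nu(\theta,v).
$$
Combining this identity with the curve-of-maximal-slope energy dissipation equality and the Cauchy--Schwarz / minimal-norm characterization of $v^r_t$ forces, by the usual equality case argument, that $v^r_t = \mathbf{P}v_{\mu^r(t)} = \mathbf{P}\nabla_\theta\phi_{\mu^r(t)}$ and that the velocity component of $\nu$ is $\mu^r(t)$-a.e. equal to $v^r_t$, i.e. $\nu = (i\times v^r_t)\#\mu^r(t)$. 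Since the limit is the same for every subsequence, the full limit exists; this also incidentally identifies $v^r_t$ explicitly. The second assertion then follows quickly: since $\exp_\theta(h v^r_t(\theta))$ differs from $e_h$ of the geodesic $\pi$ with $G(\pi)=(\theta, h v^r_t(\theta))$ only through the mismatch between the actual couplings $\Pi_h$ and the exponential of the limiting velocity, one writes
$$
W_2^2(\mu^r(t+h), \exp(h v^r_t)\#\mu^r(t)) \leq \int_{\mathfrak P} d\big(\pi(1), \exp_{\pi(0)}(h v^r_t(\pi(0)))\big)^2\,d\Pi_h(\pi),
$$
and the right-hand side is $o(h^2)$ because $\frac{1}{h}(\pi(1)-\pi(0)) = \dot\pi(0)/1 \cdot$ (geodesic speed rescaling) converges to $v^r_t(\pi(0))$ in $L^2(\mu^r(t))$ by the first part, while $\exp$ is smooth with bounded derivatives on the compact $K_r$ so it is a bi-Lipschitz near-isometry at scale $h$.

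The main obstacle I anticipate is the rigorous passage to the limit in the first-order term and the equality-case argument: one must be careful that narrow convergence of $\nu_h$ in $\Prob_2(T\Theta)$ together with only $L^\infty$-type (not better) control on $\mathbf{P}v_{\mu^r(t)}$ is enough to pass the bilinear pairing $\int \mathbf{P}v\cdot v\,d\nu_h$ to the limit — this needs the uniform integrability coming from the bounded second moments $\int|v|^2 d\nu_h$, plus continuity and boundedness of $\theta\mapsto \mathbf{P}v_{\mu^r(t)}(\theta)$ on $K_r$ (which follows from Lemma~\ref{lem:aux} and Corollary~\ref{corollaryBoundVelocity}). A secondary subtlety is justifying the replacement of the chords $\pi(1)-\pi(0)$ by $h\dot\pi(0)$ uniformly, which is exactly what Lemma~\ref{lemmaSmoothnessGeodesic} and the bounded curvature of $\Theta$ are designed to supply, and making sure the a.e.-in-$t$ statements (existence of metric derivative, of $v^r_t$, and differentiability of $t\mapsto \E_{\tau,r}(\mu^r(t))$) all hold simultaneously on a common full-measure set of times.
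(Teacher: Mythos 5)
Your proposal diverges from the paper's proof in a way that introduces a real gap.

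The paper's argument never touches the energy expansion of Proposition~\ref{lemmaDiffEp}. It identifies the limit $\nu_0$ of $\nu_h := (S_h\circ G)\#\Pi_h$ purely from the continuity equation: testing against $\phi\in C_c^\infty(\Theta)$, one shows $\int_{\Theta}\nabla_\theta\phi\cdot\tilde v_t\,d\mu^r(t) = \int_\Theta\nabla_\theta\phi\cdot v^r_t\,d\mu^r(t)$ where $\tilde v_t(\theta) := \int_{T_\theta\Theta}v\,d\nu_{0,\theta}(v)$, i.e. ${\rm div}((\tilde v_t - v^r_t)\mu^r(t))=0$; then lower semicontinuity of $|v|^2$ plus Jensen gives $\|\tilde v_t\|_{L^2(\mu^r(t))} \leq |(\mu^r)'|(t) = \|v^r_t\|_{L^2(\mu^r(t))}$, and the minimal-selection characterization in~\cite[Lemma 2.4]{Erbar2010} forces $\tilde v_t = v^r_t$; equality in Jensen then collapses the disintegration to a Dirac, giving $\nu_0 = (i\times v^r_t)\#\mu^r(t)$. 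This argument says nothing about what $v^r_t$ \emph{is}: it only identifies the limiting coupling in terms of the abstract field $v^r_t$ already given by Theorem~\ref{theoremWellPosednessCondSupport}. The explicit identification $v^r_t = -\bold{P}v_{\mu^r(t)}$ is done separately in Proposition~\ref{velocityProp}, and it \emph{uses} Lemma~\ref{lemmaDiff} as an input.

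Your route inverts this logical order and thereby creates two problems. First, to close your Cauchy--Schwarz equality-case argument you need a lower bound on $|\nabla^-\E_{\tau,r}|(\mu^r(t))$ by $\|\bold{P}v_{\mu^r(t)}\|_{L^2(\mu^r(t))}$, so that together with $|(\mu^r)'|(t) = |\nabla^-\E_{\tau,r}|(\mu^r(t))$ (maximal slope) and the upper bound from Cauchy--Schwarz you get equality. But that lower bound (\eqref{estimateLocalSlope}) is only established under the extra hypothesis ${\rm Supp}(\mu^r(t))\subset K_{r-\delta}$ — needed so the test curve $\exp(-h\bold{P}v_{\mu^r(t)})\#\mu^r(t)$ stays inside $\Prob_2(K_r)$ for small $h$. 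Lemma~\ref{lemmaDiff} makes no such support assumption, so your proof would only cover part of its stated scope. Second, even granting that, your argument yields that $\nu$ is concentrated on the graph of $-\bold{P}v_{\mu^r(t)}$; it does not by itself show $v^r_t = -\bold{P}v_{\mu^r(t)}$ $\mu^r(t)$-a.e. You wave at ``the minimal-norm characterization of $v^r_t$'', but that characterization is precisely the statement that among all fields $w$ verifying the continuity equation $\partial_t\mu^r + {\rm div}(w\mu^r)=0$, $v^r_t$ is the one of minimal norm — so to invoke it you must first show $-\bold{P}v_{\mu^r(t)}$ (equivalently $\tilde v_t$) satisfies the continuity equation with the correct normal component, which is exactly the test-function computation your proposal omits. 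Reinstating it would effectively reproduce the paper's proof.

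The second assertion of the lemma you handle correctly and essentially as the paper does, bounding $W_2^2(\mu^r(t+h),\exp(hv^r_t)\#\mu^r(t))$ by $\int_{T\Theta}d(\exp_\theta(hv^r_t(\theta)),\exp_\theta(hv))^2\,d\nu_h(\theta,v)\lesssim h^2\int_{T\Theta}|v^r_t(\theta)-v|^2\,d\nu_h$, which is $o(h^2)$ by the first part. That portion is fine.
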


\begin{proof}
Let $\phi$ be in $C^\infty_{c}(\Theta)$. The continuity equation gives :

$$
\int_{\R_+} \eta^\prime(t) \int_\Theta \phi \,d \mu^r(t) dt= - \int_{\R_+} \eta(t) \int_\Theta \nabla_\theta \phi \cdot v_t \,d \mu^r(t) dt
$$
for $\eta$ smooth compactly supported in $\R_+$. Taking $\eta$ as an approximation of the characteristic function of $[t,t+h]$, owing to the fact that $\mu^r$ is locally Lipschitz and passing to the limit, one gets :

$$
\int_{\Theta}\phi\,d\mu^r(t) - \int_\Theta \phi \,d\mu^r(t+h)= -\int_{t}^{t+h} \int_\Theta \nabla_\theta \phi \cdot v^r_t \,d \mu^r(t) dt.
$$
Passing to the limit as $h$ goes to $0$, one gets the differentiability almost everywhere of $\mathbb{R}_+ \ni t \mapsto \int_\Theta \phi \, d\mu^r(t)$ and :

$$
\lim_{h \rightarrow 0} \frac{\int_\Theta \phi\,d\mu^r(t+h) - \int_\Theta \phi \,d\mu^r(t)}{h} = \int_\Theta \nabla_\theta \phi \cdot v^r_t \,d \mu^r(t).
$$
For all $0< h\leq 1$, let us introduce $\nu_h := (S_h \circ G)\# \Pi_h$ and let $\nu_0$ be an accumulation point of $(\nu_h)_{0<h\leq 1}$ with respect to the narrow convergence on $\Prob_2(T\Theta)$.

Then, it holds that 
$$
\begin{aligned}
\frac{\int_\Theta \phi\,d\mu^r(t+h) - \int_\Theta \phi \,d\mu^r(t)}{h}  &= \frac{1}{h} \int_{\Theta^2} (\phi(\tilde{\theta}) - \phi(\theta)) \,d \gamma_h(\theta, \tilde{\theta})\\
&= \frac{1}{h} \int_{\mathfrak P} (\phi(\pi(1)) - \phi(\pi(0))) \,d \Pi_h(\pi)\\
&= \frac{1}{h} \int_{T \Theta} (\phi(\exp_\theta(v)) - \phi(\theta)) \,d {G}\# \Pi_h(\theta, v)\\
&= \frac{1}{h} \int_{T \Theta} (\phi(\exp_\theta(h v)) - \phi(\theta)) \,d {(S_h \circ G)}\# \Pi_h(\theta, v)\\
&=\int_{T \Theta} \nabla_\theta \phi(\theta) \cdot v   \,d \nu_h(\theta, v) \\
 &+ \int_{T \Theta}  R_h(\theta,v)  \,d \nu_h(\theta, v)\\
&\mathop{\longrightarrow}_{h\to 0} \int_{T \Theta} \nabla_\theta \phi(\theta) \cdot v d\nu_0(\theta,v),
\end{aligned}
$$
where $R_h(\theta,v) := \frac{\phi(\exp_\theta(h v)) - \phi(\theta)}{h} - \nabla_\theta \phi(\theta) \cdot v$ is bounded by $C(\phi) |v|^2 h$ ($\phi \in C_c^\infty(\Theta)$ and the euclidean curvature in $\Theta$ is uniformly bounded; see~\cite[Chapter 8]{LeeIntroCurv} for the definition of euclidean curvature). Actually, to get the last limit, we need the following arguments detailed below :

\begin{itemize}

\item For the first term, $\nabla \phi(\theta) \cdot v$ is quadratic in $(\theta,v)$ and consequently the passage to the limit is allowed.


\item For the second one, 

$$
\begin{array}{rl}
\int_{T \Theta}  |R_h(\theta,v)|  d \nu_h(\theta, v) \leq& C(\phi)h \int_{T \Theta}  |v|^2  d \nu_h(\theta, v) \\
= & C(\phi) h \frac{W^2_2(\mu^r(t), \mu^r(t+h))}{h^2} 
\end{array}
$$
and using again the local Lipschitz property, we can pass to the limit which is zero.

\end{itemize}
 
As a consequence,

$$
\int_{T \Theta} \nabla_\theta \phi(\theta) \cdot v d \nu_0(\theta, v) = \int_\Theta \nabla_\theta \phi (\theta) \cdot v^r_t(\theta) \,d \mu^r(t)(\theta)
$$
which is no more than (by disintegration) :

$$
\int_{\Theta} \nabla_\theta \phi(\theta) \cdot \int_{T_\theta \Theta}v \,d \nu_{0,\theta}(v) \,d \mu^r(t)(\theta) = \int_\Theta \nabla_\theta \phi(\theta) \cdot v^r_t(\theta) \,d \mu^r(t)(\theta).
$$
Noting $\tilde{v_t}(\theta) := \int_{T_\theta \Theta}v \,d \nu_{0,\theta}(v)$, the last equation is equivalent to :

$$
{\rm div}((\tilde{v_t} - v^r_t) \mu^r(t)) = 0.
$$
In addition, as $T\Theta \ni (\theta, v) \mapsto |v|^2$ is positive and lower semicontinuous and as for almost all $t\geq 0$ we have that $\lim_{h \rightarrow 0} \frac{W_2(\mu^r(t),\mu^r(t+h))}{h}
= |(\mu^r)^\prime|(t)$ (as $\mu^r$ is locally Lipschitz): 

\begin{equation}\label{convMoments}
\begin{aligned}
\int_{\Theta} \int_{T_\theta \Theta} |v|^2 \,d \nu_{0,\theta}(v) \,d \mu^r(t)(\theta) &\leq \liminf_{h \rightarrow 0} \int_{T \Theta} |v|^2 \,d \nu_{h}(\theta, v) \\
&= \liminf_{h \rightarrow 0} \frac{1}{h^2} \int_{T \Theta} |v|^2 \,d {G}\# \Pi_h (\theta, v)\\
&= \liminf_{h \rightarrow 0} \frac{1}{h^2} \int_{\mathfrak P} |\dot{\pi}(0)|^2 \,d \Pi_h (\pi)\\
&= \liminf_{h \rightarrow 0} \frac{1}{h^2} \int_{\Theta^2} d(\theta, \tilde{\theta})^2 \,d\gamma_{h}(\theta, \tilde{\theta})\\
&=  \liminf_{h \rightarrow 0} \frac{W_2^2(\mu^r(t),\mu^r(t+h))}{h^2}\\
&= |(\mu^r)^\prime|^2(t).
\end{aligned}
\end{equation}
As a consequence and by Jensen inequality,

\begin{equation}\label{JensenIneq}
\| \tilde{v}_t \|^2_{L^2(\Theta; d\mu^r(t))} \leq \int_{\Theta} \int_{T_\theta\Theta} |v|^2 \,d \nu_{0,\theta}(v) \,d \mu^r(t)(\theta) \leq |(\mu^r)^\prime|^2(t) =  \| v^r_t \|^2_{L^2(\Theta; d\mu^r(t))}.
\end{equation}
By~\cite[Lemma 2.4]{Erbar2010}, one gets $\tilde{v}_t = v^r_t$. Reconsidering~\eqref{JensenIneq}, one gets the equality case in Jensen inequality \textit{ie} :

$$
\int_{\Theta} \left| \tilde{v}_t(\theta) \right|^2 \,d \mu^r(t)(\theta) = \int_{\Theta} \int_{T_\theta \Theta} |v|^2 d \nu_{h,\theta}(v) \,d \mu^r(t)(\theta),
$$
and as a consequence $\nu_{0,\theta} = \delta_{v^r_t(\theta)}$, $\mu^r(t)$-almost everywhere in $\Theta$. In addition,
$$
\lim_{h \rightarrow 0} (S_h \circ G)\# \Pi_h = (i \times v^r_t){\#} \mu^r(t),
$$
in the sense of the narrow convergence. The convergence of the $v$ moment is given by~\eqref{convMoments}-\eqref{JensenIneq} where inequalities can be replaced by equalities (as $\tilde{v}_t = v^r_t$) and the $\liminf$ can be replaced by a $\lim$ as  $\lim_{h \rightarrow 0} \frac{W_2(\mu^r(t),\mu^r(t+h))}{h}
= |(\mu^r)^\prime|(t)$ exists : 
\begin{equation}\label{convMoment1}
\int_{\Theta} \int_{T_\theta \Theta} |v|^2 d \nu_{0,\theta}(v) \,d \mu^r(t)(\theta) = \lim_{h \rightarrow 0} \int_{T \Theta} |v|^2 d \nu_{h}(\theta, v).
\end{equation}

For the $\theta$ moment, it is more obvious as for all $0 < h \leq 1$ :

$$
\int_{T \Theta} |\theta|^2 \,d \nu_{h}(\theta, v) = \int_{\Theta} |\theta|^2 \,d \mu^r(t)(\theta)
$$
and 
$$
\int_{T \Theta} |\theta|^2 \,d \nu_{0}(\theta, v) = \int_{T \Theta} |\theta|^2 d (i \times v^r_t){\#} \mu^r(t)(\theta) =  \int_{\Theta} |\theta|^2 \,d \mu^r(t)(\theta).
$$
Consequently,

\begin{equation}\label{convMoment2}
\int_{T \Theta} |\theta|^2 d \nu_{0}(\theta, v)  = \lim_{h \rightarrow 0} \int_{T \Theta} |\theta|^2 d \nu_{h}(\theta, v).
\end{equation}

With~\eqref{convMoment1}-\eqref{convMoment2}, the convergence of moments is asserted. The narrow convergence combined with the convergence of moments gives the convergence in $\Prob_2(\Theta)$ and the proof of the first part of the lemma is finished.
\newline

For the second part, it holds that $(\exp(h v^r_t) \times i){\#}\gamma_h$ belongs to $\Gamma(\exp(h v^r_t){\#}\mu^r(t), \mu^r(t+h))$. Hence,

$$
\begin{aligned}
\frac{W^2_2(\mu^r(t+h), \exp(h v^r_t){\#} \mu^r(t))}{h^2} &\leq \frac{1}{h^2} \int_{\Theta^2} \,d(\theta, \tilde{\theta})^2 d(\exp(h v^r_t) \times i){\#}\gamma_h(\theta, \tilde{\theta})\\
& \leq  \frac{1}{h^2} \int_{\Theta^2} d(\exp_\theta(h v^r_t(\theta)) , \tilde{\theta})^2 \,d\gamma_h(\theta, \tilde{\theta})\\
& \leq  \frac{1}{h^2} \int_{T\Theta} d(\exp_\theta(h v^r_t(\theta)) , \exp_\theta(h v))^2 \,d\nu_h(\theta, v) \\
& \leq  C \int_{T\Theta} |v^r_t(\theta) - v|^2 \,d\nu_h(\theta, v)\\
&\mathop{\longrightarrow}_{h \to 0}  0,
\end{aligned}
$$
where we have used the boundedness of the euclidean curvature of the manifold $\Theta$ in the last inequality and the fact that $\nu_h \rightarrow (i \times v^r_t){\#} \mu^r(t)$, which was proved earlier. Hence the desired result.
\end{proof}

We now introduce the projection operator on the manifold $\Theta$ :

\begin{definition}\label{def:P}
For all $\theta$ in $\Theta$, the orthogonal projection on the tangent space of $ \Theta$ is given by the operator $\bold{P}_\theta : \R^{d+3} \rightarrow T_\theta \Theta$. The operator $\bold{P}: L^1_{\rm loc}(\Theta; \R^{d+3}) \to L^1_{\rm loc}(\Theta; \R^{d+3})$ denotes the corresponding projection on vector fields, \textit{i.e.} for all $X\in L^1_{\rm loc}(\Theta; \R^{d+3})$, $(\bold{P} X)(\theta) := \bold{P}_\theta X(\theta)$ for almost all $\theta\in \Theta$.

\end{definition}

Now we are able to identify the velocity field given in Theorem~\ref{theoremWellPosednessCondSupport} under a support hypothesis.

\begin{proposition}\label{velocityProp}
Let $t\geq 0$. If there exists $\delta>0$ such that ${\rm Supp}(\mu^r(t)) \subset K_{r-\delta}$, then the velocity field $v^r_t$ in~\eqref{wellPosednessEr} is equal to $- \bold{P} v_{\mu^r(t)}$ $\mu^r(t)$-almost everywhere.
\end{proposition}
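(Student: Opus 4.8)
The strategy is to combine the local characterization of the velocity field provided by Lemma~\ref{lemmaDiff} with the first-order expansion of $\E_{\tau,r}$ from Proposition~\ref{lemmaDiffEp}, and to exploit the support condition ${\rm Supp}(\mu^r(t))\subset K_{r-\delta}$ to ensure that competitor measures obtained by small perturbations of $\mu^r(t)$ remain admissible (i.e. supported in $K_r$). First I would fix a time $t\geq 0$ at which the conclusions of Lemma~\ref{lemmaDiff} hold, write $\mu:=\mu^r(t)$ and $v:=v^r_t$, and recall that $\mu$ is supported in $K_{r-\delta}$. The key point is that for any bounded Borel vector field $\xi\in L^2(\Theta;d\mu)^{d+3}$ with $\|\xi\|_{L^\infty}$ small enough (say $\|\xi\|_{L^\infty}\leq \delta$), the pushforward $({\rm exp}(s\bold{P}\xi))\#\mu$ is still supported in $K_r$ for all small $s>0$, because the sphere component stays on $S_{\R^d}(1)$ (projection onto $T_\theta\Theta$ removes the radial part) and the $b$-component is moved by at most $s\delta$, hence stays in $[-\sqrt d-2,\sqrt d+2]$ once we started $\delta$ away from the boundary; the $c$ and $a$ components are unconstrained inside $K_r$ up to enlarging nothing since $r-\delta<r$. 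Thus such perturbations are legitimate competitors for the minimizing-movement/curve-of-maximal-slope characterization.

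Next I would use the curve-of-maximal-slope property. Since $\mu^r$ is a curve of maximal slope for $|\nabla^-\E_{\tau,r}|$ and $\E_{\tau,r}$ is $(-\lambda_{\tau,r})$-geodesically convex (Proposition~\ref{propositionConvexityE}), the slope is the global slope and, by the standard theory (Ambrosio--Gigli--Savar\'e), the velocity $v^r_t$ of the curve is the element of the tangent cone realizing the descent, characterized by
$$
\int_\Theta v^r_t\cdot w\,d\mu \;\leq\; -\,\frac{d}{ds}\Big|_{s=0^+}\E_{\tau,r}\big(({\rm exp}(s w))\#\mu\big)
$$
for admissible $w$, with equality and $v^r_t\in {\rm Tan}_\mu\Prob_2(K_r)$. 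Using the expansion~\eqref{definitionV3} of Proposition~\ref{lemmaDiffEp} with $\nu=({\rm exp}(sw))\#\mu$ and $\gamma=(i,{\rm exp}(sw))\#\mu$, together with $c_2(\gamma)=O(s^2)$, one computes
$$
\frac{d}{ds}\Big|_{s=0^+}\E_{\tau,r}\big(({\rm exp}(sw))\#\mu\big)=\int_\Theta v_\mu(\theta)\cdot w(\theta)\,d\mu(\theta),
$$
for every bounded $w$ that is tangent (i.e. $w(\theta)\in T_\theta\Theta$) and small enough to be admissible by the previous paragraph. Hence for all such $w$,
$$
\int_\Theta v^r_t\cdot w\,d\mu \;=\; -\int_\Theta v_\mu\cdot w\,d\mu \;=\; -\int_\Theta \bold{P}v_\mu\cdot w\,d\mu,
$$
the last equality because $w(\theta)\in T_\theta\Theta$. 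Since $v^r_t(\theta)\in T_\theta\Theta$ $\mu$-a.e. (it is the velocity of an absolutely continuous curve in $\Prob_2(K_r)$, cf.~Theorem~\ref{theoremWellPosednessCondSupport} and~\cite[Proposition 2.5]{Erbar2010}) and tangent test fields $w$ separate points of $L^2(\Theta;d\mu)$ restricted to the tangent bundle, I conclude $v^r_t=-\bold{P}v_\mu=-\bold{P}v_{\mu^r(t)}$ $\mu^r(t)$-a.e.

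The main obstacle I anticipate is the rigorous justification that the one-sided derivative of $s\mapsto\E_{\tau,r}(({\rm exp}(sw))\#\mu)$ equals $\int_\Theta v_\mu\cdot w\,d\mu$ and that this derivative is exactly the relevant descent quantity entering the definition of the gradient curve; concretely one must (a) verify the admissibility (support-in-$K_r$) of the perturbed measures for $s$ in a right neighbourhood of $0$, which is where the hypothesis ${\rm Supp}(\mu^r(t))\subset K_{r-\delta}$ is essential — without it the boundary constraint could activate and produce a one-sided inequality instead of an equality — and (b) upgrade the two-sided characterization into the identification $v^r_t=-\bold{P}v_\mu$, using that $w$ and $-w$ are both admissible for $\|w\|_\infty$ small (again thanks to the strict interiority of the support) so that the inequality becomes an equality. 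Once these points are in place, the identification is immediate from Lemma~\ref{lemmaDiff} and the uniqueness of the minimal-norm velocity (\cite[Lemma 2.4]{Erbar2010}). A cleaner alternative, which I would actually write, is: apply Lemma~\ref{lemmaDiff} to get $\lim_{h\to0}W_2^2(\mu^r(t+h),{\rm exp}(hv^r_t)\#\mu^r(t))/h^2=0$; compare with the competitor curve $s\mapsto {\rm exp}(-s\bold{P}v_\mu)\#\mu$ (admissible by the support condition) and use $(-\lambda_{\tau,r})$-convexity plus the expansion~\eqref{definitionV3} to show this competitor has energy-dissipation rate $\int_\Theta|\bold{P}v_\mu|^2 d\mu$, which by the maximal-slope/EDI inequality forces $v^r_t=-\bold{P}v_\mu$ $\mu^r(t)$-a.e.
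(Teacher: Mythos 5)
Your ``cleaner alternative'' is essentially the paper's proof: the paper computes the dissipation rate $\frac{d}{dt}\E_{\tau,r}(\mu^r(t))=\int \bold{P}v_{\mu^r(t)}\cdot v^r_t\,d\mu^r(t)$ via the expansion of Proposition~\ref{lemmaDiffEp} and the first part of Lemma~\ref{lemmaDiff}, then uses the competitor $\tilde\mu_h:=\exp(-h\bold{P}v_{\mu^r(t)})\#\mu^r(t)$ (admissible for small $h$ precisely because ${\rm Supp}(\mu^r(t))\subset K_{r-\delta}$) to get $|\nabla^-\E_{\tau,r}|(\mu^r(t))\geq\|\bold{P}v_{\mu^r(t)}\|_{L^2(d\mu^r(t))}$, feeds both into the maximal-slope inequality $\frac{d}{dt}\E\leq-\frac12\|v^r_t\|^2-\frac12|\nabla^-\E_{\tau,r}|^2$, and concludes by the elementary algebraic identity $\frac12|a|^2+\frac12|b|^2-a\cdot b\geq0$. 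You correctly identify the role of the interior-support hypothesis, and the conclusion is identical.

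Your first approach (testing against arbitrary bounded tangent fields $w$ with $\|w\|_\infty$ small and invoking the AGS subdifferential characterization $-v^r_t\in\partial\E_{\tau,r}(\mu^r(t))$) is a genuinely different route and would give the result more directly once the two-sided test with $\pm w$ is carried out. However, as written it has a real gap: the characterization you cite from Ambrosio--Gigli--Savar\'e is formulated for gradient flows in $\Prob_2(\R^n)$, whereas the paper's existence theorem is built on Ohta's framework for $\Prob_2$ over a compact Alexandrov space, where the directly available objects are the local slope, directional derivatives, and the descending direction $\nabla_-\E/|\nabla_-\E|$ --- not a full Fr\'echet subdifferential. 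To make your first argument rigorous here you would need to re-derive a subdifferential inclusion for the velocity from Ohta's characterization (or prove, as Lemma~\ref{lemmaIdentificationGradient} does in a more restricted form, that the energy is actually differentiable along perturbations and that the velocity realizes the minimal norm), which is essentially the extra work the paper sidesteps via the Young-inequality argument. Also note a sign slip in your claimed inequality $\int v^r_t\cdot w\,d\mu\leq-\frac{d}{ds}\big|_{0^+}\E_{\tau,r}(\exp(sw)\#\mu)$: the subdifferential inclusion gives the opposite inequality; this is harmless for your argument since you test with $\pm w$, but worth fixing.

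Finally, a small point of precision: in your ``cleaner alternative'' you invoke the second conclusion of Lemma~\ref{lemmaDiff} (the estimate $\lim_{h\to0}W_2^2(\mu^r(t+h),\exp(hv^r_t)\#\mu^r(t))/h^2=0$), but the paper's proof of Proposition~\ref{velocityProp} uses the first conclusion of that lemma (the narrow convergence $(S_h\circ G)\#\Pi_h\to(i\times v^r_t)\#\mu^r(t)$) to justify passing to the limit in $\frac{\E_{\tau,r}(\mu^r(t+h))-\E_{\tau,r}(\mu^r(t))}{h}$; the second conclusion is what is used later in Lemma~\ref{lemmaIdentificationGradient}.
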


\begin{proof}
On the one hand, for $\gamma_h := (e_0, e_1)\# \Pi_h \in \Gamma_o(\mu^r(t), \mu^r(t+h))$, by Proposition~\ref{lemmaDiffEp} and the fact that for all $t\geq 0$, $\mu^r(t) \in \Prob_2(K_r)$ : 

$$
\left|\mathcal{E}_{\tau,r}(\mu^r(t+h)) - \mathcal{E}_{\tau,r}(\mu^r(t)) - \int_{\Theta^2} v_{\mu^r(t)}(\theta) \cdot(\tilde{\theta} - \theta)\, d\gamma_h(\theta, \tilde{\theta})\right| \leq  C_{r,\tau}W_2(\mu^r(t),\mu^r(t+h))^2,
$$
which is equivalent to 
$$
\left|\frac{\mathcal{E}_{\tau,r}(\mu^r_{t+h}) - \mathcal{E}_{\tau,r}(\mu^r(t))}{h} - \int_{T\Theta} v_{\mu^r(t)}(\theta) \cdot \frac{\exp_\theta(h v) - \theta}{h} \,d (S_h \circ G)\# \Pi_h(\theta, v) \right| \leq C_{r,\tau} \frac{1}{h} W_2(\mu^r(t),\mu^r(t+h))^2.
$$
Then, one can use the decomposition :

$$
\begin{aligned}
\int_{T\Theta} v_{\mu^r(t)}(\theta) \cdot \frac{\exp_\theta(h v) - \theta}{h}\, d(S_h \circ G)\# \Pi_h(\theta, v) & =  \int_{T\Theta} v_{\mu^r(t)}(\theta) \cdot v \,d(S_h \circ G)\# \Pi_h(\theta, v) \\
& + \int_{T\Theta} v_{\mu^r(t)}(\theta) \cdot R_h(\theta,v) \,d(S_h \circ G)\# \Pi_h(\theta, v),
\end{aligned}
$$
where $R_h(\theta,v) := \frac{\exp_\theta(h v) - \theta}{h} - v$ is bounded by $C h |v|^2$ due to the uniform boundedness of euclidean curvature in $\Theta$. Passing to the limit as $h$ goes to zero and using Lemma~\ref{lemmaDiff}, one gets the differentiability of $\mathbb{R}_+ \ni t \rightarrow \E_{\tau,r}(\mu^r(t))$ almost everywhere and for almost all $t\geq 0$ :

$$
\frac{d}{dt}\left[\E_{\tau,r}(\mu^r(t))\right]= \int_{\Theta} v_{\mu^r(t)}(\theta) \cdot v^r_t(\theta) \,d\mu^r(t)(\theta).
$$
Note that to pass to the limit to obtain the last equation, we need the two  following points :

\begin{itemize}
\item First, $v \cdot v_{\mu^r(t)}(\theta)$ is at most quadratic in $(\theta,v)$ which is given by Corollary~\ref{corollaryBoundVelocity}.

\item Second, it holds that $|v_{\mu^r(t)}(\theta) \cdot R_h(\theta,v)| \leq C r |\theta| h |v|^2$ by Corollary~\ref{corollaryBoundVelocity} and consequently :

$$
\begin{array}{rl}
\left| \int_{T\Theta} v_{\mu^r(t)}(\theta) \cdot R_h(\theta,v) \,d(S_h \circ G)\# \Pi_h(\theta, v) \right| \leq& C_r h \int_{T\Theta} |\theta| |v|^2 \,d(S_h \circ G)\# \Pi_h(\theta, v)\\
\leq& C_r h \int_{T\Theta}  |v|^2 \,d(S_h \circ G)\# \Pi_h(\theta, v) \\
\leq& C_r h \frac{W_2(\mu^r(t), \mu^r(t+h))^2}{h^2}
\end{array}
$$
where we used the fact that $\Pi_h$ is supported in $K_r$ in its first variable to get the second inequality. The last term converges to zero since $(\mu_r(t))_t$ is local Lipschitz.

\end{itemize}

Next as $\bold{P} v^r_t = v^r_t$, it holds that: 
\begin{equation}
\label{limitdEdt}
\frac{d}{dt}\left[\E_{\tau,r}(\mu^r(t))\right] = \int_{\Theta^2} \bold{P} v_{\mu^r(t)}(\theta) \cdot v^r_t(\theta) \,d \mu^r(t)(\theta).
\end{equation}

On the other hand, consider the curve $\tilde{\mu_h}: \mathbb{R}_+\to \Prob_2(\Theta)$ satisfying :

$$
\forall t\geq 0, \quad \tilde{\mu}_h(t) := \exp(- h \bold{P} v_{\mu^r(t)} )\# \mu^r(t).
$$

As ${\rm Supp}(\mu^r(t)) \subset K_{r - \delta}$, there exists a small time interval around zero such that $\tilde{\mu}_h(t)$ is in $\Prob_2(K_r)$ for $h>0$ small enough. So, with $\gamma_h := (i \times \exp(-h \bold{P} v_{\mu^r(t)}))\# \mu^r(t) \in \Gamma(\mu^r(t),\tilde{\mu}_h(t))$,
$$
\left|\mathcal{E}_{\tau,r}(\tilde{\mu}_{h}(t)) - \mathcal{E}_{\tau,r}(\mu^r(t)) - \int_{\Theta^2} \bold{P} v_{\mu^r(t)}(\theta) \cdot(\tilde{\theta} - \theta) \,d\gamma_h(\theta, \tilde{\theta})\right| \leq C_{r,\tau} W^2_2(\mu^r(t),\tilde{\mu}_{h}(t))
$$
and it holds that
$$
\int_{\Theta^2} \bold{P} v_{\mu^r(t)}(\theta) \cdot(\tilde{\theta} - \theta) \,d\gamma_h(\theta, \tilde{\theta})= h \int_{\Theta^2} \bold{P} v_{\mu^r(t)}(\theta) \cdot \frac{\exp_\theta(-h \bold{P} v_{\mu^r(t)}(\theta)) - \theta}{h} \,d\mu^r(t)(\theta).
$$
Hence,  

$$
\frac{\mathcal{E}_{\tau,r}(\tilde{\mu}_{h}(t)) - \mathcal{E}_{\tau,r}(\mu^r(t))}{W_2(\tilde{\mu}_h(t), \mu^r(t)) } = \frac{h}{W_2(\tilde{\mu}_h(t), \mu^r(t))} \int_{\Theta^2} \bold{P} v_{\mu^r(t)}(\theta) \cdot \frac{\exp_\theta(-h \bold{P} v^\tau_{\mu^r(t)}(\theta)) - \theta}{h} \, d\mu^r(t)(\theta) + o_h(1)
$$
and getting the limsup as $h$ goes to zero (proceeding in the similar way as above to get the limit of the first term on the right hand side) and owing to the fact that $\limsup_{h \rightarrow 0} \frac{W_2(\tilde{\mu}_h(t), \mu^r(t))}{h} \leq \| \bold{P} v_{\mu^r(t)} \|_{{L^2(\Theta;d\mu^r(t))}}$, we obtain that

\begin{equation}\label{estimateLocalSlope}
|\nabla^- \E_{\tau,r} |(\mu^r(t)) \geq \| \bold{P} v_{\mu^r(t)} \|_{L^2(\Theta;d\mu^r(t))}.
\end{equation}
As $\mu^r$ is a curve of maximal slope with respect to the upper gradient $|\nabla^- \E_{\tau,r}|$ of $\E_{\tau,r}$, one has :

$$
\begin{aligned}
\frac{d}{dt}\left[\E_{\tau,r}(\mu^r(t))\right] &= \int_{\Theta} \bold{P} v_{\mu^r(t)}(\theta) \cdot v^r_t(\theta) \,d \mu^r(t)(\theta) \leq - \frac{1}{2} \| v^r_{t} \|_{L^2(\Theta;d\mu^r(t))} - \frac{1}{2} |\nabla^- \E_{\tau,r}|^2(\mu^r(t))\\
& \leq  - \frac{1}{2} \| v^r_{t} \|^2_{L^2(\Theta;d\mu^r(t))} - \frac{1}{2} \| \bold{P} v_{\mu^r(t)} \|^2_{L^2(\Theta;d\mu^r(t))}
\end{aligned}
$$
where we have used~\eqref{estimateLocalSlope}. As a consequence,

$$
\int_{\Theta} \left( \frac{1}{2} (\bold{P} v_{\mu^r(t)})^2(\theta) + \frac{1}{2} |v^r_{t}(\theta)|^2 - \bold{P} v_{\mu^r(t)}(\theta) \cdot v^r_t(\theta) \right) \, d \mu^r(t)(\theta) \leq 0
$$
and 
$$
v^r_t = - \bold{P} v_{\mu^r(t)} \quad \mu^r(t)\text{-a.e}.
$$
\end{proof}

The identification of the velocity field when the support condition is satisfied allows to give an explicit formula for the gradient curve. It is given by the characteristics :

\begin{proposition}
\label{propositionCharacteristics}

Let $\chi^r: \mathbb{R}_+ \times \Theta \to \Theta$ be the flow associated to the velocity field $- \bold{P} v_{\mu^r(t)}$ :
$$
\left\{
\begin{aligned}
\partial_t \chi^r(t) &= - \bold{P} v_{\mu^r(t)}\\
\chi^r(0;\theta) &= \theta.
\end{aligned}
\right.
$$
Then $\chi^r$ is uniquely defined, continuous, and for all $t\geq 0$, $\chi^r(t)$ is Lipschitz on $K_r$. Moreover, as long as ${\rm Supp}(\mu^r(t)) \subset K_{r-\delta}$ for some $\delta >0$ :

$$
\mu^r(t) = {\chi^r(t)}\# \mu_0.
$$
\end{proposition}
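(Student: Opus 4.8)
The plan is to fix once and for all the gradient curve $\mu^r:\mathbb{R}_+\to\Prob_2(K_r)$ supplied by Theorem~\ref{theoremWellPosednessCondSupport} and to treat $V^r:(t,\theta)\mapsto-\bold{P}v_{\mu^r(t)}(\theta)$ as a \emph{given} non-autonomous vector field on the manifold $\Theta$, so that the definition of $\chi^r$ contains no fixed-point issue. First I would record the regularity of $V^r$. Since $P_\tau$ is linear in $\mu$ and $\mu\mapsto\phi_\mu$, $\mu\mapsto v_\mu$ depend affinely on $P_\tau(\mu)$, Corollary~\ref{cor:boundPtau} together with the local Lipschitz continuity of $t\mapsto\mu^r(t)$ in $W_2$ (part of Theorem~\ref{theoremWellPosednessCondSupport}) makes $t\mapsto V^r_t(\theta)$ locally Lipschitz, uniformly for $\theta$ in compacts. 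In the $\theta$-variable, Lemma~\ref{lem:aux} and Corollary~\ref{corollaryBoundVelocity} give that $v_{\mu^r(t)}$ has linear growth ($|v_{\mu^r(t)}(\theta)|\le C_\tau r|\theta|$) and is Lipschitz on every compact subset of $\Theta$, with constants uniform in $t$ on compact time intervals; the projection $\bold{P}$ onto $T\Theta$ acts non-trivially only on the $w$-component (through $\theta\mapsto I_d-ww^T$), hence is smooth and bounded on compacts, so $V^r$ inherits these properties. As $V^r_t(\theta)\in T_\theta\Theta$, trajectories stay on $\Theta$, and the uniform linear growth rules out finite-time blow-up, so the Cauchy--Lipschitz theorem on $\Theta$ (Carathéodory version in time) yields a unique globally defined flow $\chi^r\in C(\mathbb{R}_+\times\Theta;\Theta)$, continuous in the initial point.

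Next I would derive the Lipschitz bound: for $\theta_1,\theta_2\in K_r$ and $t\in[0,T]$, Grönwall's lemma applied to $s\mapsto d(\chi^r(s;\theta_1),\chi^r(s;\theta_2))$, together with the uniform spatial Lipschitz constant $L_{r,\tau,T}$ of $V^r$ on the relevant compact set and the comparability of geodesic and Euclidean distances on $\Theta$ from Lemma~\ref{lemmaSmoothnessGeodesic}, gives $d(\chi^r(t;\theta_1),\chi^r(t;\theta_2))\le e^{L_{r,\tau,T}t}\,d(\theta_1,\theta_2)$, i.e.\ $\chi^r(t)$ is Lipschitz on $K_r$; a parallel Grönwall estimate using $|V^r_t(\theta)|\le C_\tau r|\theta|$ bounds $|\chi^r(t;\theta)|$, which keeps ${\rm Supp}(\chi^r(t)\#\mu_0)$ inside a fixed compact on any finite interval.

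For the push-forward identity, suppose ${\rm Supp}(\mu^r(t))\subset K_{r-\delta}$ on $[0,T]$. On that interval Proposition~\ref{velocityProp} identifies $v^r_t=-\bold{P}v_{\mu^r(t)}=V^r_t$ $\mu^r(t)$-a.e., so $\mu^r(t)$ solves the continuity equation $\partial_t\rho_t+{\rm div}(V^r_t\rho_t)=0$ with $\rho_0=\mu_0$; on the other hand, differentiating $t\mapsto\int_\Theta\varphi\,d(\chi^r(t)\#\mu_0)$ against $\varphi\in C^\infty_c(\Theta)$ and using $\partial_t\chi^r=V^r_t\circ\chi^r$ shows $\chi^r(t)\#\mu_0$ solves the same equation with the same initial datum. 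Since $V^r$ is Lipschitz in space on a compact set containing both families of supports, uniqueness of measure solutions of the continuity equation with a Lipschitz velocity field forces $\mu^r(t)=\chi^r(t)\#\mu_0$ on $[0,T]$. A self-contained alternative is a Grönwall argument in Wasserstein distance: with $g(t):=W_2(\mu^r(t),\chi^r(t)\#\mu_0)$, the second part of Lemma~\ref{lemmaDiff} ($W_2^2(\mu^r(t+h),\exp(hv^r_t)\#\mu^r(t))=o(h^2)$), combined with stability of the flow $\chi^r$ and its Lipschitz constant, yields $g(t+h)\le(1+Lh)g(t)+o(h)$, whence $g\equiv0$ from $g(0)=0$.

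The step I expect to be the main obstacle is this last one: importing (or re-deriving) uniqueness for the continuity equation in the present setting requires care, because $V^r$ is only locally Lipschitz in time (enough, but it must be flagged), its spatial Lipschitz constant is merely local with $r,\tau$-dependence, the whole argument lives on the manifold $\Theta$ with a spherical factor — so the smoothness and boundedness of $\bold{P}$ and the geodesic-versus-Euclidean comparisons of Lemma~\ref{lemmaSmoothnessGeodesic} are invoked repeatedly — and the identity can only be asserted on the time interval where the support stays inside $K_{r-\delta}$, which is exactly the regime of validity of Proposition~\ref{velocityProp}. By contrast, the well-posedness, continuity and Lipschitz-on-$K_r$ assertions are routine non-autonomous ODE theory once the estimates of Lemma~\ref{lem:aux} and Corollary~\ref{corollaryBoundVelocity} are in hand.
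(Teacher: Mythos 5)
Your proposal is correct and takes essentially the same approach as the paper: the paper's proof is a one-liner asserting that everything follows from the fact that $v^r_t = -\bold{P}v_{\mu^r(t)} = -\bold{P}\nabla_\theta\phi_{\mu^r(t)}$ is $C^\infty$, and your argument is precisely the fleshed-out version of that claim — Cauchy--Lipschitz for well-posedness and Lipschitz dependence of the flow, then identification of $\mu^r(t)$ and $\chi^r(t)\#\mu_0$ via uniqueness of solutions to the continuity equation with a regular velocity field (using Proposition~\ref{velocityProp} to identify $v^r_t$ with $V^r_t$ on the interval where the support stays in $K_{r-\delta}$). The caveats you flag at the end are reasonable, but none of them is a genuine obstacle once one notes, as the paper does, that $v_{\mu^r(t)}$ is smooth in $\theta$.
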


\begin{proof}
This is a direct consequence of the fact that $v^r_t =  - \bold{P} v_{\mu^r(t)} = - \bold{P} \nabla_\theta \phi_{\mu^r(t)}$ is $C^\infty$.
\end{proof}

Next lemma relates the curve $[0,1] \ni h \mapsto \exp(h v^r_t){\#} \mu^r(t)$ with $\nabla_- \E_{\tau,r} (\mu^r(t))$. This will be useful later to prove that the velocity field characterizes the gradient curve. 

\begin{lemma}\label{lemmaIdentificationGradient}
For all $\mu \in \Prob_2(\Theta)$ with ${\rm Supp}(\mu) \subset K_{r-\delta}$ for some $\delta>0$, the map $\nu: [0,1] \ni h \mapsto \exp(-h \bold{P} v_\mu / \| \bold{P} v_\mu \|_{L^2(\Theta; d\mu)})\# \mu $ is differentiable at $h = 0$. Moreover, it holds that
$$
\nu^\prime(0) = \nabla_- \E_{\tau,r} (\mu)/ |\nabla_- \E_{\tau, r}|(\mu).
$$
\end{lemma}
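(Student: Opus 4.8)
The plan is to show that the curve $\nu(h) := \exp(-h\mathbf{P}v_\mu/\|\mathbf{P}v_\mu\|_{L^2(\Theta;d\mu)})\#\mu$ realizes, up to normalization, the maximal descent rate of $\E_{\tau,r}$ at $\mu$, which by definition of the local slope is the "gradient direction" in the Alexandrov sense. The key inputs are Proposition~\ref{lemmaDiffEp} (the second-order Taylor expansion~\eqref{definitionV3} of $\E_{\tau,r}$ with the first-order term governed by $v_\mu$) and the curvature bound on $\Theta$ controlling the exponential map remainder, exactly as in the proof of Proposition~\ref{velocityProp}.

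First I would set $V_\mu := \mathbf{P}v_\mu/\|\mathbf{P}v_\mu\|_{L^2(\Theta;d\mu)}$, so that $\|V_\mu\|_{L^2(\Theta;d\mu)} = 1$, and note that since $\mathrm{Supp}(\mu)\subset K_{r-\delta}$, for $h$ small enough $\nu(h)\in\Prob_2(K_r)$, so $\E_{\tau,r}(\nu(h))$ is finite and the expansion~\eqref{definitionV3} applies with the transport plan $\gamma_h := (i\times\exp(-hV_\mu))\#\mu$. Plugging in gives
$$
\E_{\tau,r}(\nu(h)) - \E_{\tau,r}(\mu) = -\int_\Theta v_\mu(\theta)\cdot\big(\exp_\theta(-hV_\mu(\theta))-\theta\big)\,d\mu(\theta) + O_{r,\tau}(c_2(\gamma_h)).
$$
Using $\exp_\theta(-hV_\mu(\theta)) - \theta = -hV_\mu(\theta) + R_h(\theta)$ with $|R_h(\theta)|\leq Ch^2|V_\mu(\theta)|^2$ (curvature bound, as in Proposition~\ref{velocityProp}), and $c_2(\gamma_h)\leq C h^2\|V_\mu\|_{L^2(\Theta;d\mu)}^2 = Ch^2$, together with $v_\mu\cdot\mathbf{P}v_\mu = |\mathbf{P}v_\mu|^2$ (so $\int v_\mu\cdot V_\mu\,d\mu = \|\mathbf{P}v_\mu\|_{L^2(\Theta;d\mu)}$ using Corollary~\ref{corollaryBoundVelocity} to justify integrability), one obtains
$$
\E_{\tau,r}(\nu(h)) - \E_{\tau,r}(\mu) = -h\,\|\mathbf{P}v_\mu\|_{L^2(\Theta;d\mu)} + o(h).
$$
Meanwhile $W_2(\mu,\nu(h))\leq \big(c_2(\gamma_h)\big)^{1/2} = h + o(h)$ by the same curvature estimate, and in fact $W_2(\mu,\nu(h))/h \to 1$ since the geodesic-distance cost and the Euclidean cost agree to leading order. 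Combining these two expansions shows the difference quotient defining differentiability of $\nu$ at $h=0$ converges, that $\nu'(0)$ is the unit tangent of this curve, and that
$$
\frac{\big(\E_{\tau,r}(\mu)-\E_{\tau,r}(\nu(h))\big)_+}{W_2(\mu,\nu(h))} \xrightarrow[h\to0]{} \|\mathbf{P}v_\mu\|_{L^2(\Theta;d\mu)}.
$$
By~\eqref{estimateLocalSlope} (established in the proof of Proposition~\ref{velocityProp}) we have $|\nabla^-\E_{\tau,r}|(\mu)\geq\|\mathbf{P}v_\mu\|_{L^2(\Theta;d\mu)}$, and the converse inequality $|\nabla^-\E_{\tau,r}|(\mu)\leq\|\mathbf{P}v_\mu\|_{L^2(\Theta;d\mu)}$ follows from the Taylor expansion~\eqref{definitionV3} applied along an arbitrary sequence $\nu_n\to\mu$ (the first-order term is $\leq\|\mathbf{P}v_\mu\|_{L^2(\Theta;d\mu)}W_2(\mu,\nu_n)$ by Cauchy–Schwarz after projecting, since $v_\mu\cdot(\tilde\theta-\theta)$ only sees the tangential component along optimal plans up to second order, the error being absorbed in $C_{r,\tau}c_2$). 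Hence $|\nabla^-\E_{\tau,r}|(\mu) = \|\mathbf{P}v_\mu\|_{L^2(\Theta;d\mu)}$, and dividing the computed $\nu'(0)$ by this quantity gives exactly the claimed identity $\nu'(0) = \nabla_-\E_{\tau,r}(\mu)/|\nabla_-\E_{\tau,r}|(\mu)$ (interpreting $\nabla_-\E_{\tau,r}(\mu)$ as the vector field $-\mathbf{P}v_\mu$ viewed in the tangent cone, via the identification of the metric differential structure).

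The main obstacle I anticipate is bookkeeping the difference between the geodesic distance $d$ on $\Theta$ and the ambient Euclidean distance inside $W_2$: the expansion~\eqref{definitionV3} is written with $\tilde\theta - \theta$ and $c_2(\gamma)$ in Euclidean terms, whereas $W_2$ on $\Theta$ uses the geodesic cost, so one must invoke Lemma~\ref{lemmaSmoothnessGeodesic} to pass back and forth and verify that the discrepancy is $o(h)$ — this is precisely why the curvature boundedness of $\Theta$ is needed. A secondary subtlety is making precise what "$\nu'(0)$" and "$\nabla_-\E_{\tau,r}(\mu)$" mean as elements of the tangent cone $\mathrm{Tan}_\mu\Prob_2(\Theta)$; I would handle this exactly as in Lemma~\ref{lemmaDiff}, identifying $\nu'(0)$ with the plan $(i\times(-V_\mu))\#\mu\in\Prob_2(T\Theta)$ and appealing to the one-to-one correspondence between minimal-norm velocity fields and tangent vectors from~\cite{AmbrosioSavareBook}.
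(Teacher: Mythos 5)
Your proposal is correct in its essentials and follows the same architecture as the paper's proof: establish the claim $|\nabla_-\E_{\tau,r}|(\mu)=\|\bold{P}v_\mu\|_{L^2(\Theta;d\mu)}$, show that the specific curve $\nu(h)$ achieves this decay rate, and then invoke the Alexandrov-space machinery to identify $\nu'(0)$ with the gradient direction. The main variation is in the upper bound $|\nabla_-\E_{\tau,r}|(\mu)\leq\|\bold{P}v_\mu\|_{L^2(\Theta;d\mu)}$: the paper computes the directional derivative of $\E_{\tau,r}$ along an arbitrary unit-speed geodesic lifted by a McCann plan $\Pi$, bounds it below by $-\|\bold{P}v_\mu\|_{L^2(\Theta;d\mu)}$ via Cauchy--Schwarz, and then invokes~\cite[Lemma 4.3]{Ohta2009} to pass from directional derivatives to the local slope; you instead argue directly from Definition~\ref{definitionLocalSlope} with an arbitrary approximating sequence, inserting the projection $\bold{P}$ and absorbing the non-tangential part of $\tilde\theta-\theta$ into the $c_2(\gamma)$ error using the curvature bound. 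Both routes work; the paper's is slightly more automatic because Ohta's lemma handles the reduction for you, while your direct argument requires the (correct, but compressed) observation that $(I-\bold{P})v_\mu(\theta)\cdot(\tilde\theta-\theta)$ is $O(d(\theta,\tilde\theta)^2)$. Similarly, for the final identification you appeal to the plan/minimal-velocity correspondence from~\cite{AmbrosioSavareBook}, whereas the paper cites the targeted criterion~\cite[Proof of (ii) Lemma 5.4]{Ohta2009}; the latter is the more direct reference for producing a genuine element of the space of directions.

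Two small slips worth flagging. First, the estimate $W_2(\mu,\nu(h))\leq (c_2(\gamma_h))^{1/2}$ is not valid as written: $W_2$ on $\Prob_2(\Theta)$ is defined with the \emph{geodesic} cost $d(\theta,\tilde\theta)^2$, whereas $c_2$ is the \emph{Euclidean} cost, and Lemma~\ref{lemmaSmoothnessGeodesic} gives $|\theta-\tilde\theta|\leq d(\theta,\tilde\theta)$, i.e.\ the inequality goes the wrong way. The correct bound (used in the paper) is $W_2(\mu,\nu(h))^2\leq\int_\Theta d^2\big(\theta,\exp_\theta(-hV_\mu(\theta))\big)\,d\mu=h^2$, which happens to yield the same conclusion $W_2\leq h$. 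Second, the parenthetical ``in fact $W_2(\mu,\nu(h))/h\to1$'' is neither justified by what precedes it nor needed: only the upper bound $W_2\leq h+o(h)$ is required to obtain~\eqref{nuDiff2}-type inequalities, and asserting the limit would require an additional lower bound on $W_2$.
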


\begin{proof}
First, we claim that $|\nabla_- \E_{\tau,r}(\mu)| = \| \bold{P} v_\mu(\theta) \|_{L^2(\Theta; d\mu)}$. In order to prove it, take an arbitrary unit speed geodesic $[0,1] \ni s \mapsto (e_s)\# \Pi $ starting at $\mu$ for which there exists a time interval around zero such that $(e_s)\# \Pi$ belongs to $\Prob_2(K_r)$. As a consequence, one can write for all $s >0$ sufficiently small :
$$
\left|\E_{\tau,r}((e_{ s})\# \Pi) - \E_{\tau,r}(\mu) + \int_{\Theta^2} v_{\mu}(\theta) \cdot (\tilde{\theta} - \theta) \, d (e_0, e_{ s})\# \Pi(\theta, \tilde{\theta}) \right| \leq C_{r,\tau} W_2^2(\mu, (e_{ s})\# \Pi).
$$
with
$$
\int_{\Theta^2} v_{\mu}(\theta) \cdot (\tilde{\theta} - \theta) \, d (e_0, e_{ s})\# \Pi(\theta, \tilde{\theta}) =  \int_{T\Theta} v_{\mu}(\theta) \cdot (\exp_\theta( sv) - \theta) \,d G\# \Pi(\theta, v).
$$
Dividing by $s$ and passing to the limit as $s$ goes to zero, one obtains : 

$$
\frac{d}{ds}\left[\E_{\tau,r}((e_{ s})\# \Pi)\right] =  \int_{T\Theta} v_{\mu}(\theta) \cdot v \,d G\# \Pi(\theta, v).
$$
Note that, to get the last equation, we need to prove that for all $s$ sufficiently small the function $\eta(s): T\Theta \ni (\theta,v) \mapsto v_\mu(\theta) \cdot \frac{\exp_\theta( sv) - \theta}{s}$ is uniformly integrable with respect to $G\# \Pi$. In fact, this is given by Corollary~\ref{corollaryBoundVelocity} and the uniform curvature bound on $\Theta$ giving $|\eta(s)|(\theta, v) \leq C s r |\theta| |v|^2$. As the term $C r |\theta| |v|^2$ is integrable with respect to the measure $G\# \Pi$ (recall that it has finite second-order moments and is supported in $K_r$ in the $\theta$ variable), we have the desired uniform integrability property.

Moreover, by Cauchy-Schwartz inequality:

$$
\begin{aligned}
\frac{d}{ds}\left[\E_{\tau,r}((e_{ s})\# \Pi)\right] &\geq -  \| \bold{P} v_{\mu}\|_{L^2(\Theta; d\mu)} \sqrt{\int_{T\Theta} v^2 d G\# \Pi(\theta, v)} \\
& =  -  \| \bold{P} v_{\mu}\|_{L^2(\Theta; d\mu)},
\end{aligned}
$$
where the last equality comes from :
$$
\begin{aligned}
\int_{T\Theta} v^2 \, d G\# \Pi(\theta, v) &= \int_{\mathfrak P} \dot{\pi}(0)^2 \,d \Pi(\pi)\\
& =  \int_{\mathfrak P} d(\pi(0), \pi(1))^2 \, d \Pi(\pi)\\
& =  W_2^2((e_0)\# \Pi, (e_1)\# \Pi)\\
& =  1.
\end{aligned}
$$
The last equality is derived from the fact that $[0,1] \ni s \mapsto (e_s)\# \Pi$ is a unit speed geodesic. To conclude, we have proved that for all unit speed geodesic $(\alpha, 1) \in C_\mu(\Prob_2(K_r))$

$$
D_\mu \E_{\tau,r}((\alpha,1)) \geq - \| \bold{P} v_{\mu}\|_{L^2(\Theta; d\mu)}
$$
which by~\cite[Lemma 4.3]{Ohta2009}, asserts that :

\begin{equation}\label{boundNablaEP1}
|\nabla_- \E_{\tau,r}|(\mu) \leq \| \bold{P} v_{\mu}\|_{L^2(\Theta; d\mu)}.
\end{equation}

Aside that, let $h > 0$ :
$$
\begin{aligned}
W^2_2(\nu(h), \nu(0)) & \leq  \int_{\Theta} d^2\left(\exp_\theta\left(-h \bold{P} v_\mu(\theta) / \| \bold{P} v^\tau_\mu \|_{L^2(\Theta; d\mu)} \right), \theta\right) \,d\mu(\theta) \\
& \leq  h^2 \int_{\Theta} d^2\left(\exp_\theta\left(-\bold{P} v_\mu(\theta)/ \| \bold{P} v_\mu \|_{L^2(\Theta; d\mu)}\right), \theta\right) \, d\mu(\theta) \\
& =  h^2,\\
\end{aligned}
$$
and 
\begin{equation}\label{nuLipschitz}
\limsup_{h \rightarrow 0} \frac{W_2(\nu({h}), \nu_{0})}{h} \leq 1.
\end{equation}

Moreover as ${\rm Supp}(\mu) \subset K_{r-\delta}$, $v_\mu$ is bounded in $L^\infty(K_r)$ by Corollary \ref{corollaryBoundVelocity} and for a small time interval around zero $\nu(h) \in \Prob_2(K_r)$. Consequently, as $h$ goes to $0$,
$$
\begin{aligned}
\E_{\tau,r}(\nu(h)) - \E_{\tau,r}(\mu) & =  \int_{\Theta^2} v_{\mu}(\theta) \cdot (\tilde{\theta} - \theta) \, d (i \times \exp(-h \bold{P} v_{\mu}/ \| \bold{P} v_\mu \|_{L^2(\Theta; d\mu)}))\# \mu(\theta) \\
&+ o\left(h\right) \\
&= \int_{\Theta} v_{\mu}(\theta) \cdot \left(\exp\left(-h \bold{P} v_{\mu}(\theta)/ \| \bold{P} v_\mu \|_{L^2(\Theta; d\mu)}\right) - \theta\right) \, d \mu(\theta) + o(h).\\
\end{aligned}
$$
Dividing by $h$ and passing to the limit as $h$ goes to zero (justifying the passage to the limit as above), it holds that:

\begin{equation}\label{nuDiff1}
\lim_{h \rightarrow 0} \frac{\E_{\tau,r}(\nu(h)) - \E_{\tau,r}(\mu)}{h} = -\| \bold{P} v^\tau_\mu(\theta) \|_{L^2(\Theta; d\mu)}.
\end{equation}
Additionally, with~\eqref{nuLipschitz} :

\begin{equation}\label{nuDiff2}
\limsup_{h \rightarrow 0} \frac{\E_{\tau,r}(\nu(h)) - \E_{\tau,r}(\mu)}{W_2(\nu(h), \nu(0))} \leq -\| \bold{P} v_\mu \|_{L^2(\Theta; d\mu)}.
\end{equation}

To conclude :

\begin{itemize}

\item With~\eqref{nuDiff2} and~\eqref{boundNablaEP1}, the claim is proved :

$$
|\nabla_- \E_{\tau,r}|(\mu) = \| \bold{P} v_{\mu}\|_{L^2(\Theta; d\mu)}.
$$

\item Owing to this,~\eqref{nuLipschitz} and~\eqref{nuDiff2} the curve $[0,1] \ni h \mapsto \nu(h)$ is differentiable at $h=0$ by~\cite[Proof of (ii) Lemma 5.4]{Ohta2009} and :

$$
\nu^\prime(0) =  \nabla_- \E_{\tau,r}(\mu)/ |\nabla_- \E_{\tau,r}|(\mu).
$$

\end{itemize}

This finishes the proof of the lemma.

\end{proof}

\subsubsection{Existence without support limitation}

Note that for the moment the definition domain of $\E_{\tau,r}$ is reduced to measures supported in $K_r$. Using a bootstrapping argument, we will prove that the existence theorem~\ref{theoremGlobalWellPosedness} can be extended to the energy $\E_{\tau, +\infty}$.

%
%
%
%

\begin{proof}[Proof of Theorem~\ref{theoremGlobalWellPosedness}]
Let :

\begin{itemize}
\item $r_0>0$ be such that ${\rm Supp}(\mu_0) \subset K_{r_0}$,
\item $\mu^r: \mathbb{R}_+ \ni t \mapsto \mu^r(t)$ the gradient curve associated to $\E_{\tau, r}$ for $r>r_0$.
\end{itemize}
By Corollary~\ref{corollaryBoundVelocity}, it holds that $|v_{\mu^r(t)}(\theta)| \leq C r |\theta|$ for all $t\geq 0$. Hence, for all $\theta \in K_{r_0}$, $|\chi^r(t;\theta)| \leq r_0 e^{Cr t}$  for all time $t\in \left[0, T_r := \frac{1}{Cr} \log\left(\frac{r + r_0}{2r_0}\right)\right]$ and ${\rm Supp}(\mu^r(t)) \subset K_{(r + r_0)/2} \subset K_r$. By the definition of the gradient curve :

\begin{equation}\label{murGradientCurve}
\forall t \in [0, T_r], \ (\mu^r)^\prime(t) = \nabla_- \E_{\tau, r}(\mu^r(t)) = g^\prime(0)
\end{equation}
with $g:[0,1]\ni h \mapsto \exp\left(- \bold{P} v_{\mu^r(t)} h\right)$, by Lemma~\ref{lemmaIdentificationGradient}. Note that the right hand side of last equation does not depend explicitly on $r$ but on $\mu^r_\cdot$.

We construct the curve $\mu: [0,T_r] \to \Prob_2(\Theta)$ as follows:

$$
\forall t \in [0,T_r], \;  r >r_0 \ \mu(t) := \mu^r(t).
$$
This is well-defined since by uniqueness of the gradient curve with respect to $\E_{\tau, r}$, $\mu^{r_1}(t) = \mu^{r_2}(t)$ on $[0,\min(T_{r_1},T_{r_2})]$ for $r_0 < r_1 \leq r_2$. Defining for all $n\in \mathbb{N}^*$
$$
r_{n} := (n+1) r_0,
$$
we can build inductively a gradient curve on $\left[0, \frac{1}{C r_0}\sum_{i=1}^n \frac{1}{(i+1)} \log \left( \frac{i+2}{2(i+1)} \right) \right]$. As the width of this interval is diverging, it is possible to construct a gradient curve on $\R^+$.

All the properties given by the theorem comes from the properties of $\mu^r$ derived in Theorem~\ref{theoremWellPosednessCondSupport} and Proposition~\ref{propositionCharacteristics}.
%
%
%
%
%
%
\end{proof}

\begin{remark}\label{remarkNonexistenceUniqueness}
We make here two important remarks:

\begin{itemize}
\item We did not prove the existence of a gradient curve with respect to $\E_{\tau,\infty}$ because this functional is not proved to be convex along geodesics and it is impossible to define gradients without such an assumption.

\item The uniqueness of a solution to~\eqref{wellPosednessE} is out of the scope of this article. To prove it, one should link~\eqref{wellPosednessE} and the support condition to prove that locally in time, a solution to~\eqref{wellPosednessE} coincides with the unique gradient curve of $\E_{\tau,r}$ for some $r>0$ large enough. 
\end{itemize}
\end{remark}

\subsection{Link with backpropagation in neural network}\label{sectionLinkNN}

Here, we give a proof of Theorem~\ref{theoremLinkNN}.

\begin{proof}[Proof of Theorem~\ref{theoremLinkNN}]
Returning back to the proof of Theorem~\ref{theoremGlobalWellPosedness} and for all time $T>0$, one can find $r>0$ large enough such that $\mu$, $\mu_{m}$ coincide with gradient curves on $[0,T]$ with respect to $\E_{\tau, r}$ starting from $\mu_0$ and $\mu_{0,m}$ respectively. As gradient curves with respect to $\E_{\tau, r}$ verifies the following semigroup property~\cite[Theorem 5.11]{Ohta2009}

$$
\forall t \in [0,T], \ W_2(\mu(t), \mu_{m}(t)) \leq e^{\lambda_{\tau,r} t } W_2(\mu_0, \mu_{0,m}),
$$
the expected convergence on $C([0,T], \Prob_2(\Omega))$ holds by the convergence of initial measures.
\end{proof}

\subsection{Convergence of the measure towards the optimum}
\label{sectionConvergence}
In the following, a LaSalle's principle argument is invoked in order to prove Theorem~\ref{theoremConvergence}. For simplicity, we note  $\E_{\tau} := \E_{\tau, \infty}$ for $0<\tau<+\infty$.

\subsubsection{Characterization of optima}

In this part, we focus on a characterization of global optima.
For convenience, we extend the functional $\E_\tau$ to the set of signed finite measures on $\Theta$, denoted by $\mathcal M(\Theta)$.
\begin{lemma}
\label{lemmaMeasureMeasureProba}
For all $\mu \in \mathcal{M}(\Theta)$, there exists a probability measure $\mu_p$ such that $\E_\tau(\mu) = \E_\tau(\mu_p)$.
\end{lemma}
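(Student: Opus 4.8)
The plan is to reduce an arbitrary signed measure to a probability measure by absorbing both its total mass and the sign of its negative part into the coordinates $(c,a)$, exploiting that the integrand $\Phi_\tau(\theta;x)=c+a\,\sigma_{H,\tau}(w\cdot x+b)$ is linear in the pair $(c,a)$ — hence both positively and negatively homogeneous of degree one in that pair — and does not depend on the scale of $(w,b)$. Concretely, for $\lambda\in\R$ I would introduce the Borel map $D_\lambda:\Theta\to\Theta$, $D_\lambda(c,a,w,b):=(\lambda c,\lambda a,w,b)$; this maps $\Theta$ into itself because only the unconstrained coordinates are rescaled, while the constraint $|w|=1$ and the $b$-interval are left untouched. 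From the definition of $\Phi_\tau$ one gets $\Phi_\tau(D_\lambda(\theta);x)=\lambda\,\Phi_\tau(\theta;x)$ for all $\theta\in\Theta$, $x\in\Omega$, and therefore $P_\tau\big((D_\lambda)_\#\nu\big)=\lambda\,P_\tau(\nu)$ for every finite measure $\nu$ on $\Theta$.

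Next I would take the Jordan decomposition $\mu=\mu^+-\mu^-$ into nonnegative finite measures and set $M:=|\mu|(\Theta)=\mu^+(\Theta)+\mu^-(\Theta)$. If $M=0$ then $\mu=0$ and I would simply pick $\mu_p:=\delta_{(0,0,w_0,0)}$ for any fixed $w_0\in S_{\R^d}(1)$, for which $\Phi_\tau((0,0,w_0,0);\cdot)\equiv0$, so $P_\tau(\mu_p)=0=P_\tau(\mu)$. If $M>0$, I would set
$$
\mu_p:=\frac{1}{M}\Big((D_M)_\#\mu^+ + (D_{-M})_\#\mu^-\Big),
$$
which is a nonnegative measure of total mass $\tfrac1M\big(\mu^+(\Theta)+\mu^-(\Theta)\big)=1$, i.e. a probability measure; it also lies in $\Prob_2(\Theta)$ whenever $\mu$ has a finite second moment, since $|D_{\pm M}(\theta)|^2\le(1+M^2)|\theta|^2$ on $\Theta$ (using $|\theta|^2\ge|w|^2=1$). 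Then the homogeneity identity for $P_\tau$ gives directly
$$
P_\tau(\mu_p)=\frac{1}{M}\big(M\,P_\tau(\mu^+)-M\,P_\tau(\mu^-)\big)=P_\tau(\mu^+)-P_\tau(\mu^-)=P_\tau(\mu),
$$
so that $\E_\tau(\mu_p)=\E(P_\tau(\mu_p))=\E(P_\tau(\mu))=\E_\tau(\mu)$.

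I do not expect a genuine obstacle here; the construction mirrors the rescaling trick of Remark~\ref{remarkUnifBoundFm}. The only points deserving care are: verifying that $D_\lambda$ really stays inside $\Theta$ (which is exactly why the $b$-range and the constraint $|w|=1$ must be preserved by the rescaling), the bookkeeping of the sign flip $D_{-M}$ applied to $\mu^-$ — this is what turns $\mu^+-\mu^-$ into an honest pushforward of a single probability measure — and, if one adopts the convention that the extended functional takes the value $+\infty$ whenever $P_\tau(\mu)\notin H^1(\Omega)$, the observation that $\int(|c|+|a|)\,d|\mu_p|=\int(|c|+|a|)\,d|\mu|$, so that $\mu$ and $\mu_p$ lie simultaneously in the finiteness domain and the computation above applies verbatim there.
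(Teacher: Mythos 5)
Your proof is correct and is essentially the same as the paper's: both absorb the total variation mass and the sign of the negative part into the linear coordinates $(c,a)$ via a pushforward, exploiting the $(c,a)$-homogeneity of $\Phi_\tau$. In fact your $\mu_p=\tfrac{1}{M}\big((D_M)_\#\mu^++(D_{-M})_\#\mu^-\big)$ is literally the measure the paper constructs (their $T\circ G$ equals $D_M$ on the positive set and $D_{-M}$ on the negative set); you have merely merged the paper's two-step "flip sign with $G$, then rescale with $T$" into the single maps $D_{\pm M}$.
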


\begin{proof}
Let us first consider a positive signed measure $\mu \in \mathcal{M}^+(\Theta)$. If $\mu(\Theta) = 0$, $\Phi(\theta,.) = 0$ $\mu$-almost everywhere and $\E_\tau(\mu) = 0$. Taking $\mu_p := \delta_{(0,0,w,b)}$ with $w,b$ taken arbitrary \v{eis sufficient} to prove the desired result. Now, if $\mu(\Theta) \neq 0$, consider $\mu_p := T {\#} \left(\frac{\mu}{\mu(\Theta)}\right)$ where
$T: (c,a,w,b) \rightarrow (c\mu(\Theta),a\mu(\Theta),w, b)$. In this case :

$$
\begin{aligned}
\int_{\Theta} \Phi(\theta; \cdot) d\mu &= \int_{\Theta} \mu(\Theta) \Phi(\theta; \cdot) \frac{d \mu(\theta)}{\mu(\Theta)}\\
&=\int_{\Theta} \Phi(T \theta; \cdot) \frac{d \mu(\theta)}{\mu(\Theta)}\\
&= \int_{\Theta} \Phi(\theta; \cdot) d \mu_p(\theta)
\end{aligned}
$$
where we have used the form of $\Phi$~\eqref{definitionPhi}-\eqref{definitionPhi1} to get the last inequality.

Now take an arbitrary signed measure $\mu \in \mathcal{M}(\Theta)$. By Hahn-Jordan decomposition theorem, there exists $P,N$ $\mu$-measurable sets such that $P \cup N = \Theta$ and $\mu$ is non-negative (respectively non-positive) on $P$ (respectively $N$). The signed measure $\mu$ can be written as :
$$
\mu = \mu_P - \mu_N
$$
where $\mu_P,\mu_N \in \mathcal{M}^+(\Theta)$. Consider following map :

$$
G(c,a,w,b) :=
\left\{
\begin{array}{rl}
(-c,-a,w,b) & \text{if } (a,b,w,c) \in N\\
(c,a,w,b) & \text{if } (a,b,w,c) \in P\\
\end{array} 
\right.
$$
and the measure :

$$
\mu_G := G\# (\mu_P + \mu_N) \in \mathcal{M}^+(\Theta).
$$
By construction, we have $P_\tau\left(T\# \left( \frac{\mu_G}{\mu_G(\Theta)} \right) \right) = P_\tau(\mu)$ and consequently, $\E_\tau(\mu) = \E_\tau\left(T\# \left( \frac{\mu_G}{\mu_G(\Theta)} \right) \right)$.
\end{proof}

\begin{lemma}
\label{lemmaOptimality}
The measure $\mu \in \Prob_2(\Theta)$ is optimal for Problem~\ref{poissonProba} if and only if $\phi_\mu(\theta) = 0$ for all $\theta \in \Theta$.
\end{lemma}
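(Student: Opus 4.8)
The plan is to leverage convexity. Since $\mu\mapsto P_\tau(\mu)$ is linear and $\E$ is convex, the functional $\E_\tau=\E\circ P_\tau$ is convex along the linear interpolations $t\mapsto\mu_t:=(1-t)\mu+t\nu$, $t\in[0,1]$, of any two measures $\mu,\nu\in\Prob_2(\Theta)$, so global optimality will be equivalent to a first-order variational inequality. First I would record this characterization. Differentiating $t\mapsto\E_\tau(\mu_t)=\E\big((1-t)P_\tau(\mu)+tP_\tau(\nu)\big)$ at $t=0$ and interchanging $d\E|_{P_\tau(\mu)}$ with the integration over $\Theta$ — legitimate by the same Fubini-type argument as in the proof of Proposition~\ref{lemmaDiffEp}, all integrals being finite because $\mu,\nu\in\Prob_2(\Theta)$ and Lemma~\ref{lem:aux} gives $|\phi_\mu(\theta)|\leq C_{\mu,\tau}(1+|\theta|^2)$ — yields
$$
\frac{d}{dt}\E_\tau(\mu_t)\Big|_{t=0}=d\E|_{P_\tau(\mu)}\big(P_\tau(\nu)-P_\tau(\mu)\big)=\int_\Theta\phi_\mu\,d\nu-\int_\Theta\phi_\mu\,d\mu .
$$
Since $t\mapsto\E_\tau(\mu_t)$ is convex and differentiable on $[0,1]$, the measure $\mu$ is optimal for Problem~\ref{poissonProba} if and only if $\int_\Theta\phi_\mu\,d\nu\geq\int_\Theta\phi_\mu\,d\mu$ for every $\nu\in\Prob_2(\Theta)$. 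The implication ``$\phi_\mu\equiv0\Rightarrow\mu$ optimal'' is then immediate, the criterion holding trivially.

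For the converse I would assume $\mu$ optimal and test the criterion with Dirac masses $\nu=\delta_\theta\in\Prob_2(\Theta)$, obtaining $\phi_\mu(\theta)\geq m$ for all $\theta\in\Theta$, where $m:=\int_\Theta\phi_\mu\,d\mu$ is finite. The decisive step is to exploit the explicit form of $\phi_\mu$ from~\eqref{definitionV2}: since $\nabla_x\Phi_\tau(\theta;x)=a\,w\,\sigma'_{H,\tau}(w\cdot x+b)$ does not involve $c$, since $\Phi_\tau(\theta;x)=c+a\sigma_{H,\tau}(w\cdot x+b)$, and since $\int_\Omega f=0$ annihilates the $c$-part of $\langle f,\Phi_\tau(\theta;\cdot)\rangle_{L^2(\Omega)}$, formula~\eqref{definitionV2} reduces to
$$
\phi_\mu(c,a,w,b)=c\,A_\mu+a\,B_\mu(w,b),\qquad A_\mu:=\int_\Omega P_\tau(\mu)(x)\,dx ,
$$
with $B_\mu(w,b):=\int_\Omega\nabla_xP_\tau(\mu)(x)\cdot w\,\sigma'_{H,\tau}(w\cdot x+b)\,dx-\int_\Omega f(x)\,\sigma_{H,\tau}(w\cdot x+b)\,dx+A_\mu\int_\Omega\sigma_{H,\tau}(w\cdot x+b)\,dx$. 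Thus, for each fixed $(w,b)$, $\phi_\mu$ is a \emph{linear} function of $(c,a)$, and $(c,a)$ ranges over all of $\R^2$ inside $\Theta$; a linear function on $\R^2$ bounded from below must vanish identically, so $A_\mu=0$ and $B_\mu(w,b)=0$. Since $(w,b)$ was arbitrary, this gives $\phi_\mu\equiv0$, which concludes the proof.

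The hard part is not genuinely hard here: the only points that deserve care are the differentiation-under-the-integral in the first step (of the type already carried out in Proposition~\ref{lemmaDiffEp}) and, above all, the structural observation that $\phi_\mu$ is affine in the two coordinates $c$ and $a$ whose ranges in $\Theta$ are unbounded. It is precisely this unboundedness that upgrades the a priori weak consequence of optimality — ``$\phi_\mu$ is bounded below on $\Theta$'' — to the required conclusion ``$\phi_\mu\equiv0$''.
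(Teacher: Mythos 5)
Your proof is correct but follows a genuinely different route from the paper's. The paper first extends $\E_\tau$ to signed measures via Lemma~\ref{lemmaMeasureMeasureProba}, which exploits the scaling and sign freedom built into the $(c,a)$-variables to show that every signed measure yields the same energy as some probability measure; optimality of $\mu$ then forces stationarity of $t\mapsto\E_\tau(\mu+t\nu)$ in every signed direction $\nu=\zeta\mu+\nu^\perp$, giving $\phi_\mu=0$ $\mu$-a.e.\ and $\nu^\perp$-a.e.\ for every $\nu^\perp\perp\mu$, after which continuity of $\phi_\mu$ upgrades this to vanishing on all of $\Theta$. You instead stay entirely inside $\Prob_2(\Theta)$: convexity of $\E\circ P_\tau$ along the chord $(1-t)\mu+t\nu$ yields the first-order variational inequality $\int_\Theta\phi_\mu\,d\nu\geq\int_\Theta\phi_\mu\,d\mu$, testing with Dirac masses gives $\phi_\mu\geq m$ pointwise, and the affine structure $\phi_\mu(c,a,w,b)=c\,A_\mu+a\,B_\mu(w,b)$ combined with the unboundedness of $(c,a)$ in $\Theta$ forces $A_\mu=0$ and $B_\mu\equiv0$. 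Both arguments ultimately rest on the same structural fact — $\Phi_\tau$ is linear in the unconstrained variables $(c,a)$ — but you use it directly on $\phi_\mu$ at the very end, whereas the paper packages it inside Lemma~\ref{lemmaMeasureMeasureProba} at the outset. Your version is shorter and more self-contained (it needs neither the signed-measure extension nor the Hahn--Jordan/continuity step, and the converse direction is literally a one-liner); the paper's version is structurally more robust, since the density-plus-continuity argument would survive parameterizations in which $\phi_\mu$ fails to be affine in any unbounded coordinate.
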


\begin{proof}
Suppose $\mu \in \Prob_2(\Theta)$ optimal and let $\zeta \in L^1(\Theta; \mu)$. Then, for all $\nu := \zeta \mu + \nu^\perp \in \mathcal{M}(\Theta)$  (Lebesgue decomposition of $\nu$ with respect to $\mu$ with $\zeta \in L^1(\Theta; \mu)$) and owing to Lemma~\ref{lemmaMeasureMeasureProba}, as $t$ goes to $0$,

$$
\begin{aligned}
\E_\tau(\mu + t\nu) &= \E(P_\tau(\mu) + t P_\tau(\nu))\\
&= \E_\tau(\mu) + td \E|_{P_\tau(\mu)}(P_\tau(\nu)) + o(t).
\end{aligned}
$$
Hence as $\mu$ is optimal

$$
\begin{aligned}
0 = \frac{d}{dt}\left[\E_\tau(\mu + t\nu)\right]|_{t=0} &= d \E|_{P_\tau(\mu)}(P_\tau(\nu)) \\
&= \int_\Theta \ d \E|_{P_\tau(\mu)}(\Phi_\tau(\theta; \cdot)) d \nu(\theta) \\
&= \int_\Theta \phi_\mu(\theta) d \nu(\theta)\\
&= \int_\Theta \phi_\mu(\theta) \zeta(\theta) d \mu(\theta) + \int_\Theta \phi_\mu(\theta) d \nu^\perp(\theta).
\end{aligned}
$$
As this is true for all $\zeta \in L^1(\Theta, \mu)$, one gets:

\begin{equation}
\label{phimupositive}
\phi_\mu = 0 \ \mu\text{-almost everywhere}, \quad \phi_\mu = 0 \ \nu^\perp\text{-almost everywhere}
\end{equation}
for all $\nu^\perp \perp \mu$. As $\phi_\mu$ is continuous, this is equivalent to $\phi_\mu = 0$ everywhere in $\Theta$. Indeed, let $\theta \in \Theta$. If $\theta$ belongs to ${\rm Supp}(\mu)$, then by definition of the support, $\mu(B(\theta,\varepsilon))>0$  for all $\varepsilon>0$. Thus, one can take $\theta_\varepsilon \in B(\theta,\varepsilon)$ with $\phi_\mu(\theta_\varepsilon) = 0$. As $\displaystyle \theta_\varepsilon \mathop{\longrightarrow}_{\varepsilon \to 0} \theta $, using the continuity of $\phi_\mu$, we obtain $\phi_{\mu}(\theta) = 0$. If $\theta \not\in {\rm Supp}(\mu)$, then $\delta_{\theta} \perp \mu$ and necessarily, $\phi_\mu(\theta) = 0$. The reverse implication is trivial. 

Conversely suppose now $\phi_\mu = 0$ everywhere in $\Theta$ and take $\nu \in \Prob_2(\Theta)$, then by previous computations and the convexity of $\E$ (slopes are increasing)

$$
0 = \frac{d}{dt}\left[ \E(\mu + t(\mu-\nu))\right] = \frac{d}{dt}\left[ \E(P_\tau(\mu) + t P_\tau(\mu - \nu))\right]\leq \E(P_\tau(\nu)) - \E(P_\tau(\mu)) 
$$
which implies that 
$$
\E_\tau(\mu) \leq \E_\tau(\nu)
$$
and $\mu$ is optimal.

\end{proof}

\subsubsection{Escape from critical points}

In this section, we use the notation :

$$
\theta = (a,c,w,b) =: (a,c, \omega)
$$
to make the difference between "linear" variables and "nonlinear" ones.

%
%

\begin{lemma}
\label{lemmaContinuityPhi}
For all $\mu,\nu$ in $\Prob_2(\Theta)$, it holds that
$$
\forall \theta  \in \Theta, \ |\phi_\mu(\theta) - \phi_\nu(\theta)| \leq C\left(\int_{\Theta} |\theta_1|^2 d\mu(\theta_1) + \int_{\Theta} |\theta_2|^2 d\nu(\theta_2) \right) W^2_2(\mu,\nu)(1+ |\theta|^2)
$$

$$
\forall \theta  \in \Theta, \ |v_\mu(\theta) - v_\nu(\theta)| \leq C\left(\int_{\Theta} |\theta_1|^2 d\mu(\theta_1) + \int_{\Theta} |\theta_2|^2 d\nu(\theta_2) \right) W^2_2(\mu,\nu) (1+|\theta|^2)
$$
\end{lemma}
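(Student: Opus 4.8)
The plan is to exploit that $\E$ is a quadratic functional, so that $\mu\mapsto\phi_\mu(\theta)$ and $\mu\mapsto v_\mu(\theta)$ are \emph{affine} in $P_\tau(\mu)$; the whole estimate then reduces to a Lipschitz bound for $\mu\mapsto P_\tau(\mu)$ in $H^1(\Omega)$, combined with the pointwise growth bounds of Lemma~\ref{lem:aux}.

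\textbf{Step 1 (linearization).} By~\eqref{definitionV2}, $\phi_\mu(\theta)=d\E|_{P_\tau(\mu)}(\Phi_\tau(\theta;\cdot))$, while Lemma~\ref{differentiabilityE} together with Remark~\ref{remarkEnergyBilinearForm} gives $d\E|_u(v)=\bar a(u,v)-\langle f,v\rangle_{L^2(\Omega)}$ for all $u,v\in H^1(\Omega)$. Hence
$$
\phi_\mu(\theta)-\phi_\nu(\theta)=\bar a\big(P_\tau(\mu)-P_\tau(\nu),\,\Phi_\tau(\theta;\cdot)\big).
$$
Since $\theta\mapsto\Phi_\tau(\theta;\cdot)$ is smooth with values in $H^1(\Omega)$ (its $\theta$- and $x$-derivatives are bounded by Lemma~\ref{lem:aux}) and $\bar a(u,\cdot)$ is a bounded linear form on $H^1(\Omega)$, the chain rule lets one differentiate in $\theta$ to obtain, componentwise, $v_\mu(\theta)-v_\nu(\theta)=\bar a\big(P_\tau(\mu)-P_\tau(\nu),\,\nabla_\theta\Phi_\tau(\theta;\cdot)\big)$. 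Denoting by $L$ the continuity constant of $\bar a$, this gives
$$
|\phi_\mu(\theta)-\phi_\nu(\theta)|\le L\,\|P_\tau(\mu)-P_\tau(\nu)\|_{H^1(\Omega)}\,\|\Phi_\tau(\theta;\cdot)\|_{H^1(\Omega)},
$$
and the analogous inequality for $|v_\mu(\theta)-v_\nu(\theta)|$ with $\nabla_\theta\Phi_\tau(\theta;\cdot)$ in place of $\Phi_\tau(\theta;\cdot)$. Since $|\Omega|=1$, Lemma~\ref{lem:aux} yields $\|\Phi_\tau(\theta;\cdot)\|_{H^1(\Omega)}\le C|\theta|$ and $\|\nabla_\theta\Phi_\tau(\theta;\cdot)\|_{H^1(\Omega)}\le C_\tau|\theta|$, both of which are $\le C_\tau(1+|\theta|^2)$.

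\textbf{Step 2 (Lipschitz estimate on $P_\tau$).} Let $\Pi\in\Prob_2(\mathfrak P)$ be the measure on geodesics associated, via Proposition~\ref{propositionMccannInterpolant} and Remark~\ref{remarkSupportGeodesic}, to an optimal coupling $\gamma=(e_0,e_1)\#\Pi$ between $\mu$ and $\nu$, so that $\int_{\mathfrak P}d(\pi(0),\pi(1))^2\,d\Pi(\pi)=W_2^2(\mu,\nu)$. Then $P_\tau(\mu)-P_\tau(\nu)=\int_{\mathfrak P}\big(\Phi_\tau(\pi(0);\cdot)-\Phi_\tau(\pi(1);\cdot)\big)\,d\Pi(\pi)$, and for each geodesic $\pi$ one has $\Phi_\tau(\pi(1);\cdot)-\Phi_\tau(\pi(0);\cdot)=\int_0^1\nabla_\theta\Phi_\tau(\pi(t);\cdot)\cdot\dot\pi(t)\,dt$. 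Along $\pi$ the coordinates $c,a,b$ interpolate linearly while $|w(t)|=1$, whence $|\pi(t)|^2\le|\pi(0)|^2+|\pi(1)|^2$; combined with $\|\nabla_\theta\Phi_\tau(\theta;\cdot)\|_{H^1(\Omega)}\le C_\tau|\theta|$ (Lemma~\ref{lem:aux}) and $|\dot\pi(t)|=d(\pi(0),\pi(1))$ (Lemma~\ref{lemmaSmoothnessGeodesic}), this gives $\|\Phi_\tau(\pi(0);\cdot)-\Phi_\tau(\pi(1);\cdot)\|_{H^1(\Omega)}\le C_\tau(|\pi(0)|+|\pi(1)|)\,d(\pi(0),\pi(1))$. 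By Minkowski's integral inequality and Cauchy--Schwarz,
$$
\|P_\tau(\mu)-P_\tau(\nu)\|_{H^1(\Omega)}\le C_\tau\Big(\int_{\mathfrak P}(|\pi(0)|+|\pi(1)|)^2\,d\Pi\Big)^{1/2}W_2(\mu,\nu)\le C_\tau\Big(\int_\Theta|\theta_1|^2\,d\mu(\theta_1)+\int_\Theta|\theta_2|^2\,d\nu(\theta_2)\Big)^{1/2}W_2(\mu,\nu),
$$
where the last step expands the square and uses that the marginals of $\gamma$ are $\mu$ and $\nu$.

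\textbf{Conclusion and main difficulty.} Inserting Step 2 into Step 1, and using that $\int_\Theta|\theta|^2\,d\mu\ge1$ for every $\mu\in\Prob_2(\Theta)$ (since $|w|=1$ forces $|\theta|\ge1$, so the square root on the moment term may be absorbed into it), one obtains a constant $C_\tau>0$, depending only on the fixed parameter $\tau$, for which both asserted continuity estimates hold (the argument in fact produces the bound with $W_2(\mu,\nu)$ in place of $W_2^2(\mu,\nu)$). The computation is essentially routine once Step 1 is set up; the points requiring care are the commutation of $\nabla_\theta$ with the bilinear form $\bar a(u,\cdot)$ — justified by boundedness of $\bar a(u,\cdot)$ on $H^1(\Omega)$ and the $H^1$-smoothness of $\theta\mapsto\Phi_\tau(\theta;\cdot)$ furnished by Lemma~\ref{lem:aux} — and keeping track of the linear growth in $|\theta|$ along geodesics together with the (harmless, as $\tau$ is fixed) $\tau$-dependence of the constants.
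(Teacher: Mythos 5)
Your proof is correct, but it takes a genuinely cleaner route than the paper. The paper estimates $v_\mu-v_\nu$ directly: it splits $v_\mu$ into the three pieces $v_{\mu,1}+v_2+v_{\mu,3}$ coming from the three terms in~\eqref{definitionV2}, rewrites each difference $v_{\mu,i}-v_{\nu,i}$ as an integral over $\Theta^2$ against an optimal coupling $\gamma\in\Gamma_o(\mu,\nu)$ by Fubini, and then bounds each integrand using~\eqref{gradtheta}--\eqref{gradthetax} and Cauchy--Schwarz. You instead exploit the quadratic structure of $\E$ to factor the whole estimate through $H^1(\Omega)$: writing $\phi_\mu(\theta)-\phi_\nu(\theta)=\bar a\bigl(P_\tau(\mu)-P_\tau(\nu),\,\Phi_\tau(\theta;\cdot)\bigr)$ and differentiating in $\theta$ separates cleanly the three ingredients — boundedness of $\bar a$, the Lipschitz estimate $\|P_\tau(\mu)-P_\tau(\nu)\|_{H^1}\lesssim W_2(\mu,\nu)\,(\text{moments})^{1/2}$, and the pointwise growth $\|\Phi_\tau(\theta;\cdot)\|_{H^1},\|\nabla_\theta\Phi_\tau(\theta;\cdot)\|_{H^1}\lesssim C_\tau|\theta|$ from Lemma~\ref{lem:aux}. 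This avoids the three-term bookkeeping and makes the dependence on each ingredient transparent.

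You are also right to flag that the natural power of the Wasserstein distance here is $W_2(\mu,\nu)$, not $W_2^2(\mu,\nu)$: Cauchy--Schwarz on the optimal coupling gives exactly a $W_2$ factor together with a \emph{square root} of the second moments, and the paper's own proof has the same Cauchy--Schwarz step — so the exponent $2$ on $W_2$ in the lemma statement and in the final displayed line of the paper's proof appears to be a slip. Your remark that the square root on the moment factor can be absorbed because $\int_\Theta|\theta|^2\,d\mu\ge 1$ (as $|w|=1$) is a nice touch and keeps the statement in the paper's form, up to the corrected power of $W_2$. The downstream use in Proposition~\ref{propositionEscape} survives this correction: one simply takes $\varepsilon:=\frac{\eta}{2C(\tau,\mu)R}$ instead of its square root, and the condition~\eqref{eq:condr} on $R$ adjusts accordingly.

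One small point to make fully explicit in Step~1: the interchange $\nabla_\theta\,\bar a(u,\Phi_\tau(\theta;\cdot))=\bar a(u,\nabla_\theta\Phi_\tau(\theta;\cdot))$ requires that $\theta\mapsto\Phi_\tau(\theta;\cdot)$ be (Fréchet) differentiable as a map into $H^1(\Omega)$, not just that the pointwise $\theta$-derivatives exist. This does follow from Lemma~\ref{lem:aux} (the $L^\infty$ bounds on $\nabla_\theta\Phi_\tau$, $\nabla_x\nabla_\theta\Phi_\tau$, $H_\theta\Phi_\tau$, $\nabla_xH_\theta\Phi_\tau$ give a uniform Taylor remainder in $H^1$), but it is worth stating, since it is precisely the step the paper sidesteps by differentiating under the integral sign term by term.
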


\begin{proof}
Here we focus on $v_\mu$, the proof for $\phi_\mu$ being very similar. 
Considering~\eqref{definitionV1}-\eqref{definitionV2}, one can decompose $v_\mu$ as
\begin{equation}\label{eq:vmu}
v_\mu =: v_{\mu,1} + v_{2} + v_{\mu,3}, 
\end{equation}

with 
\begin{align*}
v_{\mu,1} &:= \nabla_\theta\left[\langle \nabla_x P_{\tau}(\mu), \nabla_x \Phi_{\tau}(\theta;\cdot) \rangle_{L^2(\Omega)}\right],\\
v_2 & :=\nabla_\theta \left[ - \langle f,  \Phi_{\tau}(\theta;\cdot)   \rangle_{L^2(\Omega)} \right] ,\\
v_{\mu,3} &:= \nabla_\theta \left[ \int_{\Omega}  P_{\tau}(\mu)(x) dx \times \int_{\Omega} \Phi_{\tau}(\theta;x) dx\right] .\\
\end{align*}
Using standard integral derivation and Fubini theorems, it holds that for all $\gamma \in \Gamma_o(\mu,\nu)$,
$$
\begin{aligned}
v_{\mu,1}(\theta) - v_{\nu,1}(\theta) &= \int_{\Theta^2} \int_{\Omega} \nabla_\theta \nabla_x \Phi_\tau(\theta; x) ( \nabla_x \Phi_\tau(\theta_1; x) -  \nabla_x \Phi_\tau(\theta_2; x))  dx d\gamma(\theta_1,\theta_2).
\end{aligned}
$$
Owing to~\eqref{gradx}-\eqref{gradthetax}, one gets 
$$
\begin{aligned}
|v_{\mu,1}(\theta) - v_{\nu,1}(\theta)| & \leq C(\tau) \int_{\Theta^2} \max(|\theta_1|,|\theta_2|) |\theta_1 - \theta_1| |\theta|^2 dx d\gamma(\theta_1,\theta_2)\\
&\leq C(\tau)\left(\int_{\Theta} |\theta_1|^2 d\mu + \int_{\Theta} |\theta_2|^2 d\nu \right) W^2_2(\mu,\nu) |\theta|^2,
\end{aligned}
$$
where $C(\tau)$ is a positive constant which only depends on $\tau$, and where we used the Cauchy-Schwartz inequality. For the third term in the decomposition (\ref{eq:vmu}), one has :

$$
v_{\mu,3} - v_{\nu,3} = \int_{\Theta^2}\int_{\Omega}  \Phi_{\tau}(\theta_1;\cdot) - \Phi_{\tau}(\theta_2;\cdot) dx d\gamma(\theta_1, \theta_2) \times \int_{\Omega} \nabla_\theta\Phi_{\tau}(\theta;\cdot) dx.
$$
Owing to~\eqref{gradtheta}, one gets :

$$
\begin{aligned}
|v_{\mu,3}(\theta) - v_{\nu,3}(\theta)| &\leq C(\tau) \int_{\Theta^2} \int_{\Omega} \max(|\theta_1,\theta_2|) |\theta_1 - \theta_1|  dx d\gamma(\theta_1,\theta_2) |\theta|\\
&\leq  C(\tau)\left(\int_{\Theta} |\theta_1|^2 d\mu + \int_{\Theta} |\theta_2|^2 d\nu \right) W^2_2(\mu,\nu) |\theta|
\end{aligned}
$$
where we used again the Cauchy-Schwartz inequality. Hence the desired result. 
\end{proof}

\begin{proposition}
\label{propositionEscape}
Let $\mu \in \Prob_2(\Theta)$ such that there exists $\theta\in\Theta$, $\phi_\mu(\theta) \neq 0$.
Then there exist a set $A \subset \Theta$ and $\varepsilon>0$ such that if there exists $t_0>0$ with $W_2(\mu({t_0}),\mu)\leq\varepsilon$ and $\mu({t_0})(A)>0$, then there exists a time $0 < t_0 < t_1 < +\infty$ such that $W_2(\mu({t_1}),\mu)>\varepsilon$.

\end{proposition}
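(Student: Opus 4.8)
The proof is by contradiction: assuming that both hypotheses hold but that $W_2(\mu(t),\mu)\le\varepsilon$ for \emph{every} $t\ge t_0$, I derive an absurdity. The whole argument rests on the following structural observation. Since $d\mathcal{E}|_{P_\tau(\mu)}$ is linear and $\theta\mapsto\Phi_\tau(\theta;\cdot)$ is affine in the pair of ``linear'' variables $(c,a)$, the potential $\phi_\mu$ of~\eqref{definitionV2} is affine in $(c,a)$: writing $\theta=(c,a,\omega)$ with $\omega=(w,b)$, one has
\begin{equation*}
\phi_\mu(c,a,\omega)=c\,\alpha+a\,\beta(\omega),\qquad \alpha:=\phi_\mu(1,0,\omega),\quad\beta(\omega):=\phi_\mu(0,1,\omega),
\end{equation*}
where $\alpha$ depends neither on $(c,a)$ nor on $\omega$, and $\beta$ depends only on $\omega$ and is a smooth function on the compact manifold $S_{\R^d}(1)\times[-\sqrt d-2,\sqrt d+2]$. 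Consequently $v_\mu=\nabla_\theta\phi_\mu$ has $c$-component $\alpha$, $a$-component $\beta(\omega)$ and $\omega$-component $a\,\nabla_\omega\beta(\omega)$; and since $\bold P_\theta$ restricts to the identity on the $c$- and $a$-coordinates, the characteristics $\chi(\cdot\,;\cdot)$ of the velocity $-\bold P v_{\mu(\cdot)}$ furnished by Theorem~\ref{theoremGlobalWellPosedness} satisfy, along each of them,
\begin{equation*}
\dot c=-\alpha_t,\qquad \dot a=-\beta_t(\omega),\qquad \dot\omega=-a\,\bold P_\theta\nabla_\omega\beta_t(\omega),
\end{equation*}
where $\alpha_t,\beta_t$ are the corresponding objects built from $\mu(t)$.

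By Lemma~\ref{lemmaOptimality}, non-optimality of $\mu$ provides $\theta_\star=(c_\star,a_\star,\omega_\star)$ with $\phi_\mu(\theta_\star)\ne0$, hence $(\alpha,\beta(\omega_\star))\ne(0,0)$. Two elementary facts are used repeatedly: $W_2(\mu(t),\mu)\le\varepsilon$ forces a uniform second-moment bound $\int_\Theta|\theta|^2\,d\mu(t)\le M_2$ for all $t\ge t_0$; and applying the continuity estimate of Lemma~\ref{lemmaContinuityPhi} at a \emph{fixed} point of $\Theta$ of bounded norm — so that it is insensitive to $|a|\to\infty$ along the flow — one gets, uniformly in $t\ge t_0$, that $|\alpha_t-\alpha|\le C\varepsilon^2$ and $\sup_\omega\bigl(|\beta_t(\omega)-\beta(\omega)|+|\nabla_\omega\beta_t(\omega)-\nabla_\omega\beta(\omega)|\bigr)\le C\varepsilon^2$. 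If $\alpha\ne0$, take $A:=\Theta$ (so the assumption $\mu(t_0)(A)>0$ is automatic) and $\varepsilon$ small enough that $|\alpha_t|\ge|\alpha|/2$ with the sign of $\alpha$. Since $\dot c=-\alpha_t$ does not depend on the particle, the $c$-marginal of $\mu(t)$ is that of $\mu(t_0)$ translated by $\Delta(t):=-\int_{t_0}^{t}\alpha_s\,ds$, whence $\int_\Theta c^2\,d\mu(t)\ge \Delta(t)^2-2|\Delta(t)|\,\bigl|\int_\Theta c\,d\mu(t_0)\bigr|$, which tends to $+\infty$ because $|\Delta(t)|\ge\frac{|\alpha|}{2}(t-t_0)\to\infty$; this contradicts $\int|\theta|^2\,d\mu(t)\le M_2$.

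If instead $\alpha=0$, then $\beta(\omega_\star)\ne0$; assume $\beta(\omega_\star)>0$, the other sign being symmetric under $(a,\beta)\mapsto(-a,-\beta)$. By Sard's theorem pick a regular value $\ell\in(0,\beta(\omega_\star))$ of $\beta$, let $V$ be the connected component of $\{\omega:\beta(\omega)>\ell\}$ containing $\omega_\star$ — so that $\partial V\subset\{\beta=\ell\}$, on which $|\nabla_\omega\beta|\ge c_2>0$ and $\nabla_\omega\beta$ points into $V$ — and set $A:=\{(c,a,\omega):a\le-1,\ \omega\in V\}$, with $\varepsilon$ small enough that $C\varepsilon^2<\min(\ell,c_2/2)$. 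On $V$ one has $\beta_t\ge\ell-C\varepsilon^2>0$, so along the flow $\dot a=-\beta_t(\omega)<0$ keeps $a\le-1$ as long as $\omega\in V$; and on $\partial V$ the vector $-a\,\bold P_\theta\nabla_\omega\beta_t(\omega)$ still points into $V$ whenever $a<0$. Hence $\{a\le-1\}\times V$ is forward-invariant for the characteristic flow. Letting $\nu(t)$ be the sub-probability measure obtained by transporting the restriction $\mu(t_0)|_A$ along the flow from time $t_0$, we get $\nu(t)\le\mu(t)$, $\nu(t)(\Theta)=m_0:=\mu(t_0)(A)>0$ and ${\rm Supp}\,\nu(t)\subset\{a\le-1\}\times V$ for all $t\ge t_0$, so that
\begin{equation*}
\frac{d}{dt}\int_\Theta a^2\,d\nu(t)=\int_\Theta 2a\bigl(-\beta_t(\omega)\bigr)\,d\nu(t)\ \ge\ 2\,(\ell-C\varepsilon^2)\,m_0\ >\ 0;
\end{equation*}
thus $\int_\Theta a^2\,d\nu(t)\to+\infty$, contradicting $\int_\Theta a^2\,d\nu(t)\le\int_\Theta|\theta|^2\,d\mu(t)\le M_2$.

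The delicate point is the case $\alpha=0$: one must guarantee that the mass initially placed in $A$ does not leave, in the $\omega$-variable, the region where $\beta_t$ is bounded away from $0$ before the second moment of that mass has blown up. This is precisely what dictates the shape of $A$ — a component of a superlevel set of $\beta$ cut along a regular value, so that $\nabla_\omega\beta$, and therefore (by the uniform estimate above) $\nabla_\omega\beta_t$, points inward on $\partial V$ — and what makes the $\omega$-uniform, $a$-independent form of the continuity estimates from Lemma~\ref{lemmaContinuityPhi} indispensable.
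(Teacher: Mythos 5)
Your proof is correct and takes essentially the same route as the paper's: decompose $\phi_\mu$ affinely in the linear variables $(c,a)$, invoke Sard's theorem to pick a regular level of $\psi_\mu$ (your $\beta$), take $A$ to be the product of a half-line in $a$ with a level set in $\omega$, use Lemma~\ref{lemmaContinuityPhi} to transfer the sign information from $\phi_\mu$ to $\phi_{\mu(t)}$, and contradict the second-moment bound implied by $W_2(\mu(t),\mu)\le\varepsilon$. Two refinements in your write-up are worth flagging. First, you treat the case $r_\mu\ne0$ (your $\alpha\ne0$) explicitly via the observation that $\dot c=-\alpha_t$ is particle-independent, so the $c$-marginal undergoes a pure translation with speed bounded below; the paper dismisses this case as ``similar''. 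Second, and more substantively, you read off $\alpha_t$, $\beta_t(\omega)$ and $\nabla_\omega\beta_t(\omega)$ by evaluating $\phi_{\mu(t)}$ and $v_{\mu(t)}$ at the \emph{fixed bounded} points $(1,0,\omega)$ and $(0,1,\omega)$, which makes the perturbation estimates from Lemma~\ref{lemmaContinuityPhi} uniform in $\omega$ and independent of $(a,c)$; this gives a genuinely global (in $a$) forward-invariance of $A$ and a monotone blow-up of $\int a^2\,d\nu(t)$, whereas the paper works inside the truncation $\{1+|\theta|^2\le R\}$ and argues that all mass eventually leaves that ball. Your version dispenses with the auxiliary radius $R$ and sidesteps the question of what the velocity does outside the ball; it is a tighter execution of the same idea rather than a different proof.
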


\begin{proof}
As $\phi_\mu$ is linear in $a$ and $c$, it can be written under the form
$$
\phi_\mu(\theta) =: a \psi_{\mu}(\omega) +  cr_{\mu}.
$$

By hypothesis, the set 
$$
A_0 := \{ \theta \in \Theta \ | \ \phi_\mu(\theta) \neq0 \}
$$ 
is a non empty (open set). This is equivalent to say that either there exists $\omega$ such that $\psi_{\mu}(\omega)  \neq 0$ or $r_{\mu} \neq 0$. Suppose that $\psi_{\mu}\neq 0$ is non zero somewhere, the case for $r_{\mu}$ being similar. For all $\alpha \in \mathbb{R}$, we denote by
$$
\left\{
\begin{aligned}
A_\alpha^+ &= \psi_{\mu}^{-1}(]0, +\infty[),\\
A_\alpha^- &= \psi_{\mu}^{-1}(]-\infty, 0[).
\end{aligned}
\right.
$$
Now we focus on $A_0^-$ and suppose that this set is non empty. The case where $A_0^+$ is non empty is similar to handle and left to the reader. 

By Lemma~\ref{lemmaRegularValuesPhi} and the regular value theorem, there exists $\eta >0$ such that $\partial A^-_{-\eta} = \psi_\mu^{-1}(\{-\eta\})$ is a $(d+1)-$orientable manifold on which $\nabla_{\omega} \psi_\mu$ is non zero. With our choice of activation function $\sigma_{H,\tau}$, it is easy to prove that $A^-_{-\eta}$ is a bounded set. Indeed, if $b$ is large enough, then $\Omega \ni x \mapsto \sigma_{H,\tau}(w\cdot x +b)$ is zero and $\psi_\mu(w,b)$ is zero.

On $A^-_{-\eta}$, the gradient $\nabla_\omega \psi_\mu$ is pointing outward $A^-_{-\eta}$ and, denoting by $n_{\rm out}$ the outward unit vector to $A^-_{-\eta}$, there exists $\beta >0$ such that $|\nabla_{\omega} \psi_\mu \cdot n_{\rm out} | > \beta$ for  on $ \partial A^-_{-\eta}$, since this continuous function is nonzero on a compact set. Hence, defining : 
$$
A := \{ (a,c,\omega)\in \Theta \ | \ \omega \in A^-_{-\eta}, \ a × \geq 0 \}
$$
and owing to the fact that $v_{\mu} = (v_{\mu,a}, v_{\mu,c}, v_{\mu, \omega})$ with $v_{\mu,a} = \psi_\mu(\omega)$, $v_{\mu, c} = r_\mu$, $v_{\mu,\omega} = a \nabla_\omega \psi_\mu(\omega)$, it holds :

\begin{equation}\label{boundVmu}
\left\{
\begin{aligned}
v_{\mu,a } & <  \eta \text{ on }A\\
v_{\mu,\omega }\cdot n_{out} & >  \beta a \text{ on }   \mathbb{R}_+ \times \mathbb{R} \times \partial A^-_{-\eta}.
\end{aligned}
\right.
\end{equation}

By contradiction, suppose that $\mu(t_0)$ has non zero mass on $A$ and that $W_2(\mu,\mu(t)) \leq \varepsilon$ (with $\varepsilon$ fixed later) for all time $t\geq t_0$.  Then using Lemma~\ref{lemmaContinuityPhi}, one has :

\begin{equation}\label{comparisonVmuVmut}
|v_{\mu(t)}(\theta) - v_\mu(\theta)| \leq C(\tau,\mu) (1+|\theta|^2) \varepsilon
\end{equation}
and
$$
|\phi_{\mu(t)}(\theta) - \phi_\mu(\theta)| \leq C(\tau,\mu)(1+|\theta|^2) \varepsilon.
$$
One takes $\varepsilon := \sqrt{\frac{\eta}{2 C(\tau,\mu) R}}$ where $R>0$  satisfies :
\begin{equation}\label{eq:condr}
(R-1) \mu({t_0})(A) > \int |\theta|^2 d \mu + \frac{\eta}{2 C(\tau,\mu) R}
\end{equation}
which exists since $\mu(t_0)(A) >0$ by hypothesis. On the set $\{ \theta \in A \ | \ 1 + |\theta|^2 \leq R \}$ and by~\eqref{comparisonVmuVmut}, we have :

$$
|v_{\mu(t)}(\theta) - v_\mu(\theta)| \leq \frac{\eta}{2}
$$
and so by~\eqref{boundVmu} and the fact that $v_t = - v_{\mu(t)}$:

$$
\left\{
\begin{aligned}
v_{t,a } & >  \eta/2 \text{ on }A\\
v_{t,\omega }\cdot n_{out} & <  -\beta/2 \times a \text{ on }  \partial A^-_{-\eta}.
\end{aligned}
\right.
$$

The general picture is given by Figure~\ref{figureDescriptionEscapeMass}.
As a consequence, there exists a time $t_1$ such that the set $\{ \theta \in A \ | \ 1 + |\theta|^2 \leq R \}$ has no mass and 

$$
\int |\theta|^2 d\mu(t)(\theta) \geq (R -1) \mu(t)(A) \geq (R -1) \mu(t_0)(A).
$$
At the same time, as $W_2(\mu,\mu(t)) \leq \varepsilon$ :

$$
\int |\theta|^2 d\mu(t)(\theta) \leq \int |\theta|^2 d\mu(\theta) + \varepsilon^2 = \int |\theta|^2 d\mu(\theta) + \frac{\eta}{2 C(\tau,\mu)}
$$
and this a contradiction with the condition (\ref{eq:condr}) on $R$.

\end{proof}

\begin{remark}
The set $A$ constructed in the proof of previous lemma is of the form :
\begin{equation}
\label{setSeparability}
A := \{ (a,c, \omega) \in \Theta \ | \ \omega \in A^-_{-\eta_1}\} \cup \{ (a,c,\omega) \ | \ \omega \in A^+_{\eta_2} \}
\end{equation}
where $\eta_1, \eta_2$ are strictly positive.
\end{remark}

\begin{lemma}
\label{lemmaRegularValuesPhi}
For all $\mu \in \Prob_2(\Theta)$, if $\psi_\mu < 0$ somewhere, there exists a strictly negative regular value $-\eta$ ($\eta>0$) of $\psi_\mu$.
\end{lemma}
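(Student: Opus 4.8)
The plan is to derive this from Sard's theorem applied to $\psi_\mu$ on the compact parameter manifold $M:=S_{\R^d}(1)\times[-\sqrt d-2,\sqrt d+2]$, which is a $C^\infty$ manifold with boundary.

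First I would write $\psi_\mu$ explicitly. Starting from the definition of $\phi_\mu$ in~\eqref{definitionV2}, decomposing $\phi_\mu(\theta)=a\psi_\mu(\omega)+cr_\mu$ with $\omega=(w,b)$, and using $\nabla_x\Phi_\tau(\theta;x)=aw\sigma'_{H,\tau}(w\cdot x+b)$ together with $\int_\Omega f=0$, one finds
$$
\psi_\mu(\omega)=\int_\Omega\nabla_x P_\tau(\mu)(x)\cdot w\,\sigma'_{H,\tau}(w\cdot x+b)\,dx-\int_\Omega f(x)\,\sigma_{H,\tau}(w\cdot x+b)\,dx+\Big(\int_\Omega P_\tau(\mu)\Big)\int_\Omega\sigma_{H,\tau}(w\cdot x+b)\,dx .
$$
Since $\sigma_{H,\tau}$ is a smooth, compactly supported function (so all its derivatives are bounded on $\R$) and $P_\tau(\mu)\in H^1(\Omega)$, so that $\nabla_x P_\tau(\mu)\in L^2(\Omega)^d\subset L^1(\Omega)^d$, differentiation under the integral sign with a dominating function not depending on $\omega$ shows $\psi_\mu\in C^\infty(M)$.

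Then I would invoke Sard's theorem in its version for manifolds with boundary (where the critical set is enlarged by the points at which $\psi_\mu|_{\partial M}$ fails to be submersive): the set of critical values of $\psi_\mu$ is Lebesgue-negligible in $\R$. By hypothesis there is $\omega_0\in M$ with $c_0:=-\psi_\mu(\omega_0)>0$; hence the interval $(-c_0,0)$ consists, up to a null set, of regular values, and I may choose $\eta\in(0,c_0)$ with $-\eta$ a regular value of $\psi_\mu$, which proves the lemma. This $-\eta$ is moreover the one needed in Proposition~\ref{propositionEscape}: $\psi_\mu(\omega_0)<-\eta$ makes $A^-_{-\eta}=\psi_\mu^{-1}((-\infty,-\eta))$ nonempty, and because $\sigma_{H,\tau}$ has compact support while $b$ ranges up to $\sqrt d+2$, the function $\psi_\mu$ vanishes identically for $|b|$ close to $\sqrt d+2$; combined with connectedness of $M$ (for $d\geq2$) and the intermediate value theorem, this makes $\psi_\mu^{-1}(\{-\eta\})$ a nonempty smooth embedded hypersurface of $M$, disjoint from $\partial M$ and co-oriented by $\nabla_\omega\psi_\mu$.

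The argument is conceptually short, and I expect the only genuinely delicate point to be the routine verification that $\psi_\mu$ is smooth, i.e. that for the fixed value of $\tau$ all $\omega$-derivatives of the integrands above admit an $L^1(\Omega)$-dominating function independent of $\omega$; this uses nothing more than the boundedness of the derivatives of $\sigma_{H,\tau}$ and $\nabla_x P_\tau(\mu)\in L^2(\Omega)^d$. The second point to keep in mind is to apply Sard's theorem to the compact manifold with boundary $M$ rather than to an open subset of Euclidean space; everything else (intermediate value theorem, regular value theorem) is classical.
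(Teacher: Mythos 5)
Your argument is essentially the paper's: both reduce the claim to Sard's theorem (the paper calls it Sard--Morse) applied to $\psi_\mu$, using continuity to find a nonempty open interval of strictly negative attained values and then picking a regular value in it. You add two useful checks the paper leaves implicit --- a dominated-convergence verification that $\psi_\mu \in C^\infty$ and the remark that $\psi_\mu$ vanishes near $\partial M$ so the boundary causes no trouble for the regular-value theorem --- but these are refinements of the same proof, not a different route.
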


\begin{proof}

As $\psi_\mu < 0$ somewhere and by continuity, there exists a non empty open $O \subset ]-\infty,0[$ such that $O \subset range(\psi_\mu)$. Next, we use the Sard-Morse theorem recalled below :

\begin{theorem}[Sard-Morse]
Let $\mathcal{M}$ be a differentiable manifold and $f: \mathcal{M} \rightarrow \R$ of class $C^n$, then the image of the critical points of $f$ (where the gradient is zero) is Lebesgue negligible in $\R$.
\end{theorem}
This result applies to $\phi_\mu$ and the image of critical points of $\phi_\mu$ is Lebesgue negligible. As a consequence, there exists a point $o \in O$ which is a regular value of $\phi_\mu$. As $o \in O$, it is strictly negative and this finishes the proof of the lemma.

\end{proof}

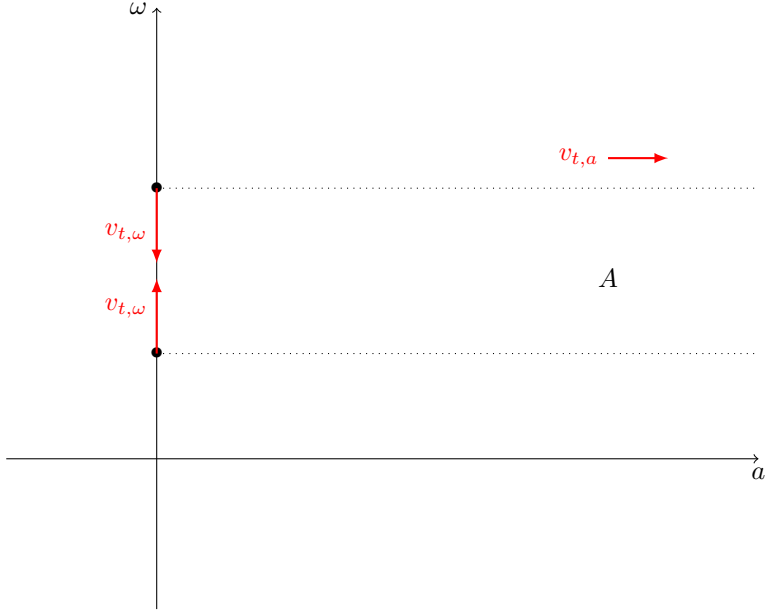
\begin{figure}
\centering
\begin{tikzpicture}[scale=2]
\draw[->] (-1,0) -- (4,0);
\draw (4,0) node[below] {$a$};
\draw [->] (0,-1) -- (0,3);
\draw (0,3) node[left] {$\omega$};

\draw [-, dotted] (0,0.5 + 1/5) -- (4,0.5 + 1/5);
\draw [-, dotted] (0,1 + 4/5) -- (4,1 + 4/5);

\draw (0,0.5 + 1/5) node[] {$\bullet$};
\draw (0,1 + 4/5) node[] {$\bullet$};
\draw (3,1.2) node[] {$A$};

\draw [->,red, arrows={-latex}, thick] (0,0.5 + 1/5) -- (0,1.2);
\draw (0,1) node[left] {$\comment{v_{t,\omega}}$};
\draw [->,red, arrows={-latex},thick] (0,1 + 4/5) -- (0,1.3);
\draw (0,1.5) node[left] {$\comment{v_{t,\omega}}$};
\draw [->,red, arrows={-latex}, thick] (3,2) -- (3.4,2);
\draw (3,2) node[left] {$\comment{v_{t,a}}$};

\end{tikzpicture}
\caption{The escape of mass towards large values of $a$}
\label{figureDescriptionEscapeMass}
\end{figure}

\subsubsection{Convergence}
%

This preliminary lemma gives an insight of why Hypothesis~\ref{hypothesisSupport} is useful :
\begin{lemma}\label{preliminaryLemmaPhi}
For all $\mu \in \Prob_2(\Theta)$, $\theta \notin \R^2 \times S_{\R^d}(1) \times ]-\sqrt{d} -2,\sqrt{d} + 2[, \tau >1$, the potential writes :

$$
\phi_\mu(\theta) = c r_\mu 
$$
where $r_\mu$ is a constant that depends on $\mu$. In particular, $\phi_\mu(\theta)$ does not depend on $a,w,b$.
\end{lemma}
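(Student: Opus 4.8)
The plan is to exploit the compact support of the regularized hat activation $\sigma_{H,\tau}$ together with the elementary bound $|w\cdot x|\le\sqrt d$ valid on $\Omega$, and then simply substitute into the definition of $\phi_\mu$.

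First I would record the support property: for $\tau>1$, one has $\sigma_{H,\tau}(y)=0$ and $\sigma'_{H,\tau}(y)=0$ whenever $|y|\ge 2$. Indeed, from \eqref{definitionRegularTrelu} we have $\sigma_{H,\tau}(y)=\sigma_\tau(y+1)-\sigma_\tau(2y)+\sigma_\tau(y-1)$, and Lemma~\ref{lemmaApproxSigma}(ii)--(iii) tells us that $\sigma_\tau$ vanishes to the left of $-1/\tau$ and coincides with the identity to the right of $1/\tau$. Hence for $y\le -1-1/\tau$ all three terms vanish, while for $y\ge 1+1/\tau$ the three (now affine) terms telescope: $(y+1)-2y+(y-1)=0$. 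Since $\tau>1$ forces $1+1/\tau<2$, this covers every $y$ with $|y|\ge 2$; and as $\sigma_{H,\tau}$ is smooth and identically $0$ (hence locally constant) on $\{\,|y|>1+1/\tau\,\}$, its derivative also vanishes there.

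Next I would plug in the hypothesis on $\theta=(c,a,w,b)$. Since $|w|=1$ and $x\in\Omega=[0,1]^d$ give $|w\cdot x|\le|x|\le\sqrt d$, the assumption $|b|\ge\sqrt d+2$ yields $|w\cdot x+b|\ge|b|-|w\cdot x|\ge 2$ for all $x\in\Omega$. Therefore $\sigma_{H,\tau}(w\cdot x+b)=0$ and, using \eqref{gradx}, $\nabla_x\Phi_\tau(\theta;x)=aw\,\sigma'_{H,\tau}(w\cdot x+b)=0$ for every $x\in\Omega$; consequently $\Phi_\tau(\theta;x)=c$ on all of $\Omega$.

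Finally I would substitute these identities into the definition \eqref{definitionV2} of $\phi_\mu$. The first term $\langle\nabla_xP_\tau(\mu),\nabla_x\Phi_\tau(\theta;\cdot)\rangle_{L^2(\Omega)}$ vanishes since $\nabla_x\Phi_\tau(\theta;\cdot)\equiv 0$; the second term equals $-\langle f,c\rangle_{L^2(\Omega)}=-c\int_\Omega f=0$ because $\int_\Omega f=0$; and the third term equals $\big(\int_\Omega P_\tau(\mu)(x)\,dx\big)\cdot\big(\int_\Omega c\,dx\big)=c\int_\Omega P_\tau(\mu)(x)\,dx$, using $|\Omega|=1$. Hence $\phi_\mu(\theta)=c\,r_\mu$ with $r_\mu:=\int_\Omega P_\tau(\mu)(x)\,dx$, a quantity depending only on $\mu$ (and $\tau$), which in particular does not depend on $a$, $w$, $b$. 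The only point requiring a little care is the bookkeeping of the support of $\sigma_{H,\tau}$ and the verification of the $\tau>1$ threshold; everything else is a direct computation.
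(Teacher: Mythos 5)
Your proof is correct and follows the same route as the paper: both hinge on the compact support of $\sigma_{H,\tau}$ (vanishing for $|y|\ge 2$ once $\tau>1$) combined with $|w\cdot x|\le\sqrt d$ on $\Omega$, so that $\Phi_\tau(\theta;\cdot)\equiv c$ and $\nabla_x\Phi_\tau(\theta;\cdot)\equiv 0$, which collapses $\phi_\mu(\theta)$ to $c\,r_\mu$. You merely spell out details the paper leaves implicit (the $1+1/\tau<2$ threshold and the explicit identification $r_\mu=\int_\Omega P_\tau(\mu)\,dx$ after using $\int_\Omega f=0$), so there is no genuine divergence in approach.
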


\begin{proof}
For all $x \in \Omega, |b| > \sqrt{d} + 2, \tau >1$ :

$$
|w \cdot x + b| \geq |b| - |x|_\infty |w|_1 > 2
$$
and
$$
\sigma_{H,\tau} (w \cdot x + b) = 0.
$$ 
This implies that for $|b| \geq \sqrt{d} + 2, \mu \in \Prob_2(\Theta)$, the potential $\phi_{\mu}$ writes $\phi_{\mu}= c r_{\mu}$ where $r_\mu$ is a constant. 
\end{proof}

In fact Hypothesis~\ref{hypothesisSupport} is verified by the gradient curve $(\mu(t))_{t\geq 0}$ for all time. This is proved in the next lemma. 

\begin{lemma}\label{lemmaPositiveMeasure}
If $\mu_0$ satisfies Hypothesis~\ref{hypothesisSupport} then for all $t\geq 0$ and all open set $O \subset S_{\R^d}(1) \times [-\sqrt{d} -2,\sqrt{d} + 2]$, 

$$
\mu(t)(\R^2 \times O) > 0
$$

\end{lemma}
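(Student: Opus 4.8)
The plan is to push $\mu_0$ forward by the characteristic flow and reduce the statement to the surjectivity of a degree-one self-map of the ``angular'' parameter cylinder $C := S_{\R^d}(1)\times[-\sqrt d-2,\sqrt d+2]$. Write $p:\Theta\to C$, $p(c,a,w,b):=(w,b)$, for the projection onto the variables $\omega=(w,b)$, and recall from Theorem~\ref{theoremGlobalWellPosedness} that $\mu(t)=\chi(t)\#\mu_0$, where $\chi:\R_+\times\Theta\to\Theta$ is the continuous flow of the time-dependent smooth vector field $-\bold{P}v_{\mu(\cdot)}$. For an open $O\subset C$ one has $\mu(t)(\R^2\times O)=(p\#\mu(t))(O)=((p\circ\chi(t))\#\mu_0)(O)$, so the statement is equivalent to ${\rm Supp}(p\#\mu(t))=C$. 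Since ${\rm Supp}((p\circ\chi(t))\#\mu_0)\supset(p\circ\chi(t))({\rm Supp}\,\mu_0)$ and, by Hypothesis~\ref{hypothesisSupport}, $\{0\}\times\{0\}\times C\subset{\rm Supp}\,\mu_0$, it suffices to prove that the continuous map $g_t:C\ni\omega\mapsto p(\chi(t;(0,0,\omega)))\in C$ is onto.

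First I would analyse the flow near $\partial C=\{\,|b|=\sqrt d+2\,\}$. Recall from the proof of Proposition~\ref{propositionEscape} that $\phi_\mu(\theta)=a\,\psi_\mu(\omega)+c\,r_\mu$ and that $v_\mu$ has $a$-component $\psi_\mu(\omega)$, $c$-component $r_\mu$ and $\omega$-component $a\,\nabla_\omega\psi_\mu(\omega)$. From parts $(ii)$ and $(iii)$ of Lemma~\ref{lemmaApproxSigma} one checks that $\sigma_{H,\tau}(y)=0$ for $|y|\geq 1+1/\tau$, hence $\sigma'_{H,\tau}(y)=0$ for $|y|>1+1/\tau$; since $|w\cdot x+b|\geq|b|-\sqrt d$ for all $x\in\Omega$ and $|w|=1$, both $\sigma_{H,\tau}(w\cdot x+b)$ and $\sigma'_{H,\tau}(w\cdot x+b)$ vanish for every $x\in\Omega$ as soon as $|b|>\sqrt d+1+1/\tau$. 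Reading $\psi_\mu$ off from the formula~\eqref{definitionV2} for $\phi_\mu$ (using $\int_\Omega f=0$), this gives $\psi_\mu\equiv 0$, hence $\nabla_\omega\psi_\mu\equiv 0$, on the open set $U:=\{(w,b)\in C:\ |b|>\sqrt d+1+1/\tau\}$, which — because $\tau>1$, as in Lemma~\ref{preliminaryLemmaPhi} — is a genuine neighbourhood of $\partial C$ in $C$. Consequently, for $\omega_0\in U$ the curve $\theta_0(\cdot)$ with $\omega$-coordinate constantly $\omega_0$, $a$-coordinate constantly $0$, and $c$-coordinate $s\mapsto-\int_0^s r_{\mu(u)}\,du$ solves the characteristic system of Theorem~\ref{theoremGlobalWellPosedness} with datum $(0,0,\omega_0)$: along it the value of $v_{\mu(s)}$ has $a$-component $\psi_{\mu(s)}(\omega_0)=0$, $c$-component $r_{\mu(s)}$ and $\omega$-component $0\cdot\nabla_\omega\psi_{\mu(s)}(\omega_0)=0$, a vector already tangent to $\Theta$, so $-\bold{P}v_{\mu(s)}(\theta_0(s))$ has $a$- and $\omega$-components $0$ and $c$-component $-r_{\mu(s)}$, matching $\dot\theta_0(s)$. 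By uniqueness of the characteristic flow, $\chi(s;(0,0,\omega_0))=\theta_0(s)$ has $\omega$-coordinate $\omega_0$ for all $s\geq 0$; hence $g_s|_U={\rm id}_U$ for every $s\in[0,t]$.

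It then remains to deduce surjectivity of $g_t$, and this is where a homology argument enters. The family $\{g_s\}_{s\in[0,t]}$ is a continuous homotopy from $g_0={\rm id}_C$ to $g_t$ which is stationary on $U\supset\partial C$, hence a homotopy of pairs $(C,\partial C)\to(C,\partial C)$ relative to $\partial C$; therefore $(g_t)_*={\rm id}$ on $H_d(C,\partial C;\mathbb{Z})$. Since $C=S_{\R^d}(1)\times[-\sqrt d-2,\sqrt d+2]$ is a compact orientable $d$-manifold with boundary, $H_d(C,\partial C;\mathbb{Z})\cong\mathbb{Z}$ is generated by the fundamental class, so $g_t$ has degree $1$. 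A self-map of pairs $(C,\partial C)$ of nonzero degree is onto: if some $y\in{\rm int}\,C$ were not in $g_t(C)$, then $(g_t)_*$ would factor through $H_d(C\setminus\{y\},\partial C;\mathbb{Z})$, which vanishes because $C\setminus\{y\}$ is a non-compact connected $d$-manifold with boundary (Lefschetz duality identifies this group with $H^0_c(C\setminus\{y\})=0$) — contradicting $\deg g_t=1$; and $\partial C\subset g_t(C)$ since $g_t|_{\partial C}={\rm id}$. Hence $g_t$ is surjective, ${\rm Supp}(p\#\mu(t))=C$, and $\mu(t)(\R^2\times O)>0$ for every nonempty open $O\subset C$.

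The main obstacle is the middle step: identifying precisely why the angular dynamics freezes on a neighbourhood of $\partial C$. This is exactly what the two structural choices of the model buy — the compactly supported activation $\sigma_{H,\tau}$ together with the parameter interval $[-\sqrt d-2,\sqrt d+2]$, chosen so that $\psi_\mu$ vanishes identically near $\partial C$ — and it is what makes the homotopy $\{g_s\}$ relative to the boundary, so that the degree argument becomes available. The remaining ingredients (the pushforward-of-support identity and uniqueness for the characteristic ODE) are routine.
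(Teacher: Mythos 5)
Your proof is correct, and it follows the same two-stage strategy as the paper: first show that the angular projection $g_t$ of the flow fixes $\partial C$ and is homotopic to the identity, then conclude surjectivity by a homological obstruction. The homological ingredient you invoke, however, is genuinely different. The paper's Lemma~\ref{lemmaSurjectivity} works in degree $d-1$: it uses $H_{d-1}(C)\cong\mathbb{Z}$ and $H_{d-1}(C\setminus\{p\})\cong\mathbb{Z}^2$, sends the generator to $(1,1)$ via the two fixed boundary spheres, and derives the contradiction $1 = i_\star g_\star(1) = 2$ from the inclusion $C\setminus\{p\}\hookrightarrow C$. You instead work in top degree with the relative group $H_d(C,\partial C)\cong\mathbb{Z}$, use that a homotopy rel $\partial C$ to the identity forces $\deg g_t = 1$, and quote the standard fact that a nonzero-degree self-map of pairs of a compact oriented $d$-manifold with boundary is surjective (via the vanishing of $H_d(C\setminus\{y\},\partial C)$ by Lefschetz duality). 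The two routes are essentially dual to one another; yours is the more packaged degree-theoretic formulation and avoids the explicit wedge-sum computation. You also add a worthwhile refinement that the paper leaves implicit: by noting that $\sigma_{H,\tau}$ \emph{and} $\sigma'_{H,\tau}$ vanish as soon as $|y|>1+1/\tau$ (rather than only $|y|\geq 2$), you obtain that $\psi_\mu\equiv 0$, hence $\nabla_\omega\psi_\mu\equiv 0$, on a whole open neighbourhood $U$ of $\partial C$, not just at $|b|=\sqrt d+2$. This simultaneously shows that trajectories started in $\{a=0\}\times C$ cannot leave $\R^2\times C$ and that $(g_s)_{s\in[0,t]}$ is a homotopy of pairs rel $\partial C$ — both of which are needed for the degree argument (and for the paper's Lemma~\ref{lemmaSurjectivity} to even be applicable with codomain $C$), but are only stated at the boundary in the paper's Lemma~\ref{preliminaryLemmaPhi}.
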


The arguments of the proof of last lemma are based on fine tools of algebraic topology. One can find a nice introduction to the topic in the reference book~\cite{HatcherBook}. With simple words, we enjoy the homotopy properties on the sphere to prove that the measure $\mu(t)$ keeps a large enough support.

\begin{proof}
For all $t\geq 0$, as $\mu(t) = (\chi(t))\# \mu_0$, we have~\cite[Lemma C.8]{BachChizat2018} :

\begin{equation}\label{supportByFlow}
{\rm Supp}(\mu(t)) = \overline{\chi(t) \left({\rm Supp} (\mu_0)\right)}.
\end{equation}
Now let $\xi_t(w,b) := (P_{S_{\R^d}(1) \times \R } \circ \chi(t))((0,0,w,b))$ where $P_{S_{\R^d}(1) \times \R }$ is the projection on $S_{\R^d}(1) \times \R$ ($w,b$ variables). We claim that the choice of the function of activation lets the extremal spheres invariant \textbf{ie} $\xi_t(w,\pm(\sqrt{d} + 2)) = (w,\pm(\sqrt{d} + 2))$. Indeed, by Lemma~\ref{preliminaryLemmaPhi} for $\theta = (c,a,w,\pm(\sqrt{d} + 2))$, $\phi_\mu(\theta) = c r_\mu$ giving :

$$
\left\{
\begin{array}{rl}
v_{\mu,w}(\theta) =& 0, \\
v_{\mu,b}(\theta) =& 0
\end{array} 
\right.
$$
and the claim is proven. Consequently by Lemma~\ref{lemmaSurjectivity}, the continuous map $\xi_t$ is surjective.

Now let $O \subset S_{\R^d}(1) \times [-\sqrt{d} - 2,\sqrt{d} + 2]$ be an open set. By what precedes, there exists a point $\omega \in S_{\R^d}(1) \times [-\sqrt{d} - 2,\sqrt{d} + 2]$ such that $\xi_t (\omega) \in O$ and $\chi(t)((0,0, \omega)) \in \R^2 \times O$. As $(0,0, \omega)$ belongs to the support of $\mu_0$ by hypothesis then $\chi(t)((0,0, \omega))$ belongs to the support of $\mu(t)$ by~\eqref{supportByFlow} and :

$$
\mu(t)(\R^2 \times O) > 0
$$
which finishes the proof of the lemma.

\end{proof}

Lemma~\ref{lemmaSurjectivity} gives conditions for the surjectivity of a continuous map on a cylinder.
\begin{lemma}\label{lemmaSurjectivity}
Let $f$ be a continuous map $f: S_{\R^d}(1) \times [0,1] \rightarrow S_{\R^d}(1) \times [0,1] =:C$, homotopic to the identity such that :

$$
\forall w \in S_{\R^d}(1), \ 
\left\{
\begin{aligned}
f(w,0) =& (w,0), \\
f(w,1) =& (w,1).
\end{aligned}
\right.
$$
Then $f$ is surjective.

\end{lemma}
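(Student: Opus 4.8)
The plan is to turn the statement into a degree-type obstruction in relative singular homology. Write $\partial C := S_{\R^d}(1)\times\{0,1\}$, so that the hypotheses $f(w,0)=(w,0)$ and $f(w,1)=(w,1)$ say exactly that $f$ restricts to the identity on $\partial C$; in particular $f$ is a map of pairs $f:(C,\partial C)\to(C,\partial C)$ (indeed $f(\partial C)=\partial C$). We regard $C=S_{\R^d}(1)\times[0,1]$ as a compact orientable $d$-manifold with boundary $\partial C$ and work with its fundamental class $[C,\partial C]\in H_d(C,\partial C;\mathbb{Z})$, where $H_*$ denotes singular homology.

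The first step is to check that $f_*[C,\partial C]=[C,\partial C]$. Since $C\simeq S_{\R^d}(1)$ we have $H_d(C;\mathbb{Z})=0$, so in the long exact sequence of the pair $(C,\partial C)$ the connecting map $\partial_*:H_d(C,\partial C)\to H_{d-1}(\partial C)$ is injective. By naturality of $\partial_*$ for the map of pairs $f$, together with $f|_{\partial C}=\mathrm{id}$, one gets $\partial_*\circ f_*=(f|_{\partial C})_*\circ\partial_*=\partial_*$ on $H_d(C,\partial C)$, and injectivity of $\partial_*$ forces $f_*=\mathrm{id}$ there.

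Next I would argue by contradiction: suppose $p\in C\setminus\mathrm{Im}(f)$. Since $\mathrm{Im}(f)\supseteq f(\partial C)=\partial C$, the point $p$ lies in the open cylinder $S_{\R^d}(1)\times(0,1)$, hence $\partial C\subset C\setminus\{p\}$ and $\mathrm{Im}(f)\subset C\setminus\{p\}$, so $f$ factors through maps of pairs $(C,\partial C)\xrightarrow{g}(C\setminus\{p\},\partial C)\xrightarrow{j}(C,\partial C)$ with $j$ induced by inclusion. Consider also $\rho:(C,\partial C)\to(C,C\setminus\{p\})$, the map of pairs induced by $\mathrm{id}_C$ (legitimate since $p\notin\partial C$). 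Two facts about $\rho$ collide: first, the composite $(C\setminus\{p\},\partial C)\xrightarrow{j}(C,\partial C)\xrightarrow{\rho}(C,C\setminus\{p\})$ is induced by the inclusion $C\setminus\{p\}\hookrightarrow C$, which kills $H_d$ because any relative cycle already supported in $C\setminus\{p\}$ is zero modulo $C\setminus\{p\}$; hence $\rho_*\circ j_*=0$. Second, by excision $H_d(C,C\setminus\{p\};\mathbb{Z})\cong H_d(\mathbb{R}^d,\mathbb{R}^d\setminus\{0\};\mathbb{Z})\cong\mathbb{Z}$, and the characteristic property of the fundamental class of a manifold with boundary is exactly that its image $\rho_*[C,\partial C]$ in this local homology group is a generator, in particular nonzero (see e.g.~\cite[Section~3.3]{HatcherBook}). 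Combining with the first step,
$$\rho_*[C,\partial C]=\rho_*\big(f_*[C,\partial C]\big)=\big(\rho_*\circ j_*\big)\big(g_*[C,\partial C]\big)=0,$$
a contradiction. Therefore $f$ is surjective.

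The only delicate ingredients are the two homological facts above: the injectivity of $\partial_*$ (where $H_d(C)=0$ enters) and the identification of $\rho_*[C,\partial C]$ with a generator of the local homology group, i.e.\ the statement that the fundamental class restricts to a local orientation at every interior point; I expect no genuine obstacle beyond care in this bookkeeping. Note that the hypothesis that $f$ is homotopic to the identity is not actually needed, only $f|_{\partial C}=\mathrm{id}$; and when $d=1$, where $C$ is a disjoint union of two segments, the conclusion also follows directly from the intermediate value theorem applied to each segment.
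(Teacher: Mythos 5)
Your proof is correct, and it takes a genuinely different route from the paper's.

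The paper works in absolute homology in degree $d-1$: it uses the homotopy equivalence $C\setminus\{p\}\simeq S_{\R^d}(1)\vee S_{\R^d}(1)$ to compute $H_{d-1}(C\setminus\{p\})\cong\mathbb{Z}^2$, identifies the images of the two boundary spheres under $g$, and derives a contradiction from $f_*=i_*\circ g_*$ on $H_{d-1}(C)\cong\mathbb{Z}$ together with $f_*=\mathrm{id}$ (which the paper gets from the homotopy-to-identity hypothesis). You instead work in relative homology in degree $d$ with the fundamental class $[C,\partial C]$: you show $f_*$ fixes $[C,\partial C]$ via naturality of the connecting homomorphism and $f|_{\partial C}=\mathrm{id}$ (so you never need the homotopy hypothesis), and then derive the contradiction from the fact that $[C,\partial C]$ must restrict to a generator of the local homology group $H_d(C,C\setminus\{p\})\cong\mathbb{Z}$ at the interior point $p$, while factoring $f$ through $C\setminus\{p\}$ forces that restriction to vanish. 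Your approach buys two things: it drops the hypothesis that $f$ be homotopic to the identity (as you correctly note, only $f|_{\partial C}=\mathrm{id}$ is used), and it avoids the need to identify the homotopy type of $C\setminus\{p\}$ and track which basis of $\mathbb{Z}^2$ is being used, a step in the paper's argument that is delicate and somewhat tersely written. The paper's approach is perhaps more concrete for readers who find the fundamental class of a manifold with boundary and local orientation classes heavier machinery than computing $H_{d-1}$ of a wedge of spheres. Your separate treatment of the $d=1$ case, where $S_{\R^d}(1)=S^0$ is disconnected, by the intermediate value theorem is also a sensible precaution that the paper omits.
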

\begin{proof}
Suppose that $f$ misses a point $p$, then necessarily $p = (w,t)$ with $0 < t < 1$. We can write :

$$
g: C \rightarrow C \setminus \{p\}
$$
the restriction of $f$ on its image. The induced homomorphism on homology groups writes :

$$
g_\star: H_{d-1}(C) \rightarrow H_{d-1}(C \setminus \{p\}).
$$

Aside that, we have the classic information on homology groups of $C$ and $C \setminus \{p\}$ :

$$
\left\{
\begin{aligned}
H_{d-1}(C)&= H_{d-1}(S_{\R^d}(1)) &\simeq \mathbb{Z}, \\
H_{d-1}(C \setminus \{p\}) &= H_{d-1}(S_{\R^d}(1) \vee S_{\R^d}(1)) &\simeq \mathbb{Z}^2
\end{aligned}
\right.
$$
where $\vee$ designates the wedge sum. Thus, the homomorphism $g_\star$ can be written as : 
$$
g_\star: \mathbb{Z} \rightarrow \mathbb{Z}^2.
$$

As $g$ lets the two spheres $w \rightarrow (w,0), w \rightarrow (w,1)$ invariant, we have :

$$
g_\star(1) = (1,1).
$$
Now we note $i: C \setminus \{p\} \rightarrow C$ the canonical inclusion map. For all $(a,b)\in \mathbb{Z}^2$, 

$$
i_\star(a,b) = a + b.
$$

By hypothesis, $f$ is homotopic to the identity so $f_\star = I_\star$ and 
$f_\star(1) = 1$ but at the same time :

$$
f_\star(1) = i_\star g_\star (1) = i_\star((1,1)) = 2
$$
which gives a contradiction.

\end{proof}

It allows to conclude on the convergence and prove Theorem~\ref{theoremConvergence}.


\begin{proof}[Proof of Theorem~\ref{theoremConvergence}]
By contradiction, suppose $\mu^\star$ is not optimal. Then by Lemma~\ref{lemmaOptimality}, $\phi_{\mu^\star} \neq 0$ somewhere. Reusing the separation of variables (see the proof of Proposition~\ref{propositionEscape}), $\phi_{\mu^\star}$ writes :

$$
\phi_{\mu^\star}(\theta) = a \psi_{\mu}(w,b) +  cr_{\mu}.
$$

Hence either :

\begin{itemize}
\item $r_\mu$ is not zero and $v_{\mu,c} \neq 0$ and one can prove that some mass escapes at $c = \infty$ as in the proof of Proposition~\ref{propositionEscape}.

\item $\psi_{\mu}$ is not identically zero and the set $A$ defined in~\eqref{setSeparability} is not empty and verifies :

\begin{equation}\label{AincludedHypo}
A \subset \R^2 \times S_{\R^d(1)} \times [-\sqrt{d}-2, \sqrt{d}+2]
\end{equation}
by Lemma~\ref{preliminaryLemmaPhi}.

\end{itemize}

We focus on the last item. By Proposition~\ref{propositionEscape}, there exists $\varepsilon>0$ such that if $W_2(\mu_{t_0}, \mu^\star) \leq \varepsilon$ for some $t_0$ and $\mu(t_0)(A) > 0$ then there exists a further time $t_1$ with $W_2(\mu(t_0), \mu^\star) > \varepsilon$. As $(\mu(t))_{t\geq 0}$ converges towards $\mu^\star$, there exists $t_0$ such that :

$$
\forall t \geq t_0, \ W_2(\mu(t_0), \mu^\star) \leq \varepsilon.
$$
But by Lemma~\ref{lemmaPositiveMeasure} and~\eqref{AincludedHypo}, for all time $\mu(t)(A) > 0$ and consequently there exists a time $t_1>t_0$ with :

$$
\ W_2(\mu({t_0}), \mu^\star) > \varepsilon
$$
which gives the contradiction.

\end{proof}

\section{Numerical experiments}\label{sectionNumericalExperiment}
In this section, we will conduct numerical experiments to evaluate the potential of the proposed method.

\subsection{The effect of frequency}

First, the influence of the frequency on the approximation is investigated. To do so, we consider $d=1$ and the following source term for which the solution is a cosinus mode :

$$
f_k(x) := \pi^2 |k|^2 \cos(\pi k \cdot x).
$$
In higher dimension, we use the corresponding source term which is a tensor product of its one dimensional counterpart :

$$
f_k(x_1, \cdots, x_d) := \pi^2 |k|_{l^2}^2 \cos(\pi k_1 \cdot x_1) \cdots \cos(\pi k_d \cdot x_d).
$$

The \href{https://gitlab.com/mathias.dus31/ml_edp/-/releases/v1.0.1}{code} is written using python supplemented with Keras/Tensorflow framework. One should remember the following implementation facts :

\begin{itemize}
\item The neural network represents the numerical approximation taking values of $x \in \Omega$ as input and giving a real as output.

\item The loss function is approximated with a Monte Carlo sampling for the integrals where the measure is uniform on $\Omega$. For each training phase, we use batches of size $10^2$ obtained from a dataset of $10^5$ samples, the number of epochs is calculated to have a time of optimization equals to $2$ (learning rate $\times$ number steps $= 2$). Note that the dataset is shuffled at each epoch.

\item The derivative involved in the loss is computed thanks to automatic differentiation.

\item The training routine is given by the algorithm of backpropagation coupled with a gradient descent optimizer for which the learning rate $\zeta := \frac{1}{2 n m}$ where $n$ is the batch size and $m$ is the width of the neural network involved. This choice will be explained later in the analysis.

\item In all the plots, the reader will see the mean curve and a shaded zone representing the interval whose width is two times the standards deviation. Each simulation is run 4 times to calculate these statistical parameters.
\end{itemize}

For $d=1$ and a width $m = 1000$, the simulations are reported in Figure~\ref{figureDim1} for which very satisfactory results for $k=1,3$ are observed, the same conclusions hold for $d=2$. 


\begin{figure}[H]
    \begin{subfigure}{.5\textwidth}
    	\centering
        \includegraphics[width=.8\linewidth]{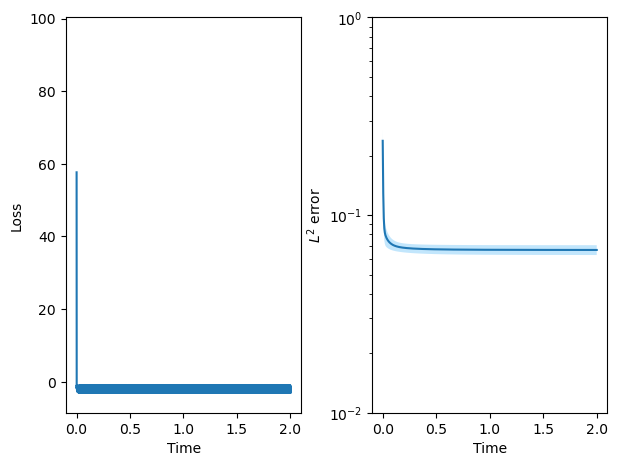}
        \caption{The case $d=1$ and $k=(1)$}
    \end{subfigure}
    \begin{subfigure}{.5\textwidth}
    	\centering
        \includegraphics[width=.8\linewidth]{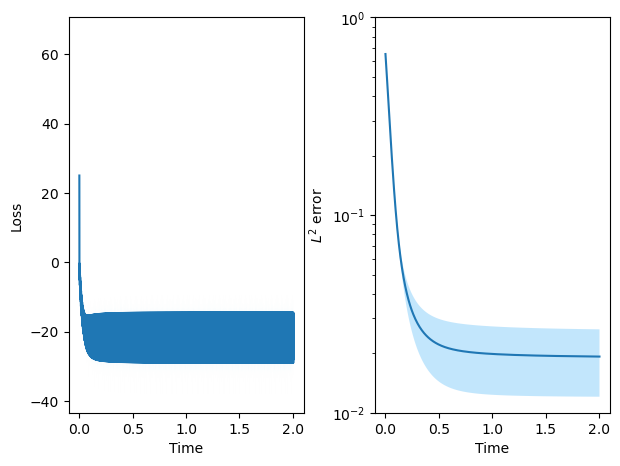}
        \caption{The case $d=1$ and $k=(3)$}
    \end{subfigure}
    \begin{center}
    \begin{subfigure}{.5\textwidth}
        \centering
        \includegraphics[width=.8\linewidth]{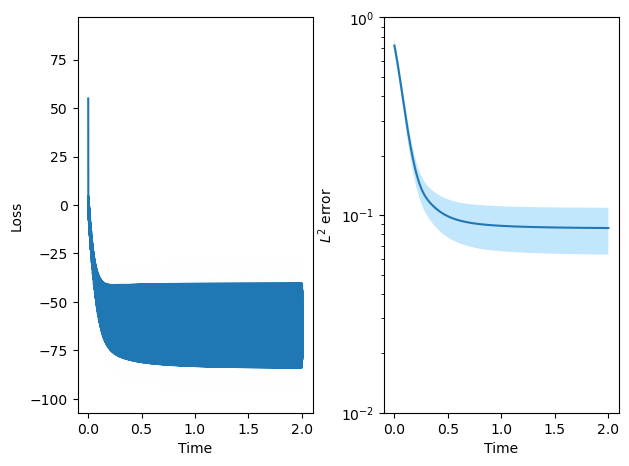}
        \caption{The case $d=1$ and $k=(5)$}
    \end{subfigure}
    \end{center}
    \caption{The effect of frequency on the approximation when $d=1$ and $m=1000$}
    \label{figureDim1}
\end{figure}

\begin{figure}[H]
    \begin{subfigure}{.5\textwidth}
    	\centering
        \includegraphics[width=.8\linewidth]{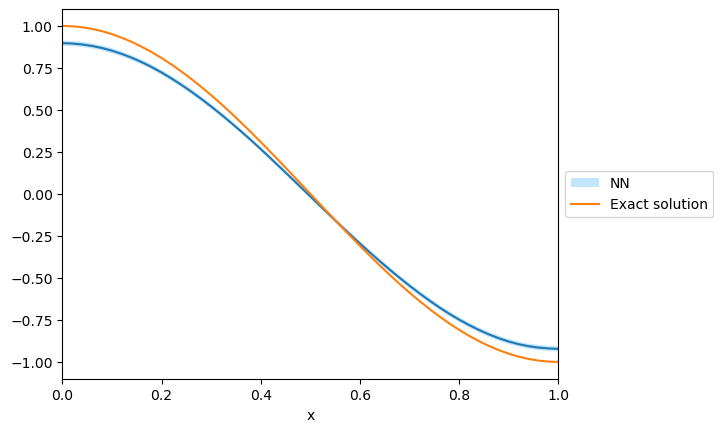}
        \caption{The case $d=1$ and $k=(1)$}
    \end{subfigure}
    \begin{subfigure}{.5\textwidth}
    	\centering
        \includegraphics[width=.8\linewidth]{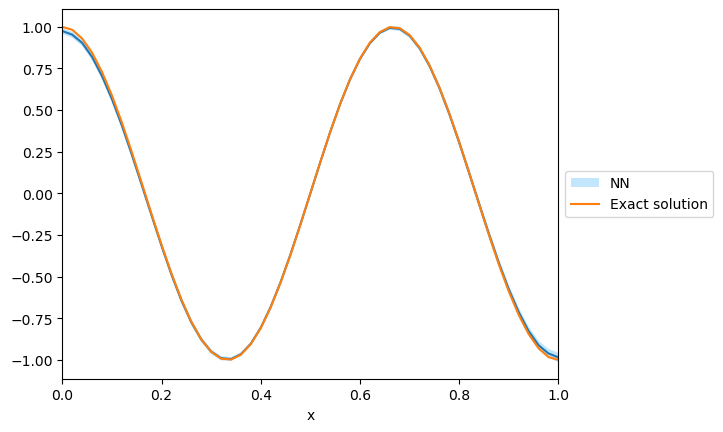}
        \caption{The case $d=1$ and $k=(3)$}
    \end{subfigure}
    \begin{center}
    \begin{subfigure}{.5\textwidth}
        \centering
        \includegraphics[width=.8\linewidth]{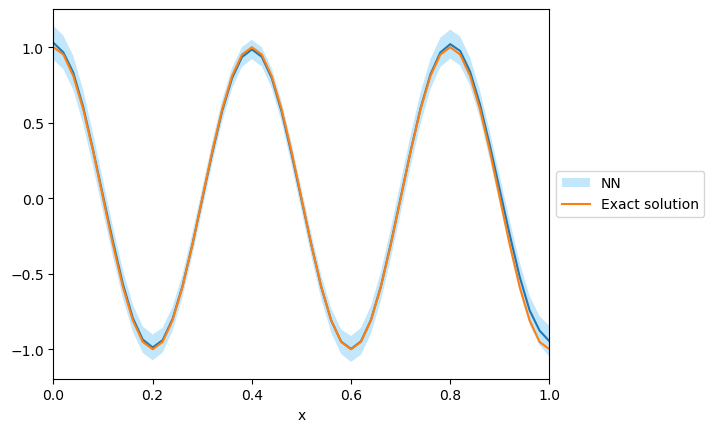}
        \caption{The case $d=1$ and $k=(5)$}
    \end{subfigure}
    \end{center}
    \caption{The numerical solutions when $d=1$ and $m=1000$}
    \label{figureDim1_sol}
\end{figure}

\begin{figure}[H]
    \begin{subfigure}{.5\textwidth}
    	\centering
        \includegraphics[width=.8\linewidth]{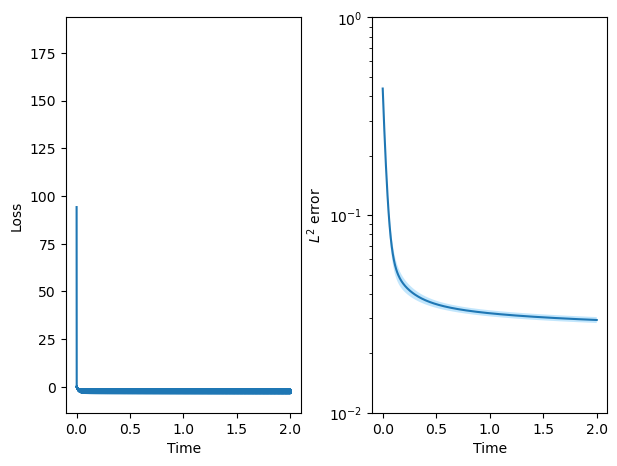}
        \caption{The case $d=2$ and $k=(1,1)$}
    \end{subfigure}
    \begin{subfigure}{.5\textwidth}
    	\centering
        \includegraphics[width=.8\linewidth]{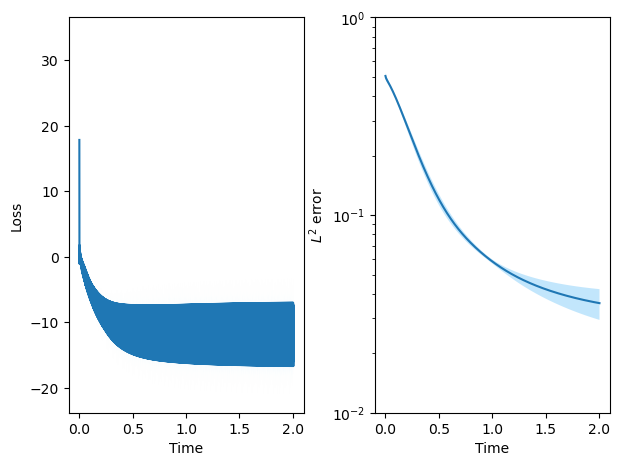}
        \caption{The case $d=2$ and $k=(3,1)$}
    \end{subfigure}
    \begin{center}
    \begin{subfigure}{.5\textwidth}
        \centering
        \includegraphics[width=.8\linewidth]{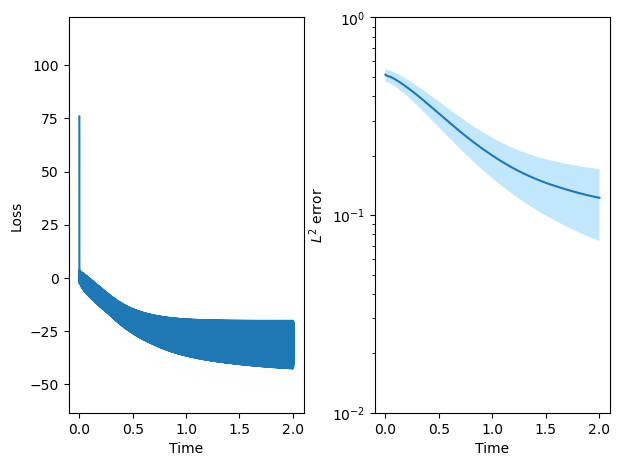}
        \caption{The case $d=2$ and $k=(5,1)$}
    \end{subfigure}
    \end{center}
    \caption{The effect of frequency on the approximation when $d=2$}
    \label{figureDim2}
\end{figure}

\begin{remark}
In this remark, we expose some heuristic arguments for the present choice of scaling related to the learning rate :

$$
\xi := \frac{1}{2 n m}.
$$
It is possible to write the learning scheme as follows :

\begin{equation}\label{batch_size_gradient_descent}
\frac{\theta_{t+1} - \theta_{t}}{dt} = -\nabla_\theta \phi^n_{\mu^m_t}(\theta_t)
\end{equation}
where :

\begin{equation}\label{def_pot_batch_size}
\phi^n_{\mu^m_t}(\theta) := \frac{1}{nm} \sum_{i,j} \nabla \Phi(\theta_j, x_i) \cdot \nabla \Phi(\theta, x_i) - f(x_i) \Phi(\theta, x_i)  + \left( \frac{1}{nm} \sum_{i,j} \Phi(\theta, x_i) \right)^2
\end{equation}
with $(x_i)_i$ are $n$ samples taken uniformly on the $d$ dimensional cube. 

By analogy, equations~\eqref{batch_size_gradient_descent}-\eqref{def_pot_batch_size} can be interpreted as an explicit finite element scheme for the heat equation where the space discretization parameter is $
h := \frac{1}{\sqrt{nm}}.
$
This gives the CFL condition :

$$
2dt \leq h^2
$$
which is equivalent to :

$$
dt \leq \frac{1}{2 n m }.
$$
In practice, one can observe that if one takes $dt > O\left(\frac{1}{n m}\right)$ then the scheme diverges in the same way as a classic finite elements scheme. 

The CFL condition is bad news since it prevents the use of large batch sizes necessary to get a good precision. In practice, the maximum on can do with a standard personal computer is $n,m=10^2$.

\end{remark}

%
%
%

\subsection{The effect of dimension}

To evaluate the effect of dimension on performance, we consider frequencies of the form $k = (\bar{k}, 0, \cdots, 0)$ where $\bar{k}$ is an integer, and plot the $L^2$ error as a function of the dimension for different $\bar{k}$. This is done in Figure~\ref{figureEffectDim} where several observations can be made :

\begin{itemize}

\item For low frequency, the precision is not affected by dimension.

\item At high frequency, performance are deteriorated as dimension increases.

\item Having a larger neural network captures better high frequency modes up to a certain dimension.

\item Variance increases with frequency but not with dimension.

\end{itemize}

\begin{figure}[H]
    \begin{subfigure}{.5\linewidth}
    	\centering

        \includegraphics[width=.8\linewidth]{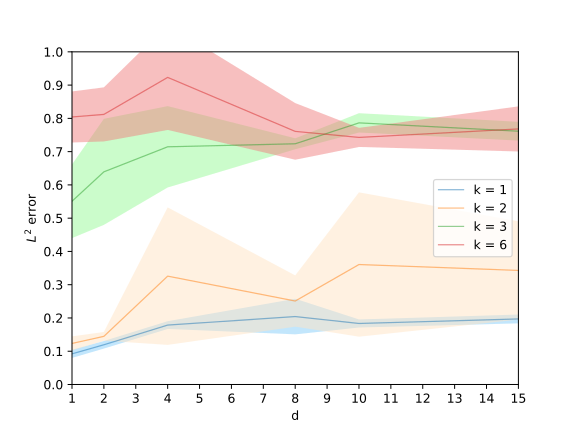}

        \caption{$m=10$}
    \end{subfigure}
    \begin{subfigure}{.5\linewidth}
    	\centering
        \includegraphics[width=.8\linewidth]{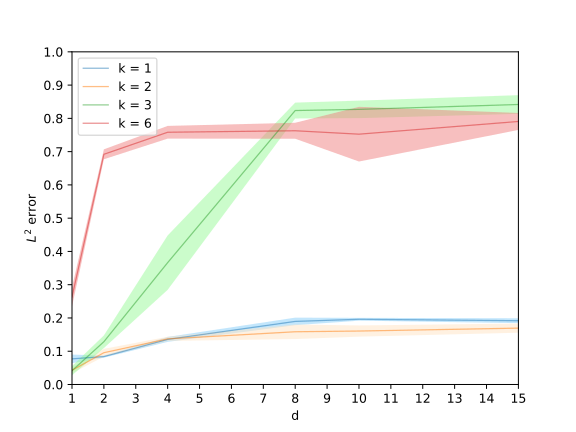}

        \caption{$m=100$}
    \end{subfigure}
    \begin{center}
    \begin{subfigure}{.5\linewidth}
        \centering
        \includegraphics[width=.8\linewidth]{./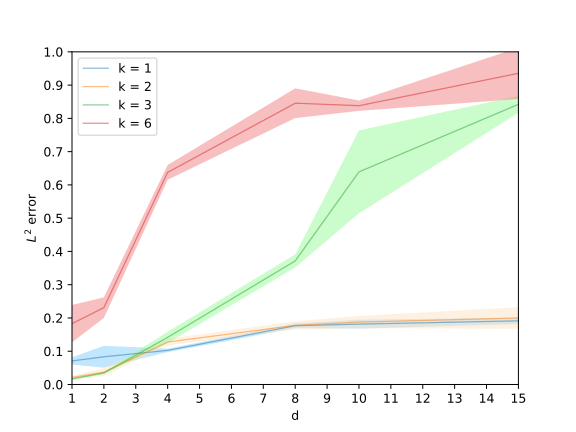}
        \caption{$m=1000$}
    \end{subfigure}
    \end{center}
    \caption{The effect of dimension for different frequencies and width}
    \label{figureEffectDim}
\end{figure}

For completeness we plot in Figure~\ref{highDimLowFrq} a high dimensional example where $d = 10$, $k=(1,1,0,\cdots,0)$ to show that the proposed method works well in the high dimensional/low frequency regime. The contour plot shows the function's values on the slice $(x_1,x_2, 0.5, \cdots, 0.5)$.

\begin{figure}[H]
\centering
\includegraphics[width=.5\linewidth]{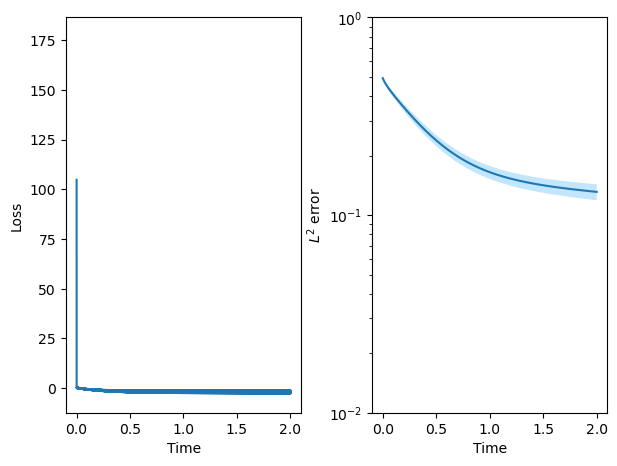}
\caption{The case $d=10$, $k=(1,1,0,\cdots,0)$ and $m=1000$}
\label{highDimLowFrq}
\end{figure}

Finally we show an example where a lot of low frequencies are involved in the high dimensional regime :

$$
f(x) = 2 \pi^2 \sum_{k=1}^{d-1} \cos(\pi \cdot x_k)\cos(\pi \cdot x_{k+1})
$$
whose solution is :

$$
u^\star(x) = \sum_{k=1}^{d-1} \cos(\pi \cdot x_k)\cos(\pi \cdot x_{k+1}).
$$

For $d=6$, $m = 1000$ and all other parameters being identical to previous cases, one gets convergence of the solution on Figure~\ref{mixedMode} where the contour plot still shows the function's values on the slice $(x_1,x_2, 0.5, \cdots, 0.5)$.

\begin{figure}[H]
    \begin{subfigure}{.5\textwidth}
    	\centering
		\includegraphics[width=.8\linewidth]{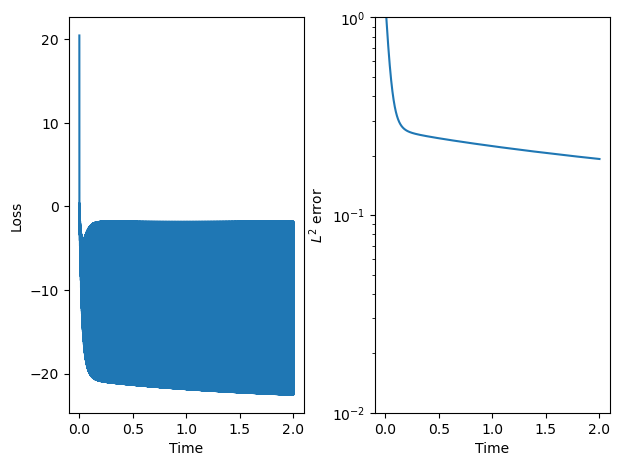}
		\caption{The loss and the $L^2$ error}
    \end{subfigure}
    \begin{subfigure}{.5\textwidth}
    	\centering
        \includegraphics[width=.8\linewidth]{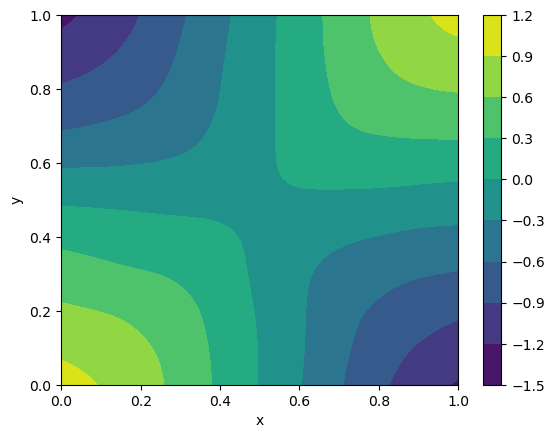}
        \caption{The numerical solution}
    \end{subfigure}
    \caption{The mixed mode solution}
    \label{mixedMode}
\end{figure}

\section{Conclusion}

In this article, the ability of two-layer neural networks to solve Poisson equation is investigated. First the PDE problem commonly understood in the Sobolev sense, is reinterpreted in the perspective of probability measures by writing the energy functional as a function over probabilities. Then, we propose to solve the obtained minimization problem thanks to gradient curves for which an existence result is shown. To justify this choice of method, the convergence towards an optimal measure is proved assuming the convergence of the gradient curve. Finally, numerical illustrations with a detailed analysis of the effects of dimension and frequency are presented. With this work, it becomes clear that neural networks is a viable method to solve Poisson equation even in the high dimensional regime; something out of reach for classical methods. Nonetheless, some questions and extensions deserve more detailed developments. First, the main remark to observe is that the convergence is not proved theoretically even if it is observed in practice. Additionally, the domain considered is very peculiar $\Omega = [0,1]^d$ and it is not obvious that one could generalize such theory on domain where sin/cosine decomposition is not available.
In numerical illustrations, integrals involved in the cost were not computed exactly but approximated by uniform sampling. It should be interesting to study the convergence of gradient curves with respect to the number of samples.

\appendix 
\section{The differential structure of Wasserstein spaces over compact Alexandrov spaces}\label{appendix}

The aim of this section is to get acquainted of the differential structure of $\Prob_2(\Theta)$. All the results presented here are not rigorously proved and we rather give a didactic introduction to the topic, the main reference being~\cite{Ohta2009}.

\subsection{The differential structure of Alexandrov spaces}

An Alexandrov space $(A,d)$ is a geodesic space embedded with its distance $d$ having a nice concave property on triangles. Roughly, Alexandrov spaces are spaces where the curvature is bounded from below by a uniform constant. Before going further, we need to introduce some notation :

\begin{definition}
Let $\alpha$ be a unit speed geodesic with $\alpha(0) = a \in A$ and $s\geq 0$, then we introduce the notation :
$$
(\alpha,s) : \mathbb{R}_+ \ni t \mapsto \alpha(st)
$$
the associated geodesic of velocity $s$. We then make the identification
$$
"(\alpha, 1) = \alpha"
$$
unit speed geodesic $\alpha$. 
\end{definition}

It is not so important to focus on a rigorous definition of such spaces but one should remember the following fundamental property of existence of a tangential cone structure :

\begin{theorem}
Let $\alpha, \beta$ be two unit speed geodesics with $\alpha(0) = \beta(0) =: a \in  A$ and $s,t\geq 0$. Then the limit :
$$
\sigma_a((\alpha,s), (\beta,t)) := \lim_{\varepsilon \rightarrow 0} \frac{1}{\varepsilon} d(\alpha(s\varepsilon), \beta(t\varepsilon))
$$
exists. Moreover,
\begin{equation}\label{cos}
\frac{1}{2st} \left( s^2 + t^2 -  \sigma_a((\alpha,s), (\beta,t))\right)
\end{equation}
depends neither on $s$ nor on $t$.
\end{theorem}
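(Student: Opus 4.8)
The plan is to obtain both assertions from the single structural property that characterises Alexandrov spaces of curvature bounded below, namely the comparison (Toponogov-type) inequality for geodesic triangles; all of these facts are collected in~\cite{Ohta2009}. Fix a lower curvature bound $\kappa$ for $(A,d)$ and let $M^2_\kappa$ be the model surface of constant curvature $\kappa$. For $p,q>0$ small, the geodesic triangle with vertex $a$ and the two other vertices $\alpha(p)$ and $\beta(q)$ has side lengths $p$, $q$ and $d(\alpha(p),\beta(q))$; write $\widetilde{\angle}_\kappa(a;p,q)\in[0,\pi]$ for the angle at the vertex corresponding to $a$ in the triangle of $M^2_\kappa$ with the same three side lengths (which exists once $p,q$ are small enough, automatically so if $\kappa\le 0$).

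First I would invoke the comparison inequality, which — in many treatments this is taken as the very definition of ``curvature $\ge \kappa$'' — says that $\widetilde{\angle}_\kappa(a;p,q)$ is monotone in each of $p$ and $q$, and hence, being bounded, admits a limit as $(p,q)\to(0,0)$. Denote this limit by $\angle_a(\alpha,\beta)\in[0,\pi]$, the angle between the two geodesic rays at $a$. Specialising to $p=s\varepsilon$ and $q=t\varepsilon$ and letting $\varepsilon\downarrow 0$, we then get, for \emph{every} fixed $s,t>0$,
$$
\widetilde{\angle}_\kappa(\varepsilon):=\widetilde{\angle}_\kappa(a;s\varepsilon,t\varepsilon)\xrightarrow[\varepsilon\to 0]{}\angle_a(\alpha,\beta),
$$
so in particular this limiting angle does not remember the speeds $s$ and $t$.

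Next I would turn the angle into a statement about $\sigma_a$ via the law of cosines in $M^2_\kappa$. By construction the side opposite $a$ in the comparison triangle has length exactly $d(\alpha(s\varepsilon),\beta(t\varepsilon))$ and the two adjacent sides have lengths $s\varepsilon$ and $t\varepsilon$, so the $\kappa$-law of cosines expresses $d(\alpha(s\varepsilon),\beta(t\varepsilon))$ as an explicit function of $s\varepsilon$, $t\varepsilon$ and $\widetilde{\angle}_\kappa(\varepsilon)$. A Taylor expansion of that identity in $\varepsilon$ — the curvature corrections enter only at higher order, so to leading order one recovers the Euclidean law of cosines — gives
$$
d(\alpha(s\varepsilon),\beta(t\varepsilon))^2 = \varepsilon^2\big(s^2+t^2-2st\cos\widetilde{\angle}_\kappa(\varepsilon)\big)+o(\varepsilon^2).
$$
Dividing by $\varepsilon^2$ and letting $\varepsilon\to 0$ (using the previous paragraph) shows that the limit defining $\sigma_a((\alpha,s),(\beta,t))$ exists — with the understanding that it is the limit of $\varepsilon^{-2}d(\alpha(s\varepsilon),\beta(t\varepsilon))^2$ — and
$$
\sigma_a((\alpha,s),(\beta,t)) = s^2+t^2-2st\cos\angle_a(\alpha,\beta),
$$
whence $\tfrac{1}{2st}\big(s^2+t^2-\sigma_a((\alpha,s),(\beta,t))\big)=\cos\angle_a(\alpha,\beta)$, which manifestly depends on neither $s$ nor $t$.

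The main obstacle is the first step: the monotonicity of the comparison angle is the substantive geometric input and is not a formal computation; a rigorous argument would either take it as the definition of the lower curvature bound or deduce it from the point-comparison axiom, as done in~\cite{Ohta2009}. The two remaining points are routine but should each be checked with a line: that the comparison triangle is genuinely realisable in $M^2_\kappa$ for all sufficiently small $\varepsilon$ (an issue only when $\kappa>0$), and that the $o(\varepsilon^2)$ remainder in the law-of-cosines expansion is uniform enough on a fixed interval of $\varepsilon$ to survive the passage to the limit together with the convergence $\widetilde{\angle}_\kappa(\varepsilon)\to\angle_a(\alpha,\beta)$.
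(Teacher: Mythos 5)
The paper does not actually prove this theorem: the appendix it lives in is explicitly advertised as a didactic survey (``All the results presented here are not rigorously proved and we rather give a didactic introduction to the topic, the main reference being~\cite{Ohta2009}''), so there is no in-paper argument to compare against. Your proof is the standard one from Alexandrov geometry and is correct in its essentials. You correctly identify the single non-trivial geometric input --- monotonicity of the comparison angle $\widetilde{\angle}_\kappa(a;p,q)$ in each side length, which is equivalent to the curvature-bounded-below hypothesis --- and then the passage to $\sigma_a$ is indeed a routine Taylor expansion of the $\kappa$-law of cosines, for which the curvature corrections enter at order $\varepsilon^4$ and do not survive the $\varepsilon^{-2}$ normalisation. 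Your two flagged caveats (realisability of the comparison triangle when $\kappa>0$ at small scales, and uniformity of the $o(\varepsilon^2)$ term) are also the right ones to worry about and are handled in any standard treatment.

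One point worth making explicit, which you handle only implicitly with the parenthetical ``with the understanding that it is the limit of $\varepsilon^{-2}d(\alpha(s\varepsilon),\beta(t\varepsilon))^2$'': as printed, the paper's statement contains a typo. With $\sigma_a$ defined as the limit of $\varepsilon^{-1}d(\alpha(s\varepsilon),\beta(t\varepsilon))$, your computation gives $\sigma_a = \sqrt{s^2+t^2-2st\cos\angle_a(\alpha,\beta)}$, so the quantity that is independent of $s,t$ is
$$
\frac{1}{2st}\left(s^2+t^2-\sigma_a\big((\alpha,s),(\beta,t)\big)^2\right)=\cos\angle_a(\alpha,\beta),
$$
i.e.\ formula~\eqref{cos} should carry $\sigma_a^2$, not $\sigma_a$. (A quick sanity check: with $\sigma_a$ unsquared the displayed quantity takes the value $1-\tfrac12\sqrt{2-2\cos\angle}$ at $s=t=1$ and $\tfrac54-\tfrac14\sqrt{5-4\cos\angle}$ at $s=2,t=1$, which differ unless $\cos\angle=1$.) This is consistent with the corollary immediately following the theorem in the paper, where $\langle(\alpha,s),(\beta,t)\rangle_a=st\cos\angle_a$ is a genuine scalar product, as well as with the later use of $\sigma_a$ as a metric on the space of directions (it must be a distance, not a squared distance). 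Rather than quietly redefining $\sigma_a$, it would be cleaner to keep the paper's definition of $\sigma_a$, prove existence of the limit exactly as you do (existence of $\lim \varepsilon^{-2}d^2$ immediately gives existence of $\lim\varepsilon^{-1}d$ by continuity of the square root on $[0,\infty)$), and then state the $s,t$-independence for the corrected expression with $\sigma_a^2$.
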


The previous theorem is very important as it enables to introduce a notion of angle and scalar product :

\begin{corollary}
One can define the local angle $\angle_a((\alpha,s), (\beta,t))$ between $(\alpha,s)$ and $(\beta,t)$ by :
$$
\cos(\angle_a((\alpha,s), (\beta,t))) := \frac{1}{2st} \left( s^2 + t^2 -  \sigma_a((\alpha,s), (\beta,t))\right)
$$
and a local scalar product :
$$
\langle (\alpha,s), (\beta,t) \rangle_a :=  st \cos(\angle_a((\alpha,s),(\beta,t))).
$$
\end{corollary}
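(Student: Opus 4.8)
The plan is to derive both assertions from the defining curvature–lower–bound property of Alexandrov spaces, i.e.\ from the monotonicity of comparison angles along geodesics (a Toponogov-type comparison). Fix a lower curvature bound $\kappa$ for $A$. For $p,q\ne a$, let $\tilde\angle_a^\kappa(p,q)$ denote the angle at the vertex corresponding to $a$ in a comparison triangle drawn in the model plane $M_\kappa^2$ with side lengths $d(a,p)$, $d(a,q)$, $d(p,q)$. The curvature bound says precisely that, along the geodesics $\alpha,\beta$, the map $(\varepsilon_1,\varepsilon_2)\mapsto \tilde\angle_a^\kappa(\alpha(\varepsilon_1),\beta(\varepsilon_2))$ is non-increasing in each of its two arguments.

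First I would establish existence of the limit. Since $\tilde\angle_a^\kappa(\alpha(\varepsilon_1),\beta(\varepsilon_2))$ is non-increasing in each variable and bounded above by $\pi$, it increases to its supremum as $(\varepsilon_1,\varepsilon_2)\to(0,0)$ (a bounded function monotone in each of two variables has a joint limit at the corner, equal to that supremum); in particular
$$
\angle_a(\alpha,\beta) := \lim_{\varepsilon_1,\varepsilon_2\to 0} \tilde\angle_a^\kappa(\alpha(\varepsilon_1),\beta(\varepsilon_2)) \in [0,\pi]
$$
exists. Specializing to $\varepsilon_1 = s\varepsilon$ and $\varepsilon_2 = t\varepsilon$, both of which tend to $0$ as $\varepsilon\to 0$, the angle $\tilde\angle_a^\kappa(\alpha(s\varepsilon),\beta(t\varepsilon))$ converges to $\angle_a(\alpha,\beta)$. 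Now invoke the law of cosines in $M_\kappa^2$ relating this comparison angle to the three side lengths $s\varepsilon$, $t\varepsilon$, $d(\alpha(s\varepsilon),\beta(t\varepsilon))$, together with the elementary fact that as the two sides $s\varepsilon$ and $t\varepsilon$ shrink the spherical/hyperbolic law of cosines degenerates to the Euclidean one, with an error controlled by the side lengths. This yields
$$
\frac{d(\alpha(s\varepsilon),\beta(t\varepsilon))^2}{\varepsilon^2} \longrightarrow s^2 + t^2 - 2st\cos\angle_a(\alpha,\beta) \qquad\text{as }\varepsilon\to 0,
$$
so the limit defining $\sigma_a((\alpha,s),(\beta,t))$ exists (normalized so that \eqref{cos} reads off a cosine) and equals $s^2+t^2-2st\cos\angle_a(\alpha,\beta)$.

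The second assertion is then immediate: $\tfrac{1}{2st}\bigl(s^2+t^2-\sigma_a((\alpha,s),(\beta,t))\bigr)=\cos\angle_a(\alpha,\beta)$, and the right-hand side manifestly involves neither $s$ nor $t$. Intuitively, reparametrizing a geodesic by a positive constant does not change the set of points it sweeps out near $a$, and the comparison angle sees only that set in the limit; all dependence on the two velocities is absorbed into the rescaling by $\varepsilon$. The main obstacle — and the only place real care is needed — is the passage to the Euclidean law of cosines when $\kappa\ne 0$: one must check that $\tilde\angle_a^\kappa$ and the Euclidean comparison angle built from the same three (shrinking) side lengths share the same limit, with an error that vanishes as the sides shrink, which legitimizes replacing $M_\kappa^2$ by its tangent plane. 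Everything else (monotonicity of comparison angles, convergence of bounded monotone functions, specialization of the joint limit) is routine, following the treatment in \cite{Ohta2009}.
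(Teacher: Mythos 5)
The paper does not actually prove this corollary: the appendix is explicitly a non-rigorous didactic summary that defers to Ohta, and the corollary is simply stated as a restatement of the preceding theorem (itself unproved there). Your proposal therefore supplies a genuine argument where the paper gives none, and it is the standard one: you derive existence of the angle from the monotonicity of comparison angles (a defining characterization of curvature bounded below), and you identify the $s,t$-independent constant with $\cos\angle_a(\alpha,\beta)$ via the degeneration of the model-space law of cosines to the Euclidean one. This is correct, and it supplies the one ingredient the corollary needs beyond the theorem, namely that the constant lies in $[-1,1]$ so that $\arccos$ produces a well-defined angle in $[0,\pi]$; the scalar-product formula is then just notation.

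Two remarks. First, the displayed formula \eqref{cos} in the paper has a typo: with $\sigma_a$ defined as $\lim_\varepsilon \tfrac{1}{\varepsilon}d(\alpha(s\varepsilon),\beta(t\varepsilon))$, the law of cosines gives $\frac{1}{2st}\bigl(s^2+t^2-\sigma_a^2\bigr)=\cos\angle_a$, i.e.\ the square of $\sigma_a$ must appear. Your computation is consistent with the squared version (you compute $\lim d^2/\varepsilon^2$), but you then write that ``the limit defining $\sigma_a$ \ldots equals $s^2+t^2-2st\cos\angle_a$,'' which conflates $\sigma_a$ with $\sigma_a^2$. You should state explicitly that $\sigma_a^2$, not $\sigma_a$, equals $s^2+t^2-2st\cos\angle_a(\alpha,\beta)$, and that the formula in the corollary should be read with $\sigma_a^2$. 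Second, the whole argument tacitly assumes $s,t>0$; for $s=0$ or $t=0$ the quotient $\tfrac{1}{2st}$ is undefined and one simply identifies $(\alpha,0)$ with the apex of the cone. Neither point affects the substance: the approach is sound and, modulo the $\sigma_a$ vs.\ $\sigma_a^2$ wording, complete.
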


We then have the following definitions. 
\begin{definition}
The space of directions $\Sigma_a(A)$ is the completion of 
$$
\left\{ (\alpha,1) \ | \ \alpha \text{ unit speed geodesic departing from a } \right\}
$$ quotiented by the relationship $\sigma_a = 0$ with respect to the distance $\sigma_a$.

The tangent cone, i.e. the set of geodesics departing from $a$ at speed $s$, of the form $(\alpha,s)$ for some $(\alpha,1)\in \Sigma_a(A)$, is denoted by $C_a(A)$.
\end{definition}

A major result from~\cite{Ohta2009} is that if the underlying space $A$ is Alexandrov and compact then the space over probabibilty $\Prob_2(A)$ is also an Alexandrov space and all the differential structure presented above is available. The proof of this result is based on McCann interpolation which allows to make the link between probability geodesics and geodesics of the underlying space.

Moreover, it is possible to define a notion of differentiation.

\begin{definition}
For a curve $(a_t)_{t\in \mathbb{R}}$ of $A$, it is said to be differentiable at $t = 0$ if there exists $(\alpha,\tau) \in C_a(A)$ such that for all $(\alpha_i,1) \in \Sigma_a(A)$, $t_i \geq 0$ with $\lim_{i \rightarrow \infty} t_i = 0$, linking $a_0$ and $a_{t_i}$ then :
$$
\mathop{\lim}_{i \rightarrow \infty} (\alpha_i, d(a_0,a_{t_i})/t_i) = (\alpha, \tau)
$$ 
where the convergence has to be understood in the sense of the distance $\sigma_a$. Moreover, the derivative of the curve at $t=0$ writes :
$$
a_0^\prime := (\alpha,\tau).
$$
\end{definition}

\subsection{The notion of gradient}

Now let us consider an energy $\E: A \rightarrow \R$ with the following property of convexity.

\begin{definition}\label{definitionConvexGeodesic}
We say that $\E$ is convex along geodesics if there exists $K \in \R$ such that for all rescaled geodesics $\alpha : [0,1] \rightarrow A$ :

$$
\E(\alpha(\lambda)) \leq (1-\lambda) \E(\alpha(0)) + \lambda \E(\alpha(1)) - \frac{K}{2} \lambda (1 - \lambda) d(\alpha(0), \alpha(1)).
$$
\end{definition}

Assuming such convexity, it is possible to define the gradient's direction of $\E$ using the differential structure of $A$ (see~\cite[Lemma 4.3]{Ohta2009}). Before doing this, it is necessary to introduce the directional derivative :

\begin{definition}
For $a \in A$ and $(\alpha,s) \in C_a(A)$, one defines :

$$
D_a\E((\alpha,s)) := \lim_{\varepsilon \rightarrow 0} \frac{\E(\alpha(s\varepsilon)) - \E(\alpha(0))}{\varepsilon}. 
$$
\end{definition}
One can prove that the limit above exists using the convexity assumption of $\E$. Owing this, there exists a direction for which the local slope (see Definition~\ref{definitionLocalSlope}) is attained in the sense defined below.
\begin{theorem}
For all $a \in A$ such that $|\nabla_- \E|(a) < \infty$, there exists a unique direction $(\alpha,1) \in \Sigma_a(A)$ such that :

$$
D_a \E((\alpha,1)) = - |\nabla_- \E|(a).
$$

This direction $\alpha$ is denoted by $
\frac{\nabla_- \E(a)}{|\nabla_- \E|(a)}
$, which means that :

$$
D_a \E((\alpha,|\nabla_- \E|(a))) := - |\nabla_- \E|^2(a).
$$
\end{theorem}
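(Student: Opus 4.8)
The plan is to reduce the statement to a variational problem on the tangent cone $C_a(A)$ whose unique solution encodes the steepest–descent direction, so that existence, the displayed identity, and uniqueness all come out at once. Set $s := |\nabla_-\E|(a)$ and assume $s>0$ (if $s=0$, $\nabla_-\E(a)$ is taken to be the apex of $C_a(A)$ and the identity $D_a\E((\alpha,0))=0$ is trivial). First I would record the elementary properties of $D_a\E$. Fixing a unit speed geodesic $\alpha$ with $\alpha(0)=a$ and putting $g(\varepsilon):=\E(\alpha(\varepsilon))$, inserting the rescaled geodesics $t\mapsto\alpha(t\varepsilon_2)$ into Definition~\ref{definitionConvexGeodesic} shows that $\varepsilon\mapsto\frac{g(\varepsilon)-g(0)}{\varepsilon}-\frac{K}{2}\varepsilon$ is nondecreasing, hence $D_a\E((\alpha,1))=\lim_{\varepsilon\to0^+}\frac{g(\varepsilon)-g(0)}{\varepsilon}$ exists, and it is finite since $-\frac{g(\varepsilon)-g(0)}{\varepsilon}=\frac{\E(a)-\E(\alpha(\varepsilon))}{d(a,\alpha(\varepsilon))}\leq s$ for small $\varepsilon$ (a direct competitor in Definition~\ref{definitionLocalSlope}). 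A change of variable gives $D_a\E((\alpha,\sigma))=\sigma\,D_a\E((\alpha,1))$, so $D_a\E$ extends to a positively $1$-homogeneous functional on all of $C_a(A)$ with $D_a\E(o)=0$ at the apex $o$; writing it as an infimum over $\varepsilon$ of functions continuous in the direction shows $D_a\E$ is upper semicontinuous on $\Sigma_a(A)$, and the local Lipschitz continuity of the $K$-convex functional $\E$ near $a$ (standard for semiconvex functions) will be used below.

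Next I would introduce $\Psi:C_a(A)\to\R$, $\Psi(w):=D_a\E(w)+\tfrac12|w|^2$ with $|w|:=d(o,w)$. Along each ray $\{(\xi,t):t\geq0\}$ one computes $\min_{t\geq0}\Psi((\xi,t))=-\tfrac12\big((-D_a\E(\xi))_+\big)^2$, attained at $t=(-D_a\E(\xi))_+$. Since $-D_a\E(\xi)\leq s$ for every $\xi$, while for an optimizing sequence $b_n\to a$ joined to $a$ by geodesics with initial directions $\xi_n$ the monotonicity above gives $-D_a\E(\xi_n)\geq\frac{\E(a)-\E(b_n)}{d(a,b_n)}+\frac{K}{2}d(a,b_n)\to s$, it follows that $\sup_{\xi}(-D_a\E(\xi))=s$ and hence $\inf_{C_a(A)}\Psi=-\tfrac12 s^2$. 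The functional $\Psi$ is lower semicontinuous and, because $\Psi(w)\geq -s|w|+\tfrac12|w|^2$, coercive on the locally compact cone $C_a(A)$, so its infimum is attained at some $w^\star$; unwinding the per–ray identity forces $w^\star\neq o$, $w^\star=(\xi^\star,s)$ with $-D_a\E(\xi^\star)=s$, that is, $\xi^\star$ realizes the local slope and $D_a\E((\xi^\star,s))=s\,D_a\E((\xi^\star,1))=-s^2$. This gives existence of the direction, the displayed identity, and identifies $\nabla_-\E(a)$ with $w^\star$.

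Finally, uniqueness of the direction amounts to uniqueness of the minimizer of $\Psi$, which follows once one knows that $w\mapsto D_a\E(w)$ is convex along geodesics of $C_a(A)$: indeed $w\mapsto\tfrac12|w|^2$ is strictly convex along geodesics of the Euclidean cone $C_a(A)$, so $\Psi$ would then be strictly convex with at most one minimizer. Concretely, if two distinct $\xi_1,\xi_2\in\Sigma_a(A)$ both realized $s$, applying convexity of $D_a\E$ to the chord of $C_a(A)$ joining $(\xi_1,1)$ and $(\xi_2,1)$ — whose midpoint sits at radius $\cos(\angle_a(\xi_1,\xi_2)/2)<1$ over the angular midpoint $\eta$, or at the apex when $\angle_a(\xi_1,\xi_2)=\pi$ — yields $D_a\E(\eta)\leq -s/\cos(\angle_a(\xi_1,\xi_2)/2)<-s$ (resp.\ $0\leq -s$), contradicting $-D_a\E\leq s$. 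I expect the convexity of $D_a\E$ on the tangent cone to be the main obstacle: it is the infinitesimal form of the $K$-convexity of $\E$, obtained by writing the finite-scale inequalities of Definition~\ref{definitionConvexGeodesic} for geodesics of $A$ issuing near $a$, rescaling, and passing to the limit — the quadratic correction $\tfrac{K}{2}\lambda(1-\lambda)d(\cdot,\cdot)^2$ washing out in the limit — and making this rigorous requires the standard but delicate Alexandrov–space facts that rescaled geodesics of $A$ near $a$ converge to geodesics of $C_a(A)$ and that the exponential-type map is continuous enough to transfer the inequalities, together with the local Lipschitz bound on $\E$. These details are carried out in~\cite[Lemma 4.3]{Ohta2009}.
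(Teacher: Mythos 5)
Note first that the paper does not actually prove this statement: the appendix is explicitly a didactic recap (``All the results presented here are not rigorously proved''), and the theorem is cited directly from~\cite[Lemma 4.3]{Ohta2009}, so there is no in-paper proof to compare against. Evaluating your attempt on its own, the Moreau-type functional $\Psi(w):=D_a\E(w)+\tfrac12|w|^2$, the per-ray minimization, and the uniqueness argument via strict convexity (correctly deferring the convexity of $D_a\E$ along geodesics of $C_a(A)$ to Ohta) are all the right ingredients, but the existence step contains two genuine gaps.

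First, a semicontinuity direction error: you derive that $D_a\E$ is \emph{upper} semicontinuous as an infimum of continuous functions, but you then assert that $\Psi$ is \emph{lower} semicontinuous, which would require $D_a\E$ to be LSC --- the opposite inequality, and not a consequence of what you established. (In fact $D_a\E$ is Lipschitz on $\Sigma_a(A)$, by the local Lipschitz bound on the $K$-convex $\E$ together with the first-order expansion $d(\alpha_1(\varepsilon),\alpha_2(\varepsilon))=\varepsilon\,\sigma_a(\xi_1,\xi_2)+o(\varepsilon)$, but you need to argue this; the infimum representation alone points the wrong way.) Second, and more seriously, you invoke the direct method on the grounds that $C_a(A)$ is ``locally compact.'' In the setting this appendix exists to support, $A=\Prob_2(\Theta)$, and the tangent cone at a measure $\mu$ embeds in $L^2(\Theta,T\Theta;\mu)$, which is infinite-dimensional and not locally compact; coercivity plus lower semicontinuity then does not by itself produce a minimizer. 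Ohta's proof of Lemma 4.3 avoids compactness altogether: a minimizing sequence for $\Psi$ (equivalently, a maximizing sequence for $-D_a\E$ on $\Sigma_a(A)$) is shown to be \emph{Cauchy}, by combining the convexity of $D_a\E$ on the cone with the nonnegative curvature of $C_a(A)$ --- i.e.\ essentially the same quadratic midpoint estimate you already deploy for uniqueness, run quantitatively. The missing idea is to use that parallelogram-type estimate for existence as well; as written, your existence argument fails precisely in the Wasserstein case the paper cares about.
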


With this, it is straightforward to define the notion of gradient curve.

\begin{definition}
A Lipschitz curve $(a_t)_{t\geq 0}$ is said to be a gradient curve with respect to $\E$ if it is differentiable for all $t\geq 0$ and :

$$
\forall t \geq 0, \  a^\prime_t = \left(\frac{\nabla_- \E(a_t)}{|\nabla_- \E|(a_t)}, |\nabla_- \E|(a_t)\right) \in C_{a_t}(A).
$$

\end{definition}

In~\cite{Ohta2009}, results about existence and uniqueness of gradient curve on $\Prob_2(A)$ are given.

\section*{Acknowledgements}

The authors acknowledge funding from the Tremplin-ERC Starting ANR grant HighLEAP (ANR-22-ERCS-0012).

\newpage 

\bibliographystyle{unsrt}
\bibliography{sample}

\end{document}